\newlength{\plarg}
\newlist{steps}{enumerate}{1}
\setlist[steps, 1]{label = Step \arabic*:}
\DeclareMathOperator{\Fp}{\mathbb{F}_{p}}
\newtheorem{theorem}{Theorem}
\numberwithin{theorem}{section}
\newtheorem{lemma}[theorem]{Lemma}
\newtheorem{corollary}[theorem]{Corollary}
\newtheorem{proposition}[theorem]{Proposition}
\newtheorem*{proposition2.14}{Proposition 2.14}
\newtheorem*{proposition2.15}{Proposition 2.15}
\theoremstyle{definition}
\newtheorem{definition}[theorem]{Definition}
\newtheorem*{notation}{Notation}
\theoremstyle{remark}
\newtheorem{remark}[theorem]{Remark}
\newcommand{\agl}{\text{AGL}(1,\mathbb{F}_{p})}
\newcommand{\gstar}{G^{*}}
\newcommand{\Cen}{\mathrm{Cen}}
\newcommand{\classc}{\mathcal{ST}(p)}
\newcommand{\classcprime}{\mathcal{ST}'(p)}
\newcommand{\classcprimezero}{\mathcal{ST}_{0}'(p)}
\renewcommand{\hat}{\widehat}
\begin{document}
%opening
\title{Non-Split Sharply 2-Transitive Groups of odd positive Characteristic}
\author{Marco Amelio, Simon Andr\'e \& Katrin Tent}
\begin{abstract}
It is well-known that every sharply 2-transitive group of characteristic $3$ splits. Here we construct the first examples of non-split sharply 2-transitive groups in odd positive characteristic $p$, for sufficiently large primes~$p$. Furthermore, we show that any group without 2-torsion can be embedded into a non-split sharply 2-transitive group of characteristic~$p$ for all sufficiently large primes $p$, yielding $2^{\aleph_0}$ many pairwise non-isomorphic countable non-split sharply 2-transitive groups in any sufficiently large characteristic.
\end{abstract}
\date{\today}
\maketitle

\section{Introduction}
\label{section introduction}

\let\thefootnote\relax\footnote{The first and third authors, and the second author until September 2023, were supported by the German Research Foundation (DFG) under Germany’s Excellence Strategy EXC 2044–390685587, Mathematics Münster: Dynamics–Geometry–Structure and by CRC 1442 Geometry: Deformations and Rigidity.}
    
Let $n\geq 1$ be an integer, and let $G$ be a group acting on a set $X$ with at least $n$ elements. The action is said to be \emph{sharply n-transitive} if for any two $n$-tuples of distinct elements $(x_{1}, \dots , x_{n})$ and $(y_{1}, \dots, y_{n})$ of $X^{n}$ there exists a unique element $g \in G$ such that $g \cdot x_{i}=y_{i}$ for $1 \leq i \leq n$. A group $G$ is called \emph{sharply $n$-transitive} if there is a set $X$ with at least $n$ elements on which $G$ acts sharply $n$-transitively.

Clearly every group acts sharply 1-transitively on itself by left multiplication. On the other hand, for $n \geq 4$, there are only finitely many sharply $n$-transitive groups, and moreover these groups are necessarily finite and completely classified. Indeed, Jordan proved in \cite{jordan} that the only finite sharply $n$-transitive groups for $n \geq 4$ are the symmetric groups $S_{n}$ and $S_{n+1}$, the alternating group $A_{n+2}$, and the Mathieu groups $M_{11}$ and $M_{12}$ for the cases $n=4$ and $n=5$ respectively. Furthermore, Tits proved in \cite[Chapitre IV, Théorème I]{tits} that there are no infinite sharply $n$-transitive groups for $n \geq 4$. Zassenhaus gave a complete classification of the finite sharply $n$-transitive groups for the cases $n=2$ and $n=3$ in \cite{zassenhaus1} and \cite{zassenhaus2}.  For $n=2$ and $n=3$ there do exist also infinite sharply $n$-transitive groups: for a skew-field $K$, the affine group $AGL(1,K)\cong K_{+} \rtimes K^{*}$ acts sharply 2-transitively on $K$, and for any (commutative) field $K$, the projective linear group $PGL(2,K)$ acts sharply 3-transitively on the projective line. These groups are infinite whenever $K$ is infinite.

Until recently, it was an open question whether a sharply 2-transitive group $G$ necessarily splits in the form $A\rtimes H$ for some non-trivial normal abelian subgroup $A$ (in which case we simply say that $G$ is \emph{split}). The first examples of non-split sharply 2-transitive groups were exhibited by Rips, Segev and Tent in \cite{rips_segev_tent} and by Rips and Tent in \cite{rips_tent}. Then, the first examples of infinite simple sharply 2-transitive groups were constructed by André and Tent in \cite{andre_tent} and by André and Guirardel in \cite{andre_guir_fin_gen_simple} (with additional properties, in particular finite generation).%\Scomment{paragraph slightly modified. In particular, I removed the last sentence about characteristic 0 since the characteristic has not been defined yet!}

An important feature associated with a sharply 2-transitive group action is its \emph{characteristic}, which we define as follows. Recall that an element of order 2 in a group is called an \emph{involution}, and a product of two distinct involutions will be called a \emph{translation}. Let $G\curvearrowright X$ be a sharply 2-transitive group action. It is easy to see that $G$ has involutions, and that involutions form a unique conjugacy class. Moreover, if involutions have fixed points, the action $G\curvearrowright X$ is $G$-equivariant to the action of $G$ by conjugation on the set of involutions of $G$, and therefore the translations form a conjugacy class. In this case we define the characteristic of $G\curvearrowright X$ to be the order of a translation if this order is finite (in which case it is necessarily a prime number $\geq  3$), or as $0$  in case this order is infinite. If involutions have no fixed points, we say that the action $G\curvearrowright X$ has characteristic $2$. Thus, sharply $2$-transitive group actions in characteristic $2$ are fundamentally different from those in other characteristics (note in particular that the examples in \cite{rips_segev_tent} show that in characteristic $2$ the action by conjugation of $G$ on itself need not be $2$-transitive on the class of involutions). Notice that when $K$ is a field, the characteristic of $AGL(1,K)\curvearrowright K$ coincides with that of $K$ in the usual sense. We often say that the group $G$ is sharply $2$-transitive of characteristic $p$ without referring to the underlying action $G\curvearrowright X$; this is not ambiguous in characteristic $\neq 2$ since the underlying action is unique (up to $G$-equivariant bijection).\footnote{Note that a group could \emph{a priori} be simultaneously of characteristic 2 and $p\neq 2$ (although there are no known examples).}

The examples in \cite{rips_segev_tent} (in characteristic $2$) and in \cite{rips_tent,andre_tent,andre_guir_fin_gen_simple} (in characteristic $0$) show the class of sharply 2-transitive groups to be wild. By a well-known result due to Kerby (see \cite[Theorem 9.5]{kerby}) every sharply $2$-transitive group in characterstic $3$ splits. On the other hand, the question of the existence of non-split sharply 2-transitive groups in characteristic $p>3$, which we address in this paper for any prime $p$ sufficiently large, had remained completely open until now.

The main difficulty in constructing non-split sharply $2$-transitive groups in characteristic $p>3$ comes from the fact that the methods used so far in order to construct non-split sharply $2$-transitive groups have proceeded through HNN-extensions, which create translations of infinite order. Therefore, since all the translations have order $p$ in a sharply 2-transitive group of characteristic $p>2$, in order to obtain sharply $2$-transitive groups in characteristic $p$, we will have to add new relations of the form $(ij)^p=1$ for distinct involutions $i,j$. This will be achieved by taking small cancellation quotients similar to the quotients used in the solution of the famous Burnside problem about the existence of infinite finitely generated groups of finite exponent (posed by Burnside in \cite{burnside}), in order to guarantee that any translation has order $p$. This makes the construction in odd positive characteristic considerably more complicated than in characteristic 2 and 0. Moreover, it is worth pointing out that the construction of infinite finitely generated groups of even exponent is notoriously more difficult than the construction of infinite finitely generated groups of odd exponent, due to the presence of involutions. As already observed above, sharply 2-transitive groups contain plenty of involutions, and one of the challenges we have to face is to keep these involutions under control when taking small cancellation quotients. We then obtain the following result (see Theorem \ref{embedding theorem} and Corollary \ref{main theorem reformulation} for stronger versions of this statement, in particular the existence of $2^{\aleph_0}$ many pairwise non-isomorphic countable non-split sharply 2-transitive groups in any characteristic $p\geq p'$).

\begin{theorem}
\label{maintheorem}
    There exists a prime number $p'$ such that any group without 2-torsion embeds into a non-split sharply $2$-transitive group of characteristic $p$, for every prime number $p \geq p'$.
\end{theorem}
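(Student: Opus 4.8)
The plan is to realize the target as a direct limit $G_\infty=\varinjlim_n G_n$ of an ascending chain of \emph{partial} sharply $2$-transitive groups of characteristic $p$, where each $G_{n+1}$ is produced from $G_n$ by an HNN-type extension that repairs one failure of $2$-transitivity, followed by a small cancellation quotient that forces the newly created translations to have order $p$. I would first isolate the relevant class of building blocks --- presumably the groups in $\classc$, together with the refinements $\classcprime$ and $\classcprimezero$ --- consisting of groups equipped with a distinguished conjugacy class of involutions $I$ such that the product of any two distinct elements of $I$ has order dividing $p$, and such that the conjugation action on $I$ is \emph{sharp} in the sense that at most one group element carries a given pair of distinct involutions to another. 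The latter is the uniqueness half of sharp $2$-transitivity and must be preserved throughout; the former encodes characteristic $p$.

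The base $G_0$ is built so that it contains the given $2$-torsion-free group $H$ and at least two involutions while lying in the class: concretely one amalgamates $H$ with a copy of $\agl$ (or with a suitable free product involving copies of $\mathbb{Z}/2\mathbb{Z}$ and $\mathbb{Z}/p\mathbb{Z}$), arranged so that $H$ injects and the only $2$-torsion is the prescribed involution class. For the inductive step, enumerate the \emph{defects} of $G_n$, i.e.\ pairs $\big((i_1,i_2),(j_1,j_2)\big)$ of pairs of distinct involutions not yet connected by any element of $G_n$. To repair one defect, form the HNN extension over the centralizers $\Cen(i_1)\cap\Cen(i_2)$ and $\Cen(j_1)\cap\Cen(j_2)$, adding a stable letter $t$ conjugating the first pair to the second, so that elements acquire the normal form $\nflong$. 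This creates the missing transitivity but, as the introduction notes, produces translations of infinite order.

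To restore characteristic $p$, impose on this HNN extension all relations $w^p=1$ for the new translations $w$, as a staged small cancellation quotient in the spirit of the Novikov--Adian / Ol'shanskii solution of the bounded Burnside problem. The primality and largeness of $p$ enter precisely here: $p$ must be taken large enough (uniformly, yielding the threshold $p'$) that the defining relators, read against the HNN normal form, satisfy the metric small cancellation condition driving the induction. One then verifies that the quotient $G_{n+1}$ again belongs to the class, that $G_n\hookrightarrow G_{n+1}$, that no involutions arise outside the prescribed class, and that the uniqueness clause survives. Taking the direct limit over an enumeration that eventually treats every defect gives a group $G_\infty\supseteq H$ whose conjugation action on involutions is sharply $2$-transitive of characteristic $p$; existence of connecting elements comes from resolving every defect cofinally, uniqueness and the exponent-$p$ relations are preserved at each stage and hence in the limit.

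It remains to secure non-splitness, which I would obtain by arranging the small cancellation geometry so that $G_\infty$ has no non-trivial abelian normal subgroup --- for instance by showing that every non-trivial normal subgroup contains a translation, hence cannot be abelian, or by forcing $G_\infty$ to be simple. The main obstacle, exactly as flagged in the introduction, is controlling involutions while performing the exponent-$p$ quotients: although $p$ is odd, the ambient group is saturated with order-$2$ elements, and this is precisely the feature that makes even-exponent Burnside constructions so much harder than odd-exponent ones. Imposing $(ij)^p=1$ by small cancellation in their presence threatens to fuse distinct involutions, to manufacture involutions outside the intended class, or to collapse the group; making the small cancellation machinery compatible with the involution structure, and checking at each stage that the embedding, the involution class, and the uniqueness clause are all simultaneously preserved, is where essentially all of the work resides, and is presumably what the refined classes $\classcprime$, $\classcprimezero$ and the normal-form bookkeeping are designed to handle.
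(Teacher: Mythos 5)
Your overall architecture coincides with the paper's: start from a free product of a group in $\classc$ (e.g.\ $\agl$) with the $2$-torsion-free group $H$, alternate HNN extensions (to create conjugators between pairs of involutions) with Burnside-type small cancellation quotients (to force the new translations to have order $p$), pass to the union over a transfinite enumeration, and derive non-splitness from the existence of non-commuting translations. Two points in your sketch would, however, break if carried out as written. First, the HNN extension cannot be taken over $\Cen(i_1)\cap\Cen(i_2)$ and $\Cen(j_1)\cap\Cen(j_2)$: in the relevant groups these intersections are trivial (no non-trivial element centralizes two distinct involutions, Remark \ref{no elem cent dist invol in c cprime}), so the stable letter would conjugate nothing and the defect would not be repaired. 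The associated subgroups must be the dihedral groups $D_{i_1,i_2}$ and $D_{j_1,j_2}$, both isomorphic to $D_p$, and --- crucially for preserving the uniqueness half of sharpness --- one may only conjugate a pair of \emph{$p$-minimal} type to a pair of \emph{$p$-affine} type: Lemma \ref{qm pair} gives quasi-malnormality of this pair of subgroups, which is what keeps path stabilizers in the Bass--Serre tree small and prevents the extension from manufacturing extra conjugators or uncontrolled involutions. Conjugating arbitrary ``defective'' pairs of pairs to one another, as you propose, would in general create a second element carrying one pair to the other; the paper instead conjugates every $p$-minimal pair to a single fixed $p$-affine pair and lets transitivity on $p$-affine pairs (Condition \ref{def class C G trans on affine trans} of Definition \ref{definition anss2t}, an invariant of the class) supply the rest.

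Second, your non-splitness argument is a non sequitur as stated: a normal subgroup containing a translation can perfectly well be abelian --- in the split case $K_+\rtimes K^*$ the set of translations together with the identity is exactly the abelian kernel. The criterion actually used is Neumann's theorem (Theorem \ref{if splits trans abelian}): the group splits if and only if $Tr(G)\cup\{1\}$ is an abelian subgroup, so it suffices to exhibit two non-commuting translations, which the paper does (Lemma \ref{non-commuting translations}) by combining the infinitude of $\langle Tr(\bar G)\rangle$ with the fact that centralizers of translations are cyclic of order $p$. Beyond these two points your proposal is a roadmap rather than a proof: everything that makes the theorem true --- that the classes $\classc$ and $\classcprime$ are preserved under the HNN step and under the partial periodic quotients, that no involutions are created or fused, that elliptic subgroups and orders of elements survive the quotients --- is deferred to ``making the small cancellation machinery compatible with the involution structure'', which is precisely where the substance of Sections \ref{subsection geometric small cancellation}--\ref{section second quotient} lies.
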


The methods used in this article, based on Coulon's paper \cite{coulon_1}, will necessarily yield a very large value for $p'$. The existence of non-split sharply 2-transitive groups of characteristic $p$ for small values of $p \geq 5$ remains an open question and seems to bear similarities to the famous open question of the existence of finitely generated infinite groups of small finite exponent.

As a side-remark we note that being a non-split sharply 2-transitive group of characteristic $p\neq 2$ is expressible by a first-order sentence in the language of groups. Hence by the Compactness Theorem, we recover the existence of non-split sharply 2-transitive groups of characteristic 0 as a consequence of Theorem \ref{maintheorem}.\footnote{However,  the strategy of the proof in positive characteristic is based on the strategy used in \cite{rips_tent} in characteristic 0, with additional work to obtain translations of finite order. So this does not actually yield a new proof in characteristic 0.}

Note that \emph{any} group embeds into a non-split sharply  sharply 2-transitive group of characteristic 2 and that \emph{any group not containing 2-torsion} can be embedded into a non-split sharply 2-transitive group of characteristic 0, and even in the centralizer of an involution in such a group (see \cite{andre_tent}). Theorem \ref{maintheorem} shows that a similar phenomenon occurs in (large) prime characteristic.

The proof of our main result has its roots in \cite{rips_tent}, but a new step has to be added in the iterative construction in order to solve the difficulty mentioned above: given a group action with trivial 2-point stabilizers we inductively consider HNN-extensions to obtain a sharply $2$-transitive action, but the main point of the paper is concerned with the difficulty of taking the appropriate quotients to obtain positive characteristic $p$ by killing $p$-powers of products of distinct involutions of infinite order, for some uniform prime $p$.

\subsection*{Structure of the paper}In Section \ref{Sec:Outline} we define two classes of groups that form the framework for our construction, and we give the proof of Theorem \ref{maintheorem} (and more generally of Theorem \ref{embedding theorem} and Corollary \ref{main theorem reformulation}), assuming two technical propositions whose proofs are postponed to Sections \ref{section small cancellation quotient} and \ref{section second quotient}. In Section \ref{section hyp spaces and grp act} we recall background on hyperbolic metric spaces and on actions of groups by isometries on these spaces. Although most of the definitions and results given in this section are well-known in the context of geodesic metric spaces, we have to work in the more general context of length metric spaces, and we explain how to adapt the proofs when necessary (see also Appendix \ref{appendix}). Then, in Section \ref{subsection geometric small cancellation}, we introduce the geometric small cancellation methods that will be crucial to prove Theorem \ref{maintheorem}. These geometric small cancellation methods were introduced by Gromov in \cite{gromov_mesoscopic}, and further developed by Delzant and Gromov in \cite{delzant_gromov}, and then notably by Arzhantseva and Delzant in \cite{arzhantseva_delzant}, by Coulon in \cite{coulon_3, coulon_2, coulon_1}, by Cantat and Lamy in \cite{cantat_lamy} (to prove that the Cremona group is not simple) and by Dahmani, Guirardel and Osin in \cite{dahmani_guirardel_osin}; see also \cite{coulon_bourbaki}. These methods proved useful to create various exotic quotients of groups exhibiting some kind of negative curvature, among which Burnside quotients of hyperbolic groups (i.e.\ infinite quotients of hyperbolic groups of finite exponent). In particular, these methods were used to answer the Burnside problem, originally solved by Adian and Novikov in \cite{adian_novikov} by means of complicated combinatorial techniques (see also \cite{adian, olshanskii, ivanov, lysenok, delzant_gromov, coulon_2, coulon_4, atkarskaya_rips_tent}).

\subsection*{Acknowledgement} For the Burnside quotients we use results from \cite{coulon_1} and we are very grateful to  R\'emi Coulon for several helpful conversations and for his enthusiasm for this project.

\section{Preliminaries and outline of the proof}
\label{Sec:Outline}

We start by fixing some notation and terminology. Recall that for a group $G$, an \emph{involution} is an element $r \in G$ of order two, and a \emph{translation} is a product $rs \in G$ of two distinct involutions $r$ and $s$. For an element $g \in G$, we write $\lvert g \rvert$ for its order. We will use the following convention: for $g,h\in G$, the conjugate of $h$ by $g$ is $g^{-1}hg$. In this paper, a pair of elements or of subgroups of a group $G$ always refers to an ordered pair.

\begin{notation}
Let $G$ be a group and $\mathcal{S} \subseteq G$ an arbitrary subset.
\begin{itemize}
    \item We write $I(\mathcal{S})=\{ r : r \in \mathcal{S}, \ \lvert r \rvert =2 \}$ for the set of involutions of $\mathcal{S}$.
    \item We write $Tr(\mathcal{S}) = \{ rs : rs \in \mathcal{S}, \ r,s \in I(G), \  r \neq s \}$ for the set of translations of $\mathcal{S}$.
    \item We write $I(\mathcal{S})^{(2)}=\{ (r,s) : r,s \in I(\mathcal{S}), \ r\neq s \}$ for the set of (ordered) pairs of distinct involutions of $\mathcal{S}$. 
    \item For involutions $r \neq s$ of $G$, we write $D_{r,s}= \langle r,s \rangle$ (note that this group is dihedral of order $2\lvert rs \rvert$, possibly infinite).
    \item We write $O(G)$ for the set of natural numbers that are orders of elements of $G$, that is, \[O(G)=\{ n \in \mathbb{N} \, : \, \exists g \in G \, , \, \lvert g \rvert=n \}.\]
\end{itemize}
Note that the sets $I(G)$ and $Tr(G)$ are invariant under conjugation. Note also that $I(\mathcal{S})=I(G)\cap \mathcal{S}$ and $Tr(\mathcal{S})=Tr(G)\cap \mathcal{S}$.
\end{notation}

Given a sharply 2-transitive group action $G\curvearrowright X$, it is easy to see that the set $I(G)$ is non-empty: for any pair of distinct points $x,y \in X$, there is an element $r$ of $G$ swapping $x$ and $y$. Since $r^{2}$ fixes both $x$ and $y$, by the uniqueness condition we must have $r^{2}=1$. Similarly, it is easy to see that all involutions are conjugate. In particular,  either every involution has a (necessarily unique) fixed point, or no involution has a fixed point.

If  $G$ acts sharply 2-transitively on a set $X$ such that involutions of $G$ have fixed points, there is a $G$-equivariant bijection $I(G) \longrightarrow X$ given by $r \longmapsto \text{Fix}(r)$, where $G$ acts on $I(G)$ by conjugation (see e.g.\ Lemma 3.1 in \cite{tent}).

Hence, if involutions in a sharply 2-transitive group have fixed points, we may assume that the group acts by conjugation on its set of involutions. An immediate consequence of this lemma is the fact that in a sharply 2-transitive group where involutions have fixed points, all translations are conjugate. Recall that in this case we define the characteristic of a sharply 2-transitive group in relation to the order of translations (we refer to the introduction).

Notice that the characteristic of a sharply 2-transitive group is necessarily 0 or a prime number $p$ (since otherwise the subgroup generated by a translation $t$ of order $m \cdot m'$ with $ m,m'>1$ would contain a translation of order $m$ that cannot be conjugate to $t$) and the case $p=2$ occurs if and only if involutions are fixed-point free (since two involutions with different fixed points cannot commute).

From now on, for a prime number $p$ we will write $\Fp$ for the finite field with $p$ elements. Also, for $n \geq 2$ we will write $C_{n}$ for the cyclic group of order $n$ and $D_{n}$ for the dihedral group of order $2n$, that is, the group given by the presentation \[ D_{n}= \langle r,s \ \vert \  r^{2}=s^{2}=(rs)^{n}=1 \rangle \cong C_n\rtimes C_2. \]

%We also note the following fact (whose details are left as an exercise to the reader).

%\begin{lemma}\label{automorphisms of dp}Let $p>3$ be a prime number. Let $H$ be the normal subgroup of $\Aut(D_{p})$ composed of the automorphisms fixing $rs$, and let $K$ be the subgroup of $\Aut(D_{p})$ composed of the automorphisms fixing $r$. One easily sees that $\Aut(D_{p})=H\rtimes K$ and that $H\simeq C_p$ and $K\simeq \mathrm{Aut}(C_p)\simeq C_{p-1}$, and one gets the following isomorphisms:\[\Aut(D_{p}) \cong C_p\rtimes \Aut(C_p) \cong C_p\rtimes C_{p-1}\cong (\Fp)_+\rtimes\Fp^*\cong\agl.\]Moreover, $K$ contains a unique involution $k$, and therefore $\Aut(D_{p})$ contains a unique subgroup isomorphic to $D_p$, namely $H\rtimes \langle k\rangle$.\end{lemma}

We now introduce two classes, denoted $\classc$ and $\classcprime$, which will be used in the construction of groups as in Theorem \ref{maintheorem}. The conditions for the class of groups $\classc$ are comparable to those for the classes considered in \cite{rips_tent,andre_tent,andre_guir_fin_gen_simple}, but in order to take small cancellation quotients as explained in the introduction, we will introduce a class $\classcprime$ whose elements are pairs composed of a group $G$ and a tree $X$ endowed with an action of $G$ satisfying a number of technical assumptions that will allow us to apply the results from \cite{coulon_1}.

\begin{notation}
    Given a subgroup $H$ of a group $G$, we  write $N_{G}(H)$ for the normalizer of $H$ in $G$, and $\text{\text{Cen}}(g)$ for the centralizer of an element $g$ in $G$.
\end{notation}

\begin{definition}
\label{def paff and pmin}
    Let $G$ be a group, $(r,s) $ a pair in $ I(G)^{(2)}$, $p$ an odd prime number.

    \begin{itemize}
        \item We say that $(r,s)$ is a pair of \emph{$p$-minimal type} if $\lvert rs \rvert=p$ and $N_{G}(\langle rs \rangle)=D_{r,s}$.
        \item We say that $(r,s)$ is a pair of \emph{$p$-affine type} if $D_{r,s}$ is contained in a subgroup $H$ of $G$ isomorphic to $\agl$. 
    \end{itemize}
\end{definition}

%Note that $\lvert rs \rvert=p$.

%\leq N_{G}(D_{r,s})$

 %and the action by conjugation of $H$ on $D_{r,s}$ coincides with the action of $\agl$ on $D_{p}$ as in the isomorphism from Lemma \ref{automorphisms of dp}

 \begin{remark}
\label{conj of pmin and paff is pmin paff}It is not hard to see that $\agl$ has a unique subgroup $D$ isomorphic to $D_p$, and that $I(D)=I(\agl)$. Therefore, the isomorphism between $H$ (as in Definition \ref{def paff and pmin}) and $\agl$ induces an isomorphism between $D_{r,s}$ and $D$. It follows that $H$ normalizes $D_{r,s}$ and that $H$ acts sharply 2-transitively on $I(D_{r,s})=I(H)$ by conjugation. Note in particular that $\lvert rs \rvert=p$. Moreover, if $(r,s) \in I(G)^{(2)}$ is of $p$-affine type and no nontrivial element of $G$ centralizes both $r$ and $s$, then in fact $N_{G}(D_{r,s})$ coincides with $H$, and thus is isomorphic to $\agl$, since the subgroup $H\subseteq N_{G}(D_{r,s})$ acts already 2-transitively by conjugation on $I(D_{r,s})$ .
\end{remark}

\begin{remark}
 Clearly, if  $(r,s) \in I(G)^{(2)}$ is of $p$-minimal type (respectively, of $p$-affine type), then so is every conjugate of $(r,s)$.
 \end{remark}

\begin{definition}
\label{definition anss2t}
    Let $G$ be a group, $p$ be an odd prime number. We will say that $G$ is \emph{almost sharply 2-transitive of characteristic $p$} if it satisfies the following conditions.
    \begin{enumerate}[label=(\arabic*)]
        \item Every translation is either of order $p$ or of infinite order, and every pair $(r,s) \in I(G)^{(2)}$ such that $rs$ is of order $p$ is either of $p$-minimal type or of $p$-affine type.
        \label{def class C pairs of minimal or affine type}
        \item The (normal) subgroup $N=\langle Tr(G)\rangle$ contains no involution.
        \label{def class C N without 2-torsion}
        \item The set of pairs $(r,s) \in I(G)^{(2)}$ of $p$-affine type is non-empty and $G$ acts transitively on it by conjugation.\label{def class C G trans on affine trans}
        \item For every pair $(r,s) \in I(G)^{(2)}$ the subgroup $\text{\text{Cen}}(rs)$ is cyclic  and generated by a translation.\label{def class C cent of trans is cyclic}
    \end{enumerate}
\end{definition}

\begin{remark}
\label{no elem cent dist invol in c cprime}If $G$ is an almost sharply 2-transitive group of of characteristic $p$, Condition \ref{def class C G trans on affine trans}  implies that $I(G)$ is non-empty. Condition \ref{def class C pairs of minimal or affine type} implies that no two distinct involutions commute (that would yield a translation of order 2). In addition, by Condition \ref{def class C cent of trans is cyclic} no non-trivial element of $G$ centralizes two distinct involutions: suppose one such $g\in G$ centralizes involutions $r\neq s\in I(G)$. Then $g\in \Cen(rs)=\langle h\rangle$ for some $h\in G$.  If $rs$ has order $p$, we can choose $h=rs$ and we see that $rgr^{-1}=g^{-1}\neq g$, a contradiction. Otherwise $rs$ has infinite order and we have  $g^m\in\langle rs\rangle\subset \Cen(rs)$ for some $m\in\mathbb{Z}^*$, so again $rg^mr^{-1}=g^{-m}\neq g^m$, a contradiction. 
\end{remark}

%\textcolor{red}{If we introduce the following definition in this section, it is necessary to recall the definition when we use it (in Section 6), or at least to put a reference to the definition.}

We now introduce the concept of a quasi-malnormal pair of subgroups of a given group, and we relate in Lemma \ref{qm pair} this concept to the case where the subgroups in consideration are finite dihedral of order $2p$.

\begin{definition}
\label{def quasi-malnormal}Let $G$ be a group. A subgroup $K$ of $G$ is said to be \emph{quasi-malnormal} if for all $g \in G \setminus K$ we have $\lvert K \cap g^{-1}Kg \rvert \leq 2$. A pair of subgroups $(K,K')$ of $G$ is said to be \emph{quasi-malnormal} if $K$ is quasi-malnormal and for all $g \in G$ we have $\lvert K \cap g^{-1}K'g \rvert \leq 2$.
\end{definition}

\begin{lemma}
\label{qm pair}Let $G$ a group. Let $(r,s) \in I(G)^{(2)}$ be a pair of $p$-minimal type and $(r',s') \in I(G)^{(2)}$ be a pair of $p$-affine type. Then the pair $( D_{r,s},D_{r',s'})$ is quasi-malnormal.\end{lemma}

\begin{proof}We first prove that $ D_{r,s}$ is quasi-malnormal. Suppose that there is some $g \in G \backslash D_{r,s}$ such that $\lvert D_{r,s} \cap (g^{-1}D_{r,s} g) \rvert > 2$. Then, this intersection must have order $p$ or $2p$, and in any case both subgroups have the same translations, that is, $\langle rs \rangle = g^{-1} \langle rs \rangle g$. Namely, $g$ normalizes $\langle rs \rangle $, contradicting the assumption that $(r,s)$ is a pair of $p$-minimal type.

Then, let us prove that $D_{r,s} \cap (g^{-1}D_{r',s'}g)$ has order at most $2$. Assume towards a contradiction that $\lvert D_{r,s} \cap g^{-1}D_{r',s'}g \rvert > 2$. Since a conjugate of a pair of $p$-affine type is still of $p$-affine type, we can assume without loss of generality that $g=1$. Then $\langle rs \rangle = \langle r's' \rangle$. But $\langle r's' \rangle$ is a characteristic subgroup of $D_{r',s'}$, so the subgroup $H \cong \agl$ of $N_{G}(D_{r',s'})$ also normalizes $\langle r's' \rangle= \langle rs \rangle$, contradicting the assumption that $(r,s)$ is a pair of $p$-minimal type.\end{proof}

\begin{remark}
In fact, a pair $(r,s) \in I(G)^{(2)}$ is of $p$-minimal type if and only if the subgroup $D_{r,s}$ is quasi-malnormal. Indeed, if there exists some $g \in N_{G}(\langle rs \rangle) \backslash D_{r,s}$, then $ D_{r,s} \cap (g^{-1}D_{r,s} g) $ has order at least $p$ and thus $D_{r,s}$ is not quasi-malnormal.
\end{remark}

Now we are ready to introduce the class $\classc$.

\begin{definition}[Class $\classc$]
\label{definition class C}
    Let $G$ be a group. We will say that $G$ belongs to the class $\classc$ if it is  almost sharply 2-transitive of characteristic $p$ with all translations of order $p$ (i.e.\ no translation has infinite order).
\end{definition}

\begin{remark}
\label{shapr 2trans in terms of affine type}
Clearly $\agl\in\classc$, so in particular this class is non-empty. Furthermore, if $G$ is in $\classc$ and every pair $(r,s) \in I(G)^{(2)}$ is of $p$-affine type, then $G$ is sharply $2$-transitive of characteristic $p\neq 2$.  
 \end{remark}
 
Our construction of non-split sharply 2-transitive groups starts from a group in $\classc$ and proceeds inductively by using HNN-extensions to conjugate pairs of $p$-minimal type to pairs of $p$-affine type. Furthermore, we will prove that the sharply 2-transitive groups that we construct contain non-commuting translations. The following theorem implies that these groups will not split. 

\begin{theorem}[see \cite{neumann}]
\label{if splits trans abelian}
    Let $G$ be a sharply 2-transitive group. Then $G$ splits if and only if $Tr(G)\cup\{ 1\}$ is an abelian subgroup (which is normal since the set of translations is invariant under conjugation).
\end{theorem}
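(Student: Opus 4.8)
The plan is to prove both implications directly from the permutation action $G \curvearrowright X$, working in the case relevant to this paper, namely characteristic $\neq 2$, so that involutions have fixed points and $r \mapsto \mathrm{Fix}(r)$ is a $G$-equivariant bijection $I(G) \to X$; in particular each point is fixed by a unique involution, and every nontrivial translation $rs$ is \emph{fixed-point-free} (if $rs$ fixed $x$, then writing $y=s(x)$ one checks that $r$ and $s$ either both fix $x$ or both swap the pair $\{x,y\}$, and in either case the uniqueness of the involution fixing $x$, resp.\ of the element swapping $x,y$, forces $r=s$). Throughout I write $A := Tr(G)\cup\{1\}$; it is invariant under conjugation, hence normal, and it is nontrivial since two points with distinct fixed-point involutions produce a nontrivial translation.

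The implication ``$A$ an abelian subgroup $\Rightarrow$ $G$ splits'' is the easy one. Fix $x_0\in X$ and put $H:=G_{x_0}$. As $G$ is $2$-transitive it is primitive, so the nontrivial normal subgroup $A$ is transitive on $X$; since translations are fixed-point-free we get $A\cap H=\{1\}$, and transitivity gives $AH=G$. Hence $G=A\rtimes H$ with $A$ nontrivial normal abelian, i.e.\ $G$ splits.

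For the converse, suppose $G=B\rtimes H$ with $B$ a nontrivial normal abelian subgroup; the goal is to show $B=A$. Being nontrivial and normal, $B$ is transitive, and being abelian and transitive it is \emph{regular}, so $B\cap G_{x_0}=\{1\}$ for every $x_0$, i.e.\ every nontrivial element of $B$ is fixed-point-free. Take $H=G_{x_0}$ and let $i_0$ be the unique involution fixing $x_0$, so $i_0\in H$. Under the identification $X\setminus\{x_0\}\cong B\setminus\{1\}$, $b\leftrightarrow b(x_0)$, the action of $H$ on $X\setminus\{x_0\}$ becomes the conjugation action of $H$ on $B$, which is therefore free; in particular conjugation by $i_0$ fixes no $b\in B\setminus\{1\}$. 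Since $B$ is abelian, $b\,(i_0 b i_0)$ is fixed by conjugation by $i_0$, hence equals $1$, so $i_0$ inverts $B$; as inversion is central in $\Aut(B)$ and all involutions are conjugate, \emph{every} involution inverts $B$.

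Two consequences then pin down $B$. First, for $b\in B\setminus\{1\}$ the element $i_0 b$ satisfies $(i_0 b)^2=(i_0 b i_0)b=b^{-1}b=1$ with $i_0 b\notin\{1,i_0\}$ (as $i_0\notin B$), so $b=i_0(i_0 b)$ is a product of two distinct involutions; thus $B\setminus\{1\}\subseteq Tr(G)$. Second, $B$ contains no involution (these have fixed points, whereas $B\setminus\{1\}$ acts freely), so every involution of $G$ has nontrivial image in $\overline{G}:=G/B\cong H$, and that image is an involution; but $H=G_{x_0}$ has the unique involution $i_0$, so $\overline{G}$ has a unique involution and all involutions of $G$ lie in a single coset of $B$. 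Hence every translation $rs$ lies in $B$, giving $Tr(G)\subseteq B$. Combining both inclusions, $A=Tr(G)\cup\{1\}=B$ is an abelian subgroup.

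The main obstacle is the converse direction, and concretely the two structural facts isolated above: that every involution inverts the abelian kernel $B$ (resting on freeness of the stabilizer action together with commutativity), and that $\overline{G}=G/B$ has a \emph{unique} involution (forcing all involutions of $G$ into one coset). Together these compel the a priori arbitrary complemented abelian normal subgroup to coincide with the translation set. The characteristic-$2$ situation, where involutions are fixed-point-free and may themselves lie in $B$, is not needed for our application and is handled in \cite{neumann}.
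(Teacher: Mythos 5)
The paper does not prove this statement at all: Theorem \ref{if splits trans abelian} is quoted from the literature with the citation to \cite{neumann}, so there is no in-paper argument to compare yours against. On its own terms, your proof of the characteristic $\neq 2$ case is correct. The forward direction (primitivity forces the nontrivial normal subgroup $Tr(G)\cup\{1\}$ to be transitive, fixed-point-freeness of translations gives trivial intersection with a point stabilizer, hence a semidirect decomposition) is the standard argument. The converse is the substantive part, and your two pivots both check out: regularity of the abelian normal subgroup $B$ plus freeness of the $G_{x_0}$-action on $X\setminus\{x_0\}$ forces $i_0$ to invert $B$, whence $B\setminus\{1\}\subseteq Tr(G)$; and the absence of involutions in $B$ together with the uniqueness of the involution in $G_{x_0}\cong G/B$ puts all involutions in a single coset of $B$, whence $Tr(G)\subseteq B$. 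The preliminary claims you use (each point fixed by a unique involution, each involution fixing a unique point, translations fixed-point-free) are all correctly justified from sharp 2-transitivity.

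The one place your proposal falls short of the statement as written is characteristic $2$, which you explicitly exclude. There the involutions are fixed-point-free, the identification $I(G)\to X$ is unavailable, and several of your steps (e.g.\ ``$B$ contains no involution because involutions have fixed points'', and the uniqueness of the involution in $G_{x_0}$) break down, so a genuinely different argument is needed. Since the theorem is only ever invoked in this paper for groups of odd characteristic $p$, and since you flag the omission and defer to \cite{neumann}, this is an acknowledged restriction rather than a hidden gap; but as a proof of the theorem as stated it is incomplete in that one case.
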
 

The cornerstone of the inductive process is the following result.

\begin{proposition}
\label{thm: classes stable under hnn and free products}
    Let $p$ be an odd prime number and let $G$ be an almost sharply 2-transitive group of characteristic $p$.
\begin{enumerate}
\item  Let $(r,s), (r',s') \in I(G)^{(2)}$ with $(r,s)$ of  $p$-affine type, and $(r',s')$ of $p$-minimal type, so $D_{r,s}$ and $D_{r',s'}$ are isomorphic to $D_p$.
Then the HNN-extension \[\gstar = \langle G,t \ \vert \ t^{-1}rt=r', t^{-1}st=s'\rangle\] is almost sharply 2-transitive of characteristic $p$.
\item Let $H$ be a non-trivial group without $2$-torsion. Then $\gstar= G * H$ is almost sharply 2-transitive  of characteristic $p$.
\end{enumerate}    
\end{proposition}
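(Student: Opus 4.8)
The plan is to treat both constructions uniformly through the action of $\gstar$ on its Bass--Serre tree $T$. For the HNN-extension the edge stabilisers are the conjugates of $D_{r,s}\cong D_{r',s'}\cong D_p$, and the stable letter conjugates $\langle rs\rangle$ to $\langle r's'\rangle$; for the free product the edge stabilisers are trivial. In both cases the vertex stabilisers are the conjugates of $G$ (together with those of $H$ for the free product). I would use two standard facts about such actions: every finite subgroup is elliptic, hence conjugate into a vertex group, and every element is either elliptic or hyperbolic with an invariant axis. Because $H$ has no $2$-torsion, it follows at once that every involution of $\gstar$, and indeed every subgroup isomorphic to $D_p$, is conjugate into $G$. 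This already gives the first half of condition~(1): a finite-order translation $ab$ generates with $a,b$ a finite dihedral subgroup, which is conjugate into $G$, so condition~(1) for $G$ forces $|ab|=p$; hence every translation of $\gstar$ has order $p$ or infinite order.

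For the classification in condition~(1) and for condition~(3) I would study an order-$p$ pair $(a,b)$ through the finite group $F=\langle ab\rangle\cong C_p$ and its fixed-point set in $T$. After conjugating we may assume $(a,b)$ lies in $G$, where it is of $p$-minimal or $p$-affine type. The $p$-affine type passes upward, since the witnessing subgroup isomorphic to $\agl$ is finite and sits inside $G\subseteq\gstar$. If $(a,b)$ is $p$-minimal in $G$, I distinguish whether $F$ fixes an edge: if it fixes none, then $N_{\gstar}(F)$ preserves the single vertex fixed by $F$, lies in $G$, and equals $N_G(F)=D_{a,b}$, so the pair remains $p$-minimal; if $F$ fixes an edge, then $F$ is conjugate into an edge group, i.e.\ to the affine $\langle rs\rangle$ or (via the stable letter) to $\langle r's'\rangle$, and in either case the pair becomes $p$-affine in $\gstar$. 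This proves condition~(1) and shows that the only pairs changing type are the $\gstar$-conjugates of $(r',s')$, which are carried to the affine pair $(r,s)$ by $t$. Condition~(3) then follows: every $p$-affine pair of $\gstar$ is $\gstar$-conjugate to a $p$-affine pair of $G$ (conjugate the finite witnessing copy of $\agl$ into $G$), and condition~(3) for $G$ collapses all of these into a single $\gstar$-orbit.

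Condition~(4) I would split according to the dynamics of $ab$ on $T$, and this is where Lemma~\ref{qm pair} does the real work. If $|ab|=p$, then $ab$ is elliptic with bounded fixed tree (the alternation between affine and $p$-minimal types along $T$ prevents an infinite fixed ray), so its centraliser fixes the centre of that tree and thus lies in a conjugate of $G$ or in an edge group; in either case it reduces to the cyclic centraliser supplied by condition~(4) for $G$. If $ab$ has infinite order but is elliptic, it cannot lie in a finite edge group, so it fixes a unique vertex and its centraliser again stays inside a conjugate of $G$. The substantial case is $ab$ hyperbolic: here $\Cen_{\gstar}(ab)$ preserves the axis $L$ and maps to $\mathrm{Isom}(L)$, with kernel the pointwise stabiliser of $L$. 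Lemma~\ref{qm pair} bounds the intersections of the edge stabilisers met along $L$, and I would use this to force that pointwise stabiliser to be trivial (the only possible nontrivial element would be an involution lying in infinitely many of the quasi-malnormal edge groups, which the dihedral structure excludes); thus $\Cen_{\gstar}(ab)$ is infinite cyclic, say $\langle\tau\rangle$. Finally, since the involution $a$ inverts $ab$ it inverts $\tau$, so $a\tau$ is an involution and $\tau=a\,(a\tau)$ is a product of two distinct involutions, i.e.\ a translation, as required.

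Condition~(2), that $N^{*}=\langle Tr(\gstar)\rangle$ contains no involution, is where I expect the main difficulty. Since $N^{*}$ is normal, its action on $T$ gives an induced graph-of-groups decomposition in which every torsion element is conjugate into a vertex group, while the edge groups meet $N^{*}$ only in the odd-order group $\langle rs\rangle$ (or trivially); so it suffices to show the vertex groups $N^{*}\cap gGg^{-1}$, all $\gstar$-conjugate to $N^{*}\cap G$, have no involution. For the free product this is clean: the retraction $\rho\colon G*H\to G$ killing $H$ sends every translation of $\gstar$ into $Tr(G)\cup\{1\}$, so $\rho(N^{*})\subseteq N$; an involution of $N^{*}$ would map to an involution of $N$ (impossible by condition~(2) for $G$), hence lie in $\ker\rho=\ast_{g\in G}\,gHg^{-1}$, which is $2$-torsion-free --- a contradiction. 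For the HNN-extension there is no such retraction, and the crux becomes showing $N^{*}\cap G=N$, so that involution-freeness is inherited from condition~(2) for $G$. I expect this last point --- a Britton normal-form analysis of how a product of conjugates of translations can land back in the vertex group $G$ --- to be the hardest part of the whole argument, rather than any of the tree-theoretic steps above.
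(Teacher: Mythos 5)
Your tree-theoretic framework for conditions (1), (3) and (4) of Definition~\ref{definition anss2t} is essentially the one the paper uses: both arguments run on the Bass--Serre tree, use Lemma~\ref{qm pair} to bound stabilizers of paths of length at least three (hence to show that the normalizer of any finite subgroup of order $>2$ is elliptic), and then reduce each condition to the corresponding condition in $G$. One detail in your condition (4) is asserted rather than proved: the reason you give for the triviality of the pointwise stabilizer of the axis (``an involution lying in infinitely many of the quasi-malnormal edge groups, which the dihedral structure excludes'') is not sufficient, since quasi-malnormality explicitly permits order-$2$ intersections of conjugates of the edge group. The step is salvageable: such an involution $\iota$ would generate the pointwise stabilizer of the axis, which is normal of order at most $2$ in the setwise stabilizer, so the reflection $a$ would centralize $\iota$ and $a\iota$ would be a translation of order $2$, contradicting condition (1). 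The paper gets the same conclusion more structurally, from the classification of loxodromic (virtually cyclic) subgroups, among which $D_{\infty}$ is the only one containing two distinct involutions.

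The genuine gap is condition (2) in the HNN case, which you correctly identify as the crux but then leave unproved: you propose to establish $N^{*}\cap G=N$ by a Britton normal-form analysis and explicitly defer it. That route can be made to work, but it is precisely the missing step, and it is not needed. The paper's argument is short: set $G'=\gstar/\langle t\rangle^{\gstar}$ and observe that $G'\cong G/\langle k^{-1}\alpha(k)\,:\,k\in D_{r,s}\rangle^{G}$, where $\alpha\colon D_{r,s}\to D_{r',s'}$ sends $r\mapsto r'$ and $s\mapsto s'$. Each generator $k^{-1}\alpha(k)$ of the kernel of $\pi\colon G\to G'$ is a product of an even number of involutions of $G$ (two if $k$ is a reflection, $4j$ if $k=(rs)^{j}$), so $\ker\pi\subseteq N$ and, since $N$ contains no involution, no involution of $G$ --- and hence, as every involution of $\gstar$ is conjugate into $G$, no involution of $\gstar$ --- is killed. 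A trivial product of an odd number of involutions of $\gstar$ therefore projects to a trivial product of an odd number of involutions of $G'$; choosing involutive preimages in $G$ of these factors, their product lies in $\ker\pi\subseteq N$ and is simultaneously an odd and an even product of involutions of $G$, which produces an involution in $N$ and contradicts condition (2) for $G$. You should either carry out the normal-form analysis in full or substitute this quotient argument.
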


%\textcolor{red}{The term "partial periodic quotients" used in the following paragraph has not been defined yet.}

However, in both cases of Proposition \ref{thm: classes stable under hnn and free products}, even if $G\in\classc$, we have $\gstar\notin\classc$ since $\gstar$ contains translations of infinite order. Thus, in order to eventually obtain a sharply 2-transitive group of characteristic $p$, in particular a group where all translations have order $p$, we need to take partial periodic quotients (see Theorem \ref{res : SC - partial periodic quotient}) of the (normal) subgroup generated by translations. To that purpose, we now introduce a new class $\classcprime$. Definition \ref{definition class Cprime} below involves Theorem \ref{res : SC - partial periodic quotient}, which is stated together with the necessary background in Section~\ref{subsection geometric small cancellation} following work of Coulon (see \cite{coulon_1}).

%\textcolor{red}{The space $X$ in the definition below should be called a tree rather than a $0$-hyperbolic length space (and these notions have not been defined yet).}

\begin{definition}[Class $\classcprime$]
\label{definition class Cprime}
Let $G$ be a group acting by isometries on a (simplicial) tree $X$. Write $N= \langle Tr(G) \rangle$ and $Q=Tr(G)$. We say that $(G,X)$ belongs to the class $\classcprime$ if $G$ is almost sharply 2-transitive of characteristic $p$ and the following conditions are satisfied.
    \begin{enumerate}[label=(\arabic*')]
        \item The action of $G$ on $X$ is non-elementary and acylindrical (see Definition \ref{acylindr definition}), and thus it is WPD (see Definition \ref{definition wpd}).
        \label{def class Cprime no parabolic}
        \item The action is such that $e(N,X)=1$, $r_{\text{inj}}(Q,X) \geq 1$, $\nu(N,X) \leq 5$ and $A(N,X)=0$ (see Subsection \ref{subsubsection invariants} for the definitions of the parameters), and the prime number $p$ is at least $n_1$ (the constant provided by Theorem \ref{res : SC - partial periodic quotient} for these values of the parameters and hyperbolicity constant $\delta=0$).
        \label{def class Cprime satisfies the ind lemma}
        \item Every translation of infinite order is loxodromic for its action on $X$.
        \label{def class Cprime inf trans are loxodromic}
        \item A finite subgroup of $G$ normalized by a loxodromic element has order at most 2.
        \label{def class Cprime no subg of odd ord norm by lox}
    \end{enumerate}
\end{definition}

We will sometimes say that $G$ belongs to the class $\classcprime$ with no mention of the space $X$ if this is not necessary. We will prove in Section \ref{section small cancellation quotient} that a group in class $\classcprime$ in fact satisfies the hypotheses of Theorem \ref{res : SC - partial periodic quotient} (that is, that the family $Q$ is strongly stable, see Definition \ref{def strongly stable family}).
 
We will show in Section \ref{section second quotient} the following refinement of Proposition~\ref{thm: classes stable under hnn and free products}.

\begin{proposition}
\label{prop:ext in cprime}
If $G\in\classc$, then for $G^*$ as in Proposition~\ref{thm: classes stable under hnn and free products} we have $(\gstar,X)\in\classcprime$ where $X$ is the Bass-Serre tree associated with the decomposition of $G^*$ as an HNN-extension or as a free product.
\end{proposition}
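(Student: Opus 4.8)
The plan is to treat the HNN-extension and the free-product cases uniformly through the action of $\gstar$ on its Bass--Serre tree $X$, exploiting that a tree is $0$-hyperbolic. By Proposition~\ref{thm: classes stable under hnn and free products} the group $\gstar$ is already almost sharply $2$-transitive of characteristic $p$, so it remains to verify the four geometric conditions \ref{def class Cprime no parabolic}--\ref{def class Cprime no subg of odd ord norm by lox} of Definition~\ref{definition class Cprime}. First I would record the relevant stabilizers: vertex stabilizers of $X$ are the conjugates of $G$ (and, in the free-product case, of $H$), the edge stabilizers are the conjugates of $D_{r,s}\cong D_p$ in the HNN case and are trivial in the free-product case, and every finite subgroup, in particular every involution, fixes a point of $X$. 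Since torsion in an HNN-extension or a free product is conjugate into a vertex group, and $H$ has no $2$-torsion, every involution of $\gstar$ is conjugate into $G$; hence the involutions sit inside the $G$-type vertices.

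The heart of the argument is the stabilizer estimate underlying acylindricity (Condition~\ref{def class Cprime no parabolic}). In the free-product case this is immediate: edge stabilizers are trivial, so the pointwise stabilizer of any segment containing an edge is trivial. In the HNN case the affine subgroup $D_{r,s}$ is \emph{not} quasi-malnormal, as its normalizer $H\cong\agl$ creates, at each $G$-vertex, a ``bush'' of $[H:D_{r,s}]=(p-1)/2$ edges all carrying the stabilizer $D_{r,s}$, so the action is not $2$-acylindrical. The key point I would prove is that this bush has diameter exactly $2$: an explicit computation using $t^{-1}D_{r,s}t=D_{r',s'}$ together with the quasi-malnormality of the pair $(D_{r',s'},D_{r,s})$ and of $D_{r',s'}$ itself (Lemma~\ref{qm pair}, noting that here $(r',s')$ is of $p$-minimal and $(r,s)$ of $p$-affine type) shows that $D_{r,s}\cap thD_{r,s}h^{-1}t^{-1}$ has order at most $2$ for every $h\in G$. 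Consequently the fixed-point set of any order-$p$ element has diameter at most $2$, and since the pointwise stabilizer of a geodesic is a subgroup of the dihedral edge groups, any such stabilizer of a geodesic of length $\geq 3$ contains no element of order $p$ and therefore has order at most $2$. This gives $3$-acylindricity with stabilizer bound $2$; non-elementarity follows by exhibiting two loxodromic isometries (e.g.\ $t$ and a suitable conjugate, or $gh$ with $g\in G\setminus\{1\}$, $h\in H\setminus\{1\}$) with distinct fixed pairs, and WPD is then automatic.

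Conditions~\ref{def class Cprime inf trans are loxodromic} and \ref{def class Cprime no subg of odd ord norm by lox} follow cleanly from this picture. For Condition~\ref{def class Cprime inf trans are loxodromic}, given a translation $uv$ with $u\neq v$ involutions, I would invoke the standard dichotomy for products of two elliptic involutions on a tree: $uv$ is elliptic if and only if $\mathrm{Fix}(u)\cap\mathrm{Fix}(v)\neq\emptyset$. In that case $u,v$ lie in a common vertex stabilizer, which (containing involutions) is a conjugate of $G$, so $uv$ is conjugate to a translation of $G$ and hence of order $p$; otherwise $uv$ is loxodromic. Thus every translation of infinite order is loxodromic. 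For Condition~\ref{def class Cprime no subg of odd ord norm by lox}, a finite subgroup $F$ normalized by a loxodromic element $\gamma$ fixes a point, so $\gamma$ preserves the subtree $\mathrm{Fix}(F)$; since a loxodromic isometry preserving a subtree has its axis contained in that subtree, $F$ fixes the axis of $\gamma$ pointwise, in particular a geodesic of length $\geq 3$, whence $\lvert F\rvert\leq 2$ by the stabilizer bound above.

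It remains to verify the numerical Condition~\ref{def class Cprime satisfies the ind lemma}, which I expect to be the main obstacle, since it requires matching the precise definitions of the invariants from Subsection~\ref{subsubsection invariants} against the prescribed bounds. Here $X$ being a tree forces $A(N,X)=0$, and since a loxodromic isometry of a simplicial tree has positive integer translation length we get $r_{\mathrm{inj}}(Q,X)\geq 1$. The remaining bounds $\nu(N,X)\leq 5$ and $e(N,X)=1$ should be read off from the $3$-acylindricity with stabilizer bound $2$ together with Condition~\ref{def class C cent of trans is cyclic} of Definition~\ref{definition anss2t} (the centralizer of a translation is cyclic, generated by a translation), which rules out proper-power pathologies along axes; the requirement $p\geq n_1$ is inherited from the standing assumption that the characteristic is large. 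Carefully carrying out these last estimates within Coulon's framework, rather than the structural steps above, is where the real work lies, and is what Section~\ref{section second quotient} is devoted to.
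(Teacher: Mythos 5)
Your proposal follows essentially the same route as the paper's proof: the path-stabilizer bound derived from the quasi-malnormality of the pair of dihedral subgroups (this is exactly the paper's Lemma~\ref{pathstabsmall} combined with Lemma~\ref{qm pair}), acylindricity of the tree action from that bound, the elliptic/loxodromic dichotomy for a product of two involutions to get Condition~\ref{def class Cprime inf trans are loxodromic}, the bounded fixed-point set of a finite subgroup of order $>2$ to get Condition~\ref{def class Cprime no subg of odd ord norm by lox} (your version, letting the axis of the loxodromic element lie inside $\mathrm{Fix}(F)$, is a clean contrapositive of the paper's ``the normalizer of $F$ is elliptic''), and the computation of $A(N,X)=0$ and $r_{\text{inj}}(Q,X)\geq 1$ from the fact that $X$ is a $0$-hyperbolic simplicial tree. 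The description of the ``bush'' of $(p-1)/2$ edges with common stabilizer $D_{r,s}$ around an affine-type vertex, forcing diameter $2$ rather than $1$ for fixed-point sets, matches the paper's case analysis of the subtree $Y$ in the verification of Condition~\ref{def class Cprime no subg of odd ord norm by lox}.

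The one place where your sketch points in the wrong direction is the verification of $e(N,X)=1$ in Condition~\ref{def class Cprime satisfies the ind lemma}, which you propose to extract from Condition~\ref{def class C cent of trans is cyclic} of Definition~\ref{definition anss2t}; that is not the relevant input, since $e(N,X)$ is governed by the maximal normal finite subgroups of the maximal loxodromic subgroups of $N$, not by centralizers of translations. What the paper uses (via Remark~\ref{e is 1 if lox subg are cyc}) is that every loxodromic subgroup of $N^*=\langle Tr(\gstar)\rangle$ is infinite cyclic: by Condition~\ref{def class Cprime no subg of odd ord norm by lox} and Lemma~\ref{lox subg in wpd is virt cyc}, every loxodromic subgroup of $\gstar$ is isomorphic to $\mathbb{Z}$, $C_2\times\mathbb{Z}$, $D_{\infty}$ or $C_4\ast_{C_2}C_4$, and since $N^*$ contains no involution (Condition~\ref{def class C N without 2-torsion}, proved by projecting onto $G$ by killing the free factor $H$ or the stable letter $t$), only $\mathbb{Z}$ survives; hence all the relevant maximal finite normal subgroups are trivial and $e(N,X)=1$. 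Likewise $\nu(N,X)\leq 5$ is not an open-ended estimate ``within Coulon's framework'': it drops out of Lemma~\ref{nu finite acyl} applied with the acylindricity constants $L=3$, $M=2$ (HNN case) or $L=M=1$ (free product) together with $r_{\text{inj}}\geq 1$, which forces $M'\leq 3$ and $\nu\leq M'+M\leq 5$. Both points follow from facts you have already established, so the defect is one of assembly rather than of substance; I would also note that leaning on Proposition~\ref{thm: classes stable under hnn and free products} for almost sharp $2$-transitivity is slightly circular in the paper's actual organization, since that proposition is only proved as part of the Section~\ref{section second quotient} argument, with Conditions~\ref{def class C pairs of minimal or affine type} and~\ref{def class C cent of trans is cyclic} themselves verified by means of the tree action.
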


This allows us to take appropriate quotients to return to $\classc$. Namely, we will show in Section \ref{section small cancellation quotient} the following result.

\begin{proposition}
\label{prop: classes stable under pp quotients}
    If $(G, X)$  belongs to $\classcprime$, then $G$ has a quotient $\bar G\in\classc$ such that the images of pairs of distinct involutions of $p$-affine type (respectively, of $p$-minimal type) will again be of $p$-affine type (respectively, of $p$-minimal type), any subgroup of $G$ that is elliptic for the action on $X$ projects to an isomorphic image in $\bar{G}$, and $O(G)=O(\bar{G})$.
\end{proposition}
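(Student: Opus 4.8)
The plan is to realize $\bar G$ as the partial periodic quotient of $G$ associated with the family of translations $Q=Tr(G)$, obtained by applying Theorem \ref{res : SC - partial periodic quotient}. By the results of Section \ref{section small cancellation quotient}, the family $Q$ is strongly stable (Definition \ref{def strongly stable family}), so the theorem applies with exponent $p\ge n_1$ and hyperbolicity constant $\delta=0$, yielding a projection $\pi\colon G\twoheadrightarrow\bar G$. I would first record the consequences of that theorem that I will use, as recalled in Section \ref{subsection geometric small cancellation}: (i) $\pi$ is injective on every elliptic subgroup of $G$; (ii) a torsion element of $\bar G$ either lifts to an elliptic element of $G$ of the same order or has order dividing $p$, and correspondingly every finite subgroup of $\bar G$ is, up to conjugacy, either the image of an elliptic subgroup of $G$ or cyclic of order dividing $p$; and (iii) each translation of infinite order, being loxodromic by Condition \ref{def class Cprime inf trans are loxodromic}, has image of order exactly $p$ with finite, essentially dihedral normalizer.

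From these I would extract the easy assertions. Since a finite-order isometry of a tree is elliptic, every finite-order element of $G$ is elliptic and thus injects with its order preserved; combined with (ii) and the fact that $p\in O(G)$ (as $G$ contains translations of order $p$ by Condition \ref{def class C G trans on affine trans} and Remark \ref{conj of pmin and paff is pmin paff}), this gives $O(\bar G)=O(G)$, and it shows that any subgroup elliptic for the action on $X$ embeds. Because $p$ is odd, the new torsion of order dividing $p$ contributes no involutions, so by (ii) every involution of $\bar G$ lifts to an involution of $G$; dually, distinct involutions $r\ne s$ of $G$ satisfy $\overline{rs}\neq 1$ (of order $p$ by (iii) when $\lvert rs\rvert=\infty$, and by injectivity on the elliptic group $D_{r,s}$ when $\lvert rs\rvert=p$), so they stay distinct. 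Hence every involution of $\bar G$ is the image of an involution of $G$ and every translation of $\bar G$ is the image of a translation of $G$; as all translations of $G$ have order $p$ or infinite and both become order $p$ in $\bar G$, \emph{all} translations of $\bar G$ have order $p$, which is what places $\bar G$ in $\classc$ rather than merely making it almost sharply $2$-transitive.

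It then remains to verify Conditions \ref{def class C pairs of minimal or affine type}--\ref{def class C cent of trans is cyclic} for $\bar G$ together with preservation of types. For a pair $(r,s)$ of $p$-affine type the witnessing subgroup $H\cong\agl$ is finite, hence elliptic, so $\pi(H)\cong\agl$ contains $D_{\bar r,\bar s}$ and still acts sharply $2$-transitively by conjugation on its involutions; thus $(\bar r,\bar s)$ is again of $p$-affine type, which yields nonemptiness and transitivity in Condition \ref{def class C G trans on affine trans} (using surjectivity of $\pi$ together with the classification of finite subgroups in (ii) to see that every affine pair of $\bar G$ lifts to one of $G$). For a pair of $p$-minimal type, and more importantly for the pairs $(\bar r,\bar s)$ arising from translations $rs$ of infinite order, the point is that the normalizer stays minimal, $N_{\bar G}(\langle\overline{rs}\rangle)=D_{\bar r,\bar s}$: this is exactly the minimal-normalizer output (iii) for the new order-$p$ elements, combined with quasi-malnormality (Lemma \ref{qm pair}) and Condition \ref{def class Cprime no subg of odd ord norm by lox} to rule out enlarged finite normalizers in the elliptic case. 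These identifications supply the second half of Condition \ref{def class C pairs of minimal or affine type}. Condition \ref{def class C N without 2-torsion} follows because $\pi(N)$ is generated by order-$p$ translations and any involution of it would lift to an involution of $N$, contradicting Condition \ref{def class C N without 2-torsion} for $G$; and Condition \ref{def class C cent of trans is cyclic} follows from the corresponding property in $G$ for the elliptic case and from the control on centralizers of images of loxodromic elements in (iii) for the newly created order-$p$ translations.

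The main obstacle is precisely this control of normalizers and centralizers in the quotient: ensuring both that the normalizer of $\langle\overline{rs}\rangle$ never grows beyond $D_{\bar r,\bar s}$ (so that $p$-minimal pairs, and the newly created order-$p$ translations, are genuinely of $p$-minimal type) and that no unexpected finite subgroup, in particular no new involution, appears. This is the heart of the geometric small-cancellation argument, and it is exactly the information that the hypotheses defining $\classcprime$ (acylindricity and the bounds $e(N,X)=1$, $r_{\text{inj}}(Q,X)\ge1$, $\nu(N,X)\le5$, $A(N,X)=0$, with $p\ge n_1$) are designed to feed into Theorem \ref{res : SC - partial periodic quotient}. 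I expect the delicate part to be matching the precise conclusions of that theorem to the group-theoretic statements above, rather than any further geometric input.
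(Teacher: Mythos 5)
Your overall strategy coincides with the paper's: take $\bar G$ to be the partial periodic quotient of Theorem \ref{res : SC - partial periodic quotient} applied with $N=\langle Tr(G)\rangle$ and $Q=Tr(G)$, and then verify the conditions of $\classc$ together with preservation of types. However, there is a genuine gap in how you propose to finish. The structural facts you list as consequences (ii) and (iii) of Theorem \ref{res : SC - partial periodic quotient} --- that every finite subgroup of $\bar G$ either lifts or is cyclic of order dividing $p$, that the image of a loxodromic translation has order exactly $p$ with dihedral normalizer $D_{\bar r,\bar s}$, and that centralizers of the new order-$p$ translations are controlled --- are \emph{not} conclusions of that theorem. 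Its conclusions concern individual elements of $\bar N$ and elementary non-loxodromic subgroups of $G$; they say nothing about normalizers, centralizers, or arbitrary finite subgroups of $\bar G$. You write that you expect no ``further geometric input'' to be needed beyond matching the theorem's conclusions to the group-theoretic statements, but that further input is exactly where the work lies.

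Concretely, the paper obtains this control in two steps that your proposal skips. First, it introduces the auxiliary class $\classcprimezero$ and proves (Proposition \ref{reformulation class cprime stable under sc quotients}) that each intermediate SC-quotient $(\hat G_i,\hat X_i)$ stays in that class; this is where the geometry enters, via the identification of apex stabilizers with $D_p$ (Lemma \ref{apex stab in SC quot}), the classification of elliptic subgroups of a single SC-quotient as lifting or being $C_p$ or $D_p$ (Lemma \ref{elliptic maps to Dp}), and the uniqueness of the apex fixed by an element of order $>2$ (Lemma \ref{norm stab v is in stab v}). Note also that $D_p$ (not only cyclic groups) occurs among the non-lifting finite subgroups, and this case is essential for showing pairs stay of $p$-minimal type. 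Second, properties of the limit $\bar G$ are deduced by lifting each finite set of relations to some finite stage $\hat G_m$ (Remark \ref{from sc to pp}) and invoking membership of $(\hat G_m,\hat X_m)$ in $\classcprimezero$; this is how Conditions \ref{def class C N without 2-torsion} and \ref{def class C cent of trans is cyclic}, the equality $O(G)=O(\bar G)$, and Lemma \ref{pp pmin and paff maps to pmin and paff} are actually proved. Finally, the strong stability of $Q$ that you invoke at the outset is itself a by-product of this same induction (Corollary \ref{cor strongly stable}), so it cannot be cited independently of the intermediate-quotient analysis. Your proof would become correct if you replaced the appeal to (ii) and (iii) by this two-step scheme.
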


We will need the following easy observation.

\begin{remark}
\label{class cprime stable under un of inf chains}
    Notice that if a group $G$ is the union of an infinite chain of subgroups $H_{m}$, $m \in \mathbb{N}$, all of them belonging to the class $\classc$, then so does $G$. Moreover, if for all $m,m' \in \mathbb{N}$ we have $O(H_{m})=O(H_{m'})$, then $O(G)=O(H_{0})$.
\end{remark}

Propositions~\ref{prop:ext in cprime} and~\ref{prop: classes stable under pp quotients} are the tools used in the proof our main result, i.e. the following stronger version of Theorem \ref{maintheorem}.

\begin{theorem}[The Embedding Theorem]
\label{embedding theorem}
    There exists a prime number $p'$ with the following property. If $G\in\classc$ for a prime number $p \geq p'$ and if $H$ has no 2-torsion, then $G$ and $H$ embed into a non-split sharply 2-transitive group $\mathbf{G}$ of characteristic $p$ with $O(\mathbf{G})=O(G) \cup O(H)$ and of cardinality equal to the maximum of $\aleph_{0}$ and the cardinalities of $G$ and $H$.
\end{theorem}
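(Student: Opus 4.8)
The plan is to prove the Embedding Theorem by an iterative construction that alternates between the two operations furnished by Propositions~\ref{thm: classes stable under hnn and free products}, \ref{prop:ext in cprime} and~\ref{prop: classes stable under pp quotients}, building an increasing chain of groups in $\classc$ whose union is the desired sharply $2$-transitive group $\mathbf{G}$. First I would use Proposition~\ref{thm: classes stable under hnn and free products}(2) to enlarge the starting group $G$ by the free product with $H$: since $H$ has no $2$-torsion, $G*H$ is almost sharply $2$-transitive of characteristic $p$, and by Proposition~\ref{prop:ext in cprime} the pair $(G*H, X)$ lies in $\classcprime$ for the Bass--Serre tree $X$. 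Applying Proposition~\ref{prop: classes stable under pp quotients} then returns a quotient back in $\classc$, in which $G$ and $H$ (being elliptic, hence embedded isomorphically) survive as subgroups. This handles the embedding of $H$; the bulk of the work is to turn the resulting group into one that is \emph{sharply} $2$-transitive.

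By Remark~\ref{shapr 2trans in terms of affine type}, a group in $\classc$ is sharply $2$-transitive exactly when \emph{every} pair of distinct involutions is of $p$-affine type. The engine for achieving this is Proposition~\ref{thm: classes stable under hnn and free products}(1): given a pair of $p$-affine type and a pair of $p$-minimal type, the HNN-extension conjugating one to the other stays almost sharply $2$-transitive, after which Propositions~\ref{prop:ext in cprime} and~\ref{prop: classes stable under pp quotients} bring us back into $\classc$ while preserving orders ($O(G)=O(\bar G)$) and the affine/minimal dichotomy. So the core of the argument is a bookkeeping scheme: enumerate (transfinitely, or countably in the countable case) all $G$-conjugacy classes of pairs in $I(G)^{(2)}$ with $rs$ of order $p$, and at each stage pick a representative that is currently of $p$-minimal type and fuse it, via the HNN step, onto the (fixed, by Condition~\ref{def class C G trans on affine trans}, single) conjugacy class of $p$-affine type. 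Because Proposition~\ref{thm: classes stable under hnn and free products} requires that $(r',s')$ be of $p$-minimal type, I must verify that every ``new'' pair of order $p$ created by the HNN-extension is again either affine or minimal — which is precisely what Condition~\ref{def class C pairs of minimal or affine type} guarantees for groups in the class — so the hypotheses remain available at every step.

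The delicate point is arranging that, in the direct limit, \emph{no} pair of $p$-minimal type remains: one must set up the enumeration so that each minimal-type class eventually gets converted to affine type and, crucially, that pairs newly introduced at later stages are also eventually treated. This is a standard back-and-forth/dovetailing device, but care is needed because each HNN-extension and each periodic quotient can create fresh pairs of $p$-minimal type. I would organize the induction so that at stage $m$ I deal with the $m$-th pair on a running list that is updated after each extension, guaranteeing that every pair appearing at any finite stage is addressed at some later stage; passing to the union $\mathbf{G}=\bigcup_m H_m$, Remark~\ref{class cprime stable under un of inf chains} ensures $\mathbf{G}\in\classc$ and controls $O(\mathbf{G})$, while the fusion scheme forces every pair of $I(\mathbf{G})^{(2)}$ of order $p$ to be of $p$-affine type, so $\mathbf{G}$ is sharply $2$-transitive of characteristic $p$.

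Finally I would address non-splitness and the cardinality count. Non-splitness follows from Theorem~\ref{if splits trans abelian}: it suffices to exhibit two non-commuting translations in $\mathbf{G}$, and these are produced already by the free-product step (translations coming from $G$ and from the $H$-side factor do not commute, by Remark~\ref{no elem cent dist invol in c cprime} and the structure of $\Cen(rs)$), and they persist into $\mathbf{G}$. The equality $O(\mathbf{G})=O(G)\cup O(H)$ comes from tracking orders through each operation: the free product gives $O(G*H)=O(G)\cup O(H)$ (orders in a free product are orders in the factors), each HNN-extension in the chain introduces no new \emph{finite} orders beyond those of $G$ and its new translations of order $p$ (already in $O(G)$), and Proposition~\ref{prop: classes stable under pp quotients} preserves $O$ exactly; the final union is handled by Remark~\ref{class cprime stable under un of inf chains}. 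The cardinality is the maximum of $\aleph_0$ and $|G|,|H|$, since each of the countably many steps adds at most $\max(\aleph_0,|G|,|H|)$ elements. The main obstacle I anticipate is the combinatorial design of the dovetailing enumeration: ensuring it is exhaustive in the limit despite each step spawning new minimal-type pairs, and verifying at each step that the hypotheses of Proposition~\ref{thm: classes stable under hnn and free products}(1) (a genuinely $p$-minimal target pair, distinct from the affine class) actually hold for the representative chosen.
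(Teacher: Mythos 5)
Your overall architecture matches the paper's proof: free product with $H$, pass to $\classcprime$ via the Bass--Serre tree, return to $\classc$ by the partial periodic quotient, and then iteratively conjugate $p$-minimal pairs onto a fixed $p$-affine pair by HNN-extensions followed by quotients, organizing the enumeration so that pairs created at later stages are eventually treated (the paper does this as $\omega$ rounds of transfinite sweeps over snapshots $I(G_k^0)^{(2)}$, which is the same dovetailing you describe), and finally invoking Theorem~\ref{if splits trans abelian} for non-splitness and tracking $O(\cdot)$ and cardinality through each step.

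The one genuine gap is in your non-splitness argument. You propose to exhibit two non-commuting translations in the free product $G*H$ and assert that ``they persist into $\mathbf{G}$.'' But between $G*H$ and $\mathbf{G}$ sits the partial periodic quotient $\overline{G*H}$, and there is no reason a priori that two specific non-commuting translations of $G*H$ have non-commuting images there: the quotient kills a large normal subgroup of $N=\langle Tr(G*H)\rangle$, and commutation relations can be created. (Also, since $H$ has no $2$-torsion it contains no involutions, so ``translations coming from the $H$-side factor'' needs to be read as products such as $i\cdot h^{-1}ih$.) The paper avoids this by proving non-commutativity directly \emph{in the quotient}: consequence~(\ref{pp quotient infinitely many elem in bar N}) of Theorem~\ref{res : SC - partial periodic quotient} shows that the image $\bar N$ of the translation subgroup is infinite, while Condition~\ref{def class C cent of trans is cyclic} for the quotient says every translation has centralizer cyclic of order $p$; if all translations commuted, $\bar N$ would be contained in a single such centralizer, contradicting its infinitude (this is Lemma~\ref{non-commuting translations}). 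Once non-commuting translations exist in $\overline{G*H}=G_0^0$, they do persist into $\mathbf{G}$, because $G_0^0$ embeds into every later group of the chain. You should replace the ``produced in the free product and persist'' step by an argument of this kind carried out after the quotient.
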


%Are these two conditions necessary? 

%\begin{displaymath}\frac 1{\sqrt {p'}} \left(\frac {4\pi}{\delta_1}\sqrt{\frac {2\sinh \rho_0\sinh (26 \delta_1)}{L_S}}\right) < 1, \end{displaymath}

%and\begin{displaymath}2\delta_{1} \sqrt{\frac { L_{S}\sinh \rho_0}{p'\sinh (26 \delta_1)}} \leq 1.\end{displaymath}

%Notice that the second of these two conditions has been chosen in such a way that Condition \ref{enu: SC - induction lemma - rinj} of Proposition \ref{res: SC - induction lemma} is reduced to $r_{\text{inj}}(N,X) \geq 1$, while the first ensures that the asymptotic translation length of elements is reduced when taking their images under SC-quotients. Fix now a prime number $p \geq p'$.

\begin{proof}
Let $n_1$ be the constant involved in Definition \ref{definition class Cprime}, Condition \ref{def class Cprime satisfies the ind lemma}. Let $p'$ be the smallest odd prime number such that $p'\geq n_1$, and let $p \geq p'$ be a prime number. Let $G\in\classc$ and $H$ be a group without 2-torsion. By Proposition~\ref{prop:ext in cprime} applied to $G^*=G*H$ we see that $(G^*,X)\in\classcprime$ where $X$ is the Bass-Serre tree associated with the free product. Clearly, we have $O(\gstar)=O(G) \cup O(H)$.

Let $G_{0}^{0}=\overline{G^*}\in\classc$ be the group obtained from $(G^*,X)$ by applying Proposition~\ref{prop: classes stable under pp quotients}. In particular, we have $O(\gstar)=O(G_{0}^{0})$. Since $G$ and $H$ are elliptic for their action on the Bass-Serre tree of the free product, $G_{0}^{0}$ contains isomorphic images of these groups. Furthermore, we will prove that $G_{0}^{0}$ contains non-commuting translations (see Lemma \ref{non-commuting translations}).

Now, fix a pair of distinct involutions $(i,j) \in G_{0}^{0}$ of $p$-affine type. Enumerate all pairs of $I( G_{0}^{0})^{(2)}$ as $(i_{0}^{\lambda},j_{0}^{\lambda}) : \lambda < \gamma$. We will construct inductively $G_{0}^{\alpha}$ for $\alpha < \gamma$. For successor ordinals, suppose $G_{0}^{\alpha}$ has already been constructed, that this group is in class $\classc$, that $G_{0}^{0}$ embeds into $G_{0}^{\alpha}$, and that we have $O(G)=O(G_{0}^{\alpha})$. Consider the pair $(i_{0}^{\alpha},j_{0}^{\alpha})$. If this pair is of $p$-affine type, we set $G_{0}^{\alpha+1}=G_{0}^{\alpha}$. If the pair is of $p$-minimal type, we set \[(G_{0}^{\alpha})^{*}= \langle G_{0}^{\alpha} \, , \, t \ \vert \ t^{-1}jt=j_{0}^{\alpha} \, , \, t^{-1}it=i_{0}^{\alpha} \rangle.\]Note that this is a well-defined HNN-extension since $ D_{i,j}$ and $D_{i_{0}^{\alpha},j_{0}^{\alpha}}$ are isomorphic to $D_p$. Clearly, in $(G_{0}^{\alpha})^{*}$ the pair $(i_{0}^{\alpha},j_{0}^{\alpha})$ is of $p$-affine type.

By Proposition \ref{prop:ext in cprime}, the pair $((G_{0}^{\alpha})^{*},X)$ is in $\classcprime$, with $X$ the Bass-Serre tree of the HNN-extension. Moreover, clearly $G_{0}^{0}$ embeds into $(G_{0}^{\alpha})^{*}$, and $O((G_{0}^{\alpha})^{*})=O(G_{0}^{\alpha})$.

With this in mind, we set $G_{0}^{\alpha+1}=\overline{(G_{0}^{\alpha})^{*}}$ as in Proposition \ref{prop: classes stable under pp quotients}. This group belongs to the class $\classc$, and is such that $O(G_{0}^{\alpha+1})=O(G_{0}^{\alpha})$. Furthermore $G_{0}^{0}$ embeds in $G_{0}^{\alpha+1}$ (and therefore, so do $G$ and $H$) since it has an isomorphic image in the HNN-extension that is elliptic for its action on the Bass-Serre tree $X$. Moreover, every pair $(i_{0}^{\lambda},j_{0}^{\lambda})$ of distinct involutions is of $p$-affine type for $\lambda \leq \alpha$.

For a limit ordinal $\alpha$, we set $G_{0}^{\alpha}=\cup_{\beta< \alpha} G_{0}^{\beta}$. By Remark \ref{class cprime stable under un of inf chains} this group is in class $\classc$, $G_{0}^{0}$ embeds into it, and is such that $O(G_{0}^{\alpha})=O(G_{0}^{0})$. Moreover, every pair $(i_{0}^{\lambda},j_{0}^{\lambda})$ of distinct involutions is of $p$-affine type for $\lambda < \alpha$.

Put $G_{1}^{0}=\cup_{\beta< \gamma} G_{0}^{\beta}$. With Remark \ref{class cprime stable under un of inf chains} in mind, we can deduce similar properties to the ones in the previous paragraph, and in this case, every pair $(r,s) \in I(G_{0}^{0})^{(2)}$ is of $p$-affine type. The cardinality of this group is clearly the maximum of $\aleph_{0}$ and the cardinalities of $G$ and $H$.

Now, build $G^{0}_{k+1}$ from $G^{0}_{k}$ in a completely analogous way to the construction of $G^{0}_{1}$ from $G^{0}_{0}$, just replacing $G^{\alpha}_{0}$ by $G^{\alpha}_{k}$, $G^{\alpha+1}_{0}$ by $G^{\alpha+1}_{k}$, enumerating the pairs of $I(G_{k}^{0})$ as $(i_{k}^{\lambda},j_{k}^{\lambda}) : \lambda < \gamma$, and conjugating pairs of $p$-minimal type to the pair $(i,j)$ of $p$-affine type. Once again, assume that the group $G_{k}^{\alpha}$ is in class $\classc$, with $G_{k}^{0}$ (and thus also $G_{0}^{0}$) embedding into it, and with $O(G_{k}^{\alpha})=O(G_{0}^{0})$. If we need to take an HNN-extension at that step, Proposition \ref{prop:ext in cprime} ensures that the pair $((G_{k}^{\alpha})^{*},X)$, with $X$ the Bass-Serre tree of the extension, belongs to $\classcprime$. Moreover, Proposition \ref{prop: classes stable under pp quotients} gives that $G_{k}^{\alpha+1}$ belongs to $\classc$ with $O(G_{k}^{\alpha+1})=O(G_{0}^{0})$, and with a completely analogous argument to the case $k=0$ we get that $G_{0}^{0}$ embeds into this group. Finally, Remark \ref{class cprime stable under un of inf chains} guarantees that $G_{k+1}^{0}$ is also in class $\classc$ and that $O(G_{k+1}^{0})=O(G)$. By construction, $G_{0}^{0}$ embeds into $G_{k+1}^{0}$ and all pairs $(r,s) \in I(G_{k}^{0})^{(2)}$ are of $p$-affine type in this group, since they are conjugate to one such pair $(i,j)$. Again, the cardinality of $G_{k+1}^{0}$ is the maximum of $\aleph_{0}$ and the cardinalities of $G$ and $H$.

Set now $\mathbf{G}= \cup_{k< \omega} G_{k}^{0}$. Since every group $G_{k}^{0}$ is in class $\classc$ and is such that $O(G_{k}^{0})=O(G_{0}^{0})$, then by Remark \ref{class cprime stable under un of inf chains} $\mathbf{G}$ is itself in class $\classc$ with $O(\mathbf{G})=O(G_{0}^{0})$. Moreover, every pair $(r,s) \in I(\mathbf{G})^{(2)}$ is of $p$-affine type: if this pair of involutions appears for the first time in some $G_{m}^{0}$, then our construction ensures that this pair is of $p$-affine type in $G_{m+1}^{0}$. In particular, $\mathbf{G}$ is sharply 2-transitive of characteristic $p$. By construction, $G_{0}^{0}$ (and thus, $G$ and $H$) embeds into $\mathbf{G}$, so it contains non-commuting translations and thus by Theorem \ref{if splits trans abelian} it is not split. Finally, the cardinality of this group is the maximum of $\aleph_{0}$ and the cardinalities of $G$ and $H$. Thus, $\mathbf{G}$ is a group as claimed by Theorem \ref{embedding theorem}.
\end{proof}

Theorem \ref{embedding theorem} also implies the following result. 

\begin{corollary}[The Existence Theorem]
\label{main theorem reformulation}
    There exists a prime number $p'$ such that for every prime number $p \geq p'$ there are $2^{\aleph_{0}}$ many pairwise non-isomorphic countable non-split sharply 2-transitive groups of characteristic $p$.
\end{corollary}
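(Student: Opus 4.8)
The plan is to fix the finite group $G=\agl\in\classc$ (legitimate by Remark~\ref{shapr 2trans in terms of affine type}) and to feed the Embedding Theorem a whole family of countable torsion groups without $2$-torsion, using the set of element orders $O(\mathbf{G})$ as an isomorphism invariant to separate the resulting sharply $2$-transitive groups. The guiding idea is that Theorem~\ref{embedding theorem} lets us prescribe $O(\mathbf{G})=O(G)\cup O(H)$, so by controlling the orders appearing in $H$ we control an invariant of $\mathbf{G}$.

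First I would record that $O(\agl)$ is \emph{finite}. Writing an element of $\agl$ as $x\mapsto ax+b$ with $a\in\mathbb{F}_{p}^{*}$, $b\in\mathbb{F}_{p}$, its order is $p$ when $a=1\neq b$, is $1$ when $a=1=b$, and otherwise equals the order of $a$ in the cyclic group $\mathbb{F}_{p}^{*}$; hence
\[
O(\agl)=\{1,p\}\cup\{d:d\mid p-1\}.
\]
Consequently I can fix an infinite set of pairwise distinct odd primes $\{q_{n}:n\in\mathbb{N}\}$ with $q_{n}\notin O(\agl)$ for every $n$, since only finitely many primes must be avoided. For each subset $S\subseteq\mathbb{N}$ I then set $H_{S}=\bigoplus_{n\in S}C_{q_{n}}$. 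Each $H_{S}$ is countable and has no $2$-torsion (all $q_{n}$ are odd), and because the $q_{n}$ are distinct primes one checks immediately that a prime $q$ lies in $O(H_{S})$ if and only if $q=q_{n}$ for some $n\in S$.

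Next I would apply Theorem~\ref{embedding theorem} to the pair $(\agl,H_{S})$, which is permitted as $\agl\in\classc$ and $H_{S}$ has no $2$-torsion. This produces a non-split sharply $2$-transitive group $\mathbf{G}_{S}$ of characteristic $p$ with
\[
O(\mathbf{G}_{S})=O(\agl)\cup O(H_{S})
\]
and of cardinality $\max(\aleph_{0},|\agl|,|H_{S}|)=\aleph_{0}$, so $\mathbf{G}_{S}$ is countable. The crucial feature is that the primes $q_{n}$ were chosen \emph{outside} $O(\agl)$: for every $n$ we therefore have $q_{n}\in O(\mathbf{G}_{S})$ if and only if $q_{n}\in O(H_{S})$ if and only if $n\in S$. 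Hence $S$ is recoverable from $O(\mathbf{G}_{S})$, and the map $S\mapsto O(\mathbf{G}_{S})$ is injective.

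Finally, since the set of element orders is invariant under isomorphism, distinct subsets $S\neq S'$ yield $\mathbf{G}_{S}\not\cong\mathbf{G}_{S'}$. As $\mathbb{N}$ has $2^{\aleph_{0}}$ subsets, this exhibits $2^{\aleph_{0}}$ pairwise non-isomorphic countable non-split sharply $2$-transitive groups of characteristic $p$ (and $2^{\aleph_{0}}$ is also the largest possible number of isomorphism classes of countable groups, so the bound is sharp). I do not anticipate a genuine obstacle: all the substance is carried by the Embedding Theorem, and the only delicate point is the bookkeeping of orders — choosing the separating primes $q_{n}$ to lie outside the finite set $O(\agl)$ and observing that the direct sum $H_{S}$ realises \emph{exactly} the prescribed prime orders, so that the invariant $O$ faithfully encodes $S$.
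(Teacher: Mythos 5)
Your proposal is correct and follows essentially the same route as the paper's own proof: fix $G=\agl\in\classc$, choose an infinite set of primes avoiding the finite set $O(\agl)$ (the paper simply takes all primes greater than $p$), form the direct sums $H_S$ of cyclic groups indexed by subsets $S$, and use the prescribed invariant $O(\mathbf{G}_S)=O(\agl)\cup O(H_S)$ from Theorem~\ref{embedding theorem} to separate the $2^{\aleph_0}$ resulting groups up to isomorphism. The only cosmetic difference is your explicit computation of $O(\agl)$, which the paper replaces by the observation that the largest prime dividing $|\agl|=p(p-1)$ is $p$.
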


\begin{proof}
%An element of finite order in $H'$ must lie in a conjugate of one of the factors, and therefore $O(H')=O(G) \cup O(H)$. Notice that Lemma \ref{classes stable under amalgam} implies that we can take a partial periodic quotient of $H'$ as provided by Theorem \ref{res : SC - partial periodic quotient}. By Lemma \ref{classes stable under pp quotients}, the partial periodic quotient $\bar{H}'$ will belong to class $\classc$, the subgroup $\langle Tr(\bar{H}') \rangle$ will contain no involution, and we will have $O(\bar{H}')=O(H')=O(G) \cup O(H)$. In particular, $\bar{H}'$ satisfies the hypotheses of Theorem \ref{embedding theorem}, so it embeds into a countable group $\mathbf{H}$ that belongs to class $\classc$ with all pairs of distinct involutions of $p$-affine type, and with $O(\mathbf{H})=O(\bar{H}')=O(G) \cup O(H)$.

Keeping the same notation as the statement of Theorem \ref{embedding theorem}, put $G= \agl \in \classc$. We will show how to choose groups $H$ without 2-torsion appropriately so as to obtain $2^{\aleph_{0}}$ many non-isomorphic such groups. Denote by $P$ the set of prime numbers greater than $p$. There are $\aleph_{0}$ many such primes, and therefore there are $2^{\aleph_{0}}$ many subsets of $P$. For every $S \subseteq P$, write $H_{S}$ for the direct sum of the cyclic groups of order $s \in S$. Notice that the prime numbers in $O(H_{S})$ are precisely the ones in $S$. Moreover, by the choice of $P$, none of these primes will be in $O(\agl)$, since $\lvert \agl \rvert=p(p-1)$, and thus the greatest prime number in $O(\agl)$ is $p$.

Consider now the group $\mathbf{H}_{S}$ provided by Theorem \ref{embedding theorem} with $H=H_{S}$. Such a group is a countable non-split sharply 2-transitive group of characteristic $p$ with $O(\mathbf{H}_{S})=O(\agl) \cup O(H_{S})$. In particular, we have $O(\mathbf{H}_{S}) \cap P=S$. Therefore, if $S' \neq S''$ are subsets of $P$, $\mathbf{H}_{S'} \ncong \mathbf{H}_{S''}$ (since $O(\mathbf{H}_{S'}) \neq O(\mathbf{H}_{S''})$). Since there are $2^{\aleph_{0}}$ many distinct subsets of $P$, this yields the desired conclusion.
\end{proof}

The remainder of this article is devoted to recalling the necessary background and developing the tools to complete the proof of Theorem \ref{embedding theorem}, that is, to proving Propositions \ref{prop:ext in cprime} and \ref{prop: classes stable under pp quotients}.

\section{Hyperbolic metric spaces and group actions}
\label{section hyp spaces and grp act}

In this section we recall some concepts from metric geometry. We start by introducing \emph{length metric spaces} and \emph{quasi-geodesics} in Subsection~\ref{length metric spaces} (for a more comprehensive overview, see for example \cite[Chapter 2]{burago_burago_ivanov}). In Subsection~\ref{subsubsection hyperb metric spaces} we recall some of the basic ideas of hyperbolic spaces in the sense of Gromov, and in Subsection~\ref{subsubsection isom of hyp spaces} we study the actions by isometries of groups on these spaces. We follow closely the exposition by Coulon in \cite[Sections 2 and 3]{coulon_1}.

From now on, for a metric space $X$ and two points $x$ and $x'$ of $X$, we will write $d_{X}(x,x')$ (or just $d(x,x')$ if the metric space is clear from the context) for the distance between $x$ and $x'$, unless different notation is explicitly defined.

\subsection{Length metric spaces and quasi-geodesics}
\label{length metric spaces}

For a topological space $X$, a \emph{path} in $X$ is a continuous map $\gamma : I \longrightarrow X$, where $I$ is an interval (possibly consisting of a single point) in $\mathbb{R}$.

\begin{definition}
\label{def length fun by metric}
    Let $(X, d)$ be a metric space and let $\gamma: [a,b] \longrightarrow X$ be a path. A \emph{partition} of $[a,b]$ is a finite collection $Y=\{ y_{0}, \dots y_{N} \}\subset [a,b]$ such that $a=y_{0}\leq y_{1} \leq \dots \leq y_{N}=b$. The \emph{sum of a partition $Y$} is defined as \[ \Sigma(Y)= \sum_{i=1}^{N} d(\gamma(y_{i-1}),\gamma(y_{i})). \]The length of the path $\gamma$ with respect to the metric $d$ is denoted by $L_{d}(\gamma)$ and defined as \[ L_{d}(\gamma)= \sup \{ \Sigma(Y) \, : \, Y \text{is a partition of} \ [a,b] \}. \]
\end{definition}

\begin{definition}
\label{length induces metric}
    Let $(X,d)$ be a metric space connected by rectifiable paths (i.e. between any two points there is a path of finite length). Then the \emph{intrinsic metric} $d_{\ell}$ on $X$ is defined  by putting 
    \[ d_{\ell}(x,y)= \inf \{ L_d(\gamma) \, : \, \gamma:[a,b] \longrightarrow X, \, \gamma(a)=x, \, \gamma(b)=y \}. \]
\label{definition length space}A metric space $(X,d)$ is said to be a \emph{length metric space} (or just a \emph{length space}) if  $d_{\ell}=d$. If moreover $(X,d)$ has the property that there always exists a path $\gamma$ that achieves the infimum in the definition of $d_{\ell}$, then $(X,d)$ is called a \emph{geodesic metric space}, and the path $\gamma$ is called a \emph{geodesic}.
\end{definition}

We now recall the concepts of a \emph{quasi-isometric embedding} and of a \emph{quasi-geodesic}.

\begin{definition}
\label{def quasi-isometry}
    Let $\ell,L \geq 0$ and $k \geq 1$, and let $X_{1}$ and $X_{2}$ be two metric spaces. A map $f: X_{1} \longrightarrow X_{2}$ is a \emph{$(k,\ell)$-quasi-isometric embedding} if for every two points $x,y \in X_{1}$ we have:\[ \frac{1}{k}d_{X_{2}}(f(x),f(y))-\ell \leq d_{X_{1}}(x,y) \leq kd_{X_{2}}(f(x),f(y))+\ell .\] We say that $f$ is an \emph{$L$-local $(k,\ell)$-quasi-isometric embedding} if its restriction to any subset of diameter at most $L$ is a $(k,\ell)$-quasi-isometric embedding. 
\end{definition}

\begin{definition}
\label{def quasi-geodesic}
    Let $X$ be a metric space, $I \subseteq \mathbb{R}$ an interval. A path $\gamma: I \longrightarrow X$ that is a $(k,\ell)$-quasi-isometric embedding is called a \emph{$(k,\ell)$-quasi-geodesic}. If it is an $L$-local $(k,\ell)$-quasi-isometric embedding it is called an \emph{$L$-local $(k,\ell)$-quasi-geodesic}. 
\end{definition}

\begin{remark}Note that for $k=1$ and $\ell=0$, $f$ is a genuine isometry in Definition~\ref{def quasi-isometry}, and $\gamma$ is a genuine geodesic in Definition~\ref{def quasi-geodesic}.
\end{remark}

\begin{remark}\label{existence of qg in length}If $(X,d)$ is a length space, then by definition of the infimum, for every pair of points $x,y \in X$ and every $\ell>0$, there exists a path $\gamma : [a,b]\rightarrow X$ such that $d(x,y)\leq L_d(\gamma)\leq d(x,y)+\ell$. After reparametrizing $\gamma$ (by arc length) if necessary, we can assume that $a=0$ and $b=L_d(\gamma)$ and thus that $L_d(\gamma)=\vert b-a\vert$. Hence, $\gamma$ is a $(1,\ell)$-quasi-geodesic (and thus it is a $(k,\ell)$-quasi-geodesic for every $k\geq 1$).
\end{remark}

\subsection{Hyperbolic metric spaces}
\label{subsubsection hyperb metric spaces}

 For a metric space $X$, a point $x$ in $X$ and a subset $Y$ of $X$, we will write \[ d_{X}(x,Y)= \inf_{y \in Y} \{ d_{X}(x,y) \} \] for the distance between $x$ and $Y$. 
 Also, for a subset $Y$ of $X$ we will write $\text{diam}(Y)$ for the diameter of $Y$, that is, \[ \text{diam}(Y)=\sup_{y,y' \in Y}(d_{X}(y,y')). \] We will put $B_{X}(x,r)$ (or simply $B(x,r)$ if the metric space $X$ is clear by context) for the ball of radius $r$ centered at $x$.

\begin{definition}
\label{gromov prod def}
    Let $x$, $y$ and $z$ be three points of $X$. The \emph{Gromov product of $x$ and $y$ with respect to $z$} is \[\langle x,y \rangle_{z}=\frac{1}{2} \{ d(x,z)+d(y,z)-d(x,y) \}.\]
\label{delta hyp def}A metric space $X$ is said to be \emph{$\delta$-hyperbolic} (in the sense of Gromov) if for every four points $x,y,z,t \in X$ we have \[\langle x,z \rangle_{t} \geq \min \{ \langle x,y \rangle_{t} , \langle y,z \rangle_{t} \} - \delta.\] We will say that $X$ is \emph{hyperbolic} if it is $\delta$-hyperbolic for some $\delta \geq 0$.
\end{definition}

\begin{remark}
  For simplicity of notation, from now on we assume that the hyperbolicity constant $\delta$ is positive. However, notice that this is not a serious restrain: if $X$ is $\delta$-hyperbolic for $\delta \geq 0$, then it is $\delta '$-hyperbolic for every $\delta ' \geq \delta$.
   
    If $X$ is a $\delta$-hyperbolic geodesic metric space, then every geodesic triangle in $X$ is \emph{$2\delta$-thin}, that is, for every geodesic triangle in $X$, every side of the triangle is contained in the closed $2\delta$-neighbourhood of the union of the other two sides \cite[Lemma 11.28]{drutu_kapovich}.
\end{remark}

\subsubsection{Quasi-geodesic polygons} 

We  now collect some useful properties of \emph{quasi-geodesic polygons} in length spaces, similar to the $\delta$-thin condition for triangles in geodesic spaces. We follow closely the exposition by Druţu and Kapovich in \cite[Chapter 11]{drutu_kapovich}.

\begin{definition}
\label{quasi-geodesic triangles}
    Let $X$ be a metric space, $x,y,z \in X$. A \emph{$(k,\ell)$-quasi-geodesic triangle with vertices $x$, $y$ and $z$} is the union of the image of three paths $\gamma_{x,y}$, $\gamma_{y,z}$ and $\gamma_{z,x}$ such that the initial point of $\gamma_{t,t'}$ is $t$, the endpoint is $t'$ and each path is a $(k,\ell)$-quasi-geodesic.
\end{definition}

From now on, we will not distinguish between paths (that are actually maps from an interval of the real line to $X$) from their images in $X$, and we will assume these paths are parametrized by arc length. Notice that if $X$ is a length space, we get immediately from Remark~\ref{existence of qg in length} that for any $x,y,z \in X$ and any $\ell >0$, there exists a $(1,\ell)$-quasi-geodesic triangle with vertices $x$, $y$ and $z$. We will denote one such triangle by $[x,y,z]_{\ell}$ and its sides by $[x,y]_{\ell}$, $[y,z]_{\ell}$ and $[x,z]_{\ell}$ respectively.

In a similar way, we can define a \emph{$(k,\ell)$-quasi-geodesic $n$-gon} for $n \geq 4$. As in geodesic spaces, one such $n$-gon is said to be \emph{$\alpha$-thin} if every side is contained in the closed $\alpha$-neighbourhood of the other $n-1$ sides.

Lemma~\ref{almost uniqueness of quasi-geod} below shows how, even though quasi-geodesics between two points may not be unique in a hyperbolic length space, they cannot go too far away from one another. This result is well-known in the context of geodesic spaces. Since we couldn't find a proof in the literature for length spaces, we include a proof of this result in Appendix~\ref{appendix}.

\begin{lemma}
\label{almost uniqueness of quasi-geod}
    Let $X$ be a $\delta$-hyperbolic metric space, let $x,y \in X$. Let $\gamma$ be a $(1,\ell)$-quasi-geodesic connecting $x$ to $y$ for $\ell \geq 0$. Let $p \in X$ be a point such that
    \begin{equation*}
    %\label{almost uniqueness of qg assumption}
        d(x,p)+d(p,y) \leq d(x,y) + 2\delta+3\ell.   
    \end{equation*}
    Then, $d(p, \gamma) \leq 4\delta + 8\ell$.
\end{lemma}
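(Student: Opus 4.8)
The plan is to reduce everything to the four-point inequality defining hyperbolicity, combined with an intermediate value argument. First I would restate the hypothesis in terms of Gromov products: since $d(x,p)+d(p,y)\le d(x,y)+2\delta+3\ell$, we get immediately $\langle x,y\rangle_p=\tfrac12(d(x,p)+d(y,p)-d(x,y))\le \delta+\tfrac32\ell$. Fix a parametrization $\gamma\colon[a,b]\to X$ with $\gamma(a)=x$ and $\gamma(b)=y$; the $(1,\ell)$-quasi-isometric embedding property gives, for all $s,t$, the two-sided estimate $\lvert s-t\rvert-\ell\le d(\gamma(s),\gamma(t))\le \lvert s-t\rvert+\ell$. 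The goal is then to produce a single point $q=\gamma(s_0)$ on $\gamma$ with $d(p,q)\le 4\delta+8\ell$, since then $d(p,\gamma)\le d(p,q)$.

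For any point $q=\gamma(s)$, the four-point condition applied to $x,q,y$ based at $p$ yields $\langle x,y\rangle_p\ge \min\{\langle x,q\rangle_p,\langle q,y\rangle_p\}-\delta$, so at least one of $\langle x,q\rangle_p,\langle q,y\rangle_p$ is at most $\langle x,y\rangle_p+\delta\le 2\delta+\tfrac32\ell$. The difficulty — and the heart of the proof — is that to bound $d(p,q)$ I need both products to be small \emph{at the same point} $q$, not merely one of them. To arrange this I would consider the function $\psi(s)=\langle x,\gamma(s)\rangle_p-\langle \gamma(s),y\rangle_p=\tfrac12\big(d(x,p)-d(y,p)+d(\gamma(s),y)-d(x,\gamma(s))\big)$, which is continuous in $s$. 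Evaluating at the endpoints gives $\psi(a)=\langle y,p\rangle_x\ge 0$ and $\psi(b)=-\langle x,p\rangle_y\le 0$, so by the intermediate value theorem there is some $s_0\in[a,b]$ with $\psi(s_0)=0$. Writing $q=\gamma(s_0)$ and letting $m$ denote the common value $\langle x,q\rangle_p=\langle q,y\rangle_p$, the four-point estimate above forces $m\le 2\delta+\tfrac32\ell$.

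It then remains to convert the smallness of $m$ into a bound on $d(p,q)$. Adding the defining formulas for $\langle x,q\rangle_p$ and $\langle q,y\rangle_p$ gives the exact identity $d(p,q)=\langle x,q\rangle_p+\langle q,y\rangle_p-\tfrac12(d(x,p)+d(y,p))+\tfrac12(d(x,q)+d(q,y))$. Here I would bound the two correction terms crudely: the triangle inequality gives $d(x,p)+d(y,p)\ge d(x,y)$, while the quasi-geodesic estimates give $d(x,q)+d(q,y)\le (b-a)+2\ell\le d(x,y)+3\ell$ (using $b-a\le d(x,y)+\ell$). Substituting, the two occurrences of $\tfrac12 d(x,y)$ cancel and I obtain $d(p,q)\le 2m+\tfrac32\ell\le 4\delta+\tfrac92\ell\le 4\delta+8\ell$, as desired. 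The only genuinely delicate point is the simultaneity of the two Gromov products, which the continuity plus intermediate value step resolves; everything else is the four-point inequality together with bookkeeping of the quasi-geodesic constants. In particular the argument never invokes actual geodesics, so it transfers verbatim to the length-space setting.
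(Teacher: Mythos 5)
Your proof is correct, and it in fact yields the slightly sharper bound $4\delta+\tfrac92\ell$. The overall strategy --- locate a point $q$ on $\gamma$ at which the two Gromov products $\langle x,q\rangle_p$ and $\langle q,y\rangle_p$ are simultaneously controlled, then convert that into a bound on $d(p,q)$ --- is the same as in the paper's appendix, but your execution differs in two ways. First, the paper equalizes by choosing $z\in\gamma$ with $d(p,y)\le d(z,y)\le d(p,y)+\ell$ and then estimating the difference $\langle x,z\rangle_p-\langle y,z\rangle_p$, which introduces an extra error term; you instead apply the intermediate value theorem to the continuous function $\psi(s)=\langle x,\gamma(s)\rangle_p-\langle \gamma(s),y\rangle_p$ (whose endpoint values $\langle y,p\rangle_x\ge 0$ and $-\langle x,p\rangle_y\le 0$ you compute correctly) to make the two products exactly equal, so the four-point inequality gives $m\le\langle x,y\rangle_p+\delta\le 2\delta+\tfrac32\ell$ with no imbalance correction. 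Second, you extract $d(p,q)$ from the exact identity $d(p,q)=\langle x,q\rangle_p+\langle q,y\rangle_p-\tfrac12\left(d(x,p)+d(y,p)\right)+\tfrac12\left(d(x,q)+d(q,y)\right)$ together with the triangle inequality and the quasi-geodesic estimate $d(x,q)+d(q,y)\le d(x,y)+3\ell$, rather than from the paper's chain of inequalities; this is what lets you dispense with the paper's preliminary normalization $d(x,p)\le d(y,p)$ and its two boundary cases $d(x,p)\ge d(x,y)$ and $d(y,p)\ge d(x,y)$. Both arguments use only the four-point condition, continuity of the path, and the $(1,\ell)$-quasi-isometric-embedding inequalities, so both are valid in the length-space setting where no geodesics are available; yours is somewhat cleaner and gives a marginally better constant.
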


\begin{lemma}
\label{thin qg triangles}
    Let $X$ be a $\delta$-hyperbolic metric space and let $[x,y,p]_{\ell}$ be a $(1,\ell)$-quasi-geodesic triangle. Then, this triangle is $(4\delta + 8\ell)$-thin.
\end{lemma}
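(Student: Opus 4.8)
The plan is to reduce the thinness statement to a single application of Lemma~\ref{almost uniqueness of quasi-geod}, one point and one side at a time. By relabelling the vertices it suffices to show that every point of the side $[x,y]_{\ell}$ lies within $4\delta+8\ell$ of $[x,p]_{\ell}\cup[p,y]_{\ell}$. Fix $w\in[x,y]_{\ell}$. The key observation is that Lemma~\ref{almost uniqueness of quasi-geod}, applied to the $(1,\ell)$-quasi-geodesic $[x,p]_{\ell}$ (which connects $x$ to $p$) together with the auxiliary point $w$, yields $d(w,[x,p]_{\ell})\leq 4\delta+8\ell$ as soon as the near-collinearity hypothesis $d(x,w)+d(w,p)\leq d(x,p)+2\delta+3\ell$ holds; and symmetrically with $[p,y]_{\ell}$ in place of $[x,p]_{\ell}$. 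So the whole proof reduces to showing that, for every $w$ on $[x,y]_{\ell}$, at least one of these two near-collinearity inequalities is satisfied.

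First I would restate both target inequalities in terms of Gromov products with basepoint $w$: the inequality $d(x,w)+d(w,p)\leq d(x,p)+2\delta+3\ell$ is exactly $\langle x,p\rangle_{w}\leq\delta+\tfrac{3}{2}\ell$, and likewise the other one is $\langle p,y\rangle_{w}\leq\delta+\tfrac{3}{2}\ell$. Next I would bound $\langle x,y\rangle_{w}$ from above using that $w$ lies on a side of the quasi-geodesic triangle: by Remark~\ref{existence of qg in length} the side $[x,y]_{\ell}$ can be taken to have length at most $d(x,y)+\ell$ and to be parametrized by arc length, so splitting it at $w$ gives $d(x,w)+d(w,y)\leq d(x,y)+\ell$, whence $\langle x,y\rangle_{w}\leq\tfrac{\ell}{2}$. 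Finally, the four-point $\delta$-hyperbolicity condition (Definition~\ref{delta hyp def}) applied to $x,p,y$ with basepoint $w$ gives $\langle x,y\rangle_{w}\geq\min\{\langle x,p\rangle_{w},\langle p,y\rangle_{w}\}-\delta$, so that $\min\{\langle x,p\rangle_{w},\langle p,y\rangle_{w}\}\leq\delta+\tfrac{\ell}{2}\leq\delta+\tfrac{3}{2}\ell$. Thus one of the two Gromov products is small enough, the corresponding near-collinearity inequality holds, and Lemma~\ref{almost uniqueness of quasi-geod} finishes the case.

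I do not expect a serious obstacle here; the content is entirely in the bookkeeping with Gromov products, and the delicate point to get right is the length estimate on a side. Unlike the classical geodesic-triangle argument, I cannot assume the sides are genuine geodesics, so the bound $d(x,w)+d(w,y)\leq d(x,y)+\ell$ must be derived purely from the length control on $(1,\ell)$-quasi-geodesics in a length space provided by Remark~\ref{existence of qg in length}; this is exactly the place where working in length spaces rather than geodesic spaces matters. The only thing left to check is that the constants align, namely that the bound $\delta+\tfrac{\ell}{2}$ coming from hyperbolicity is comfortably below the threshold $\delta+\tfrac{3}{2}\ell$ demanded by Lemma~\ref{almost uniqueness of quasi-geod}, which holds since $\ell\geq 0$, and that the conclusion of that lemma is precisely the desired constant $4\delta+8\ell$. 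Handling both cases symmetrically and then cycling through the three sides completes the proof.
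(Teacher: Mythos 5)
Your argument is correct and is essentially the paper's proof: both reduce the claim to a single application of Lemma~\ref{almost uniqueness of quasi-geod} via one instance of the four-point hyperbolicity condition on $\{x,y,p,w\}$, merely written with Gromov products based at $w$ instead of at the opposite vertex $p$. The only point to flag is that your estimate $d(x,w)+d(w,y)\le d(x,y)+\ell$ relies on the side being the specific short path from Remark~\ref{existence of qg in length}; for an arbitrary $(1,\ell)$-quasi-geodesic side one only gets $d(x,w)+d(w,y)\le d(x,y)+3\ell$ (which is what the paper uses), but this still gives $\min\{\langle x,p\rangle_{w},\langle p,y\rangle_{w}\}\le \delta+\tfrac{3}{2}\ell$, exactly the threshold required by Lemma~\ref{almost uniqueness of quasi-geod}, so your proof goes through unchanged for arbitrary quasi-geodesic triangles.
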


\begin{proof}
    Let $z \in [x,y]_{\ell}$, we need to prove that it is $(4\delta + 8\ell)$-close to $[y,p]_{\ell} \cup [x,p]_{\ell}$. By $\delta$-hyperbolicity, we have \[ \langle x,y \rangle_{p} \geq \min (\{ \langle x,z \rangle_{p} \, , \, \langle y,z \rangle_{p} \}) - \delta. \] Without loss of generality, we may consider that \[ \langle x,y \rangle_{p} \geq  \langle x,z \rangle_{p} - \delta,\]i.e.\ that  \[ d(y,p)-d(x,y) \geq  d(z,p)-d(x,z) - 2\delta.\]
We will show that $z$ is $(4\delta + 8l)$-close to $[y,p]_{\ell}$. By combining the last inequality with the fact that, since $z \in [x,y]_{\ell}$ we have \[ d(x,y)-d(x,z) \geq d(y,z) - 3\ell, \] we obtain \[ d(y,z)+d(z,p) \leq d(y,p)+3\ell+2\delta. \] By Lemma~\ref{almost uniqueness of quasi-geod}, we obtain that $d(z,[y,p]_{\ell}) \leq 4\delta + 8\ell$.
\end{proof}

Notice that the bound shown may not be the optimal one. We immediately obtain the following corollary.

\begin{corollary}
\label{thin qg polygons}
    Let $X$ be a $\delta$-hyperbolic length space. Then every $(1,\ell)$-quasi-geodesic $n$-gon is $(4(n-2)\delta + 8(n-2)\ell)$-thin.
\end{corollary}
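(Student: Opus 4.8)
The plan is to prove the statement by induction on $n$, taking the triangle case (Lemma~\ref{thin qg triangles}) as the base case $n=3$, where the claimed bound $4(n-2)\delta + 8(n-2)\ell$ specializes exactly to $4\delta + 8\ell$. For the inductive step I would cut the $n$-gon into a triangle and an $(n-1)$-gon by introducing a single diagonal, and then \emph{chase} an arbitrary point on a side across this diagonal, paying the thinness cost of each of the two pieces in turn.

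More precisely, let $P$ be a $(1,\ell)$-quasi-geodesic $n$-gon with vertices $x_1, \dots, x_n$ and sides $\sigma_i = [x_i, x_{i+1}]_\ell$ (indices mod $n$). Since $X$ is a length space, Remark~\ref{existence of qg in length} provides a $(1,\ell)$-quasi-geodesic $d$ from $x_{n-1}$ to $x_1$. This diagonal splits $P$ into the $(1,\ell)$-quasi-geodesic triangle $T$ with vertices $x_{n-1}, x_n, x_1$ and sides $\sigma_{n-1}, \sigma_n, d$, together with the $(1,\ell)$-quasi-geodesic $(n-1)$-gon $P'$ with vertices $x_1, \dots, x_{n-1}$ and sides $\sigma_1, \dots, \sigma_{n-2}, d$. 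By Lemma~\ref{thin qg triangles}, $T$ is $(4\delta + 8\ell)$-thin, and by the induction hypothesis $P'$ is $(4(n-3)\delta + 8(n-3)\ell)$-thin.

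Now I would fix a point $z$ on some side $\sigma_i$ of $P$ and distinguish whether $\sigma_i$ lies in $T$ or in $P'$. Suppose first that $\sigma_i \in \{\sigma_{n-1}, \sigma_n\}$, so it is a side of $T$. Thinness of $T$ yields a point $w$ on one of the other two sides of $T$ with $d(z,w) \leq 4\delta + 8\ell$. If $w$ lies on a genuine side of $P$ (that is, on $\sigma_n$ or $\sigma_{n-1}$), we are done since $4\delta + 8\ell \leq 4(n-2)\delta + 8(n-2)\ell$. Otherwise $w \in d$, so $w$ lies on a side of $P'$; applying the thinness of $P'$ to $w$ produces a point $w'$ on one of $\sigma_1, \dots, \sigma_{n-2}$ with $d(w, w') \leq 4(n-3)\delta + 8(n-3)\ell$, and the triangle inequality gives $d(z, w') \leq (4\delta + 8\ell) + (4(n-3)\delta + 8(n-3)\ell) = 4(n-2)\delta + 8(n-2)\ell$. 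The symmetric case where $\sigma_i$ is a side of $P'$ is handled identically, chasing a point on $d$ back through $T$ and summing the same two constants.

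Once the decomposition is fixed the argument is essentially bookkeeping; the only point requiring care is that the estimate coming from the thinness of each piece may land on the diagonal $d$ rather than on a genuine side of $P$, which is precisely why one must chase through the second piece and why the two constants accumulate additively (yielding the factor $n-2$). I expect no real obstacle beyond checking that the diagonal always exists (guaranteed by the length-space hypothesis via Remark~\ref{existence of qg in length}) and that the two sub-polygons are again honest $(1,\ell)$-quasi-geodesic polygons, so that the base lemma and the induction hypothesis apply verbatim.
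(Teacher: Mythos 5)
Your proof is correct and takes essentially the same route as the paper: the paper's (one-line) argument triangulates the $n$-gon by the $n-3$ quasi-geodesic diagonals emanating from a fixed vertex and accumulates the triangle constant $4\delta+8\ell$ of Lemma~\ref{thin qg triangles} over the resulting $n-2$ triangles, and your induction, once unrolled, produces exactly that fan triangulation with the same additive bookkeeping. The only caveat, which the paper's own sketch shares, is that Remark~\ref{existence of qg in length} furnishes the diagonal as a $(1,\ell)$-quasi-geodesic only for $\ell>0$, which is harmless in every application in the paper.
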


Corollary~\ref{thin qg polygons} and the following lemma are proved in a completely analogous way to the case of geodesic $n$-gons in geodesic spaces (see for example \cite[Section 11.1]{drutu_kapovich}), that is, by triangulating the $n$-gon by $n-3$ $(1,\ell)$-quasi-geodesic diagonals joining a given vertex to the non-adjacent vertices of the $n$-gon.

\begin{lemma}
\label{distances within quadrangles}
    Let $X$ be a $\delta$-hyperbolic length space, $[p,q,r,s]_{\ell}$ a $(1,\ell)$-quasi-geodesic quadrangle with $d(p,q)=d(r,s)$. Then, for any pair of points $x \in [p,q]_{\ell}$ and $y \in [r,s]_{\ell}$ with $d(p,x)=d(s,y)$ we have that \[ d(x,y) \leq 5 \max (\{ d(s,p) \, , \, d(q,r) \}) + 16 \delta + 38\ell. \]
\end{lemma}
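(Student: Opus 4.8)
The plan is to exploit the thinness of the quadrangle together with the fact that its two long sides $[p,q]_{\ell}$ and $[r,s]_{\ell}$ have equal length, so that $x$ and $y$ are ``corresponding'' points. Write $L=d(p,q)=d(r,s)$, $m=\max\{d(s,p),d(q,r)\}$, $a=d(p,x)=d(s,y)$, and put $C=8\delta+16\ell$, which by Corollary \ref{thin qg polygons} (the case $n=4$) is a thinness constant for $[p,q,r,s]_{\ell}$. The two elementary facts I will use repeatedly are that, on a $(1,\ell)$-quasi-geodesic side parametrized by arc length, the parameter of a point differs from its distance to an endpoint by at most $\ell$ (Definition \ref{def quasi-isometry}), and that the distance between two points of the side never exceeds the length of the subpath between them. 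The argument then splits according to the position of $a$ in $[0,L]$.

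For $a\le m+\ell+C$ the point $x$ is close to $p$ and $y$ is close to $s$, so the triangle inequality along the short side $[s,p]_{\ell}$, namely $d(x,y)\le d(x,p)+d(p,s)+d(s,y)$, already gives a bound of the required form. The symmetric range $a\ge L-m-\ell-C$ is treated identically using the short side $[q,r]_{\ell}$, after checking that $d(x,q)$ and $d(r,y)$ are both at most $L-a+2\ell$ (using distance $\le$ subpath length and the length bound $L+\ell$ on the sides), so that $d(x,y)\le d(x,q)+d(q,r)+d(r,y)$ is again controlled by $m$, $\delta$ and $\ell$.

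In the middle range $m+\ell+C<a<L-m-\ell-C$, thinness yields a point $w$ on one of the other three sides with $d(x,w)\le C$. Comparing $d(p,x)=a$ with the length of $[s,p]_{\ell}$ rules out $w\in[s,p]_{\ell}$ (otherwise $a\le d(p,w)+C\le m+\ell+C$), and comparing $d(x,q)\ge L-a$ with the length of $[q,r]_{\ell}$ rules out $w\in[q,r]_{\ell}$ (otherwise $L-a\le m+\ell+C$); hence $w\in[r,s]_{\ell}$. It then remains only to compare this nearby point $w$ with the prescribed corresponding point $y$.

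The main obstacle is precisely this last comparison: thinness produces \emph{some} point $w\in[r,s]_{\ell}$ close to $x$, but gives no control on its position along the side relative to $y$, which is pinned down by $d(s,y)=a$. I would recover the matching by estimating $d(s,w)$ — it differs from $d(s,x)$, and hence from $a$, by at most $m+C$ — and then invoking the arc-length tracking estimate on the common quasi-geodesic $[r,s]_{\ell}$ to conclude that $d(w,y)\le m+C+2\ell$; combined with $d(x,w)\le C$ this closes the middle case. Apart from this step the proof is routine, and all three cases yield bounds comfortably below $5m+16\delta+38\ell$. The only persistent nuisance is the bookkeeping of the $O(\ell)$ discrepancies between arc length and metric distance, which accumulate into the final coefficient of $\ell$.
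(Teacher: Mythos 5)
Your proof is correct and follows essentially the same route as the paper's: split according to whether $x$ sits near an end of its side (handled by the triangle inequality through the short sides) or in the middle (handled by the thinness of the quadrangle from Corollary~\ref{thin qg polygons}, ruling out the two short sides, and then matching the projected point $w\in[r,s]_{\ell}$ with $y$ using that both lie at comparable distance from $s$ on a common $(1,\ell)$-quasi-geodesic). The only cosmetic differences are that you case-split on $d(p,x)$ rather than on $\min\{d(x,s),d(x,r)\}$ and obtain the bound on $d(w,y)$ directly from the arc-length parametrization instead of by contradiction, and your constants land comfortably under the stated $5\max\{d(s,p),d(q,r)\}+16\delta+38\ell$.
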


\subsubsection{Quasi-convex and strongly quasi-convex subsets} 

We now define the concepts of a \emph{quasi-convex} subset of a metric space and of a \emph{strongly quasi-convex} subset of a hyperbolic length space. For a more comprehensive overview, see \cite[Subsection 2.3]{coulon_1}.

For a subset $Y$ of a metric space $X$, we  write $Y^{\alpha}$ (respectively, $Y^{+\alpha}$) for the open (respectively, closed) $\alpha$-neighbourhood of $Y$.

\begin{definition}
\label{quasi convex def}
    Let $X$ be a metric space, $\alpha \geq 0$. A subset $Y$ of $X$ is \emph{$\alpha$-quasi-convex} if for every pair of points $y,y' \in Y$ and every point $x \in X$ we have that \[ d(x,Y) \leq \langle y,y' \rangle_{x} + \alpha. \]
\end{definition}

\begin{remark}
    If $X$ is a geodesic space, the usual definition of an $\alpha$-quasi-convex subset $Y$ is that every geodesic joining two points of $Y$ is contained in $Y^{+\alpha}$. If $X$ is a $\delta$-hyperbolic geodesic space, a subset is $\alpha$-quasi-convex in the usual sense if and only if it is $(\alpha + 4\delta)$-quasi-convex in the sense of Definition~\ref{quasi convex def}. 
\end{remark}

\begin{definition}
\label{strong quasi convex def}
    Let $X$ be a $\delta$-hyperbolic length space, $\alpha \geq 0$. Let $Y$ be a subset of $X$ connected by rectifiable paths. Denote by $d_{Y}$ the length metric on $Y$ induced by the restriction of the length structure on $X$ to $Y$ (see \cite[Section 2]{burago_burago_ivanov} for the precise definition of a length structure). The subset $Y$ is said to be \emph{strongly quasi-convex} if it is $2\delta$-quasi-convex and for every pair of points $y,y' \in Y$ we have that \[ d_{Y}(y,y') \leq d_{X}(y,y') + 8 \delta . \]
\end{definition}

\begin{definition}
\label{definition hull}
    Let $X$ be a $\delta$-hyperbolic length space, $Y$ a subset of $X$. The \emph{hull of $Y$}, denoted by $\text{hull}(Y)$, is the union of all $(1,\delta)$-quasi-geodesics joining two points of $Y$.
\end{definition}

\begin{lemma}\cite[Lemma 2.15]{coulon_2}
\label{hull is quasi-convex}
    Let $X$ be a $\delta$-hyperbolic length space, $Y$ a subset of $X$. The hull of $Y$ is $6\delta$-quasi-convex.
\end{lemma}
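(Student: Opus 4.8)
The plan is to verify Definition~\ref{quasi convex def} directly: fix $a,b\in H:=\mathrm{hull}(Y)$ and an arbitrary $x\in X$, and produce a point of $H$ within $\langle a,b\rangle_x+6\delta$ of $x$. By Definition~\ref{definition hull}, $a$ lies on a $(1,\delta)$-quasi-geodesic $\gamma_1$ joining two points $y_1,y_1'\in Y$, and $b$ lies on a $(1,\delta)$-quasi-geodesic $\gamma_2$ joining two points $y_2,y_2'\in Y$. Both $\gamma_1,\gamma_2$, all of their sub-quasi-geodesics, and every $(1,\delta)$-quasi-geodesic between two points of $Y$ (which exists by Remark~\ref{existence of qg in length}) are contained in $H$.

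The first ingredient is that a single $(1,\delta)$-quasi-geodesic $\sigma$ joining points $u,v$ is quasi-convex with a small constant $c_0\delta$, i.e.\ $d(x,\sigma)\le \langle u,v\rangle_x+c_0\delta$ for every $x$. I would prove this not through the thin-triangle bound of Lemma~\ref{thin qg triangles} (which is too lossy for the constant we want) but by a direct four-point estimate: parametrise $\sigma$ by arc length and take the point $m=\sigma(t_0)$ with $t_0\approx\langle x,v\rangle_u$. The four-point inequality of Definition~\ref{delta hyp def} together with the defining quasi-geodesic inequalities bounds $\langle x,m\rangle_u$ from below by $\langle x,v\rangle_u$ up to a multiple of $\delta$, whereupon $d(x,m)=d(x,u)+d(u,m)-2\langle x,m\rangle_u$ collapses to $\langle u,v\rangle_x$ plus a multiple of $\delta$. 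Since $m\in\sigma\subseteq H$, this already gives $d(x,H)\le\langle u,v\rangle_x+c_0\delta$ whenever $u,v$ lie on a common quasi-geodesic inside $H$ -- in particular for the pairs $(a,y_1')$, $(y_1',y_2)$ and $(y_2,b)$, which lie respectively on $\gamma_1$, on a chosen connecting quasi-geodesic between the points $y_1',y_2\in Y$, and on $\gamma_2$.

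The second ingredient chains these three controlled pairs together using the four-point inequality at basepoint $x$. Applying it with intermediate point $y_1'$ and then with intermediate point $y_2$ gives $\langle a,b\rangle_x\ge\min\{\langle a,y_1'\rangle_x,\langle y_1',y_2\rangle_x,\langle y_2,b\rangle_x\}-2\delta$. Since each of the three Gromov products $G$ occurring in this minimum satisfies $d(x,H)\le G+c_0\delta$ by the first ingredient, evaluating at the one attaining the minimum yields $d(x,H)\le\langle a,b\rangle_x+(c_0+2)\delta$. Hence $H$ is $(c_0+2)\delta$-quasi-convex, and it remains only to check that the sharp value of the single-quasi-geodesic constant is $c_0=4$, giving exactly $6\delta$.

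The main obstacle is precisely this constant bookkeeping: the combinatorial skeleton (three hull segments joined by two four-point inequalities) is robust, but pinning down $c_0\le 4$ requires carrying the $\delta$-error of the quasi-geodesic inequalities through the four-point estimate carefully, rather than invoking the cruder polygon-thinness bounds of Corollary~\ref{thin qg polygons}, which would inflate the constant well beyond $6\delta$. A secondary point to treat cleanly is the degenerate case where the projection parameter $t_0$ falls outside the arc-length interval of $\sigma$, so that $x$ effectively projects to an endpoint; there the estimate only improves.
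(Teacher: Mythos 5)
Your argument is correct. Note that the paper does not actually prove this lemma: it is imported verbatim from \cite[Lemma 2.15]{coulon_2}, so there is no in-paper proof to compare with; your proof is the natural one and, structurally, the one Coulon's argument follows. Both ingredients check out. For the first, parametrise the $(1,\delta)$-quasi-geodesic $\sigma\colon [0,T]\to X$ from $u$ to $v$ by arc length (the paper's standing convention) and set $m=\sigma(t_0)$ with $t_0=\langle x,v\rangle_u$; then $t_0-\delta\le d(u,m)\le t_0$ and $d(m,v)\le T-t_0\le d(u,v)+\delta-t_0$, so $\langle v,m\rangle_u\ge t_0-\delta$, the four-point inequality at basepoint $u$ gives $\langle x,m\rangle_u\ge t_0-2\delta$, and hence $d(x,m)=d(x,u)+d(u,m)-2\langle x,m\rangle_u\le d(x,u)-\langle x,v\rangle_u+4\delta=\langle u,v\rangle_x+4\delta$, confirming $c_0=4$. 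Your ``degenerate case'' is in fact vacuous: $\langle x,v\rangle_u\le d(u,v)\le T$ for every $x$, so the projection parameter never leaves $[0,T]$. The second ingredient is exactly as you state: two four-point inequalities at basepoint $x$ along the chain $a$, $y_1'$, $y_2$, $b$ cost $2\delta$, each consecutive pair lies on a $(1,\delta)$-quasi-geodesic contained in the hull (a subpath of $\gamma_1$, a quasi-geodesic joining the two points $y_1',y_2$ of $Y$, a subpath of $\gamma_2$), and $4\delta+2\delta=6\delta$.
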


\subsubsection{The boundary at infinity} 

Let $X$ be a $\delta$-hyperbolic metric space, and  $x \in X$. A sequence $(y_{n})_{n \in \mathbb{N}}$ is said to \emph{converge to infinity} if $\langle y_{m},y_{m'} \rangle_{x}$ tends to infinity as $m$ and $m'$ tend to infinity. Note that by hyperbolicity this does not depend on the choice of $x$. The set $\mathcal{S}$ of sequences converging to infinity is endowed with a relation $R \subseteq \mathcal{S}^{2}$ defined as follows: two sequences $(y_{n})_{n \in \mathbb{N}}$ and $(z_{n})_{n \in \mathbb{N}}$ in $\mathcal{S}$ are related if \[ \lim_{n \to \infty} \langle y_{n},z_{n} \rangle_{x}= + \infty. \] Again hyperbolicity gives that this is in fact an equivalence relation.

\begin{definition}
    Let $X$ be a hyperbolic metric space, $\mathcal{S}$ the set of sequences of points of $X$ converging to infinity. The \emph{boundary at infinity of $X$}, denoted as $\partial X$, is the quotient of $\mathcal{S}$ by the equivalence relation $R$.
\end{definition}

This definition does not depend on the choice of the base point $x$ since $X$ is hyperbolic. We will write $[(x_{m})_{m \in \mathbb{N}}]$ for the equivalence class of the sequence $(x_{m})_{m \in \mathbb{N}}$. For a subset $Y$ of $X$, we will write $\partial Y$ for the set of elements of $\partial X$ that are limits of sequences of points of $Y$.

\begin{comment}
\begin{remark}
    A metric space $X$ is said to be \emph{proper} if every closed and bounded set is compact. If $X$ is a $\delta$-hyperbolic proper geodesic space, then we can define an equivalence relation between geodesic rays (starting at a point $x \in X$) as follows: let $\gamma_{1}, \gamma_{2}: [0, \infty ) \longrightarrow X$ be two such rays, they are equivalent if there is a constant $K$ such that \[ d_{X}(\gamma_{1}(t), \gamma_{2}(t)) \leq K , \ \forall t \geq 0. \] In this setting, sometimes the boundary at infinity of $X$ is defined as the set of equivalence classes of this relation between geodesic rays (and this definition does not depend on the choice of the base point $x$). Let us denote momentarily by $\partial 'X$ this boundary.

    Both definitions are essentially the same, in the following sense: both define a natural topology on the union of $X$ and its boundary (see for example \cite[Sections 11.11 and 11.12]{drutu_kapovich}). Moreover, let $h: X \cup \partial ' X \longrightarrow X \cup \partial X$ be the map defined as the identity on $X$, and such that, for a geodesic ray $\gamma$ starting at $x$, $h$ sends the equivalence class of $\gamma$ to the element $[(\gamma(n))_{n \in \mathbb{N}}]$ of $\partial X$. This map is a homeomorphism \cite[Theorem 11.104]{drutu_kapovich}.

    Therefore, from now on we will not distinguish between both definitions of the boundary at infinity of a proper geodesic hyperbolic space, and we will denote both by $\partial X$.
\end{remark}
\end{comment}

By construction, if a group $G$ acts by isometries on a hyperbolic space $X$ this action extends in a natural way to an action on the boundary $\partial X$: for $ \eta = [(x_{m})_{m \in \mathbb{N}}]\in\partial X$, put $g \cdot \eta = [(g \cdot x_{m})_{m \in \mathbb{N}}] $.

\subsection{Group actions on hyperbolic spaces}
\label{subsubsection isom of hyp spaces}

 For a group $G$ acting by isometries on a hyperbolic metric space $X$, we denote by $\partial G$ the set of accumulation points of $G \cdot x$ in $\partial X$ (note again that this definition does not depend on the choice of $x$). Then either one (and hence every) orbit of $G$ is bounded or $\partial G$ is non-empty (see for example \cite[Proposition 3.4]{coulon_1}).
 
Recall that if $g$ is an isometry of $X$, then $g$ is of one of the following types: 
    \begin{itemize}
        \item elliptic, i.e. $\partial \langle g \rangle$ is empty,
        \item parabolic, i.e.  $\partial \langle g \rangle$ has exactly one element, and
        \item loxodromic, i.e.  $\partial \langle g \rangle$ has exactly two elements.
    \end{itemize}

For a loxodromic isometry $g$, the two elements of $\partial \langle g \rangle$ are \[ g^{- \infty} = [(g^{-m} \cdot x)_{m \in \mathbb{N}}] \, \, \text{and} \, \, g^{+ \infty} = [(g^{m} \cdot x)_{m \in \mathbb{N}}]. \] 
 They are the only points of $\partial X$ fixed by $g$ (see \cite[Chapitre 10, Proposition 6.6]{coornaert_delzant_papadopoulos}). Conversely we have the following well-known lemma.
 
\begin{lemma} {\cite[Proposition 3.6]{coulon_1}}
\label{two points in boundary implies loxodromic}
    Let $G$ be a group acting by isometries on a hyperbolic metric space $X$. If $\partial G$ has at least two points, then $G$ contains a loxodromic isometry.%\Kcomment{is this standard? Can we skip the reference?}
\end{lemma}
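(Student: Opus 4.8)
The plan is to produce a single element $g \in G$ whose cyclic group $\langle g\rangle$ has two distinct accumulation points in $\partial X$; by the trichotomy recalled just above (elliptic isometries satisfy $\partial\langle g\rangle=\emptyset$, parabolic ones have exactly one point, and loxodromic ones exactly two), this immediately forces $g$ to be loxodromic. So it suffices to exhibit $g$ together with two \emph{distinct} points $\mu,\nu\in\partial X$ such that $g^{n}\cdot x\to\mu$ and $g^{-n}\cdot x\to\nu$ as $n\to\infty$, since then $\{\mu,\nu\}\subseteq\partial\langle g\rangle$.

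Fix a basepoint $x\in X$. Since $\xi\neq\eta$ both lie in $\partial G$, the set of accumulation points of $G\cdot x$, I would choose $a,b\in G$ with $a\cdot x$ very close to $\xi$ and $b\cdot x$ very close to $\eta$; more precisely, for any prescribed thresholds one can arrange $d(x,ax)$ and $d(x,bx)$ to be as large as desired while keeping the Gromov product $\langle ax,bx\rangle_{x}$ bounded by a constant $M$ depending only on $\xi$, $\eta$ and $x$ (this boundedness is exactly the assertion that $\xi\neq\eta$). The element to use is $g=ab^{-1}$. It satisfies $g\cdot(bx)=ax$ and $g^{-1}\cdot(ax)=bx$, so $g$ carries a region near $\eta$ toward $\xi$ while $g^{-1}$ carries a region near $\xi$ toward $\eta$.

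The heart of the argument is then a ping-pong / North--South estimate: using $\delta$-hyperbolicity together with the thin-triangle bounds established above (Lemma~\ref{thin qg triangles} and Corollary~\ref{thin qg polygons}), I would show that once $d(x,ax)$ and $d(x,bx)$ are large compared to $\delta$ and $M$, there exist disjoint neighbourhoods $U\ni\xi$ and $V\ni\eta$ in $X\cup\partial X$ with $g(X\setminus V)\subseteq U$ and $g^{-1}(X\setminus U)\subseteq V$. A routine induction on this contraction property then yields that $(g^{n}x)_{n}$ converges to some $\mu\in\overline{U}$ and $(g^{-n}x)_{n}$ to some $\nu\in\overline{V}$, with $\mu\neq\nu$ because $U\cap V=\emptyset$; hence $g$ is loxodromic. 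I expect the main obstacle to be precisely this estimate, since the hypothesis only controls where $a$ and $b$ send the single basepoint $x$, and not the full dynamics of $a$ or $b$ individually. The key point is that forming $g=ab^{-1}$ converts the two one-sided data $ax\to\xi$ and $bx\to\eta$ into genuine two-sided source--sink dynamics for $g$, and the bounded cross-product $\langle ax,bx\rangle_{x}\leq M$ (which is where $\xi\neq\eta$ enters) is exactly what prevents the forward and backward orbits of $g$ from collapsing onto the same boundary point.
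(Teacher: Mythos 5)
First, a remark on the comparison itself: the paper offers no proof of Lemma~\ref{two points in boundary implies loxodromic} — it is quoted directly from \cite[Proposition 3.6]{coulon_1} — so your attempt has to stand on its own, and unfortunately it does not. The gap sits exactly where you flagged it, and forming $g=ab^{-1}$ does not repair it. The only dynamical information you extract from the hypotheses is that $g$ sends the single point $bx$ to $ax$ and $g^{-1}$ sends $ax$ to $bx$; this is consistent with $g$ being an elliptic involution that \emph{swaps} the two regions rather than contracting into one of them. Concretely, let $X=\mathbb{R}$ (a $0$-hyperbolic length space), let $G$ be the infinite dihedral group $\{y\mapsto \pm y+k : k\in\mathbb{Z}\}$, so $\partial G=\{+\infty,-\infty\}$, and take $x=0$, $\xi=+\infty$, $\eta=-\infty$, $a:y\mapsto y+2n$ and $b:y\mapsto -y-2n$ (the reflection about $-n$). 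Then $ax=2n$, $bx=-2n$, $d(x,ax)=d(x,bx)=2n$ is as large as you wish and $\langle ax,bx\rangle_{x}=0$, so every hypothesis you impose on $a$ and $b$ holds; yet $g=ab^{-1}=ab$ is the map $y\mapsto -y$, an elliptic involution fixing the basepoint. It does satisfy $g(bx)=ax$ and $g^{-1}(ax)=bx$, but the claimed inclusion $g(X\setminus V)\subseteq U$ fails already at the point $ax\notin V$, since $g(ax)=bx\in V$. (The group $G$ here of course does contain loxodromic elements, namely the translations, so the lemma is not contradicted — only your construction is.)

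The conceptual problem is that the hypothesis controls only the \emph{forward} images $ax,bx$ of one basepoint and says nothing about $a^{-1}x$ or $b^{-1}x$; equivalently, for your candidate $g$ and any basepoint $y$ you have no upper bound on the Gromov product $\langle gy,g^{-1}y\rangle_{y}$, and it is precisely the inequality $d(y,gy)>2\langle gy,g^{-1}y\rangle_{y}+C\delta$ (the standard loxodromicity criterion, cf.\ Lemma~\ref{asympt and trans length relation}) that one must verify. A correct argument therefore needs an additional idea: for instance, a case analysis in which one first checks whether $a$ (or $b$) already satisfies the criterion — i.e.\ whether $\langle ax,a^{-1}x\rangle_{x}$ is small compared to $d(x,ax)$, in which case $a$ itself is loxodromic — and, in the remaining case, uses hyperbolicity and the bound $\langle ax,bx\rangle_{x}\leq M$ to deduce control of $\langle a^{-1}x,bx\rangle_{x}$ and certify that a suitable product of $a^{\pm1}$ and $b^{\pm1}$ satisfies the criterion. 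Without some such step distinguishing the elliptic/reflection-type behaviour illustrated above, the ping-pong estimate you describe cannot be established.
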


Next we introduce two concepts of translation lengths that can be used, among other things, to give a characterization of loxodromic isometries.

\begin{definition}
\label{translation length definition}
    Let $X$ be a hyperbolic metric space, $g$ an isometry of $X$. The \emph{translation length} of $g$, denoted by $[g]_{X}$ (or simply $[g]$ if the metric space $X$ is clear from the context) is \[ [g]_{X}= \inf \{ d(x, g \cdot x) \, : \, x \in X \}. \] The \emph{asymptotic translation length} of $g$, denoted by $[g]^{\infty}_{X}$ (or simply $[g]^{\infty}$) is \[ [g]^{\infty}_{X}= \lim_{n \to + \infty} \frac{1}{n}d(x, g^{n} \cdot x).\]
\end{definition}

Once again, notice that the definition of the asymptotic translation length does not depend on the choice of $x$. These two concepts are related as follows:

\begin{lemma} {\cite[Chapitre 10, Propositions 6.3 and 6.4]{coornaert_delzant_papadopoulos}}
\label{asympt and trans length relation}
    Let $X$ be a $\delta$-hyperbolic metric space, $g$ an isometry of $X$. Then, the quantities $[g]$ and $[g]^{\infty}$ satisfy \[ [g]^{\infty} \leq [g] \leq [g]^{\infty} + 32 \delta, \]
 and $g$ is loxodromic if and only if $[g]^{\infty} > 0$.
\end{lemma}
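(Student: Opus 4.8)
The plan is to establish the two inequalities separately and then read off the loxodromic dichotomy. Fix a basepoint $x\in X$ and write $a_n=d(x,g^n\cdot x)$. Since $g$ is an isometry, $a_{m+n}=d(x,g^{m+n}\cdot x)\le d(x,g^m\cdot x)+d(g^m\cdot x,g^{m+n}\cdot x)=a_m+a_n$, so $(a_n)_{n\ge 1}$ is subadditive; by Fekete's lemma $\lim_n a_n/n$ exists and equals $\inf_{n\ge 1}a_n/n$, so $[g]^\infty$ is well defined and $a_n\ge n[g]^\infty$ for every $n$. Independence of the basepoint is immediate, since replacing $x$ by $y$ alters $a_n$ by at most $2d(x,y)$, which vanishes after division by $n$.

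The lower bound $[g]^\infty\le[g]$ is the easy half: for any $y\in X$, iterating the triangle inequality along the orbit gives $d(y,g^n\cdot y)\le n\,d(y,g\cdot y)$, hence $\tfrac1n d(y,g^n\cdot y)\le d(y,g\cdot y)$; letting $n\to\infty$ yields $[g]^\infty\le d(y,g\cdot y)$, and taking the infimum over $y$ gives $[g]^\infty\le[g]$.

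The substance is the upper bound $[g]\le[g]^\infty+32\delta$, and this is where I expect the real work. The basepoint itself is of no use here—if $x$ lies far from the axis of $g$ then $d(x,g\cdot x)$ can be arbitrarily larger than $[g]^\infty$—so the task is to produce a single point whose displacement is controlled. Writing $\ell=[g]^\infty$, the plan is: for large $N$ pick a $(1,\delta)$-quasi-geodesic $\Gamma_N$ from $g^{-N}\cdot x$ to $g^{N}\cdot x$ (these exist by Remark~\ref{existence of qg in length}) and let $c$ be a central point of $\Gamma_N$. Its translate $g\cdot\Gamma_N$ runs from $g^{-N+1}\cdot x$ to $g^{N+1}\cdot x$, so $\Gamma_N$ and $g\cdot\Gamma_N$ are two quasi-geodesics whose respective endpoints lie a bounded distance apart, and which therefore fellow-travel along their long common middle portion. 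Applying $g$ advances a central point along this portion by the average per-step progress $a_{2N}/(2N)$, which tends to $\ell$; using the $\delta$-hyperbolic four-point inequality together with Lemma~\ref{almost uniqueness of quasi-geod} to pass between $\Gamma_N$ and its translate, and carefully tracking the resulting additive constants, one obtains $d(c,g\cdot c)\le\ell+32\delta$ for $N$ large, whence $[g]\le\ell+32\delta$. The main obstacle is exactly the bookkeeping of these fellow-traveling constants in a length space (rather than a geodesic one); this is the computation carried out in \cite[Chapitre 10]{coornaert_delzant_papadopoulos}.

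Finally, the equivalence follows from these bounds together with the classification of isometries recalled above. If $\ell>0$, the estimates $|n-m|\,\ell\le d(g^m\cdot x,g^n\cdot x)=a_{|n-m|}\le|n-m|\,a_1$ show that $n\mapsto g^n\cdot x$ is a quasi-isometric embedding of $\mathbb{Z}$, i.e.\ a bi-infinite quasi-geodesic; in the hyperbolic space $X$ such a line has two distinct endpoints in $\partial X$, so $\partial\langle g\rangle$ has two elements and $g$ is loxodromic (one may also invoke Lemma~\ref{two points in boundary implies loxodromic}). Conversely, a loxodromic $g$ translates along a quasi-geodesic axis joining its two fixed boundary points, and $\ell$ equals the positive translation rate along that axis, so $\ell>0$. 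This yields $g$ loxodromic $\iff[g]^\infty>0$, and as a by-product elliptic and parabolic isometries, being non-loxodromic, satisfy $[g]^\infty=0$.
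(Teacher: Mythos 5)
This lemma is not proved in the paper at all --- it is quoted verbatim from \cite[Chapitre 10, Propositions 6.3 et 6.4]{coornaert_delzant_papadopoulos} --- and your outline is the standard argument from that source: subadditivity and Fekete for the existence of $[g]^{\infty}$, the trivial estimate $[g]^{\infty}\leq [g]$, and the midpoint-of-a-long-quasi-geodesic argument for $[g]\leq [g]^{\infty}+32\delta$, with the constant bookkeeping deferred to the reference exactly where the paper defers it. The one soft spot is the implication ``loxodromic $\Rightarrow [g]^{\infty}>0$'': invoking a quasi-geodesic axis along which $g$ translates at rate $[g]^{\infty}$ presupposes part of what the cited proposition establishes; a self-contained route is to note that $\langle g^{-n}\cdot x, g^{n}\cdot x\rangle_{x}$ is uniformly bounded (the two fixed boundary points being distinct) while $d(x,g^{n}\cdot x)\to\infty$, which forces $[g^{m}]^{\infty}=m[g]^{\infty}>0$ for suitable $m$. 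Since the paper itself only cites the result, this does not put you at any disadvantage relative to it.
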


\subsubsection{The axis of an isometry}
\label{subsubsection axis of isom}

We  now introduce the concepts of the \emph{axis} of an isometry and the \emph{cylinder} of a loxodromic isometry, which will play an important role in the small cancellation results introduced in Section~\ref{subsection geometric small cancellation}.
For a hyperbolic length space $X$ and two distinct points $\zeta$ and $\eta$ of $\partial X$, we say that a path $\gamma: \mathbb{R} \longrightarrow X$ joins $\zeta$ and $\eta$ if \[ \{ [(\gamma(-m))_{m \in \mathbb{N}}], [(\gamma(m))_{m \in \mathbb{N}}] \} = \{ \zeta, \eta \}. \] 

%\textcolor{red}{Add a remark (as in Rémi's paper) saying that the axis is defined for any element (not only for loxodromic elements).}

\begin{definition}
\label{def axis of isometry}
    Let $X$ be a hyperbolic metric space, $g$ be an isometry of $X$. The \emph{axis} of $g$, denoted as $A_{g}$ is the set \[ \{ x \in X \, : \, d(x, g \cdot x) < [g] + 8 \delta \}. \]
\end{definition}

Note that the axis is defined for any isometry of $G$, not necessarily a loxodromic one.

In the following, let $L_{S}\in\mathbb{N}$ be such that  every $L_{S}\delta$-local $(1,10^{5}\delta)$-quasi-geodesic is a (global) $(2,10^{5}\delta)$-quasi-geodesic, and there is a bound for the Hausdorff distance between any two such quasi-geodesics. 

Such a number $L_S$ exists and does not depend on $\delta$ by \cite[Proposition 2.6 and Corollary 2.7]{coulon_1}.

\begin{definition}
\label{def cylinder of isometry}
    Let $X$ be a $\delta$-hyperbolic length space, $g$ a loxodromic isometry of $X$. We denote by $\Gamma_{g}$ the union of all $L_{S} \delta$-local $(1, \delta)$-quasi-geodesics joining $g^{- \infty}$ and $g^{+ \infty}$. The \emph{cylinder} of $g$, denoted as $Y_{g}$ is the open $20\delta$-neighbourhood of $\Gamma_{g}$.
\end{definition}

The next result relates the axis and the cylinder of a loxodromic isometry (see \cite[Lemmas 2.32 and 2.33]{coulon_2} and \cite[Lemma 3.13]{coulon_1}).

\begin{lemma}
\label{axis vs cylinder}
    Let $X$ be a $\delta$-hyperbolic length space, $g$ a loxodromic isometry of $X$, $A_{g}$ the axis of $g$ and $Y_{g}$ the cylinder of $g$. Then $A_{g} \subseteq Y_{g}$, $Y_{g} \subseteq A_{g}^{+52 \delta}$ and $Y_{g}$ is a strongly quasi-convex subset of $X$.
\end{lemma}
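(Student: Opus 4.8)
The plan is to establish the three assertions in turn, with the local-to-global principle defining $L_{S}$ as the main engine. For the inclusion $A_{g}\subseteq Y_{g}$, given $x\in A_{g}$ (so $d(x,g\cdot x)<[g]+8\delta$), I would fix a $(1,\delta)$-quasi-geodesic $\gamma_{0}$ from $x$ to $g\cdot x$, which exists by Remark~\ref{existence of qg in length}, and concatenate its translates $g^{n}\gamma_{0}$ into a bi-infinite $g$-invariant path $\gamma$ running through the orbit $(g^{n}\cdot x)_{n\in\mathbb{Z}}$. Since $g$ is loxodromic we have $d(x,g^{n}\cdot x)\to\infty$, so $\gamma$ joins $g^{-\infty}$ to $g^{+\infty}$. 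The crux is to verify that $\gamma$ is an $L_{S}\delta$-local $(1,\delta)$-quasi-geodesic: inside each translate it already is one, so only the backtracking at the breakpoints $g^{n}\cdot x$ must be controlled. Here the hypothesis $x\in A_{g}$ enters through the lower bound $d(x,g^{k}\cdot x)\geq[g^{k}]^{\infty}=k[g]^{\infty}\geq k([g]-32\delta)$ furnished by Lemma~\ref{asympt and trans length relation}, which against the upper bound $\approx k[g]$ on arc length shows that the accumulated defect over any window is $O(\delta)$. Hence $x\in\Gamma_{g}\subseteq Y_{g}$.

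For the inclusion $Y_{g}\subseteq A_{g}^{+52\delta}$, since $Y_{g}$ is the open $20\delta$-neighbourhood of $\Gamma_{g}$ it suffices to show $\Gamma_{g}\subseteq A_{g}^{+32\delta}$. Take $y\in\Gamma_{g}$ on an $L_{S}\delta$-local $(1,\delta)$-quasi-geodesic $\sigma$ from $g^{-\infty}$ to $g^{+\infty}$. By the defining property of $L_{S}$, the path $\sigma$ is a global $(2,10^{5}\delta)$-quasi-geodesic, and because $g$ fixes both boundary points, $g\sigma$ is another such quasi-geodesic between the same endpoints; the uniform Hausdorff bound in the choice of $L_{S}$ then forces $\sigma$ and $g\sigma$ to fellow-travel. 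This lets me estimate $d(y,g\cdot y)$ in terms of $[g]^{\infty}$, and combining with $[g]\leq[g]^{\infty}+32\delta$ from Lemma~\ref{asympt and trans length relation} produces a point of $\sigma$ within $32\delta$ of $y$ whose displacement is $<[g]+8\delta$, that is, a point of $A_{g}$.

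For the strong quasi-convexity of $Y_{g}$, I first note that $\Gamma_{g}$ is a union of $(1,\delta)$-quasi-geodesics joining the pair $g^{\pm\infty}$, so an argument parallel to Lemma~\ref{hull is quasi-convex} shows it is $6\delta$-quasi-convex; its $20\delta$-neighbourhood $Y_{g}$ is then seen to be $2\delta$-quasi-convex using the thin-polygon estimate of Corollary~\ref{thin qg polygons} and the almost-uniqueness of quasi-geodesics (Lemma~\ref{almost uniqueness of quasi-geod}). For the length-metric condition $d_{Y_{g}}(y,y')\leq d_{X}(y,y')+8\delta$ in Definition~\ref{strong quasi convex def}, the point is that a $(1,\delta)$-quasi-geodesic between two points of $Y_{g}$ fellow-travels the corresponding sub-arc of $\Gamma_{g}$ by stability of quasi-geodesics, and the generous $20\delta$ width of $Y_{g}$ absorbs the fellow-travelling constant; thus one can route a path between $y$ and $y'$ inside $Y_{g}$ of length at most $d_{X}(y,y')+8\delta$.

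The main obstacle is the first step. Verifying that the orbit path is genuinely an $L_{S}\delta$-local $(1,\delta)$-quasi-geodesic, rather than merely $(1,C\delta)$-local for some larger $C$, demands careful bookkeeping of the breakpoint defect against the asymptotic displacement, and it is precisely to make such local-to-global passages succeed with clean constants that $L_{S}$ is selected via \cite[Proposition 2.6 and Corollary 2.7]{coulon_1}. The constant $52\delta=20\delta+32\delta$ and the width $20\delta$ appearing in the last step are then dictated by these estimates.
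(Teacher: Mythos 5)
First, note that the paper does not actually prove this lemma: it is quoted directly from Coulon (\cite[Lemmas 2.32 and 2.33]{coulon_2} and \cite[Lemma 3.13]{coulon_1}), so any comparison is with Coulon's argument rather than with a proof in this text. Measured against that, your first step contains a genuine gap. You concatenate the translates $g^{n}\gamma_{0}$ of a $(1,\delta)$-quasi-geodesic from $x$ to $g\cdot x$ and claim the resulting bi-infinite path is an $L_{S}\delta$-local $(1,\delta)$-quasi-geodesic, so that $x\in\Gamma_{g}$. The bookkeeping you defer cannot close this: at a single breakpoint $g^{n}\cdot x$ the backtracking is measured by the Gromov product $\langle g^{n-1}\cdot x,\,g^{n+1}\cdot x\rangle_{g^{n}\cdot x}$, and the only available lower bound on $d(g^{n-1}\cdot x,g^{n+1}\cdot x)$ is $[g^{2}]^{\infty}=2[g]^{\infty}\geq 2[g]-64\delta$ from Lemma~\ref{asympt and trans length relation}, while $d(g^{n}\cdot x, g^{n\pm1}\cdot x)<[g]+8\delta$. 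This leaves a defect of order $40\delta$ \emph{per breakpoint}, which is already far above the $\delta$ allowed in the definition of $\Gamma_{g}$ (Definition~\ref{def cylinder of isometry} requires $(1,\delta)$, not $(1,C\delta)$); worse, when $[g]$ is small compared to $L_{S}\delta$ a single window of length $L_{S}\delta$ contains on the order of $L_{S}\delta/[g]$ breakpoints, so the accumulated defect is unbounded, not $O(\delta)$. The conclusion $A_{g}\subseteq\Gamma_{g}$ is in fact false in general; the lemma only asserts $A_{g}\subseteq Y_{g}$, i.e.\ that points of the axis lie within $20\delta$ of $\Gamma_{g}$. The correct route (Coulon's) first constructs \emph{one} bi-infinite $L_{S}\delta$-local $(1,\delta)$-quasi-geodesic joining $g^{\pm\infty}$ — itself a nontrivial step, done by discretizing the orbit of a suitable power of $g$ so that consecutive sample points are at distance comparable to $L_{S}\delta$ — and then bounds $d(x,\Gamma_{g})$ for $x\in A_{g}$ by a hyperbolicity estimate comparing the displacement of $x$ with the displacement of its projection to that path. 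Your argument also never establishes that $\Gamma_{g}$ is non-empty, which was supposed to follow from the failed construction.

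The second and third steps are plausible in outline and closer in spirit to Coulon's argument (fellow-travelling of $\sigma$ and $g\sigma$ for the inclusion $Y_{g}\subseteq A_{g}^{+52\delta}$; quasi-convexity of a union of uniform quasi-geodesics with the same endpoints at infinity, plus routing paths through the $20\delta$-collar, for strong quasi-convexity). But they remain sketches: the claim that the $20\delta$-neighbourhood of a $6\delta$-quasi-convex set is $2\delta$-quasi-convex in the sense of Definition~\ref{quasi convex def} is not automatic and needs the specific estimates, and the additive constant $8\delta$ in Definition~\ref{strong quasi convex def} requires an explicit construction of the connecting path inside $Y_{g}$. As written, the proposal should not be accepted as a proof; the honest course here is either to carry out Coulon's construction of the local quasi-geodesic joining $g^{\pm\infty}$ in full, or to cite the lemma as the paper does.
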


\subsubsection{Elementary subgroups}\label{def element subgroup} 

Let $X$ be a hyperbolic metric space, and let $G$ be a group acting on $X$ by isometries. Let $H$ be a subgroup of $G$. We say that $H$ is \emph{elementary} if $\partial H$ has at most two points. Otherwise, we say it is \emph{non-elementary}. We say that an elementary subgroup $H$ is
    \begin{itemize}
        \item \emph{elliptic} if its orbits are bounded (equivalently, if $\partial H$ is empty),
        \item \emph{parabolic} if $\partial H$ has exactly one point, or
        \item \emph{loxodromic} if $\partial H$ has exactly two points.
    \end{itemize}

Notice that any finite subgroup of $G$ is elliptic. We can associate to an elliptic subgroup a set of `almost fixed points' in the sense of the following definition.

\begin{definition}
\label{def charact subset of elliptic}
    Let $F$ be an elliptic subgroup of $G$. The \emph{characteristic set} of $F$ is \[ C_F= \{ x \in X \, : \, \forall g \in F, \, d(g \cdot x,x) \leq 11 \delta \}. \]
\end{definition}

\begin{lemma}\cite[Corollary 3.27]{coulon_1}
\label{charac subset of elliptic is quasi convex}
    Let $F$ be an elliptic subgroup of $G$. The characteristic set $C_F$ is $9 \delta$-quasi-convex.
\end{lemma}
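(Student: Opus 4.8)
The plan is to deduce the statement from the quasi-convexity of the \emph{hull} of $C_F$ (Lemma~\ref{hull is quasi-convex}), together with the observation that $C_F$ is coarsely convex, in the sense that every $(1,\delta)$-quasi-geodesic joining two of its points stays uniformly close to $C_F$. We may assume $C_F$ has at least two points, since otherwise the conclusion is immediate from Definitions~\ref{quasi convex def} and~\ref{gromov prod def}. First I would fix $y,y'\in C_F$ and $x\in X$ and recall that, by Lemma~\ref{hull is quasi-convex}, the set $\mathrm{hull}(C_F)$ is $6\delta$-quasi-convex. Since $y,y'\in C_F\subseteq\mathrm{hull}(C_F)$, this gives
\[ d\bigl(x,\mathrm{hull}(C_F)\bigr)\le \langle y,y'\rangle_x+6\delta. \]

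It then suffices to show that $\mathrm{hull}(C_F)$ lies in a uniformly bounded neighbourhood of $C_F$; quantitatively, a neighbourhood of radius $3\delta$ is enough to reach the stated constant, since combining $\mathrm{hull}(C_F)\subseteq C_F^{+3\delta}$ with the displayed inequality yields
\[ d(x,C_F)\le d\bigl(x,\mathrm{hull}(C_F)\bigr)+3\delta\le \langle y,y'\rangle_x+9\delta, \]
which is exactly the $9\delta$-quasi-convexity required by Definition~\ref{quasi convex def}. Note the clean split $9\delta=6\delta+3\delta$, the first summand coming from the hull and the second from the coarse-convexity estimate; a sharper displacement bound giving $\mathrm{hull}(C_F)\subseteq C_F$ would even yield $6\delta$-quasi-convexity, so the $3\delta$ only needs to absorb the worst-case loss.

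To prove the inclusion I would take a point $m$ on a $(1,\delta)$-quasi-geodesic $\gamma$ joining $y,y'\in C_F$ (such $\gamma$ exists by Remark~\ref{existence of qg in length}) and bound the displacement $d(m,g\cdot m)$ for every $g\in F$. The mechanism is that each $g$ moves the endpoints only slightly: by Definition~\ref{def charact subset of elliptic} we have $d(g\cdot y,y)\le 11\delta$ and $d(g\cdot y',y')\le 11\delta$. Hence $g\cdot\gamma$ is a $(1,\delta)$-quasi-geodesic whose endpoints are $11\delta$-close to those of $\gamma$, so by stability of quasi-geodesics in hyperbolic length spaces (Lemma~\ref{almost uniqueness of quasi-geod}, together with the thinness of quasi-geodesic triangles from Lemma~\ref{thin qg triangles}) the paths $\gamma$ and $g\cdot\gamma$ fellow-travel. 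The point $g\cdot m$, which sits at the same arc-length parameter on $g\cdot\gamma$ as $m$ does on $\gamma$, is thereby forced to stay uniformly close to $m$. This is the standard quasi-convexity of the displacement function $z\mapsto d(z,g\cdot z)$ along a quasi-geodesic: the displacement of an interior point is controlled by the maximal displacement of the endpoints, up to an additive error that is a small multiple of $\delta$.

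The main obstacle is precisely this constant bookkeeping in the \emph{length}-space (rather than geodesic) setting, and making it uniform over all $g\in F$ simultaneously. One must run the fellow-traveling estimate so that the displacement of $m$ is controlled by $11\delta$ plus an error small enough that, after passing to a nearby point if necessary, $m$ lands within $3\delta$ of the genuine intersection $C_F=\bigcap_{g\in F}\{z:d(g\cdot z,z)\le 11\delta\}$. The delicate points are that the estimate must hold simultaneously for every $g\in F$ with a single witness point, and that the various additive $\delta$-losses — from the stability bound in Lemma~\ref{almost uniqueness of quasi-geod}, from comparing arc-length parameters on $\gamma$ and $g\cdot\gamma$, and from the non-geodesic nature of $X$ — must accumulate to no more than the allotted slack. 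Once this coarse-convexity estimate is secured, the proof closes via the displayed chain of inequalities.
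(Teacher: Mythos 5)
The paper does not prove this statement at all: it is quoted verbatim from Coulon (\cite[Corollary 3.27]{coulon_1}), so there is no in-paper argument to compare yours against. Judged on its own merits, your sketch has a genuine gap at its central step, namely the claimed inclusion $\mathrm{hull}(C_F)\subseteq C_F^{+3\delta}$. Your mechanism for it is: every $g\in F$ moves the endpoints $y,y'$ of a $(1,\delta)$-quasi-geodesic $\gamma$ by at most $11\delta$, hence by fellow-traveling $g$ moves an interior point $m$ by at most $11\delta$ plus a small error, hence $m$ is $3\delta$-close to $C_F$. The last implication is not justified by anything in the paper or in your sketch, and it is precisely the mathematical content of the result. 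The set $C_F$ is a sublevel set at the sharp threshold $11\delta$ (Definition~\ref{def charact subset of elliptic}); knowing that $\sup_{g\in F}d(g\cdot m,m)\leq 11\delta+\varepsilon$ gives no bound on $d(m,C_F)$ without a separate ``contraction'' lemma relating the displacement function to the distance to its $11\delta$-sublevel set. That lemma (essentially Coulon's Proposition 3.26) is the engine of the actual proof, which applies it directly to the ambient point $x$ after bounding $\sup_g d(g\cdot x,x)$ in terms of $\langle y,y'\rangle_x$ via hyperbolicity; your route through the hull quietly assumes it.

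The quantitative claims are also untenable. The only fellow-traveling estimate available in the paper for interior points of quasi-geodesic quadrangles is Lemma~\ref{distances within quadrangles}, which with endpoint displacements $11\delta$ and $\ell=\delta$ gives $d(m,g\cdot m)\leq 5\cdot 11\delta+16\delta+38\delta=109\delta$ --- an order of magnitude larger than the $11\delta+3\delta$ budget your split $9\delta=6\delta+3\delta$ requires. Even with an optimal contraction lemma of the form $d(m,C_F)\leq\tfrac{1}{2}\bigl(\sup_g d(g\cdot m,m)-11\delta\bigr)+O(\delta)$, this would place $\mathrm{hull}(C_F)$ only in a neighbourhood of radius roughly $49\delta$ of $C_F$, yielding a quasi-convexity constant far above $9\delta$. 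So the approach, as calibrated, cannot produce the stated constant; to recover $9\delta$ you should abandon the detour through the hull and argue directly on $x$ as Coulon does.
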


\subsubsection{Acylindrical group actions}

Recall that a group is hyperbolic if and only if it acts properly and cocompactly by isometries on a hyperbolic metric space. In recent years, many interesting results were obtained for groups that admit a (non-elementary) action on a hyperbolic metric space by isometries satisfying a weaker condition. One of the two weaker conditions that will be considered in this paper is \emph{acylindricity}. This notion goes back to Sela’s paper \cite{Sela}, where it was considered for groups acting on trees. In the context of general metric spaces, the following definition was introduced by Bowditch in \cite{bowditch}.

%Note however that, in order to get interesting results, we cannot remove the requirement of cocompactness from the previous definition without strengthening the properness assumption, as every countable group admits a proper action on a hyperbolic space (namely the parabolic action on a combinatorial horoball).

\begin{definition}
\label{acylindr definition}
Let $G$ be a group acting by isometries on a $\delta$-hyperbolic metric space $X$. The action is said to be \emph{acylindrical} if for every $ \varepsilon \geq 0$ there exist $M, L > 0$ such that for every $x,y \in X$ with $d(x,y) \geq L$ we have: \[ \lvert \{ g \in G : d(x,g \cdot x) \leq \varepsilon, \, d(y,g \cdot y) \leq \varepsilon \} \rvert \leq M .\]    
\end{definition}

\begin{remark}
\label{acylind characterization}
   By \cite[Proposition 5.31]{dahmani_guirardel_osin} it suffices to check this condition for $\varepsilon=100\delta$. Even though this result is stated for geodesic spaces, this also holds for length spaces (see \cite[Proposition 5.6]{coulon_4}). 
\end{remark}

The following lemma, which is an immediate consequence of \cite[Theorem 1.1]{osin}, will play an important role in the proof of our main result.

\begin{lemma}\label{no_parabolic_osin}Le $G$ be a group acting acylindrically by isometries on a hyperbolic metric space. Then $G$ has no parabolic subgroup.
\end{lemma}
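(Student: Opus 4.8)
The plan is to deduce this directly from Osin's classification of acylindrical actions on hyperbolic spaces \cite[Theorem 1.1]{osin}, which asserts that any group acting acylindrically on a hyperbolic space falls into exactly one of three types: the action has bounded orbits (elliptic), or the group is virtually cyclic and contains a loxodromic isometry (lineal), or the group contains infinitely many independent loxodromic isometries (general type). The essential feature of this trichotomy, by contrast with the general Gromov classification for arbitrary isometric actions, is that it contains \emph{no} parabolic case. Translating into the language of boundaries introduced in Subsection~\ref{subsubsection isom of hyp spaces}, these three types correspond respectively to $\partial H$ being empty, having exactly two points, or having at least three points, so that $\lvert \partial H \rvert$ is never equal to $1$.

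First I would observe that acylindricity passes to subgroups: if $G$ acts acylindrically on $X$, then for any subgroup $H \leq G$ the restricted action of $H$ on $X$ is again acylindrical, since for each $\varepsilon \geq 0$ the very same constants $M, L$ furnished by Definition~\ref{acylindr definition} for $G$ work for $H$ (the relevant set of elements of $H$ is contained in the corresponding set of elements of $G$). Next, I would apply Osin's trichotomy to the acylindrical action of an arbitrary subgroup $H$ on $X$. In each of the three cases the accumulation set $\partial H$ has cardinality $0$, $2$, or at least $3$; in particular it is never a single point. By the definition of a parabolic subgroup recalled in Subsection~\ref{subsubsection isom of hyp spaces} (namely that $\partial H$ has exactly one point), this shows that $H$ is not parabolic. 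Since $H$ was arbitrary, $G$ has no parabolic subgroup.

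The one technical point requiring care is that Osin states his result for geodesic hyperbolic spaces, whereas we work throughout with length spaces. I would address this exactly as in Remark~\ref{acylind characterization}, invoking the adaptation of acylindricity arguments to length spaces given in \cite[Proposition 5.6]{coulon_4}; alternatively one may pass to a geodesic space quasi-isometric to $X$, under which the loxodromic/parabolic/elliptic dichotomy and acylindricity are preserved. Since the real content is entirely contained in \cite[Theorem 1.1]{osin}, there is no genuine obstacle here beyond checking that its hypotheses apply in our setting; the subgroup-inheritance of acylindricity and the reading-off of $\lvert \partial H \rvert$ from the trichotomy are routine.
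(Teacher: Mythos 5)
Your proposal is correct and follows exactly the route the paper intends: the paper offers no written proof, stating only that the lemma is an immediate consequence of \cite[Theorem 1.1]{osin}, and your argument is precisely the routine unpacking of that citation (acylindricity is inherited by subgroups, and Osin's trichotomy leaves no case in which $\partial H$ is a single point). Your extra care about the geodesic-versus-length-space hypothesis, handled as in Remark~\ref{acylind characterization} via \cite[Proposition 5.6]{coulon_4}, is a sensible addition that the paper leaves implicit.
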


\subsubsection{Weakly properly discontinuous group actions}

We now define a weakening of acylindricity, namely \emph{weak proper discontinuity}. This notion was introduced by Bestvina and Fujiwara in \cite{bestvina_fujiwara}.

\begin{definition}
\label{definition wpd}
Let $G$ be a group acting by isometries on a metric space $X$, and let $h \in G$ be a loxodromic element. The isometry $h$ is said to satisfy the \emph{weak proper discontinuity property} (WPD property) if for every $x\in X$ and every $ \varepsilon \geq 0$ there exist $n \in \mathbb{N}$, $M>0$ such that \[\lvert \{ g \in G : d(x,g \cdot x) \leq \varepsilon, \, d(h^{n} \cdot x,gh^{n} \cdot x) \leq \varepsilon \} \rvert \leq M . \] The action is said to be \emph{weakly properly discontinuous} (WPD) if every loxodromic element satisfies the WPD property.
    
\end{definition}

\begin{remark}
Clearly, an acylindrical action is also WPD.
\end{remark}

% Also, by the remark following \cite[Proposition 3.19]{coulon_1} we have that a loxodromic isometry $g$ satisfies the WPD property if and only if for every $m \in \mathbb{Z} \backslash \{ 0 \}$ so does $g^{m}$.

The following lemma will be crucial (see \cite[Section 3]{coulon_1}).

\begin{lemma}\label{lox subg in wpd is virt cyc}Let $G$ be a group with a WPD action by isometries on a hyperbolic length space $X$. Then every loxodromic subgroup $H$ of $G$ is contained in a unique maximal loxodromic subgroup of $G$, namely the setwise stabilizer of the pair $\partial H\subset \partial G$, denoted by $M_G(H)$. Moreover, $M_G(H)$ (and thus $H$) are virtually cyclic.
\end{lemma}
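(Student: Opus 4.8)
The plan is to prove Lemma~\ref{lox subg in wpd is virt cyc} by first understanding the structure of a single loxodromic element, then bootstrapping to the whole subgroup $H$ via the WPD property. Let me think about each piece.

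Let $H$ be a loxodromic subgroup, so $\partial H = \{\zeta, \eta\}$ has exactly two points. The first observation is that $H$ contains an actual loxodromic isometry: by Lemma~\ref{two points in boundary implies loxodromic}, since $\partial H$ has at least two points, $H$ contains a loxodromic element $h$, whose fixed points on the boundary are $h^{\pm\infty}$.

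First I would argue that $\{\zeta,\eta\} = \{h^{-\infty}, h^{+\infty}\}$, i.e. every element of $H$ preserves this pair. The point is that $H$ acts on $\partial H = \{\zeta, \eta\}$ (as $H$ preserves its own limit set), and I must show $\partial H$ is exactly the fixed-point pair of $h$. Since $h^{\pm\infty} \in \partial H$ and $\partial H$ has only two elements, we get $\{\zeta, \eta\} = \{h^{-\infty}, h^{+\infty}\}$ directly. Thus every $g \in H$ permutes the two boundary points $h^{\pm\infty}$, and so $H$ lies in the setwise stabilizer $\mathrm{Stab}(\{h^{-\infty}, h^{+\infty}\})$. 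This stabilizer is precisely the candidate $M_G(H)$ in the statement; I would define $M_G(H)$ to be this stabilizer and check it is loxodromic (it contains $h$ and fixes $\partial H$ setwise, so its boundary is also $\{\zeta,\eta\}$) and that it contains every loxodromic subgroup with the same pair, giving both maximality and uniqueness. Any loxodromic subgroup $H'$ with $\partial H' = \partial H$ satisfies $H' \subseteq M_G(H)$ by the same stabilizer argument, which is exactly the uniqueness of the maximal loxodromic subgroup containing $H$.

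The heart of the proof, and the step I expect to be the main obstacle, is showing that $M := M_G(H)$ is virtually cyclic; the rest is essentially formal boundary bookkeeping. Here is where WPD enters. Let $h \in H$ be loxodromic with axis quasi-geodesic joining $h^{-\infty}$ to $h^{+\infty}$. Every element $g \in M$ preserves the pair $\{h^{-\infty}, h^{+\infty}\}$, so it either preserves or swaps these two boundary points; the orientation-preserving elements form a subgroup $M^+$ of index at most $2$ in $M$, and it suffices to show $M^+$ is virtually cyclic (in fact I expect to show $M^+$ is the relevant virtually cyclic group, with the index-$2$ issue handled by the subgroup stabilizing each point individually). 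The plan is to consider the quotient map $M^+ \to \mathbb{Z}$ (or to $\mathbb{R}$) recording the asymptotic translation length / signed displacement along the common axis, using the function $g \mapsto [g]^\infty$ together with the orientation; the kernel $K$ consists of elements with bounded orbits near the axis, i.e. elliptic elements fixing both $h^{\pm\infty}$. The key quantitative input is WPD applied to $h$: for $\varepsilon = 100\delta$ (or any fixed $\varepsilon$) and a suitably large power $h^n$, the set of $g$ moving both a basepoint $x$ on the axis and $h^n \cdot x$ by at most $\varepsilon$ is finite. Since every element of $K$ roughly preserves a long segment of the axis, one shows $K$ is contained in such a finite set and hence is finite; then $M^+/K$ embeds in $\mathbb{Z}$, giving that $M^+$ — and therefore $M$ — is virtually cyclic. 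The genuinely delicate point, which I would reference \cite[Section 3]{coulon_1} for rather than reprove, is the uniform control translating ``$g$ preserves the pair $\partial H$'' into ``$g$ $\varepsilon$-almost-fixes a long segment of the axis,'' since one must use that elements fixing both endpoints at infinity stay within bounded Hausdorff distance of the quasi-geodesic axis (Lemma~\ref{axis vs cylinder} and the cylinder estimates) to feed the WPD inequality correctly; getting the right power $n$ and basepoint so that the WPD bound captures \emph{all} of $K$ simultaneously is the crux of the argument.
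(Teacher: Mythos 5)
The paper does not actually prove this lemma: it is stated with a pointer to \cite[Section 3]{coulon_1}, so there is no in-house argument to compare yours against. Your sketch follows the standard route, which is essentially the one Coulon takes: use Lemma~\ref{two points in boundary implies loxodromic} to produce a loxodromic $h\in H$, identify $\partial H$ with $\{h^{\pm\infty}\}$, observe that any loxodromic subgroup containing $H$ has the same two-point boundary and hence lies in the setwise stabilizer $M_G(H)$ of that pair, check that $M_G(H)$ is itself loxodromic because its orbits stay in a bounded neighbourhood of the cylinder $Y_h$ (Lemma~\ref{axis vs cylinder}), and then use the WPD property of $h$ to control the index-two subgroup $M^{+}$ fixing the two ends. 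All of that is sound in outline.

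The one step that is genuinely short of a proof is the final deduction that $M^{+}$ is virtually cyclic. First, the ``signed asymptotic displacement'' $M^{+}\to\mathbb{R}$ is a priori only a homogeneous quasimorphism on a group coarsely preserving a quasi-line, not a homomorphism, so speaking of its kernel requires justification. Second, and more seriously, even granting a homomorphism to $\mathbb{R}$ with finite kernel, this does not yield virtual cyclicity: a finite-by-$\mathbb{Q}$ group has exactly that form and is not virtually cyclic. You must show the image is \emph{discrete} in $\mathbb{R}$, and this is a second, separate application of the same WPD finiteness statement (only finitely many elements of $M^{+}$ can have translation number in a fixed bounded window, since each of them displaces a basepoint $x$ on the axis and the point $h^{n}\cdot x$ by a uniformly bounded amount). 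The cleaner packaging is: (i) the elliptic elements of $M^{+}$ form a normal subgroup $F$ (a product of elements displacing every point of the axis boundedly is again elliptic), and $F$ is finite by WPD; (ii) the translation numbers of loxodromic elements of $M^{+}$ form a discrete subset of $\mathbb{R}$, again by WPD; (iii) an element $g_0$ of minimal positive translation number then generates $M^{+}$ modulo $F$, so $M^{+}$ is finite-by-cyclic. With that supplement, and a word on why $\partial M_G(H)=\partial H$ (invariance of $Y_h$ under the stabilizer of the endpoint pair), your argument is complete.
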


Recall that an infinite virtually cyclic group $H$ maps either onto $\mathbb{Z}$ or onto $D_{\infty}$, with finite kernel (which is the unique maximal normal finite subgroup of $H$). In the first case, we say that $H$ is of \emph{cyclic type}, and in the second case we say that $H$ is of \emph{dihedral type}. An element of $H$ is called \emph{primitive} if it maps to an element of $\mathbb{Z}$ (in the first case) or of $D_{\infty}$ (in the second case) that has infinite order and does not admit a proper root. This terminology can be extended to a loxodromic element $h$ of $G$: the element $h$ is called \emph{primitive} if it is primitive as an element of the virtually cyclic subgroup $M_G(\langle h\rangle)$ (equivalently, $h$ has minimal asymptotic translation length among the loxodromic elements of $M_G(\langle h\rangle)$).

The next lemma relates the cylinder of a loxodromic isometry with the characteristic subset of finite subgroups normalized by this element.

\begin{lemma}\cite[Lemma 3.33]{coulon_1}
\label{charac subs of max finite sgrp}
    Let $G$ be a group with a WPD action by isometries on a hyperbolic length space $X$. Let $g$ be a loxodromic element of $G$ and $H$ a subgroup fixing the set $\{g^{\pm \infty}\}$ pointwise. Let $F$ be the maximal normal finite subgroup of $H$. Then, the cylinder $Y_g$ is contained in the $51 \delta$-neighbourhood of the characteristic subset $C_F$.
\end{lemma}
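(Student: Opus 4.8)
The plan is to prove the containment pointwise on the defining set $\Gamma_g$ of the cylinder and then spread it out by the $20\delta$-neighbourhood. Recall from Definition~\ref{def cylinder of isometry} that $Y_g=\Gamma_g^{20\delta}$, where $\Gamma_g$ is the union of all $L_S\delta$-local $(1,\delta)$-quasi-geodesics joining $g^{-\infty}$ to $g^{+\infty}$. Hence it suffices to show that every point of $\Gamma_g$ lies within $31\delta$ of $C_F$, for then $Y_g=\Gamma_g^{20\delta}\subseteq C_F^{+(31\delta+20\delta)}=C_F^{+51\delta}$. The first observation is that $F\subseteq H$ fixes $g^{+\infty}$ and $g^{-\infty}$ individually, so $F$ permutes the local quasi-geodesics joining these two boundary points and therefore preserves $\Gamma_g$ setwise. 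Moreover every $h\in F$ has finite order, so $\langle h\rangle$ is finite, hence elliptic; in particular each $\langle h\rangle$-orbit is bounded.

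The heart of the argument is to bound, uniformly in $h\in F$, the displacement $d(y,h\cdot y)$ of a fixed point $y\in\Gamma_g$. Choose a local quasi-geodesic $\gamma$ as in Definition~\ref{def cylinder of isometry} passing through $y$. Since $h$ fixes both endpoints, $h\cdot\gamma$ is again such a quasi-geodesic joining $g^{-\infty}$ to $g^{+\infty}$, so by the defining property of $L_S$ (the uniform Hausdorff bound between two such quasi-geodesics with the same endpoints) the paths $\gamma$ and $h\cdot\gamma$ stay within a Hausdorff distance $D$ depending only on $\delta$. Thus $h\cdot y$ projects to some $\gamma(s)$ with $d(h\cdot y,\gamma(s))\le D$, and it remains to bound $|s|$. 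If $|s|$ were large, then because $h$ is an isometry and nearest-point projection between the uniformly close quasi-geodesics $\gamma$ and $h\cdot\gamma$ is coarsely monotone, the iterates would satisfy $h^k\cdot y\approx\gamma(ks)$ and drift to $g^{\pm\infty}$, contradicting that $h$ is elliptic with bounded orbit. (Equivalently, a finite-order isometry fixing two distinct boundary points cannot translate along a quasi-geodesic joining them, since a nonzero asymptotic translation length would make it loxodromic by Lemma~\ref{asympt and trans length relation}.) This yields $d(y,h\cdot y)\le B$ for a constant $B$ depending only on $\delta$, uniformly over $h\in F$; I expect this uniform control along the whole cylinder, together with the ensuing constant bookkeeping, to be the main obstacle.

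Finally I would convert the bounded orbit into a genuine almost-fixed point. The orbit $F\cdot y$ has diameter at most $2B$, and for a finite group acting on a $\delta$-hyperbolic space a standard quasi-center argument produces a point $x$ with $d(h\cdot x,x)\le 11\delta$ for all $h\in F$, that is, $x\in C_F$ (in particular $C_F\neq\emptyset$), with $d(x,y)$ controlled by $B$ and $\delta$ via the $9\delta$-quasi-convexity of $C_F$ from Lemma~\ref{charac subset of elliptic is quasi convex} and Definition~\ref{def charact subset of elliptic}. Tracking the constants through this step gives $d(y,C_F)\le 31\delta$ for every $y\in\Gamma_g$, and hence $Y_g\subseteq C_F^{+51\delta}$, as desired. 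Note that only the finiteness of $F$ and the inclusion $F\subseteq H$ (which guarantees that $F$ fixes $g^{\pm\infty}$) are used; the maximality of $F$ serves only to make $F$, and thus $C_F$, canonical.
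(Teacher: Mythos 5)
First, a point of comparison: the paper does not prove this statement at all — it is imported verbatim as \cite[Lemma 3.33]{coulon_1} — so there is no in-paper argument to measure yours against. Judged on its own, your outline follows what is surely the intended route: $F$ fixes $g^{+\infty}$ and $g^{-\infty}$ individually, hence permutes the local quasi-geodesics defining $\Gamma_g$; an elliptic isometry coarsely preserving such a quasi-geodesic without swapping its ends cannot drift along it (a positive asymptotic translation length would make it loxodromic by Lemma~\ref{asympt and trans length relation}), so each point of $\Gamma_g$ has uniformly bounded $F$-orbit; a quasi-center argument then produces a nearby point of $C_F$. Your closing observation that only the finiteness of $F$ and the inclusion $F\subseteq H$ are used is also correct.

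The gap is that the entire content of the lemma is the number $51\delta$, and your argument never produces it. Two constants are left floating. First, the Hausdorff-distance bound $D$ between two $L_S\delta$-local $(1,\delta)$-quasi-geodesics with the same endpoints: the paper only asserts that \emph{some} such bound exists, and your drift estimate accumulates an additive error at each iterate, so it only yields a displacement bound $B$ of the form $C+D$ where $C$ absorbs $D$ and the global quasi-geodesic constants ($k=2$, $\ell=10^{5}\delta$); nothing shows this is of the order of a few tens of $\delta$. Second, the passage from ``the $F$-orbit of $y$ has diameter at most $2B$'' to ``$d(y,C_F)\leq 31\delta$'' requires a quantitative quasi-center statement of the form $d(y,C_F)\leq \tfrac{1}{2}\sup_{h\in F}d(y,h\cdot y)+O(\delta)$; the quasi-convexity of $C_F$ from Lemma~\ref{charac subset of elliptic is quasi convex}, which you invoke, is not the relevant tool here and you do not supply the needed estimate. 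As written, the argument proves $Y_g\subseteq C_F^{+K\delta}$ for \emph{some} universal $K$ — qualitatively the right statement — but not the precise bound that is cited and actually used downstream (e.g.\ inside the proof of Lemma~\ref{loxodromic subgroups in the quotient}); recovering $51\delta$ requires carrying out Coulon's bookkeeping rather than deferring it.
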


\section{Geometric small cancellation}
\label{subsection geometric small cancellation}

In the 1910s, Dehn proved that for the fundamental group of a closed orientable surface of genus at least two the word problem is solvable. His work involved negative curvature, and was a precursor for small cancellation theory. Small cancellation conditions were formulated explicitely for the first time by Tartakovskii in 1949. Then, small cancellation theory was developed notably by Greendlinger in the early 1960s and by Lyndon and Schupp around the same time to study groups given by group presentations where defining relations have small overlaps with each other. However, the geometric origins of small cancellation theory have gradually been forgotten in favour of combinatorial and topological methods. According to Gromov, ``the role of curvature was reduced to a metaphor (algebraists do not trust geometry)'', and he proposed to return to the geometric sources of small cancellation theory. This point of view appears in \cite{gromov_mesoscopic} and \cite{delzant_gromov}, and was further developed extensively in the work of Coulon.
 
In this section we introduce and adapt the small cancellation results proved in \cite{coulon_1}. We start by defining in Subsection~\ref{subsubsection invariants} some \emph{invariants} of a WPD group action on a hyperbolic metric space (for a more comprehensive overview, see \cite[Section 3.5]{coulon_1}). Then, in Subsection~\ref{subsubsection cone off} we introduce the \emph{cone-off} construction (see also \cite[Section 4]{coulon_1}), that will play a central role when we state Theorem~\ref{small cancellation theorem} in Subsection~\ref{subsubsections small cancellation theorems} and Proposition~\ref{res: SC - induction lemma} and Theorem~\ref{res : SC - partial periodic quotient} in Subsection~\ref{subsubsection partial periodic and small cancellation}, which are the small cancellation results used to prove Proposition~\ref{prop: classes stable under pp quotients}.

 \subsection{Invariants of a Group Action}
 \label{subsubsection invariants}

 Throughout this subsection we fix a group $G$ with a WPD action by isometries on a hyperbolic space $X$. We introduce the invariants $r_{\text{inj}}(Q,X)$ (for a subset $Q$ of $G$), $e(G,X)$, $\nu(G,X)$ and $A(G,X)$. They will play the role of the small cancellation parameters in Subsection~\ref{subsubsections small cancellation theorems}.

 \begin{definition}
 \label{def rinj}
     Let $Q$ be a subset of $G$. The \emph{injectivity radius} of $Q$ is \[ r_{\text{inj}}(Q,X)= \inf \{ \left[ g\right]^{\infty} : g \in Q, \, \, g \, \, \text{loxodromic} \}. \]
 \end{definition}

%Recall that, for a group $G$, its \emph{exponent} is the smallest positive integer $m$ such that $g^{m}=1$ for all $g \in G$. 

%For a finite group $F$, its \emph{holomorph}, denoted by $\text{Hol}(F)$ is the semidirect product $F \rtimes \text{Aut}(F)$. \marginpar{I have never seen this notation}

\begin{definition}
\label{def parameter e}
   The invariant $e(G,X)$ is the least common multiple of $\lvert F \rtimes \text{Aut}(F) \rvert$ where $F$ runs over the maximal normal finite subgroup of all maximal loxodromic subgroups of $G$.
\end{definition}

\begin{remark}
\label{e is 1 if lox subg are cyc}
    Notice that if all loxodromic subgroups of $G$ are cyclic, then we will get $e(G,X)=1$.
\end{remark}

\begin{definition}
\label{def param nu}
    The invariant $\nu (G,X)$ (or simply $\nu$) is the smallest positive integer $m$ satisfying the following property: let $g$ and $h$ be two isometries of $G$ with $h$ loxodromic. If $g$, $h^{-1}gh$,..., $h^{-m}gh^m$ generate an elementary subgroup which is not loxodromic, then $g$ and $h$ generate an elementary subgroup of $G$.
\end{definition}

The proof of Lemma 6.12 in \cite{coulon_1} yields the following bound for $\nu(G,X)$ for acylindrical actions with positive injectivity radius.

\begin{lemma}
\label{nu finite acyl}
    Assume the action of $G$ on $X$ is acylindrical and with positive injectivity radius. Call $L$ and $M$ the parameters in the definition of an acylindrical action (Definition~\ref{acylindr definition}) corresponding to $\varepsilon = 97\delta$, and put $M'$ as the smallest positive integer such that $M' r_{\text{inj}}(G,X) \geq L$. Then, $\nu(G,X) \leq M'+M$.
\end{lemma}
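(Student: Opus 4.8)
The plan is to show that the integer $m = M' + M$ witnesses the defining property of $\nu(G,X)$. I take isometries $g,h\in G$ with $h$ loxodromic, assume that $E:=\langle g, h^{-1}gh,\dots,h^{-m}gh^m\rangle$ is elementary but not loxodromic, and aim to deduce that $\langle g,h\rangle$ is elementary. Since the action is acylindrical it has no parabolic subgroup by Lemma~\ref{no_parabolic_osin}, so the elementary non-loxodromic group $E$ is in fact elliptic, i.e.\ it has bounded orbits. Hence its characteristic set $C_E$ (Definition~\ref{def charact subset of elliptic}) is non-empty, and I fix a point $c\in C_E$, so that $d(u\cdot c,c)\le 11\delta$ for every $u\in E$. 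Applying this to $u=h^{-i}gh^i$ for $0\le i\le m$ and then using the isometry $h^i$ gives
\[ d(g\cdot h^i c,\, h^i c)\le 11\delta\le 97\delta,\qquad 0\le i\le m, \]
so all $m+1$ points $h^i c$ are moved by at most $\varepsilon=97\delta$ by $g$.

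Next I record that $h^{M'}$ translates every point by at least $L$. As $h$ is loxodromic in $G$, Definition~\ref{def rinj} gives $[h]^\infty\ge r_{\text{inj}}(G,X)$, whence $[h^{M'}]^\infty=M'[h]^\infty\ge M'\,r_{\text{inj}}(G,X)\ge L$ by the choice of $M'$ (which is where positivity of the injectivity radius is used, guaranteeing $M'$ is finite). Since $d(z,h^{M'}z)\ge [h^{M'}]^\infty$ for every $z$ (by subadditivity of $n\mapsto d(z,h^{M'n}z)$), I obtain $d(c,h^{M'}c)\ge L$. Thus the pair $(c,h^{M'}c)$ is admissible for the acylindricity estimate of Definition~\ref{acylindr definition} with $\varepsilon=97\delta$, so the set
\[ S:=\{w\in G : d(c,wc)\le 97\delta,\ d(h^{M'}c,\,w\,h^{M'}c)\le 97\delta\} \]
has at most $M$ elements.

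I then feed the conjugates $t_i:=h^{-i}gh^i$, $0\le i\le M$, into $S$. For $i\le M$ both indices $i$ and $M'+i$ lie in $\{0,\dots,m\}$ because $m=M'+M$; a direct computation with the isometries $h^{\mp i}$ turns the two displayed inequalities (for indices $i$ and $M'+i$) into $d(c,t_i c)\le 97\delta$ and $d(h^{M'}c,\,t_i h^{M'}c)\le 97\delta$. Hence $t_0,\dots,t_M\in S$: these are $M+1$ elements in a set of size at most $M$, so by the pigeonhole principle $t_i=t_j$ for some $0\le i<j\le M$, which means $g$ commutes with the nontrivial power $h^{\,j-i}$ of $h$. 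Since $h^{\,j-i}$ is loxodromic with the same endpoints $\{h^{+\infty},h^{-\infty}\}$ as $h$, commuting with it forces $g$ to preserve this pair of boundary points setwise; as $h$ also fixes the pair, $\langle g,h\rangle$ stabilises $\{h^{\pm\infty}\}$ and is therefore elementary. This yields $\nu(G,X)\le M'+M$.

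The argument is essentially a pigeonhole over the $M+1$ conjugates of $g$ once two ingredients are secured, so I expect the genuine content to lie not in any hard estimate but in those two inputs: eliminating the parabolic case through acylindricity (Lemma~\ref{no_parabolic_osin}) and extracting a \emph{single} common quasi-fixed point $c\in C_E$ for the whole elliptic group $E$, rather than separate near-fixed points of the individual conjugates. Everything after that—checking that the chosen conjugates all land in the one acylindricity-bounded set $S$, and passing from a commuting power to an elementary subgroup—is routine bookkeeping with the isometries $h^{\pm i}$.
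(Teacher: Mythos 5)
Your proof is correct and is essentially the argument the paper invokes by citing the proof of Lemma 6.12 in \cite{coulon_1}: rule out the parabolic case by acylindricity, take a common quasi-fixed point $c\in C_E$ of the elliptic group generated by the conjugates, note that $d(c,h^{M'}c)\geq M'\,r_{\text{inj}}(G,X)\geq L$, and apply the acylindricity bound together with the pigeonhole principle to the $M+1$ conjugates $h^{-i}gh^{i}$, $0\leq i\leq M$, to force $g$ to commute with a power of $h$ and hence to preserve $\{h^{\pm\infty}\}$. All the intermediate computations (the identity $t_ih^{M'}=h^{M'}t_{M'+i}$, the bound $d(z,h^{M'}z)\geq[h^{M'}]^{\infty}$, and the passage from a commuting power to an elementary subgroup via Lemma~\ref{lox subg in wpd is virt cyc}) check out.
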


\begin{definition}
\label{def param A}
    Assume the action of $G$ on $X$ has finite parameter $\nu = \nu(G,X)$. 
    
For  $g_{1}, \dots , g_{m}\in G$ we put \[A(g_{1},\dots, g_{m}) =  \text{diam}\left(A_{g_{1}}^{+13\delta} \cap \dotsc \cap A_{g_{m}}^{+13\delta}\right).\]    
    We denote by $\mathcal{A}$ the set of $(\nu+1)$-tuples $(g_{0},\dots,g_{\nu})$ such that $g_{0},\dots,g_{\nu}$ generate a non-elementary subgroup of $G$ and for all $j \in \{0, \dots, \nu \}$ we have $[g_{j}] \leq L_{S}\delta$. We define \[ A(G,X) = \sup_{(g_{0},\dots,g_{\nu}) \in \mathcal A} (\{A\left(g_{0}, \dots,g_{\nu}\right)\}). \]
		
\end{definition}

\begin{remark}
\label{invariants in rescaled space}
    Notice that if $G$ acts on a $\delta$-hyperbolic metric space $X$ and $\lambda X$ is a rescaling of $X$ (that is, a metric space with the same underlying set and distances multiplied by $\lambda$), then $\lambda X$ is a $\lambda \delta$-hyperbolic metric space endowed with an action of $G$ and invariants $r_{\text{inj}}(Q, \lambda X)= \lambda r_{\text{inj}}(Q, X)$ for any subset $Q$ of $G$, $e(G, \lambda X)=e(G,X)$, $\nu(G, \lambda X)= \nu(G,X)$ and $A(G, \lambda X)= \lambda A(G,X)$.
\end{remark}

\subsection{The Cone-Off Construction}
\label{subsubsection cone off}

In this subsection we introduce the \emph{cone-off} construction over certain families of subspaces of a metric space, and explain how to extend the action of a group on the metric space to an action on the cone-off. This construction will allow us to iteratively apply the Small Cancellation Theorem introduced in Subsection~\ref{subsubsections small cancellation theorems}. For the rest of this section, we fix a number $\rho >0$.

\begin{definition}
\label{definition cone over X}
    Let $X$ be a metric space. The \emph{cone over $X$ of radius $\rho$}, denoted by $Z_{\rho}(X)$ (or, if the value of $\rho$ is clear by context, simply $Z(X)$) is the topological quotient of $X \times [0,\rho]$ by the equivalence relation identifying all points of the form $(x,0)$, $x\in X$.
\end{definition}

The equivalence class of $(x,0)$ is called the \emph{apex} of the cone. The cone over $X$ is endowed with a metric characterized as follows (see \cite[Chapter I.5, Proposition 5.9]{bridson_haefliger}). Let $x=(y,r)$ and $x'=(y',r')$ be two points of $Z(X)$, then \[ \cosh(d_{Z(X)}(x,x'))=\cosh(r)\cosh(r')-\sinh(r)\sinh(r')\text{cos}\left(\min \left( \pi , \dfrac{d_{X}(y,y')}{\sinh(\rho)} \right)\right).\] 

In addition, if $X$ is a length space, then so is $Z(X)$. An action by isometries of a group $G$ over $X$ naturally extends to an action by isometries on $Z(X)$ as follows: for $x=(y,r)$ in $Z(X)$ and $g \in G$, put $g \cdot x=(g \cdot y,r)$. Note that in this case the apex of $Z(X)$ is  a global fixed point.

We can compare the original metric space $X$ with its cone $Z(X)$ by defining a \emph{comparison map} $\psi: X \longrightarrow Z(X)$ such that $x \longmapsto (x, \rho)$.

Now we are ready to introduce the cone-off construction.

\begin{definition}
\label{definition cone off}
    Let $X$ be a hyperbolic length space, $\mathcal{Y}$ a collection of strongly quasi-convex subsets of $X$. For $Y \in \mathcal{Y}$, denote by $d_{Y}$ the metric on $Y$ given by the length structure on $Y$ induced by the length structure of $X$, by $Z(Y)$ the cone over $Y$ (endowed with the distance $d_{Y}$) of radius $\rho$ and by $\psi_{Y}$ the corresponding comparison map.

    The \emph{cone-off of radius $\rho$ over $X$ relative to $\mathcal{Y}$}, denoted by $\dot{X}_{\rho}(\mathcal{Y})$ (or simply $\dot{X}$ if $\rho$ and $\mathcal{Y}$ are clear by context) is the quotient of the disjoint union of $X$ and the $Z(Y)$ for all $Y \in \mathcal{Y}$ by the equivalence relation that, for all $Y \in \mathcal{Y}$ and $y \in Y$, identifies $y$ with $\psi_Y(y) \in Z(Y)$.
\end{definition}

Since the hyperbolic length space $X$ embeds into the cone-off $\dot X$, we will identify it with its image under this embedding. The cone-off is naturally endowed with a metric given by the length structure on $\dot{X}$ induced by the length structures on $X$ and $Z(Y)$ for all $Y \in \mathcal{Y}$ \cite[Section 4.2]{coulon_1}.

 The following lemma gives conditions under which the cone-off is hyperbolic, with certain control over the hyperbolicity constant. For this purpose, we introduce a parameter that controls the overlap between the elements of $\mathcal{Y}$. We write \[ \Delta(\mathcal{Y})= \sup_{Y_{1} \neq Y_{2} \in \mathcal{Y}}\left(\text{diam}\left(Y_{1}^{+5\delta} \cap Y_{2}^{+5\delta}\right)\right). \]
 
 Following Coulon, we denote by $\boldsymbol{\delta}$ the hyperbolicity constant of the hyperbolic plane.

\begin{lemma}\cite[Proposition 6.4]{coulon_2}
\label{cone off hyperb}
    There exist positive numbers $\delta_{0}$, $\Delta_{0}$ and $\rho_{0}$ that satisfy the following property. Let $X$ be a $\delta$-hyperbolic space with $\delta \leq \delta_{0}$. Let $\mathcal{Y}$ be a family of strongly quasi-convex subsets of $X$ with $\Delta(\mathcal{Y}) \leq \Delta_{0}$. Let $\rho \geq \rho_{0}$. Then the cone-off $\dot{X}_{\rho}(\mathcal{Y})$ is $\dot{\delta}$-hyperbolic, with $\dot{\delta}=900 \boldsymbol{\delta}$.
\end{lemma}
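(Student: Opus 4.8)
The plan is to establish the Gromov four-point inequality on $\dot{X}_{\rho}(\mathcal{Y})$ with hyperbolicity constant $\dot{\delta}=900\boldsymbol{\delta}$, by combining the intrinsic hyperbolicity of each individual cone with a local-to-global (Cartan–Hadamard type) principle, calibrating the three thresholds $\delta_{0}$, $\Delta_{0}$, $\rho_{0}$ precisely so as to control how the pieces fit together. The first step is to analyze a single cone $Z(Y)$. Its distance is defined by the hyperbolic law of cosines, so $Z_{\rho}(Y)$ is isometrically modeled on a region of the hyperbolic plane $\mathbb{H}^{2}$; from this one shows that $Z_{\rho}(Y)$ is $\boldsymbol{\delta}$-hyperbolic, \emph{uniformly} in $Y$ and in $\rho$. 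The crucial point here is that, although the cone has diameter at most $2\rho$ and every point lies within $\rho$ of the apex, its hyperbolicity constant does \emph{not} grow with $\rho$, because around any pair of points the local geometry is exactly that of $\mathbb{H}^{2}$. I would also record the distortion of the comparison map $\psi_{Y}$: the cone distance $d_{Z(Y)}(\psi_{Y}(y),\psi_{Y}(y'))$ is a fixed increasing function of $d_{Y}(y,y')/\sinh\rho$ that saturates near $2\rho$, so that coning caps large distances of $Y$ at roughly $2\rho$ while contracting the metric.

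The second step is to understand the structure of (local) geodesics in $\dot{X}$. Any rectifiable path decomposes into maximal arcs lying in $X$ and arcs lying in some cone $Z(Y)$, and inside a cone an efficient route either stays near $\psi_{Y}(Y)$ or passes close to the apex. I would show that a local geodesic of $\dot{X}$ has a controlled alternating structure, in which consecutive cone excursions are separated by genuine progress in $X$. Because the members of $\mathcal{Y}$ are strongly quasi-convex and satisfy $\Delta(\mathcal{Y})\le\Delta_{0}$, distinct cones overlap only in sets of bounded diameter, so a geodesic cannot shuttle between two cones to manufacture a short-cut; this is precisely where the bounded-overlap hypothesis enters.

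The third step is the local-to-global upgrade. Since $X$ is $\delta$-hyperbolic with $\delta\le\delta_{0}$ and each cone is $\boldsymbol{\delta}$-hyperbolic, one checks that at a scale comparable to $\rho$ every ball of $\dot{X}$ essentially meets at most one cone (again by $\Delta(\mathcal{Y})\le\Delta_{0}$), and hence locally resembles either $X$ or a single cone, both of which are hyperbolic with the desired constant. I would then invoke a Cartan–Hadamard / local-to-global theorem for hyperbolic length spaces: a space that is hyperbolic at a sufficiently large scale is globally hyperbolic with a controlled constant. Choosing $\delta_{0}$, $\Delta_{0}$ and $\rho_{0}$ so that the scale of local hyperbolicity exceeds the threshold of this theorem yields global hyperbolicity of $\dot{X}$, and tracking the constants through the $\mathbb{H}^{2}$-comparison estimates produces the explicit value $\dot{\delta}=900\boldsymbol{\delta}$.

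The main obstacle lies in the second and third steps: proving the local-to-global upgrade and, within it, the delicate estimate that a geodesic triangle of $\dot{X}$ which dips into the cones is still thin. The danger is that coning off can in principle create short-cuts that destroy hyperbolicity whenever the coned subsets overlap too much or $X$ fails to be hyperbolic at the relevant scale; controlling this is exactly what the hypotheses $\delta\le\delta_{0}$, $\Delta(\mathcal{Y})\le\Delta_{0}$ and $\rho\ge\rho_{0}$ are designed to guarantee. Making all the constants uniform — independent of the particular family $\mathcal{Y}$ and of the radius $\rho$ — is the technical heart of the argument.
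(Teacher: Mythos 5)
The paper does not actually prove this lemma: it is imported verbatim from Coulon (\cite[Proposition 6.4]{coulon_2}), so there is no internal proof to compare yours against. Measured against the argument in the cited source, your outline follows the same overall strategy --- each cone $Z_{\rho}(Y)$ is compared with a region of $\mathbb{H}^{2}$ and is hyperbolic with a constant independent of $\rho$ and $Y$, the bounded-overlap hypothesis $\Delta(\mathcal{Y})\leq\Delta_{0}$ prevents shortcuts through several cones, and a quantitative Cartan--Hadamard (local-to-global) theorem for length spaces upgrades local hyperbolicity to global $900\boldsymbol{\delta}$-hyperbolicity. So the skeleton is the right one.

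As a proof, however, there is a genuine gap, concentrated exactly where you locate the difficulty. First, the local model is misstated: it is not true that at scale comparable to $\rho$ every ball of $\dot{X}$ meets at most one cone. The hypothesis $\Delta(\mathcal{Y})\leq\Delta_{0}$ bounds the diameter of $Y_{1}^{+5\delta}\cap Y_{2}^{+5\delta}$ for distinct members of $\mathcal{Y}$, not the number of cones a ball can meet; a ball of radius $\rho$ centred in $X$ may intersect arbitrarily many cones. The actual proof instead uses that balls of radius less than $\rho$ about an apex lie entirely in one cone (this is the content of the identity $B_{\dot{X}}(v,\rho)=Z(Y)\setminus Y$), while away from apices the metric of $\dot{X}$ is a controlled distortion $\mu$ of the metric of $X$, so the four-point inequality is transported from $X$; the overlap bound only controls the error terms in this comparison. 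Second, the local-to-global step cannot simply be invoked: one must verify that local geodesics of length a definite multiple of the local hyperbolicity scale are global quasi-geodesics in $\dot{X}$, and it is precisely in checking this for paths that enter and leave cones (possibly through apices) that the thresholds $\delta_{0}$, $\Delta_{0}$, $\rho_{0}$ and the final constant $900\boldsymbol{\delta}$ are determined. Your text names these steps but supplies none of the estimates, so it should be regarded as a correct plan rather than a proof; since the present paper treats the statement as a black box from \cite{coulon_2}, that is also the appropriate course here.
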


For the remainder of this subsection, we fix a length space $X$ and a family $\mathcal{Y}$ as in Definition~\ref{definition cone off}. 

Lemmas~\ref{dist on x compared cone off} and~\ref{dist on Y compared cone off} provide some insight on how the metric on $\dot{X}$ relates to the metric on $X$ and to the cones $Z(Y)$. In order to state the first of these results, we introduce the map $\mu: \mathbb{R}_{\geq 0} \longrightarrow \mathbb{R}_{\geq 0}$ characterized by \[ \cosh(\mu(t))=\cosh^{2}(\rho)-\sinh^{2}(\rho)\text{cos}\left(\min\left( \pi, \dfrac{t}{\sinh(\rho)}\right)\right) \] for all $t \geq 0$. The map $\mu$ has the following properties that will be used further down.

\begin{lemma}\cite[Proposition 4.2]{coulon_1}
\label{bound on mu for small t}
    The map $\mu$ is continuous, concave (down) and non-decreasing. Furthermore, the following hold.
    \begin{itemize}
        \item For all $t \geq 0$, we have: $t-\dfrac{1}{24}\left(1+\dfrac{1}{\sinh^{2}(\rho)}\right)t^{3} \leq \mu(t) \leq t$, and
        \item for all $t \in [0, \pi \sinh(\rho)]$, we have: $t \leq \pi \sinh\left(\dfrac{\mu(t)}{2}\right)$.
    \end{itemize}
\end{lemma}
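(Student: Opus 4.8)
The plan is to first replace the implicit relation defining $\mu$ by an explicit, tractable one. Writing $s=\sinh(\rho)$ and $c=\cosh(\rho)$, so that $c^2-s^2=1$, I note that for $t\in[0,\pi s]$ one has $\min(\pi,t/s)=t/s$, hence $\cosh(\mu(t))=c^2-s^2\cos(t/s)=1+s^2\bigl(1-\cos(t/s)\bigr)$. Using $1-\cos\theta=2\sin^2(\theta/2)$ together with $\cosh(\mu)=1+2\sinh^2(\mu/2)$, this collapses to
\[ \sinh\!\left(\frac{\mu(t)}{2}\right)=\sinh(\rho)\,\sin\!\left(\frac{t}{2\sinh(\rho)}\right),\qquad t\in[0,\pi\sinh(\rho)], \]
where the positive square roots are legitimate because $\mu\ge 0$ and $t/(2s)\in[0,\pi/2]$. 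For $t\ge\pi s$ the truncation gives $\cosh(\mu(t))=c^2+s^2=\cosh(2\rho)$, so $\mu\equiv 2\rho$ is constant there. This identity is the crux, and everything else will be extracted from it.

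From the identity, continuity and monotonicity are immediate: on $[0,\pi s]$ we have $\mu(t)=2\operatorname{arcsinh}\!\bigl(s\sin(t/2s)\bigr)$, which is continuous, constant afterwards, and the two pieces agree at $\pi s$; and since $\sin$ is non-decreasing on $[0,\pi/2]$ while $\operatorname{arcsinh}$ is increasing, $\mu$ is non-decreasing. For concavity I would set $\phi(t)=s\sin(t/2s)$ and compute that $\mu=2\operatorname{arcsinh}(\phi)$ satisfies $\mu''=\tfrac{2}{(1+\phi^2)^{3/2}}\bigl(\phi''(1+\phi^2)-\phi(\phi')^2\bigr)$; since $\phi''=-\tfrac{1}{4s}\sin(t/2s)\le 0$ and $\phi\ge 0$ on $[0,\pi s]$, the bracket is $\le 0$, so $\mu$ is concave on $[0,\pi s]$, constant (hence concave) beyond, and $C^1$ at the junction because $\phi'(\pi s)=0$; thus $\mu'$ is non-increasing throughout and $\mu$ is globally concave.

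For the upper bound $\mu(t)\le t$ I would use $\sin(t/2s)\le t/2s$ to get $\sinh(\mu/2)=s\sin(t/2s)\le t/2\le\sinh(t/2)$, whence $\mu\le t$ on $[0,\pi s]$; for $t\ge\pi s$ one has $\mu=2\rho<\pi\sinh(\rho)\le t$ (using $\sinh\rho>\rho$ and $\pi>2$). The last assertion is exactly Jordan's inequality: setting $v=t/(2s)\in[0,\pi/2]$, the claim $t\le\pi\sinh(\mu/2)=\pi s\sin v$ reads $\sin v\ge\tfrac{2}{\pi}v$, which holds since $v\mapsto(\sin v)/v$ decreases from $1$ to $2/\pi$ on $(0,\pi/2]$.

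The delicate part, and the step I expect to require the most care, is the lower bound with its precise constant. Here I would combine three elementary estimates valid for the non-negative arguments in play: $\operatorname{arcsinh}(y)\ge y-\tfrac{y^3}{6}$ (from integrating $(1+\tau^2)^{-1/2}\ge 1-\tfrac{\tau^2}{2}$), $\sin x\ge x-\tfrac{x^3}{6}$, and $\sin x\le x$. Plugging $y=s\sin(t/2s)$ into the arcsinh bound and then inserting the sine bounds yields
\[ \frac{\mu(t)}{2}\ \ge\ s\sin\!\left(\frac{t}{2s}\right)-\frac{s^3}{6}\sin^3\!\left(\frac{t}{2s}\right)\ \ge\ \left(\frac{t}{2}-\frac{t^3}{48s^2}\right)-\frac{t^3}{48}, \]
so that $\mu(t)\ge t-\tfrac{t^3}{24}\bigl(1+\tfrac{1}{\sinh^2\rho}\bigr)$ on $[0,\pi s]$. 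Finally, for $t\ge\pi s$ the right-hand cubic $g(t)=t-\tfrac{t^3}{24}(1+1/s^2)$ is already decreasing (its maximum sits at $t_*=2\sqrt2\,s/c<\pi s$, since $c\ge 1>2\sqrt2/\pi$), so $g(t)\le g(\pi s)\le\mu(\pi s)=2\rho=\mu(t)$, which closes the estimate for all $t\ge 0$. Matching the constant $\tfrac{1}{24}(1+1/\sinh^2\rho)$ exactly is precisely what forces the particular pairing of the three elementary bounds above.
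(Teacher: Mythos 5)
Your proof is correct. Note that the paper gives no proof of this lemma at all: it is quoted verbatim from Coulon (\cite[Proposition 4.2]{coulon_1}), so there is no in-paper argument to compare against. Your derivation — reducing the defining relation to the identity $\sinh(\mu(t)/2)=\sinh(\rho)\sin\bigl(t/(2\sinh\rho)\bigr)$ on $[0,\pi\sinh\rho]$ and then extracting all five assertions from elementary estimates on $\arcsin$, $\operatorname{arcsinh}$ and $\sin$ — is essentially the standard route taken in the cited source, and all the steps (including the exact constant $\tfrac{1}{24}(1+1/\sinh^2\rho)$ and the extension of the lower bound to $t\geq\pi\sinh\rho$ via monotonicity of the cubic) check out.
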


\begin{lemma}\cite[Lemma 5.8]{coulon_2}
\label{dist on x compared cone off}
    For every $x,x' \in X$ we have: \[ \mu(d_{X}(x,x')) \leq d_{\dot{X}}(x,x') \leq d_{X}(x,x'). \]
\end{lemma}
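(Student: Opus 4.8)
The plan is to prove the two inequalities separately, the upper bound being essentially formal and the lower bound requiring the subadditivity of $\mu$. For the upper bound $d_{\dot X}(x,x') \le d_X(x,x')$, I would simply note that $X$ embeds into $\dot X$ so that the length of a path lying in $X$ is unchanged; hence every path in $X$ from $x$ to $x'$ is also a path in $\dot X$ of the same length. Since $\dot X$ admits at least these paths, its distance is an infimum over a larger family and is therefore bounded above by $d_X(x,x')$, which is itself the infimum over paths in $X$ because $X$ is a length space.

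For the lower bound, the key computation is to identify the restriction of the cone metric to the gluing locus. Taking $y,y' \in Y$ and comparing the metric on $Z(Y)$ evaluated at $\psi_Y(y)=(y,\rho)$ and $\psi_Y(y')=(y',\rho)$ (both at radius $\rho$) with the defining formula for $\mu$, one reads off directly that
\[ d_{Z(Y)}(\psi_Y(y),\psi_Y(y')) = \mu\big(d_Y(y,y')\big). \]
I would then record the two features of $\mu$ needed below: by Lemma~\ref{bound on mu for small t} the map $\mu$ is non-decreasing and concave, and evaluating the defining relation at $t=0$ gives $\cosh(\mu(0)) = \cosh^2(\rho)-\sinh^2(\rho)=1$, so $\mu(0)=0$. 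A concave function vanishing at the origin is subadditive, i.e. $\mu(a)+\mu(b) \ge \mu(a+b)$, and more generally $\sum_i \mu(c_i) \ge \mu(\sum_i c_i)$.

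With these in hand, the argument is a subdivision of an arbitrary rectifiable path $\gamma$ from $x$ to $x'$ in $\dot X$. The complement $\dot X \setminus X$ is the disjoint union of the cone interiors (points of radius $<\rho$), so $\gamma^{-1}(\dot X \setminus X)$ is an open subset of the parameter interval and decomposes into intervals $(s_j,t_j)$, on each of which $\gamma$ stays in a single cone $Z(Y_j)$; since its endpoints $\gamma(s_j)=(p_j,\rho)$ and $\gamma(t_j)=(q_j,\rho)$ lie on the gluing locus $\psi_{Y_j}(Y_j) \subseteq X$, the length of this piece is at least $d_{Z(Y_j)}(\psi_{Y_j}(p_j),\psi_{Y_j}(q_j)) = \mu(d_{Y_j}(p_j,q_j)) \ge \mu(d_X(p_j,q_j))$, using $d_{Y_j}\ge d_X$ (the induced length metric on a subset dominates the ambient distance) together with monotonicity of $\mu$. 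The remaining pieces of $\gamma$ lie in $X$, and such a piece with endpoints $p,q$ has length at least $d_X(p,q) \ge \mu(d_X(p,q))$ since $\mu(t)\le t$. Listing the ordered endpoints of all pieces as $x=z_0,z_1,\dots,x'$, every piece contributes at least $\mu(d_X(z_{i-1},z_i))$, so by subadditivity and monotonicity of $\mu$ and the triangle inequality in $X$,
\[ L(\gamma) \ \ge\ \sum_i \mu\big(d_X(z_{i-1},z_i)\big)\ \ge\ \mu\Big(\sum_i d_X(z_{i-1},z_i)\Big)\ \ge\ \mu\big(d_X(x,x')\big). \]
Taking the infimum over $\gamma$ yields $d_{\dot X}(x,x') \ge \mu(d_X(x,x'))$.

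The main obstacle I anticipate is purely technical: a rectifiable path may cross the gluing locus $\bigcup_Y \psi_Y(Y)$ infinitely often, so the subdivision above produces countably many pieces whose endpoints may accumulate. Making the length additive over this decomposition requires treating the length of $\gamma$ as a measure on the parameter interval and checking that the finite forms of the triangle inequality and of the subadditivity of $\mu$ pass to the countable limit, using monotone convergence and the continuity of $\mu$. Once this bookkeeping is set up the estimate above is unaffected, which is why I expect the conceptual content to reside entirely in the identity $d_{Z(Y)}(\psi_Y(y),\psi_Y(y'))=\mu(d_Y(y,y'))$ and the subadditivity of $\mu$.
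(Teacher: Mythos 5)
This lemma is not proved in the paper at all: it is imported verbatim from Coulon (\cite[Lemma 5.8]{coulon_2}), so there is no in-paper argument to compare yours with. That said, your strategy is the right one and you have isolated the correct ingredients: the upper bound via the $1$-Lipschitz embedding $X\hookrightarrow\dot X$, the identity $d_{Z(Y)}(\psi_Y(y),\psi_Y(y'))=\mu(d_Y(y,y'))$ read off from the two defining $\cosh$-formulas, the inequality $d_Y\geq d_X$, and the monotonicity, subadditivity and the bound $\mu(t)\leq t$ for $\mu$. If a path from $x$ to $x'$ decomposes into finitely many consecutive pieces, each lying in $X$ or in a single cone with endpoints on the gluing locus, your chain of inequalities is complete and correct.

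The gap is in the reduction to that situation, and it is not cured by ``monotone convergence and continuity of $\mu$''. Two distinct problems arise when the excursion intervals accumulate. First, $\gamma^{-1}(X)$ is merely closed and may be a Cantor set, so the ``remaining pieces lying in $X$'' need not be intervals and there is no ordered list $z_0,z_1,\dots$ over which to telescope. Second, and more seriously, the contribution of $\gamma^{-1}(X)$ to $L(\gamma)$ is naturally a $d_{\dot X}$-variation, whereas your telescoping needs to bound it below by $\mu$ of a $d_X$-displacement; since $d_{\dot X}\leq d_X$ on $X$, this comparison goes the wrong way on the Cantor part, and the argument becomes circular there (bounding $d_X(\gamma(w),\gamma(w'))$ by the length of the sub-path between $w$ and $w'$ is exactly the statement being proved once that sub-path leaves $X$). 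The standard repairs are: (a) keep only the finitely many excursions outside a set of total length $<\varepsilon$, and use the quantitative estimate $\mu(t)\geq t-\frac{1}{24}\left(1+\frac{1}{\sinh^{2}(\rho)}\right)t^{3}$ from Lemma~\ref{bound on mu for small t} (equivalently $\mu^{-1}(s)\leq s+O(s^{3})$ for small $s$) to show that the discarded excursions join points of $X$ at total $d_X$-distance $O(\varepsilon)$, so that replacing them by paths in $X$ costs nothing in the limit; or (b) bypass paths entirely and use the description of the glued metric as an infimum over finite chains of points lying pairwise in a common piece, extending $\mu\circ d_X$ to a pseudometric $\tilde d$ on all of $\dot X$ (including the cone interiors) with $\tilde d\leq d_W$ on each piece $W$, and concluding by the triangle inequality for $\tilde d$ — which is essentially Coulon's route. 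Either repair needs an input (the cubic lower bound on $\mu$, or the extension of $\tilde d$ to the apices and cone interiors) that your sketch does not contain, so as written the lower bound is not yet established.
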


\begin{lemma}\cite[Lemma 5.7]{coulon_2}
\label{dist on Y compared cone off}
    Let $v$ be the apex of a cone $Z(Y)$ for some $Y \in \mathcal{Y}$. Then, $B_{\dot{X}}(v,\rho)=Z(Y) \backslash Y$.
\end{lemma}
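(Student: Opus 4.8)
The plan is to read everything off from the explicit cone metric, using the structural fact that in the cone-off $\dot{X}$ the cone $Z(Y)$ is attached to the rest of the space \emph{only} along its base $\psi_Y(Y)$ (identified with $Y\subseteq X$), which sits at height $\rho$. Write a point of $Z(Y)$ as $x=(y,r)$ with $r\in[0,\rho]$, and take $v$ to be the class of $(y',0)$. The defining formula for the cone metric, applied with $r'=0$, gives
\[
\cosh\bigl(d_{Z(Y)}(v,x)\bigr)=\cosh(r)\cosh(0)-\sinh(r)\sinh(0)\cos(\theta)=\cosh(r),
\]
where $\theta=\min\bigl(\pi,\,d_Y(y,y')/\sinh(\rho)\bigr)$ is irrelevant since $\sinh(0)=0$. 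Hence $d_{Z(Y)}(v,x)=r$, and the radial path $s\mapsto(y,s)$, $s\in[0,r]$, is a geodesic of length $r$ from $v$ to $x$. Recalling that the base (height $\rho$) is identified with $Y$, the set $Z(Y)\setminus Y$ is exactly the set of points of height $r<\rho$; it is an open subset of $\dot{X}$ whose topological frontier is $Y$.

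For the inclusion $Z(Y)\setminus Y\subseteq B_{\dot{X}}(v,\rho)$ I would use that the radial geodesic above stays inside $Z(Y)\setminus Y$ as long as its height is $<\rho$. Since the length structure of $\dot{X}$ restricts on $Z(Y)$ to the length structure of the cone, any path contained in $Z(Y)$ has the same $\dot{X}$-length as its $d_{Z(Y)}$-length; in particular $d_{\dot{X}}(v,x)\le d_{Z(Y)}(v,x)=r<\rho$ for every $x=(y,r)$ with $r<\rho$.

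For the reverse inclusion $B_{\dot{X}}(v,\rho)\subseteq Z(Y)\setminus Y$, take $x\notin Z(Y)\setminus Y$ and let $\gamma$ be any rectifiable path from $v$ to $x$ in $\dot{X}$. Since $v\in Z(Y)\setminus Y$, which is open with frontier $Y$, and $x$ lies outside it, $\gamma$ must leave $Z(Y)\setminus Y$; letting $t_0$ be the first exit time, continuity forces $\gamma(t_0)\in Y$, say $\gamma(t_0)=(y,\rho)$. The subpath $\gamma|_{[0,t_0]}$ lies entirely in $Z(Y)$, so its $\dot{X}$-length equals its length for $d_{Z(Y)}$, which is at least $d_{Z(Y)}(v,(y,\rho))=\rho$ by the computation above. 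Thus $L_{d_{\dot{X}}}(\gamma)\ge\rho$, and taking the infimum over all such $\gamma$ yields $d_{\dot{X}}(v,x)\ge\rho$. Combining the two inclusions proves $B_{\dot{X}}(v,\rho)=Z(Y)\setminus Y$.

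The only delicate point, and the one I would treat most carefully, is the topological step just used: that a path from the apex to a point outside $Z(Y)\setminus Y$ must cross the base $Y$, and that its initial segment is faithfully measured by the cone metric. Both rest on the fact that in the cone-off the only identifications involving $Z(Y)$ occur along $\psi_Y(Y)$, so the apex and the interior of the cone communicate with $X$ and with the other cones exclusively through $Y$; this is what makes $Z(Y)\setminus Y$ open with frontier $Y$ and makes the $\dot{X}$-length of a subpath contained in $Z(Y)$ agree with its cone length.
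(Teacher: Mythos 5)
The paper does not prove this lemma; it is quoted from \cite[Lemma 5.7]{coulon_2}, and your argument is essentially Coulon's: compute $d_{Z(Y)}(v,(y,r))=r$ from the explicit cone metric, get one inclusion from the radial path, and get the other by showing that any path from the apex to a point outside $Z(Y)\setminus Y$ must pay at least $\rho$ before it can leave the cone. The structure of the proof is right, and you correctly isolate the delicate point.

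That delicate point, however, is stated in a form that is not literally true and is where the write-up needs repair. You assert that ``any path contained in $Z(Y)$ has the same $\dot X$-length as its $d_{Z(Y)}$-length.'' If $\dot X$-length means the length induced by the metric $d_{\dot X}$ (Definition 3.1 of the paper), this fails in general: on $Z(Y)$ one only has $d_{\dot X}\leq d_{Z(Y)}$, and the inequality can be strict (two points of the cone over far-apart base points may be joined more cheaply by descending to $Y$, crossing through $X$, and climbing back up), so a priori $L_{d_{\dot X}}(\gamma)\leq L_{d_{Z(Y)}}(\gamma)$ — the wrong direction for the lower bound you need. The correct route is to use the way $d_{\dot X}$ is actually defined, namely as an infimum over chains of jumps each of which is measured either in $X$ or in a single cone $Z(Y')$. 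Since the apex $v$ lies in no other piece of $\dot X$ than $Z(Y)$, and since $Z(Y)\setminus Y$ meets neither $X$ nor any other cone, every such chain from $v$ to a point $x\notin Z(Y)\setminus Y$ begins with a block of jumps all measured in $Z(Y)$ whose last point $z_k$ is the first point of the chain lying in $Y$; the triangle inequality in $Z(Y)$ then gives $\sum_{i\leq k}\ell_i\geq d_{Z(Y)}(v,z_k)=\rho$, hence $d_{\dot X}(v,x)\geq\rho$. This also sidesteps the point-set claims (openness of $Z(Y)\setminus Y$, the first exit point landing in $Y$) that your continuous-path formulation leans on. With that substitution the proof is complete; the rest (the formula $\cosh(d(v,(y,r)))=\cosh(r)$, the radial geodesic, and the disjointness of $Z(Y)\setminus Y$ from $X$ and the other cones) is correct as written.
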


\subsubsection{Group action on the cone-off} For the remainder of Section~\ref{subsection geometric small cancellation}, we fix real numbers $\rho \geq \max  (\{ \rho_{0}, 10^{10}\boldsymbol{\delta},10^{20}L_{S}\boldsymbol{\delta}\})$, $\delta \leq \delta_{0}$, a $\delta$-hyperbolic length space $X$, a family $\mathcal{Y}$ of strongly quasi-convex subsets of $X$ with $\Delta(\mathcal{Y}) \leq \Delta_{0}$, where $\rho_{0}$, $\delta_{0}$ and $\Delta_{0}$ are the parameters provided by Lemma~\ref{cone off hyperb}.

%\textcolor{blue}{It seems to me that the real numbers above are not only fixed in this subsection. For instance, the fact that $\rho \geq \max  (\{ \rho_{0}, 10^{100}\boldsymbol{\delta}\})$ is used in the next (sub)sections.}

Consider a group $G$ acting by isometries on $X$, and acting on the family $\mathcal{Y}$ by left translation, that is, such that $g \cdot Y \in \mathcal{Y}$ for all $Y \in \mathcal{Y}$. We can extend this action by homogeneity to an action of $G$ on the cone-off, as follows. Let $Y \in \mathcal Y$ and $x = (y,r)$ be a point of the cone $Z(Y)$. For $g\in G$ we define $g\cdot x = g\cdot (y,r) = (g \cdot y,r)\in Z(g \cdot Y)$. It follows from the definition of the metric of $\dot X$ that this action is an isometry on $\dot X$.

The next result uses techniques from \cite[Proposition 4.10]{coulon_1} for WPD actions on hyperbolic length spaces, and from \cite[Proposition 5.40]{dahmani_guirardel_osin} for acylindrical actions on hyperbolic geodesic spaces.

\begin{lemma}
\label{acyl passes to cone-off}
    If the action of $G$ on $X$ is acylindrical, then so is the induced action on $\dot{X}$.
\end{lemma}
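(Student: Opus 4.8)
The plan is to verify the acylindricity condition of Definition~\ref{acylindr definition} directly for $\dot X$, reducing every configuration to an application of acylindricity of the original action on $X$. Since $\dot X$ is $\dot\delta$-hyperbolic with $\dot\delta = 900\boldsymbol\delta$ by Lemma~\ref{cone off hyperb}, Remark~\ref{acylind characterization} lets me fix the single value $\varepsilon = 100\dot\delta$; by the standing assumption $\rho \ge 10^{10}\boldsymbol\delta$ we have $\varepsilon \ll \rho$, a gap I exploit throughout. I first record three facts. A point $(u,r)$ of a cone $Z(Y)$ lies at distance $\rho - r$ from its base point $u \in X$ and at distance $r$ from the apex $v_Y$, so every point of $\dot X$ is within $\rho$ of $X$. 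Next, by Lemma~\ref{dist on Y compared cone off} the open cone $Z(Y)\setminus Y$ is exactly $B_{\dot X}(v_Y,\rho)$; in particular $d_{\dot X}(v_Y, v_{Y'})\ge\rho$ for $Y\ne Y'$, so no point is $\varepsilon$-close to two apices. Finally, by Lemma~\ref{dist on x compared cone off} we have $\mu(d_X(a,b)) \le d_{\dot X}(a,b) \le d_X(a,b)$ for $a,b\in X$, and since $\mu$ is non-decreasing with $\mu(t)\ge t - ct^3$ (Lemma~\ref{bound on mu for small t}) it is invertible below its supremum $2\rho$; hence $d_{\dot X}(a,b)<2\rho$ forces $d_X(a,b)\le \mu^{-1}(d_{\dot X}(a,b))$, a finite quantity depending only on $\rho$ and $d_{\dot X}(a,b)$.

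Now fix $x,y\in\dot X$ with $d_{\dot X}(x,y)\ge L$ (with $L\gg\rho$ to be chosen) and set $S=\{g\in G: d_{\dot X}(x,gx)\le\varepsilon,\ d_{\dot X}(y,gy)\le\varepsilon\}$. For each point, say $x=(u,r)\in Z(Y)$ (taking $r=\rho$, $u=x$ when $x\in X$), I distinguish two regimes according to the depth $\rho-r$. In the \emph{peripheral} regime ($r>\varepsilon$), replacing $x$ by its base point $u$ costs at most $\rho-r$ on each side, so $d_{\dot X}(u,gu)\le \varepsilon + 2(\rho-r) < 2\rho$ for $g\in S$, and the third fact yields $d_X(u,gu)\le \mu^{-1}(2\rho-\varepsilon)=:\varepsilon_X$, a fixed constant. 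In the \emph{central} regime ($r\le\varepsilon$, i.e.\ $x$ within $\varepsilon$ of the apex), we get $d_{\dot X}(v_Y,gv_Y)\le 3\varepsilon<\rho$, so $gv_Y$ is an apex at distance $<\rho$ from $v_Y$, forcing $gv_Y=v_Y$ and hence $g\in\mathrm{Stab}(Y)$ for every $g\in S$.

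It then remains to bound $|S|$ in each combination of regimes by producing two points of $X$ that are far apart and moved a bounded amount by every $g\in S$, and invoking acylindricity of $X$. If both points are peripheral, their bases satisfy $d_X(u_x,u_y)\ge d_{\dot X}(u_x,u_y)\ge L-2\rho$ and are moved at most $\varepsilon_X$. If $x$ is central (so $S\subseteq\mathrm{Stab}(Y)$) and $y$ peripheral with base $u_y$, I let $p\in Y$ be a nearest-point projection of $u_y$ onto the strongly quasi-convex set $Y$: coarse equivariance of the projection bounds $d_X(p,gp)$ in terms of $d_X(u_y,gu_y)\le\varepsilon_X$, while $d_{\dot X}(v_Y,u_y)\le \rho + d_X(p,u_y)$ combined with $d_{\dot X}(v_Y,u_y)\ge L-\varepsilon-\rho$ forces $d_X(p,u_y)\ge L-\varepsilon-2\rho$, so $u_y$ and $p$ are far apart and moved boundedly. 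The crux is the central–central case. Here $S\subseteq\mathrm{Stab}(Y)\cap\mathrm{Stab}(Y')$, the bases are distinct ($Y=Y'$ would give $d_{\dot X}(x,y)\le2\varepsilon<L$), and since cone depths are at most $\rho$ we get $d_X(Y,Y')\ge d_{\dot X}(v_Y,v_{Y'})-2\rho\ge L-2\varepsilon-2\rho$; as $Y,Y'$ are strongly quasi-convex and far apart, their connecting bridge is coarsely unique and preserved (up to bounded error) by any $g$ stabilising both, so its endpoints $p\in Y$, $q\in Y'$ are moved boundedly while $d_X(p,q)\ge d_X(Y,Y')$ is large.

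In every case, choosing $L$ larger than all the thresholds $L_X$ furnished by acylindricity of $X$ for the finitely many displacement bounds involved, that acylindricity caps the number of admissible $g$ by the corresponding constant $M_X$; taking $M=\max M_X$ proves the claim. The main obstacle is precisely the central regime: there the cone-off distorts the metric so drastically that small displacement in $\dot X$ conveys no direct information in $X$, and one must instead pin $g$ inside an apex stabiliser and transfer the problem back to $X$ through nearest-point projections onto the strongly quasi-convex bases, using the bounded overlap $\Delta(\mathcal Y)\le\Delta_0$ and the separation $d_{\dot X}(v_Y,v_{Y'})\ge\rho$ of distinct cones.
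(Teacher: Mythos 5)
Your argument is correct in outline, but it takes a genuinely different route from the paper's. The paper avoids your peripheral/central case analysis entirely: given $a,b\in\dot X$ with $d(a,b)\ge L'+4\rho$, it takes a $(1,\dot\delta)$-quasi-geodesic $[a,b]_{\dot\delta}$, uses Lemma~\ref{distances within quadrangles} to see that \emph{every} point of this segment is moved by at most $600\dot\delta$ by any $g$ moving $a$ and $b$ by at most $100\dot\delta$, and observes that since each ball $B(v,\rho)$ around an apex has diameter $2\rho$, the segment must contain two points of $X$ at mutual distance at least $L'$; the estimate $t\le\pi\sinh(\mu(t)/2)$ then converts the $600\dot\delta$ displacement into the uniform bound $\pi\sinh(300\dot\delta)$ in $X$, and acylindricity of $G\curvearrowright X$ finishes. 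That trick buys two things over your approach: all of your case combinations collapse into the single case of two points of $X$, so no projection machinery is needed; and the displacement constant fed into the acylindricity of $X$ is $\pi\sinh(300\dot\delta)$ rather than your $\varepsilon_X=\mu^{-1}(2\rho-\varepsilon)$, which is close to $\pi\sinh\rho$ (harmless for the statement, but much cruder). The cost of your route is concentrated in the central--peripheral and central--central cases, which rest on coarse equivariance of (almost-)nearest-point projections onto quasi-convex subsets and coarse uniqueness of the bridge between two far-apart quasi-convex subsets; these are standard for geodesic spaces but are not established in this paper for length spaces (the authors prove even Lemma~\ref{almost uniqueness of quasi-geod} from scratch in the appendix for precisely this kind of reason), so you would need to supply them. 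Your reduction of the central regime to apex stabilizers via the separation of distinct apices is sound, and is essentially the mechanism the paper itself uses later in the proof of Lemma~\ref{acyl passes to quotient}.
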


\begin{proof}
    We will apply Remark~\ref{acylind characterization}. The action on $X$ is acylindrical, therefore, there are positive numbers $L'$ and $M'$ such that, for all $x,x' \in X$, if $d_{X}(x,x')$ is at least $L'$, then there are at most $M'$ elements moving $x$ and $x'$ less than $\pi \sinh(300\dot{\delta})$. We will show that we can take $M'$ and $L'+4 \rho$ as the parameters $M$ and $L$ of Remark~\ref{acylind characterization}.

    Now, let $a,b \in \dot X$ be such that $d(a,b) \geq L' + 4 \rho$, and consider a $(1, \dot \delta)$-quasi-geodesic segment $[a,b]_{\dot \delta}$. If an element $g \in G$ moves both $a$ and $b$ by less than $100 \dot \delta$, we can apply Lemma~\ref{distances within quadrangles} to conclude that any point in this quasi-geodesic segment is moved by less than $600 \dot \delta$ by $g$. Furthermore, since the diameter of the ball around an apex of the cone is $2\rho$, this quasi-geodesic must contain points of $X$ at distance at least $L'$. Therefore, we may assume that $a,b \in X$ and that we need to bound the number of elements of $G$ moving $a$ and $b$ by at most $600 \dot \delta$. By Lemma~\ref{dist on x compared cone off}, we have that \[\mu(d_{X}(a,g \cdot a)) \leq d_{\dot X}(a,g \cdot a) \leq 600 \dot \delta.\] By the choice of $\rho$, we have that $\mu(d_{X}(a,g \cdot a)) < \pi \sinh(\rho)$, and therefore Lemma~\ref{bound on mu for small t} gives \[ d_{X}(a, g \cdot a) \leq \pi \sinh (300 \dot \delta). \] Similarly, we obtain that \[ d_{X}(b, g \cdot b) \leq \pi \sinh (300 \dot \delta), \] so the number of elements $g$ satisfying this property is indeed at most $M'$.
\end{proof}

\subsection{The Small Cancellation Theorem}
\label{subsubsections small cancellation theorems}

In this subsection we will use the cone-off construction introduced in Subsection~\ref{subsubsection cone off} to state a small cancellation theorem. We follow  the exposition by Coulon in \cite[Section 5]{coulon_1} with the difference that we consider the more restricted case of acylindrical actions (instead of WPD ones). To simplify the statements of some results, throughout this subsection we will fix a group $G$ acting on a $\delta$-hyperbolic length space $X$. We assume that this action is acylindrical and non-elementary. We consider a family $\mathcal{Q}$ of pairs $(H,Y)$ such that
\begin{itemize}
    \item there exists an odd integer $n \geq 100$ such that for all $H$ we have a loxodromic element $h'\in G$ such that $H=\langle h'^{n} \rangle$,
    \item $Y$ is the cylinder $Y_{h}$ for $h=h'^{n}$,
    \item there is a normal subgroup $N \trianglelefteq G$ without involutions and containing the normal subgroup $K$ of $G$ generated by the collection $\{H \, : \, (H,Y) \in \mathcal{Q}\}$ such that for every pair $(H,Y) \in \mathcal{Q}$ the element $h'$ is a primitive loxodromic element of $N$, and
    \item there is an action of $G$ on $\mathcal{Q}$ via  $g \cdot (H,Y)=(g^{-1}Hg,g^{-1}Y)$ for  $g \in G$ and $(H,Y) \in \mathcal{Q}$.
\end{itemize}

We want to study the quotient $\hat{G}=G / K$, and to that purpose, we will define a metric space $\hat{X}$ on which $\hat{G}$ acts. Notice first that, since $Y$ is a strongly quasi-convex subset of $X$ for all $(H,Y) \in \mathcal{Q}$ (see Lemma~\ref{axis vs cylinder}), we can construct the cone-off $\dot X$ of radius $\rho$ relative to the family $\{Y \, : \, (H,Y) \in \mathcal{Q}\}$. The group $G$ has a natural action on this space induced by the action of $G$ on $X$. We define the space $\hat{X}$ as the quotient of $\dot X$ by $K$. This space is endowed with an action of $\hat{G}$ (see \cite[Section 5.1]{coulon_1}). We write $\zeta: \dot X \longrightarrow \hat{X}$ for the projection map and $v(\mathcal{Q})$ for the subset of $\dot X$ consisting of the apices of the cones $Z(Y)$ for $(H,Y) \in \mathcal{Q}$. Then $\hat{v}(\mathcal{Q})$ denotes its image in $\hat{X}$. We will also call the elements of $\hat{v}(\mathcal{Q})$ apices. For an element $g \in G$ (respectively, $x \in \dot X$), we will write $\hat{g}$ (respectively, $\hat{x}$) for its image in $\hat{G}$ (respectively, $\hat{X}$).

In order to get some desired properties of the action of $\hat{G}$ on $\hat{X}$, we need the action of $G$ on $X$ to satisfy some conditions related to the quantities $\Delta(\mathcal{Q})$ and $T(\mathcal{Q})$, that play the role of the length of the largest piece and the length of the shortest relator in the usual small cancellation theory, and that are defined as \[ \Delta(\mathcal{Q})= \sup(\{ \text{diam}(Y_{1}^{+5\delta} \cap Y_{2}^{+5\delta}) \, : \, (H_{1},Y_{1}) \neq (H_{2},Y_{2}) \in \mathcal{Q}\}) \] and \[ T(\mathcal{Q})= \inf (\{ [h] \, : \, h \in H \, , \, (H,Y) \in \mathcal{Q} \}). \]

Now we can introduce the aforementioned small cancellation result. Recall that $\boldsymbol{\delta}$ denotes the hyperbolicity constant of the hyperbolic plane.

\begin{lemma}
\label{small cancellation theorem}
There exist positive constants $\rho_{0}$, $\delta_{0}$ and $\Delta_{0}$ that are independent of $X$, $G$ and $\mathcal{Q}$ such that for $\delta \leq \delta_{0}$, $\rho \geq \rho_{0}$, $\Delta(\mathcal{Q}) \leq \Delta_{0}$ and $T(\mathcal{Q}) \geq 8\pi \sinh(\rho)$ the following hold.
    \begin{enumerate}
        \item(see \cite[Proposition 6.4]{coulon_2}) The cone-off $\dot X$ is a $\dot \delta$-hyperbolic length space with $\dot \delta = 900 \boldsymbol{\delta}$.
        \item(see \cite[Proposition 6.7]{coulon_2}) The space $\hat{X}$ is a $\hat{\delta}$-hyperbolic length space with $\hat{\delta}= 64 \cdot 10^{4} \boldsymbol{\delta}$.
        \item(see \cite[Propositions 5.14 and 5.15]{coulon_2}) The group $\hat{G}=G/K$ acts by isometries on $\hat{X}$, and this action is WPD and non-elementary.
        \item For every $(H,Y) \in \mathcal{Q}$, the projection $G \twoheadrightarrow \hat{G}$ induces an isomorphism from $\text{\text{Stab}}(Y) / H$ onto the image of $\text{Stab}(Y)$.
        \label{small cancellation theorem isom of staby}

\item(see \cite[Proposition 5.16]{coulon_1}) \label{g hat preserves elem}The image in $\hat{G}$ of an elliptic (respectively parabolic, loxodromic) subgroup of $G$ is elliptic (respectively parabolic or elliptic, elementary).
 \item (see \cite[Proposition 5.17]{coulon_1}) \label{g hat isomorph of elliptic}Let $E$ be an elliptic subgroup of $G$. The projection $G \twoheadrightarrow \hat{G}$ induces an isomorphism from $E$ onto its image.
\item(see \cite[Proposition 5.18]{coulon_1}) \label{elliptic subgroups of hat g}Let $\hat{E}$ be an elliptic subgroup of $\hat{G}$. Then one of the following holds.
    \begin{enumerate}
        \item There is an elliptic subgroup $E$ of $G$ (for its action on $X$) such that the projection $G \twoheadrightarrow \hat{G}$ induces an isomorphism from $E$ onto $\hat{E}$.
        \item There exists $\hat{v} \in \hat{v}(\mathcal{Q})$ such that $\hat{E}$ is contained in $\text{\text{Stab}}(\hat{v})$.
    \end{enumerate}
    \end{enumerate}
\end{lemma}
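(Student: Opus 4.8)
The plan is to read off each of the seven assertions from the corresponding result of Coulon, after arranging a single triple of constants $(\rho_0,\delta_0,\Delta_0)$ that simultaneously fits all of them. Concretely, I would take $\rho_0$, $\delta_0$ and $\Delta_0$ to be the largest of the constants produced by the cited propositions of \cite{coulon_1} and \cite{coulon_2}; since these are themselves independent of $X$, $G$ and $\mathcal{Q}$, so is the resulting triple. The hypothesis $T(\mathcal{Q}) \geq 8\pi\sinh(\rho)$ is precisely the geometric small cancellation condition (the shortest relator is long compared with the cone radius), and $\Delta(\mathcal{Q}) \leq \Delta_0$ bounds the overlap of distinct cylinders. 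The key preliminary remark is that an acylindrical non-elementary action is in particular WPD (see the remark after Definition \ref{definition wpd}), so every WPD-based statement in Coulon's papers applies directly to $G \curvearrowright X$.

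Granting this, most parts are immediate. Each cylinder $Y$ with $(H,Y)\in\mathcal{Q}$ is strongly quasi-convex by Lemma \ref{axis vs cylinder}, so the family $\{Y : (H,Y)\in\mathcal{Q}\}$ satisfies the hypotheses of the cone-off construction; Part (1) is then \cite[Proposition 6.4]{coulon_2}, Part (2) is \cite[Proposition 6.7]{coulon_2}, and Part (3) is \cite[Propositions 5.14 and 5.15]{coulon_2}. Parts (5), (6) and (7) are \cite[Propositions 5.16, 5.17 and 5.18]{coulon_1} applied to $\hat G = G/K$. In each case there is nothing to prove beyond matching notation and confirming that the small cancellation hypotheses hold.

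The only assertion requiring genuine work is Part (4), and this is where I expect the main obstacle to lie. Fix $(H,Y)\in\mathcal{Q}$, let $v$ be the apex of the cone $Z(Y)$, and set $\hat v = \zeta(v)$. Since $\mathrm{Stab}(Y)$ fixes $v$, its image in $\hat G$ fixes $\hat v$ and hence lies in $\mathrm{Stab}(\hat v)$. As $H \subseteq K$, the subgroup $H$ lies in the kernel of the restriction $\mathrm{Stab}(Y)\to\hat G$ of the projection, so this map factors as a surjection of $\mathrm{Stab}(Y)/H$ onto the image of $\mathrm{Stab}(Y)$; the entire content of Part (4) is the injectivity of this surjection, equivalently the identity $\mathrm{Stab}(Y)\cap K = H$. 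I would establish this by analysing apex stabilizers in the quotient: following Coulon \cite{coulon_1}, one shows that $\mathrm{Stab}(\hat v)$ equals the image of $\mathrm{Stab}(Y)$ and that the induced map $\mathrm{Stab}(Y)/H \to \mathrm{Stab}(\hat v)$ is an isomorphism. Heuristically, the only elements of the normal closure $K$ fixing the cylinder $Y$ are the powers of $h$ generating $H$, since any other relator is carried by a different cylinder which, by $\Delta(\mathcal{Q})\leq\Delta_0$ and $T(\mathcal{Q})\geq 8\pi\sinh(\rho)$, overlaps $Y$ too little to contribute. Making this rigorous — ruling out $g\in (\mathrm{Stab}(Y)\cap K)\setminus H$ — is the delicate step, and it is exactly where the small cancellation inequalities are consumed.
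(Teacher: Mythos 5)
Your proposal takes essentially the same approach as the paper: the lemma is presented there without a proof environment at all, as a compilation of results imported from Coulon's work, which is exactly what you do for parts (1)--(3) and (5)--(7) after fixing a common triple of constants. The only divergence is that you single out part (4) as requiring genuine new work, whereas the paper also treats it as one of Coulon's results (it is the statement that $\mathrm{Stab}(Y)/H$ maps isomorphically onto $\mathrm{Stab}(\hat{v})$, used without further justification in the remark immediately following the lemma); your reduction of part (4) to the identity $\mathrm{Stab}(Y)\cap K=H$ is the right formulation, but your sketch explicitly stops short of proving it, so if you did intend to reprove it rather than cite it you would still owe the delicate small cancellation estimate you flag. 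Since the paper itself imports that item, this is not a defect relative to the paper's own treatment.
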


 \begin{comment}
Furthermore, we have the following estimates:
We finish this subsection by introducing estimates for the invariants (as introduced in Subsection~\ref{subsubsection invariants}) for the action of $\hat{N}$ on $\hat{X}$ in terms of the ones for the action of $N$ on $X$.

\begin{lemma} \begin{enumerate}
\item \cite[Proposition 5.27]{coulon_1}
\label{e in the quotient}
   The number $e(\hat{N}, \hat{X})$ divides $e(N,X)$.
   \item \cite[Proposition 5.28]{coulon_1}
\label{nu in the quotient}
    The invariant $\nu(\hat{N}, \hat{X})$ divides $\nu(N,X)$.
    \item \cite[Corollary 5.30]{coulon_1}
\label{A in the quotient}
    Put $\nu=\nu(\hat{N}, \hat{X})$. Then we have \[A(\hat{N}, \hat{X}) \leq A(N,X)+(\nu +4)\pi \sinh(2L_{S}\hat{\delta}).\]
    \item \label{rinj in the quotient}\cite[Corollary 5.32]{coulon_1}
	Denote by $l$ the greatest lower bound on the asymptotic translation lengths (in $X$) of loxodromic elements of $N$ which do not belong to some $\text{\text{Stab}}(Y)$ for $(H,Y) \in \mathcal Q$. Then \[r_{\text{inj}}( \hat{N}, \hat{X}) \geq \min(\{ \frac{2\rho l}{8 \pi \sinh(\rho)},  \hat{\delta}\}). \]
\end{enumerate}
\end{lemma}
\end{comment}

\begin{remark}
\label{stab of v in the small cancellation theorem}
    Notice that $(H_{1},Y_{1}) \neq (H_{2},Y_{2}) \in \mathcal{Q}$ implies $Y_{1}\neq Y_{2}$ since $\Delta(\mathcal{Q})$ is finite and the cylinder of a loxodromic element is unbounded. In particular, for $(H,Y)\in \mathcal{Q}$ we must have indeed that $H$ is a normal subgroup of $\text{\text{Stab}}(Y)$, so that point (\ref{small cancellation theorem isom of staby}) of Lemma~\ref{small cancellation theorem} actually makes sense. Moreover, this also implies that, for all $\hat{v} \in \hat{v}(Q)$, if we write $Y$ for the subset of a pair $(H,Y) \in \mathcal{Q}$ such that $\hat{v}$ is the image of the vertex corresponding to $Z(Y)$, then $\text{\text{Stab}}(\hat{v}) \cong \text{\text{Stab}}(Y) / H$. Note that $\text{\text{Stab}}(\hat{v})$ is finite since $\text{\text{Stab}}(Y)$ is virtually cyclic infinite (by Lemma~\ref{lox subg in wpd is virt cyc}) and $H$ is infinite, and thus has finite index in $\text{\text{Stab}}(Y)$. Moreover, if $F$ is the maximal finite normal  subgroup of $\text{Stab}(Y)$, then its isomorphic image $\hat F$ in $\text{Stab}(\hat v)$ is normal in this subgroup. Furthermore, $\text{Stab}(\hat v) / \hat F$ is isomorphic either to $D_n$ or to $C_n$, and  the quotient map takes the elliptic elements of $\text{Stab}(Y)$  into either the trivial element or an involution of $D_n$.
    \end{remark}

We now state a number of properties of the space $\hat{X}$ and of the action of the group $\hat{G}$ on it that will prove to be useful further forward.

\begin{lemma}\cite[Proposition 5.8]{coulon_1}
\label{vQ has 2 elements}
    The set $\hat{v}(\mathcal{Q})$ has at least two elements.
\end{lemma}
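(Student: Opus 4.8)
The plan is to argue by contradiction, playing off two facts against each other: on one hand $\hat{G}$ permutes the set of apices $\hat{v}(\mathcal{Q})$, and on the other hand each apex has a \emph{finite} stabiliser in $\hat{G}$. If there were only one apex it would be a global fixed point of $\hat{G}$, forcing $\hat{G}$ to be finite, in contradiction with the non-elementarity of its action on $\hat{X}$.

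First I would check that $\hat{G}$ acts on $\hat{v}(\mathcal{Q})$ by permutations. The group $G$ acts on $\mathcal{Q}$ by $g\cdot(H,Y)=(g^{-1}Hg,g^{-1}Y)$ and hence permutes the apices of the cones $Z(Y)$ inside $\dot{X}$; since $K\trianglelefteq G$ acts trivially on $\hat{X}=\dot{X}/K$, the projection $\zeta\colon\dot{X}\to\hat{X}$ is $G$-equivariant, so $\hat{G}=G/K$ maps $\hat{v}(\mathcal{Q})=\zeta(v(\mathcal{Q}))$ into itself. As $\mathcal{Q}$ is non-empty, so is $\hat{v}(\mathcal{Q})$. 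Suppose towards a contradiction that $\hat{v}(\mathcal{Q})=\{\hat{v}\}$ is a singleton; then every element of $\hat{G}$ fixes $\hat{v}$, i.e.\ $\hat{G}=\mathrm{Stab}_{\hat{G}}(\hat{v})$.

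Next I would identify this stabiliser and note that it is finite. Let $(H,Y)\in\mathcal{Q}$ be the pair with apex $v$ such that $\hat{v}=\zeta(v)$. An element $\hat{g}$ fixes $\hat{v}$ if and only if some lift $g\in G$ satisfies $g\cdot v\in K\cdot v$, i.e.\ if and only if $g\in K\cdot\mathrm{Stab}_{G}(Y)$ (using that the $G$-stabiliser of the apex $v$ is exactly $\mathrm{Stab}_{G}(Y)$). Hence $\mathrm{Stab}_{\hat{G}}(\hat{v})=(K\cdot\mathrm{Stab}_{G}(Y))/K\cong \mathrm{Stab}_{G}(Y)/(\mathrm{Stab}_{G}(Y)\cap K)=\mathrm{Stab}_{G}(Y)/H$, where the equality $\mathrm{Stab}_{G}(Y)\cap K=H$ comes from part~\ref{small cancellation theorem isom of staby} of Lemma~\ref{small cancellation theorem}; this is precisely the isomorphism recorded in Remark~\ref{stab of v in the small cancellation theorem}. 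By Lemma~\ref{lox subg in wpd is virt cyc} the group $\mathrm{Stab}_{G}(Y)$ is virtually cyclic and infinite, while $H$ is infinite of finite index, so $\mathrm{Stab}_{G}(Y)/H$ is finite. Our assumption would therefore make $\hat{G}$ finite; equivalently, it would force $G=K\cdot\mathrm{Stab}_{G}(Y)$.

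This contradicts Lemma~\ref{small cancellation theorem}, according to which the action of $\hat{G}$ on $\hat{X}$ is non-elementary: then $\partial\hat{G}$ has at least two points, so by Lemma~\ref{two points in boundary implies loxodromic} the group $\hat{G}$ contains a loxodromic isometry, which has infinite order, whence $\hat{G}$ is infinite. This rules out the singleton case and proves that $\hat{v}(\mathcal{Q})$ has at least two elements. I do not expect a genuine obstacle here: the whole argument is a short deduction from results already established. The only step requiring care is the identification $\mathrm{Stab}_{\hat{G}}(\hat{v})\cong\mathrm{Stab}_{G}(Y)/H$, i.e.\ verifying that passing to the quotient does not enlarge the apex stabiliser beyond the image of $\mathrm{Stab}_{G}(Y)$ — which is exactly what the inclusion $g\in K\cdot\mathrm{Stab}_{G}(Y)$ above establishes.
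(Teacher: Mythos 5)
The paper does not actually prove this lemma: it is quoted verbatim from Coulon as \cite[Proposition 5.8]{coulon_1}, so there is no in-paper argument to compare yours against, and your proposal is necessarily a different route from the source. As a deduction from the other black-boxed results in the paper it is correct. The reduction is clean: if $\hat{v}(\mathcal{Q})$ were the singleton $\{\hat{v}\}$, then $\hat{G}$, which permutes $\hat{v}(\mathcal{Q})$, would fix $\hat{v}$ and hence coincide with $\mathrm{Stab}(\hat{v})\cong\mathrm{Stab}(Y)/H$, which is finite by Remark~\ref{stab of v in the small cancellation theorem} (via Lemma~\ref{lox subg in wpd is virt cyc} and part~(\ref{small cancellation theorem isom of staby}) of Lemma~\ref{small cancellation theorem}); this is incompatible with the non-elementarity of the action of $\hat{G}$ on $\hat{X}$, which forces $\hat{G}$ to contain a loxodromic (hence infinite-order) element by Lemma~\ref{two points in boundary implies loxodromic}. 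You correctly isolate the one delicate step, namely that $\mathrm{Stab}_{\hat{G}}(\hat{v})$ is exactly $(K\cdot\mathrm{Stab}_{G}(Y))/K$ and not something larger; this does use that distinct apices of $\hat{X}$ are uniformly separated, so that $\hat{g}\cdot\hat{v}=\hat{v}$ really forces $g\cdot v\in K\cdot v$. The one caveat worth flagging is that your argument consumes the non-elementarity of the \emph{quotient} action (item~(3) of Lemma~\ref{small cancellation theorem}, i.e.\ \cite[Propositions 5.14 and 5.15]{coulon_2}), which in the source material is a heavier statement than the apex count; for a self-contained proof one would need to check that those propositions do not themselves rely on the apex count. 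The standard argument avoids this by using only the non-elementarity of $G\curvearrowright X$: since $\mathcal{Q}$ is non-empty and $G$-invariant and each $\mathrm{Stab}(Y)$ is elementary, $\mathcal{Q}$ must contain two pairs with distinct cylinders (otherwise $G=\mathrm{Stab}(Y)$ would be virtually cyclic), yielding two apices in $\dot{X}$, and the separation estimate $d_{\hat{X}}(\hat{v}_1,\hat{v}_2)\geq 2\rho$ for distinct apices keeps them distinct in $\hat{X}$. Your version buys brevity at the cost of that dependency; otherwise it is sound.
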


\begin{lemma}\cite[Corollary 5.10]{coulon_1}
\label{image out stab y fixes uniq vert}
    Let $(H,Y) \in \mathcal{Q}$ and $v$ be the apex of $Z(Y)$. Let $\hat{g} \in \text{\text{Stab}}(\hat{v})$ be such that it is not the image of an elliptic element of $\text{\text{Stab}}(Y)$. Then, there exists $k \in \mathbb{Z}$ such that $A_{\hat{g}^{k}}$ is contained in the $6 \hat \delta$-neighbourhood of $\hat v$. In particular, $\hat{v}$ is the unique apex of $\hat{v}(\mathcal{Q})$ fixed by $\hat{g}$.
\end{lemma}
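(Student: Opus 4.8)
The plan is to exploit the fact that, near the apex $\hat v$, the quotient space $\hat X$ looks like a cone over a circle on which $\hat g$ acts as a rotation: a rotation of a cone fixes the apex and displaces every other point by an amount growing with the radius, so its almost–fixed–point set is forced to cluster near the apex. Concretely, I would first understand $\hat g$ through a lift $g\in\mathrm{Stab}(Y)$. By Lemma~\ref{lox subg in wpd is virt cyc}, $\mathrm{Stab}(Y)$ is virtually cyclic, so each of its elements is either elliptic (finite order) or loxodromic (infinite order), and by Remark~\ref{stab of v in the small cancellation theorem} one has $\mathrm{Stab}(\hat v)\cong\mathrm{Stab}(Y)/H$ with $H=\langle h\rangle$ infinite. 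Since $\hat g$ is not the image of an elliptic element, its whole coset $gH$ avoids elliptic elements; in particular $\hat g\neq 1$ and, in the dihedral-type case, $g$ avoids the reflections (these have order $2$, hence are elliptic). Recalling that $Y=Y_h$ is the cylinder of $h$ and that $h$ translates along it, the quotient by $H$ turns $Y$ into the circle $C=Y/\langle h\rangle$ of circumference $L=[h]$, with $L\ge T(\mathcal Q)\ge 8\pi\sinh(\rho)$. Using the small cancellation description of the quotient in \cite{coulon_1}, I would identify $B_{\hat X}(\hat v,\rho)$ with (a ball in) the cone of radius $\rho$ over $C$, on which $\hat g$ fixes $\hat v$ and acts as the nontrivial rotation by some length $\alpha\in(0,L)$ induced by the translation of $g$ along $Y$.

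Next I would pick the power $k$. Writing $m\ge 2$ for the finite order of $\hat g$, the rotation lengths $k\alpha\bmod L$, for $1\le k\le m$, are precisely the multiples of $L/m$; choosing $k$ with $k\alpha\bmod L$ closest to $L/2$ makes the uniform circle displacement
\[
d_{C}(y,\hat g^{k}y)=\min\!\left(k\alpha\bmod L,\;L-(k\alpha\bmod L)\right)
\]
at least $L/4\ge 2\pi\sinh(\rho)$. Feeding this into the cone metric recorded after Definition~\ref{definition cone over X},
\[
\cosh d_{\hat X}\big((y,r),\hat g^{k}\cdot(y,r)\big)=\cosh^{2}(r)-\sinh^{2}(r)\cos\!\left(\min\!\left(\pi,\tfrac{d_{C}(y,\hat g^{k}y)}{\sinh(\rho)}\right)\right),
\]
the argument of the cosine equals $\pi$, so $\hat g^{k}$ moves every cone point at radius $r\in(0,\rho)$ by exactly $2r$. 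Since $\hat g^{k}$ fixes $\hat v$ we have $[\hat g^{k}]=0$, whence $A_{\hat g^{k}}=\{x:d(x,\hat g^{k}\cdot x)<8\hat\delta\}$; in particular every cone point at radius $r>6\hat\delta$ is moved by $2r>12\hat\delta$ and is excluded from $A_{\hat g^{k}}$.

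It then remains to rule out points $\hat x$ lying outside the cone over $C$, that is, with $d(\hat v,\hat x)\ge\rho$ (by Lemma~\ref{dist on Y compared cone off} every point within $\rho$ of $\hat v$ is such a cone point). Assuming $\hat x\in A_{\hat g^{k}}$, I would take a $(1,\hat\delta)$-quasi-geodesic $\gamma$ from $\hat v$ to $\hat x$ and a comparison point $p=\gamma(R)$ at a radius $R$ chosen with $6\hat\delta<R<\rho$ and $2R$ larger than the thinness constant below. As $\hat g^{k}$ fixes $\hat v$, the path $\hat g^{k}\gamma$ is a quasi-geodesic from $\hat v$ to $\hat g^{k}\hat x$ carrying $\hat g^{k}p$ at parameter $R$. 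Since $d(\hat x,\hat g^{k}\hat x)<8\hat\delta$ while $d(\hat v,\hat x)\ge\rho\gg\hat\delta$, applying Lemma~\ref{thin qg triangles} to the quasi-geodesic triangle $[\hat v,\hat x,\hat g^{k}\hat x]$ shows $p$ is $O(\hat\delta)$-close to $\hat g^{k}\gamma$, which bounds $d(p,\hat g^{k}\cdot p)$ by the thinness constant; but $p$ is a radius-$R$ cone point, moved by exactly $2R$, a contradiction. Hence $A_{\hat g^{k}}\subseteq\overline B(\hat v,6\hat\delta)$. Finally, any apex $\hat w$ fixed by $\hat g$ satisfies $\hat w\in A_{\hat g^{k}}$, so $d(\hat v,\hat w)\le 6\hat\delta$; since two distinct open cones are disjoint in $\dot X$, their apices are at distance $\ge\rho$ there, and this survives in $\hat X$, forcing $\hat w=\hat v$ and giving the ``in particular'' clause.

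The main obstacle I anticipate is the very first step: making rigorous the local identification of $\hat X=\dot X/K$ near $\hat v$ with the cone over $Y/\langle h\rangle$, together with the fact that $\hat g$ acts there as a genuine rotation rather than a reflection (the latter being precisely what the hypothesis ``not the image of an elliptic element'' excludes). This is where the small cancellation machinery of \cite{coulon_1}, controlling how $K$ acts around the apices, is indispensable; granting it, the displacement computation in the cone and the hyperbolic thin-triangle estimate are routine.
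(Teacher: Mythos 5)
This lemma is not proved in the paper at all: it is imported verbatim from Coulon as \cite[Corollary 5.10]{coulon_1}, so there is no in-paper argument to compare against. Your reconstruction is, in substance, Coulon's own proof: the ball $B_{\hat X}(\hat v,\rho)$ is identified with a ball in the cone over $\bar Y=Y/H$, the hypothesis that $\hat g$ is not the image of an elliptic element forces every lift in $\mathrm{Stab}(Y)$ to be loxodromic and orientation-preserving on $Y$ (reflection-type elements of a dihedral-type virtually cyclic group are elliptic, as you note), the induced action on $\bar Y$ is a rotation whose $k$-th power can be chosen to displace every point by at least $L/4\geq 2\pi\sinh(\rho)-O(\hat\delta)\geq\pi\sinh(\rho)$, and then the cone metric formula saturates at angle $\pi$ and gives displacement $2r$ at radius $r$, which with $[\hat g^{k}]=0$ expels all points of radius $>4\hat\delta$ from $A_{\hat g^{k}}$; the thin-triangle argument handles points outside the cone, and the ``in particular'' clause follows since distinct apices are at distance at least $2\rho\gg 6\hat\delta$.

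Two caveats. First, the step you yourself flag --- that $\zeta$ restricts to an isometry from a ball in $Z(Y)/H$ onto $B_{\hat X}(\hat v,\rho)$ and intertwines the actions --- is genuinely the load-bearing input (it is Coulon's Proposition 5.9 and the surrounding small cancellation analysis), and your proof is only complete modulo that black box; since the paper black-boxes the whole corollary, this is acceptable here, but it is not a ``routine'' granting. Second, the assertion that a cone point at radius $r$ is moved by \emph{exactly} $2r$ is only safe when the distance is realized inside the cone; for $r$ close to $\rho$ a path exiting through the boundary sphere and travelling in $\hat X\setminus B(\hat v,\rho)$ could a priori be shorter, so you should either restrict the exact computation to $r\leq\rho/2$ (which suffices, since you only need displacement $\geq 8\hat\delta$ and $\min(2r,2(\rho-r))>8\hat\delta$ there) and let your thin-triangle argument absorb the annulus $\rho/2\leq d(\hat v,\hat x)$, or quote the lower bound $d_{\hat X}(y,\hat g^{k}y)\geq\mu(\pi\sinh\rho)=2\rho$ for boundary points. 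With those adjustments the argument is correct and matches the source.
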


\begin{lemma}\cite[Proposition 3.21]{coulon_2}
\label{lifting figures}
    Let $\alpha \geq 0$ and $d \geq \alpha$. Let $\hat{Z}$ be an $\alpha$-quasi-convex subset of $\hat{X}$. We assume that, for every $\hat{v} \in \hat{v}(\mathcal{Q})$, $\hat{Z}$ does not intersect $B(\hat{v},\rho /20+d+10\hat{\delta})$. Then, there exists a subset $Z$ of $\dot X$ with the following properties.
    \begin{enumerate}
        \item The map $\zeta: \dot X \longrightarrow \hat{X}$ induces an isometry from $Z$ onto $\hat{Z}$.
        \item For every $\hat{g} \in \hat{G}$, if $\hat{g} \cdot \hat{Z}$ lies in the $d$-neighbourhood of $\hat{Z}$, then there exists a preimage $g$ of $\hat{g}$ such that for every $z,z' \in Z$, we have that \[ d_{\dot X}(g \cdot z',z)=d_{\hat{X}}(\hat{g} \cdot \hat{z}',\hat{z}). \]
        \item The projection $G \twoheadrightarrow \hat{G}$ induces an isomorphism from $\text{\text{Stab}}(Z)$ onto $\text{\text{Stab}}(\hat{Z})$.
    \end{enumerate}
\end{lemma}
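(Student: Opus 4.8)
The plan is to exploit that the projection $\zeta\colon \dot X \to \hat X$ is, away from the apices, a local isometry with a definite injectivity scale, and then to build $Z$ by lifting paths, using hyperbolicity to guarantee that the lift is globally well defined. This is the geometric analogue of the fact that a covering map admits path-lifting, with hyperbolicity playing the role that simple-connectivity plays in classical covering-space theory.

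First I would isolate the local model for $\zeta$. The quotient $\hat X = \dot X / K$ is taken by the normal subgroup $K$ generated by the cyclic groups $H$, each of which fixes one apex; the small cancellation hypotheses $\Delta(\mathcal{Q})\le\Delta_0$ and $T(\mathcal{Q})\ge 8\pi\sinh(\rho)$ force the cones to be pairwise far apart and the relators long, so that $K$ moves every point lying outside the $\rho/20$-neighbourhood of $v(\mathcal{Q})$ by a large amount. Consequently $\zeta$ restricts to an isometry from any ball $B(z,\rho/20)$ onto its image as soon as $z$ lies at distance more than $\rho/20+10\hat\delta$ from the apices (this is the incarnation, in the cone-off machinery underlying Lemma~\ref{small cancellation theorem}, of the statement that the quotient is a covering away from the cone points). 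The hypothesis that $\hat Z$ avoids $B(\hat v,\rho/20+d+10\hat\delta)$ is precisely what keeps $\hat Z$, together with a controlled neighbourhood of it, inside this good region.

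Next I would construct $Z$. Fix $\hat z_0\in\hat Z$ and a preimage $z_0\in\dot X$. For an arbitrary $\hat z\in\hat Z$, use $\alpha$-quasi-convexity of $\hat Z$ and $\hat\delta$-hyperbolicity of $\hat X$ to choose a $(1,\hat\delta)$-quasi-geodesic from $\hat z_0$ to $\hat z$ contained in $\hat Z^{+(\alpha+C\hat\delta)}$ for a universal constant $C$; by the avoidance hypothesis this path stays at distance more than $\rho/20+10\hat\delta$ from every apex. Lifting it through the local isometries above, starting at $z_0$, produces a preimage $z$ of $\hat z$, and I set $Z$ to be the collection of all such endpoints. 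The delicate point is well-definedness: if two admissible quasi-geodesics join $\hat z_0$ to $\hat z$, Lemma~\ref{almost uniqueness of quasi-geod} shows that they stay uniformly close, so they cobound a thin quasi-geodesic bigon lying entirely in the good region, along which the two lifts must agree. This monodromy-killing argument is the heart of the proof, and it is where the constant $\rho/20+d+10\hat\delta$ is calibrated so that the thin filling never meets a cone; I expect this step to be the main obstacle, since it is where the quantitative control on distances to the apices must be combined with thinness to ensure every intermediate path occurs where $\zeta$ is a genuine local isometry.

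Granting well-definedness, property~(1) follows by lifting a quasi-geodesic between two arbitrary points $\hat z,\hat z'\in\hat Z$ and reading off $d_{\dot X}(z,z')=d_{\hat X}(\hat z,\hat z')$ from the local isometry together with quasi-convexity, which keeps the realizing path in the good region. For property~(2), if $\hat g\cdot\hat Z$ lies in the $d$-neighbourhood of $\hat Z$ then $\hat g\cdot\hat Z$ is again quasi-convex and avoids the apex balls (this is exactly the role of $d$ in the statement); picking any preimage $g_0\in G$ of $\hat g$, the set $g_0\cdot Z$ is some lift of $\hat g\cdot\hat Z$, and correcting $g_0$ by a suitable $k\in K$ yields $g=kg_0$ for which $d_{\dot X}(g\cdot z',z)=d_{\hat X}(\hat g\cdot\hat z',\hat z)$ for all $z,z'\in Z$. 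Finally, for property~(3) the map $\zeta$ sends $\mathrm{Stab}(Z)$ into $\mathrm{Stab}(\hat Z)$; it is injective because distinct $K$-translates of $Z$ are disjoint (as $K$ moves points far in the good region), and surjective because, by uniqueness of lifts, every element of $\mathrm{Stab}(\hat Z)$ has a representative preserving $Z$.
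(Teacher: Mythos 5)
The paper does not prove this lemma: it is imported verbatim (up to the choice of constants) from Proposition~3.21 of \cite{coulon_2}, so the only proof to measure yours against is Coulon's --- and your outline does follow the same route. Away from the apices the projection $\zeta\colon\dot X\to\hat X$ is injective at a definite scale and the $K$-orbits are widely separated; one fixes a lift of a base point of $\hat Z$, propagates it along quasi-geodesics kept inside this good region by $\alpha$-quasi-convexity and the avoidance hypothesis, kills the monodromy via Lemma~\ref{almost uniqueness of quasi-geod}, and reads off properties (1)--(3) from uniqueness of lifts. That is an accurate reconstruction of the strategy. Be aware, however, that the two facts carrying all the weight --- that $\zeta$ restricts to an isometry on balls of radius comparable to $\rho/20$ centred outside a neighbourhood of $v(\mathcal{Q})$, and that every non-trivial element of $K$ moves such points by a large definite amount --- are not soft consequences of $\Delta(\mathcal{Q})\le\Delta_0$ and $T(\mathcal{Q})\ge 8\pi\sinh(\rho)$: they are the Greendlinger-type statements that form the technical core of \cite{coulon_2} (and of Lemma~\ref{small cancellation theorem} here), and your sketch assumes them rather than proves them. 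Granting those, the remaining points are fine modulo small print: in (1) the local isometry along a single $(1,\ell)$-quasi-geodesic only gives $d_{\dot X}(z,z')\le d_{\hat X}(\hat z,\hat z')+O(\ell)$, so you must let $\ell\to 0$ using that $\dot X$ is a length space and combine with the fact that $\zeta$ is $1$-Lipschitz; and in (2) the single correcting element $k\in K$ must work simultaneously for all points of $Z$, which again invokes uniqueness of lifts over the whole of $\hat Z$. In short: right strategy, consistent with the cited source, but an outline whose hard content is outsourced to exactly the black box that the paper itself invokes.
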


The next result studies the structure of loxodromic subgroups in the quotient $\hat{G}$. The proof follows very closely the proof of \cite[Proposition 5.26]{coulon_1}.

%\textcolor{red}{Do we really need the following lemma? It is only used in the proof of Lemma 5.8, but we could simply use Lemma 5.3.}

\begin{lemma}\emph{(Compare \cite[Proposition 5.26]{coulon_1})}
\label{loxodromic subgroups in the quotient}
    Let $\hat E$ be a loxodromic subgroup of $\hat{G}$, $\hat{F}$ its maximal normal finite subgroup. Then one of the following holds.
    \begin{enumerate}
        \item There exists a loxodromic subgroup $E$ of $G$ such that the projection $G \twoheadrightarrow \hat{G}$ induces an isomorphism from $E$ onto $\hat E$ (in particular, it induces an isomorphism from the maximal normal finite subgroup $F$ of $E$ onto $\hat F$).
        \item There exists a pair $(H,Y)$ in $\mathcal{Q}$ with the following property: there is a finite normal subgroup $F'$ of $\text{Stab}(Y)$ such that its image is a subgroup of index at most $2$ in $\hat{F}$. Moreover, if $\text{Stab}(Y)$ is of cyclic type, then $\hat{F}$ coincides with the image of $F'$.
    \end{enumerate}

%In fact, the following should be true (generalizing Proposition 5.26 in Rémi's paper): there exists a subgroup $\hat H$ of $\hat E$ of index at most 2 such that there exists a loxodromic subgroup $H$ of $G$ such that the projection induces an isomorphism from $H$ onto $\hat H$.
\end{lemma}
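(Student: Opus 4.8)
The plan is to follow the strategy of \cite[Proposition 5.26]{coulon_1}, the whole argument turning on a single geometric dichotomy: whether or not the cylinder of a primitive loxodromic element of $\hat E$ can be separated from the apices $\hat{v}(\mathcal{Q})$. To set this up, note first that since the action of $\hat G$ on $\hat X$ is WPD (Lemma~\ref{small cancellation theorem}(3)), the subgroup $\hat E$ is virtually cyclic and is contained in a unique maximal loxodromic subgroup (Lemma~\ref{lox subg in wpd is virt cyc}); I would pick a primitive loxodromic $\hat h \in \hat E$ and consider its cylinder $\hat Y = Y_{\hat h}$, which is strongly quasi-convex (Lemma~\ref{axis vs cylinder}) and is preserved by $\hat E$ since $\hat E$ preserves $\{\hat h^{\pm\infty}\}$. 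Applying Lemma~\ref{charac subs of max finite sgrp} to the normal finite subgroup $\hat F \trianglelefteq \hat E$, the cylinder $\hat Y$ lies in a uniform neighbourhood of the characteristic set $C_{\hat F}$ (which is quasi-convex by Lemma~\ref{charac subset of elliptic is quasi convex}); this is the mechanism that ties the finite part $\hat F$ to the geometry of $\hat h$.

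\textbf{Case 1 (the cylinder avoids the apices).} If $\hat Y$, or its hull to be safe, stays outside every ball $B(\hat v, \rho/20 + d + 10\hat\delta)$, then the lifting lemma (Lemma~\ref{lifting figures}) produces a subset $Z \subseteq \dot X$ mapped isometrically onto $\hat Y$, with $\mathrm{Stab}(Z) \cong \mathrm{Stab}(\hat Y)$ and compatibility with the $\hat G$-action. I would then take $E$ to be the preimage of $\hat E \le \mathrm{Stab}(\hat Y)$ under this isomorphism: it maps isomorphically onto $\hat E$, and a generator translating along $\hat Y$ lifts to an element translating along $Z$ (which, avoiding the cones, lies in $X \subseteq \dot X$), hence to a genuinely loxodromic element of $G$. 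The maximal normal finite subgroups of $E$ and $\hat E$ then correspond under the isomorphism, yielding conclusion~(1).

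\textbf{Case 2 (the cylinder meets an apex).} If instead $\hat Y$ comes within the forbidden distance of some apex $\hat v$, then $\hat v$ lies near $C_{\hat F}$, so $\hat F$ almost-fixes $\hat v$ and hence $\hat F \le \mathrm{Stab}(\hat v)$. Writing $\hat v$ for the apex attached to $(H,Y) \in \mathcal{Q}$, Remark~\ref{stab of v in the small cancellation theorem} identifies $\mathrm{Stab}(\hat v) \cong \mathrm{Stab}(Y)/H$ and exhibits a quotient $\mathrm{Stab}(\hat v)/\hat F_0 \cong D_n$ or $C_n$, where $\hat F_0$ is the image of the maximal finite normal subgroup of $\mathrm{Stab}(Y)$ and where the images of elliptic elements of $\mathrm{Stab}(Y)$ lie in $\{1\}$ together with the involutions. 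The key observation is that every element of $\hat F$ is the image of an \emph{elliptic} element of $\mathrm{Stab}(Y)$: otherwise, by Lemma~\ref{image out stab y fixes uniq vert}, it would fix $\hat v$ as its \emph{unique} apex, contradicting the fact that, being normalized by the loxodromic $\hat h$, the subgroup $\hat F$ also fixes the neighbouring apex $\hat h\cdot\hat v \neq \hat v$. Consequently the image of $\hat F$ in $\mathrm{Stab}(\hat v)/\hat F_0$ consists of $1$ and involutions coming from elliptic elements; combined with the cyclic/dihedral structure this image has order at most $2$, and is trivial in the cyclic case since there the elliptic elements are exactly the torsion subgroup. Pulling this back to an appropriate finite normal subgroup $F'$ of $\mathrm{Stab}(Y)$ gives conclusion~(2).

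I expect the main obstacle to be Case~2, specifically pinning down the normal subgroup $F'$ of $\mathrm{Stab}(Y)$ whose image sits inside $\hat F$ with index at most $2$, and verifying its normality: the element $\hat h$ normalizing $\hat F$ does \emph{not} come from $\mathrm{Stab}(Y)$ (it moves $\hat v$), so normality of $F'$ cannot be transported directly and must instead be extracted from the $D_n$-versus-$C_n$ bookkeeping together with the description $\hat F = \bigcap_{k} \hat h^{k}\hat F_0\hat h^{-k}$. This is precisely the delicate point of \cite[Proposition 5.26]{coulon_1}, and adapting that bookkeeping to the present (more restrictive) acylindrical setting is where the real work lies.
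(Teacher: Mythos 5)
Your proposal is correct and follows the same overall strategy as the paper's proof — a lift-or-hit-an-apex dichotomy, resolved via Lemma~\ref{lifting figures} in the first branch and via Remark~\ref{stab of v in the small cancellation theorem} in the second — but it differs in two local choices. First, the paper runs the dichotomy on the characteristic set $C_{\hat F}$ itself (does it meet $B(\hat v,\rho/10)$ for some apex $\hat v$?) rather than on the cylinder $Y_{\hat h}$ of a chosen loxodromic $\hat h\in\hat E$; since $C_{\hat F}$ is already $\hat E$-invariant and quasi-convex (Lemma~\ref{charac subset of elliptic is quasi convex}), this spares the transfer of proximity from $Y_{\hat h}$ to $C_{\hat F}$ via Lemma~\ref{charac subs of max finite sgrp}, but the two versions are equivalent up to constants and your Case~1 lift of the cylinder works just as well. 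Second, and more substantively, your mechanism for showing that every element of $\hat F$ is the image of an elliptic element of $\text{Stab}(Y)$ is genuinely different: you use normality of $\hat F$ in $\hat E$ to see that $\hat F$ fixes both $\hat v$ and $\hat h\cdot\hat v\neq\hat v$, contradicting the uniqueness of the fixed apex in Lemma~\ref{image out stab y fixes uniq vert}, whereas the paper uses the other half of that lemma (the axis of a power of $\hat u$ lies in a $6\hat{\delta}$-ball around $\hat v$) to force $C_{\hat F}$ into a bounded set and then contradicts the unboundedness of the cylinder via Lemma~\ref{charac subs of max finite sgrp}; your route is arguably the more direct of the two. Finally, the point you flag as the remaining delicate work — extracting the normal subgroup $F'$ of $\text{Stab}(Y)$ and the index-$\leq 2$/cyclic-type bookkeeping — is not carried out in the paper's proof either, which simply concludes with an appeal to Remark~\ref{stab of v in the small cancellation theorem} (and to \cite{coulon_1}), so your write-up is at the same level of completeness there.
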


\begin{proof}
    Let $\hat{E}$ be a loxodromic subgroup of $\hat G$ and let $\hat{F}$ be the maximal normal finite subgroup of $\hat E$. Now, since $\hat{E}$ normalizes $\hat F$, then $C_{\hat F}$ is invariant under $\hat{E}$. Thus, $ \hat E \leq \text{Stab}(C_{\hat F})$. Moreover, by Lemma~\ref{charac subset of elliptic is quasi convex}, $C_{\hat F}$ is $9 \hat \delta$-quasi-convex.

    Suppose now that for every $\hat v \in \hat v (\mathcal{Q})$ we have that $C_{\hat F}$ does not intersect $B(\hat v , \rho /10)$. By the choice of $\rho$, we have that $\rho/10 > \rho/20 + 19 \hat \delta$, so we can apply Lemma~\ref{lifting figures} to get a subgroup $E$ of $G$ such that the projection $G \twoheadrightarrow \hat G$ induces an isomorphism from $E$ onto $\hat{E}$. Write $F$ for the preimage of $\hat F$ in $E$. Note that $E$ normalizes $F$. Furthermore, as the preimage of a loxodromic subgroup of $\hat G$, by Theorem~\ref{small cancellation theorem} (\ref{g hat preserves elem}) $E$ is loxodromic. Therefore, we are in the first case.

    Assume now there is some $\hat v \in \hat v (\mathcal{Q})$ such that $C_{\hat F}$ intersects $B(\hat v , \rho /10)$.  Let $\hat x$ be a point in this intersection. Then, for all $\hat u \in \hat F$, the triangle inequality gives \[ d(\hat v , \hat u \cdot \hat v) \leq d(\hat v , \hat x) + d(\hat x , \hat u \cdot \hat x) + d(\hat u \cdot \hat x, \hat u \cdot \hat v) \leq \rho /5 + 11 \hat \delta < 2 \rho. \]
    Thus, since $\hat G$ acts on $\hat v (\mathcal{Q})$ and the distance between distinct apices is at least $2 \rho$, then $\hat u$ must be in $\text{Stab}(\hat v)$. Let $(H,Y) \in \mathcal{Q}$ be a pair such that the image of the apex of its cone on $\hat X$ is $\hat v$.
    
    Suppose now that $\hat F$ contains an element $\hat u$ that is not the image of an elliptic element of $\text{Stab}(Y)$. Then, by Lemma~\ref{image out stab y fixes uniq vert}, a power $\hat{u}^{k}$ of this element has its axis contained in the $6 \hat \delta$-neighbourhood of $\hat v$. This implies that the characteristic subset $C_{\hat F}$ is contained in the $15 \hat \delta$-neighbourhood of $\hat v$: indeed, for $x\in C_{\hat F}$ we have $d(x,u^k(x))\leq 11\delta$ (by definition of $C_{\hat F}$), thus by Proposition 3.10 in \cite{coulon_1} the distance between $x$ and the axis of $u^k$ is at most $8.5\delta$; now, since the axis of $u^k$ is contained in the $6 \hat \delta$-neighbourhood of $\hat v$, we get $d(x,\hat v)\leq 14.5\delta$. By Lemma~\ref{charac subs of max finite sgrp}, the cylinder $Y_{\hat g}$ is contained in the $51 \hat \delta$-neighbourhood of the $C_{\hat F}$. But $Y_{\hat g}$ has bi-infinite local quasi-geodesics joining $\hat{g}^{\pm \infty}$, so it is not contained in a bounded set.

    Therefore, $\hat F$ cannot contain an element $\hat u$ that is not the image of an elliptic element of $\text{Stab}(Y)$. Now by Remark~\ref{stab of v in the small cancellation theorem} we are in the second case.
\end{proof}

The next result uses the techniques from \cite[Proposition 5.14]{coulon_1} for WPD actions on hyperbolic length spaces, and in \cite[Proposition 5.33]{dahmani_guirardel_osin}  for acylindrical actions on hyperbolic geodesic spaces.

\begin{lemma}
\label{acyl passes to quotient}
    Suppose that there exists $m \in \mathbb{N}$ such that $\lvert \text{Stab}(\hat{v}) \rvert \leq m$ for all $\hat{v} \in \hat{v}(\mathcal{Q})$. Then, the action of $\hat{G}$ on $\hat{X}$ is acylindrical.
\end{lemma}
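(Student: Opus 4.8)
The plan is to verify acylindricity directly in the reduced form of Remark~\ref{acylind characterization}, i.e.\ to produce constants $L,M>0$ so that whenever $\hat x,\hat y\in\hat X$ satisfy $d(\hat x,\hat y)\ge L$, at most $M$ elements $\hat g\in\hat G$ move both $\hat x$ and $\hat y$ by at most $\varepsilon:=100\hat\delta$. The two ingredients are the acylindricity of the $G$-action on the cone-off $\dot X$ (Lemma~\ref{acyl passes to cone-off}) and the standing hypothesis $\lvert\mathrm{Stab}(\hat v)\rvert\le m$ for every apex. The whole argument is organized around a dichotomy: whether a fixed quasi-geodesic from $\hat x$ to $\hat y$ passes close to an apex or stays far from all of them.

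First I would fix a $(1,\hat\delta)$-quasi-geodesic $\hat\gamma=[\hat x,\hat y]_{\hat\delta}$ (which exists since $\hat X$ is a length space) and establish a uniform displacement bound along it: if $\hat g$ moves both endpoints by at most $\varepsilon$, then every point $\hat p\in\hat\gamma$ is moved by at most $C:=5\varepsilon+54\hat\delta$. This follows by applying Lemma~\ref{distances within quadrangles} to the quasi-geodesic quadrangle with vertices $\hat x,\hat y,\hat g\hat y,\hat g\hat x$: its two short sides have length $\le\varepsilon$, its two long sides $\hat\gamma$ and $\hat g\hat\gamma$ have equal length, and the lemma bounds the distance between corresponding points $\hat p$ and $\hat g\hat p$. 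The key feature is that $C$ depends only on $\hat\delta$, not on $\hat g$ or $\hat p$.

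Next I would set $R:=\rho/20+C+10\hat\delta$; by the choice of $\rho$ (namely $\rho\ge 10^{10}\boldsymbol\delta$ against $\hat\delta=64\cdot10^4\boldsymbol\delta$) one has $R<\rho$ and even $2R+C<2\rho$. Case A: some apex $\hat v$ satisfies $\hat\gamma\cap B(\hat v,R)\neq\emptyset$; I fix a witness $\hat p\in\hat\gamma$ with $d(\hat p,\hat v)\le R$, depending only on $\hat\gamma$. For every admissible $\hat g$ the displacement bound gives $d(\hat v,\hat g\hat v)\le 2R+C<2\rho$, and since $\hat G$ permutes the apices and distinct apices lie at distance $\ge 2\rho$ apart (as in the proof of Lemma~\ref{loxodromic subgroups in the quotient}), this forces $\hat g\hat v=\hat v$. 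Hence all admissible $\hat g$ lie in $\mathrm{Stab}(\hat v)$, of which there are at most $m$. Case B: $\hat\gamma$ avoids $B(\hat v,R)$ for every apex. Then $\hat\gamma$ is an $\alpha$-quasi-convex set with $\alpha=O(\hat\delta)\le C$ (from the thinness of quasi-geodesic triangles, Lemma~\ref{thin qg triangles}) that misses every $B(\hat v,\rho/20+C+10\hat\delta)$, so Lemma~\ref{lifting figures} with $d=C$ yields a lift $Z\subseteq\dot X$ on which $\zeta$ restricts to an isometry and, for each admissible $\hat g$, a preimage $g\in G$ with $d_{\dot X}(g\cdot z',z)=d_{\hat X}(\hat g\hat z',\hat z)$ for all $z,z'\in Z$. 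Writing $a,b\in Z$ for the lifts of $\hat x,\hat y$, this gives $d_{\dot X}(g\cdot a,a),d_{\dot X}(g\cdot b,b)\le\varepsilon$ while $d_{\dot X}(a,b)=d_{\hat X}(\hat x,\hat y)\ge L$. Since preimages of distinct elements of $\hat G=G/K$ lie in distinct cosets of $K$, the assignment $\hat g\mapsto g$ is injective, so the number of admissible $\hat g$ is at most the number of $g\in G$ displacing $a$ and $b$ by at most $\varepsilon$; taking $L$ to be the acylindricity constant of the $\dot X$-action for this $\varepsilon$ bounds this by the corresponding $M'$. Setting $M:=\max(m,M')$ then finishes the verification.

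The main obstacle is the calibration between the two regimes. Everything hinges on the displacement constant $C$ (hence the enlarged radius $R$) being small compared with $\rho$: this is exactly what makes Case A force $\hat g$ to fix the apex, and simultaneously what keeps $R$ below the threshold needed to invoke Lemma~\ref{lifting figures} in Case B. Verifying $2R+C<2\rho$ and $R<\rho$ is where the standing smallness of $\hat\delta$ relative to $\rho$ is used in an essential way. A minor but necessary point is the injectivity of the (non-canonical) lift $\hat g\mapsto g$, which rests only on the fibres of $G\to\hat G$ being disjoint cosets.
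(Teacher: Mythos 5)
Your proposal is correct and follows essentially the same route as the paper's proof: the same dichotomy on whether the connecting set meets a $\Theta(\rho)$-ball around an apex, the same displacement bound along it via Lemma~\ref{distances within quadrangles}, the stabilizer bound $m$ in the near case, and Lemma~\ref{lifting figures} plus acylindricity of $G$ on $\dot X$ in the far case, ending with $M=\max(m,M')$. The only cosmetic differences are that the paper works with $\mathrm{hull}(\{\hat a,\hat b\})$ (which is $6\hat\delta$-quasi-convex by Lemma~\ref{hull is quasi-convex}) rather than a single quasi-geodesic, and uses the fixed radius $\rho/10$ in place of your $R$.
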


\begin{proof}
    We will apply Remark~\ref{acylind characterization}. The action of $G$ on the cone-off $\dot X$ is acylindrical, therefore, there are positive numbers $M'$ and $L'$ such that, for all $x,x' \in \dot X$, if $d_{\dot X}(x,x')$ is at least $L'$, then there are at most $M'$ elements moving $x$ and $x'$ less than $100 \hat{\delta}$. We will show that we can take $\max \{ m,M' \}$ and $L'$ as the parameters $M$ and $L$ of Remark~\ref{acylind characterization}.

    Now, let $\hat{a},\hat{b} \in \hat{X}$ be at distance at least $L'$, and consider the set $\hat{Y}=\text{hull}(\{a,b\})$. By Lemma~\ref{hull is quasi-convex}, it is a $6\hat{\delta}$-quasi-convex subset of $\hat{X}$. Moreover, by Lemma~\ref{distances within quadrangles}, for every $\hat{x} \in \hat{Y}$, every element $\hat{g} \in \hat{G}$ moving $\hat{a}$ and $\hat{b}$ by at most $100 \hat{\delta}$ will move $\hat{x}$ by at most $600 \hat{\delta}$.

    Suppose that there is $\hat{v} \in \hat{v}(\mathcal{Q})$ lying in the closed $\rho/10$-neighbourhood of $\hat{Y}$, and let $\hat{c} \in \hat{Y}$ be at distance at most $\rho/10$ of $\hat{v}$. Then, we get from the triangle inequality that \[ d(\hat{g} \cdot \hat{v}, \hat{v}) \leq 2d(\hat{c}, \hat{v}) + d(\hat{g} \cdot \hat{c}, \hat{c}) \leq \rho/5+600 \hat{\delta}. \]
Recall that $\rho \geq \max  (\{ \rho_{0}, 10^{10}\boldsymbol{\delta},10^{20}L_{S}\boldsymbol{\delta}\})$ and $\hat{\delta}=64 \cdot 10^{4} \boldsymbol{\delta}$. Therefore, $\rho/5+600 \hat{\delta}$ (and thus also $d(\hat{g} \cdot \hat{v}, \hat{v})$) is less than $2\rho$, and this quantity is a lower bound for the distance between apices. In consequence, $\hat{g}$ fixes $\hat{v}$, and therefore, there can only be $m$ such elements.

    Now suppose that, for all $\hat{v} \in \hat{v}(\mathcal{Q})$, $\hat{Y}$ does not intersect $B(\hat{v},\rho/10)$. By the choice of $\rho$, we have that $\rho/20 + 610 \hat{\delta}<\rho/10$, thus we can apply Lemma~\ref{lifting figures} with $d=600 \hat{\delta}$ and $\alpha=6\hat{\delta}$: if some $\hat{g} \in \hat{G}$ moves $\hat{a}$ and $\hat{b}$ by at most $100 \hat{\delta}$, then $\hat{g} \cdot \hat{Y}$ lies in the closed $600 \hat{\delta}$-neighbourhood of $\hat{Y}$ (by Lemma~\ref{distances within quadrangles}), and by Lemma~\ref{lifting figures} we can find preimages $a$ and $b$ of $\hat{a}$ and $\hat{b}$ respectively, with \[d_{\dot X}(a,b)=d_{\hat{X}}(\hat{a},\hat{b}) \geq L',\] and such that there is a preimage $g$ of $\hat{g}$ in $G$ satisfying \[ d_{\dot{X}}(g \cdot a, a) =d_{\hat{X}}(\hat{g} \cdot \hat{a}, \hat{a}) \leq 100 \hat{\delta} \] and \[ d_{\dot{X}}(g \cdot b, b) =d_{\hat{X}}(\hat{g} \cdot \hat{b}, \hat{b}) \leq 100 \hat{\delta}. \] By the choice of $L'$, there can be at most $M'$ such elements $g$, yielding that there can be at most $M'$ elements $\hat{g}$ moving both $\hat{a}$ and $\hat{b}$ at most $100\hat{\delta}$.
    \end{proof}

%\begin{lemma}
%\label{non diehdral lox subgroups lift}
 %   Let $\hat E$ be a loxodromic subgroup of $\hat G$ that is not of dihedral type. Then, there is a loxodromic subgroup $E$ of $G$ such that the projection $G \twoheadrightarrow \hat G$ induces an isomorphism from $E$ onto $\hat E$. Moreover, if $\hat E$ is a maximal loxodromic subgroup of $\hat G$, then $E$ is a maximal loxodromic subgroup of $G$.
%\end{lemma}

%\begin{proof}
 %   This lemma appears as Proposition 5.25 in \cite{coulon_1} without the assumption that $\hat E$ is not of dihedral type, but instead assuming that it is a subgroup of $\hat N$. However, this assumption was used precisely to argue that, since $\hat N$ has no involution, then $\hat E$ is not of dihedral type. Once we have included this last statement in our hypotheses, the rest of the proof of Proposition 5.25 works \textit{verbatim} for this case.
%\end{proof}

We now want to find a bound for the injectivity radius of the images of certain subsets of $N$ (recall that $N$ is a normal subgroup of $G$ that has no involution, as defined at the beginning of Section 4.3). The following technical definition isolates the properties that a subset $Q$ of $N$ needs to have so that we can bound the injectivity radius of its image in terms of the injectivity radius of $Q$. In our applications in Section 5, we will take for $G$ a group in class $\classcprime$ (or in an auxiliary class $\classcprimezero$ that will be defined in that section), for $Q$ the set of translations, and for $N$ the subgroup of $G$ generated by translations.

\begin{definition}
\label{def stable family}
    Let $Q$ be a subset of $N$ and $\hat{Q}$ be its image in $\hat{G}$. We say that $Q$ is \emph{stable} with respect to $\mathcal{Q}$ if the following property is satisfied: let $\hat{g}$ be a non-elliptic element of $\hat{Q}$. Suppose there is a subset $A$ of $\dot{X}$ such that the projection $\zeta: \dot X \longrightarrow \hat{X}$ induces an isometry from $A$ onto the axis $A_{\hat{g}^{m}}$ for some $m \in \mathbb{N}$ and the projection $G \twoheadrightarrow \hat{G}$ induces an isomorphism from $\text{Stab}(A)$ onto $\text{Stab}(A_{\hat{g}^{m}})$. Let $g$ be the preimage of $\hat{g}$ in $\text{Stab}(A)$. Then $g \in Q$.
\end{definition}

We will prove in Lemma~\ref{tuple of cprime satisfies ind hyp} that in our setting the set of translations is stable in the sense of Definition~\ref{def stable family} (with respect to an appropriate family $\mathcal{Q}$). 

%In fact, we will prove that in our setting every loxodromic translation is still a translation. 

Let us remark that being a stable subset may depend on the specific family $\mathcal{Q}$ we are considering. However, if the family $\mathcal{Q}$ is clear by context, we may omit mentioning it explicitly. Notice also that a subgroup of $N$ containing $K$ will be a stable subset. In particular, $N$ is a stable subset independently of the family in consideration. The next lemma slightly generalizes \cite[Proposition 5.31]{coulon_1}.

\begin{lemma}
\label{asymp trans length stable subset}\emph{(Compare \cite[Proposition 5.31]{coulon_1})}
    Let $Q$ be a stable subset of $N$. Denote by $l$ the infimum over the asymptotic translation length in $X$ of loxodromic elements of $Q$ that do not belong to $\text{Stab}(Y)$ for $(H,Y) \in \mathcal{Q}$. Let $\hat{g}$ be a non-elliptic element of $\hat{Q}$. If every preimage of $\hat{g}$ in $G$ is loxodromic, then we have \[[\hat{g}]^{\infty} \geq \text{min} \biggl ( \biggl \{ \frac{l \hat{\delta}}{\pi \text{sinh}(26 \hat{\delta})} , \hat{\delta} \biggr \} \biggr ).\]
\end{lemma}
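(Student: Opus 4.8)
The plan is to adapt the proof of \cite[Proposition 5.31]{coulon_1}, the one genuinely new ingredient being that the hypothesis that $Q$ is stable (Definition~\ref{def stable family}) is exactly what lets us locate an appropriate preimage of $\hat g$ inside $Q$. All hyperbolicity estimates below are taken with respect to $\hat\delta$, since $\hat X$ is $\hat\delta$-hyperbolic by Lemma~\ref{small cancellation theorem}.

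\textbf{Reduction and loxodromicity.} If $[\hat g]^\infty \geq \hat\delta$ there is nothing to prove, since the right-hand side is a minimum bounded by $\hat\delta$; so I would assume $[\hat g]^\infty < \hat\delta$. First I would record that the action of $\hat G$ on $\hat X$ is acylindrical: by Remark~\ref{stab of v in the small cancellation theorem} the apex stabilizers $\text{Stab}(\hat v)$ are finite of uniformly bounded order, so Lemma~\ref{acyl passes to quotient} applies. Consequently Lemma~\ref{no_parabolic_osin} rules out parabolics, and as $\hat g$ is non-elliptic it must be loxodromic on $\hat X$.

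\textbf{Lifting the axis and producing $g\in Q$.} The point of the smallness assumption is that it forces the axis $A_{\hat g}$ to avoid every apex. Indeed, for $\hat x\in A_{\hat g}$ one has $d(\hat x,\hat g\hat x)\leq [\hat g]+8\hat\delta\leq [\hat g]^\infty+40\hat\delta<41\hat\delta$ by Lemma~\ref{asympt and trans length relation}; if some such $\hat x$ lay within $\rho/20+d+10\hat\delta$ of an apex $\hat v$ (for the suitable $d=O(\hat\delta)$ needed below), the triangle inequality would give $d(\hat v,\hat g\hat v)\leq 2d(\hat x,\hat v)+d(\hat x,\hat g\hat x)<2\rho$, the minimal gap between distinct apices, whence $\hat g\hat v=\hat v$ — impossible, as $\text{Stab}(\hat v)$ is finite while $\hat g$ has infinite order. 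Since $A_{\hat g}$ lies within Hausdorff distance $52\hat\delta$ of the strongly quasi-convex cylinder $Y_{\hat g}$ (Lemma~\ref{axis vs cylinder}) it is quasi-convex, so Lemma~\ref{lifting figures} yields a subset $A\subseteq\dot X$ on which $\zeta$ restricts to an isometry onto $A_{\hat g}$ and with $\text{Stab}(A)\cong\text{Stab}(A_{\hat g})$. Applying stability of $Q$ (the case $m=1$ of Definition~\ref{def stable family}) to the preimage $g$ of $\hat g$ in $\text{Stab}(A)$ gives $g\in Q$. By hypothesis $g$ is loxodromic on $X$, and $g\notin\text{Stab}(Y)$ for any $(H,Y)\in\mathcal Q$ — otherwise its image would lie in the finite group $\text{Stab}(\hat v)$ (Remark~\ref{stab of v in the small cancellation theorem}), contradicting that $\hat g$ is loxodromic. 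Hence $[g]^\infty_X\geq l$.

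\textbf{Comparing translation lengths.} The crux, and the step I expect to be the main obstacle, is to bound $[g]^\infty_X$ above by $[\hat g]^\infty$ through the cone-off geometry. Since $\zeta|_A$ is an isometry intertwining the two actions, $g$ translates $A$ in $\dot X$ exactly as $\hat g$ translates $A_{\hat g}$ in $\hat X$, so $[g]^\infty_{\dot X}=[\hat g]^\infty$. I would then fix a basepoint on the axis and choose $n$ with $n\,[\hat g]^\infty$ of order $\hat\delta$, so that the $\dot X$-displacement along the axis satisfies $d_{\dot X}(z,g^n z)\leq 52\hat\delta$. Because this displacement is far below $2\rho$, Lemma~\ref{dist on x compared cone off} gives $\mu\!\left(d_X(z,g^n z)\right)\leq d_{\dot X}(z,g^n z)\leq 52\hat\delta<2\rho$, which forces $d_X(z,g^n z)\leq \pi\sinh(\rho)$, so that the second estimate of Lemma~\ref{bound on mu for small t} yields $d_X(z,g^n z)\leq \pi\sinh\!\left(\mu(d_X(z,g^n z))/2\right)\leq \pi\sinh(26\hat\delta)$. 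Subadditivity of $k\mapsto d_X(z,g^{kn}z)$ then gives $[g]^\infty_X\leq \tfrac1n d_X(z,g^n z)\leq \frac{\pi\sinh(26\hat\delta)}{\hat\delta}\,[\hat g]^\infty$, and combining with $[g]^\infty_X\geq l$ produces $[\hat g]^\infty\geq \frac{l\hat\delta}{\pi\sinh(26\hat\delta)}$, as required. The delicate points here are keeping the additive error terms bounded (rather than growing with $n$), which holds because the displacement is measured along the axis, and arranging the numerical window so that the halving in Lemma~\ref{bound on mu for small t} produces precisely the constant $26\hat\delta$; passing between the $X$-axis of $g$ and the lifted axis $A$ (which are at bounded Hausdorff distance) is what makes the basepoint $z\in X$ usable in the $\mu$-comparison.
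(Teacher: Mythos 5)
Your proposal is correct and follows essentially the same route as the paper: the paper's own proof of this lemma is a one-sentence deferral to \cite[Proposition 5.31]{coulon_1}, observing that Definition~\ref{def stable family} was tailored precisely so that Coulon's argument runs verbatim with $Q$ in place of $N$, and your write-up is a faithful expansion of that argument with the stability hypothesis invoked at exactly the right point (to conclude that the lifted preimage lies in $Q$, hence has asymptotic translation length at least $l$). The remaining steps (lifting the axis via Lemma~\ref{lifting figures}, excluding $\text{Stab}(Y)$, and the $\mu$-comparison producing the constant $26\hat{\delta}$) match Coulon's proof, so no further comment is needed.
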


\begin{proof}
    As stated before, this lemma appears in \cite{coulon_1} as Proposition 5.31 taking the stable subset $Q$ to be the whole subgroup $N$. However, the proof works with no further change if we put $Q$ in place of $N$, since we have included \textit{ad hoc} in Definition~\ref{def stable family} the property that we need for this to happen.
\end{proof}

From this result we immediately obtain the following corollary and the desired bound.

\begin{corollary}
\label{inj radius of stable subset}
    Let $Q$ be a stable subset of $N$. Denote by $l$ the infimum over the asymptotic translation length in $X$ of loxodromic elements of $Q$ that do not belong to $\text{Stab}(Y)$ for $(H,Y) \in \mathcal{Q}$. Then we have \[ r_{\text{inj}}(\hat{Q}, \hat{X}) \geq \text{min} \biggl ( \biggl \{ \frac{l \hat{\delta}}{\pi \text{sinh}(26 \hat{\delta})} , \hat{\delta} \biggr \} \biggr ).\]
\end{corollary}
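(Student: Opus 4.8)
The plan is to derive this statement directly from the preceding Lemma~\ref{asymp trans length stable subset}. By Definition~\ref{def rinj}, the quantity $r_{\text{inj}}(\hat{Q},\hat{X})$ is the infimum of the asymptotic translation lengths $[\hat{g}]^{\infty}$ taken over all \emph{loxodromic} elements $\hat{g}\in\hat{Q}$. Lemma~\ref{asymp trans length stable subset} already supplies the desired lower bound $\min\{l\hat{\delta}/(\pi\sinh(26\hat{\delta})),\hat{\delta}\}$ for $[\hat{g}]^{\infty}$, but only under the additional hypothesis that every preimage of $\hat{g}$ in $G$ is loxodromic. Hence the entire content of the corollary reduces to checking that this hypothesis is automatically satisfied whenever $\hat{g}$ itself is loxodromic; once this is done, applying the lemma to every loxodromic $\hat{g}\in\hat{Q}$ and passing to the infimum yields the claim.

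To verify the hypothesis, fix a loxodromic $\hat{g}\in\hat{Q}$; in particular $\hat{g}$ is non-elliptic, so Lemma~\ref{asymp trans length stable subset} is applicable in principle. Let $g\in G$ be any preimage of $\hat{g}$. Since we work under the standing assumption that the action of $G$ on $X$ is acylindrical, Lemma~\ref{no_parabolic_osin} guarantees that $G$ has no parabolic element, so $g$ is either elliptic or loxodromic. If $g$ were elliptic, then the cyclic subgroup $\langle g\rangle$ would be elliptic, and by Lemma~\ref{small cancellation theorem}~(\ref{g hat preserves elem}) its image $\langle\hat{g}\rangle$ would be elliptic as well, contradicting the choice of $\hat{g}$ as loxodromic. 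Therefore every preimage $g$ of $\hat{g}$ is loxodromic, as required.

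With the hypothesis secured, Lemma~\ref{asymp trans length stable subset} gives $[\hat{g}]^{\infty}\geq\min\{l\hat{\delta}/(\pi\sinh(26\hat{\delta})),\hat{\delta}\}$ for every loxodromic $\hat{g}\in\hat{Q}$, and taking the infimum over all such $\hat{g}$ produces the stated inequality for $r_{\text{inj}}(\hat{Q},\hat{X})$. The only step carrying genuine content is the lifting argument in the second paragraph, namely that loxodromic elements of $\hat{G}$ never arise from elliptic (or parabolic) elements of $G$, which is precisely where acylindricity (via the absence of parabolics) and the elementarity-preservation property of the small cancellation quotient are used; the rest is the formal manipulation of infima. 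I expect no real obstacle here, consistent with the corollary being stated as an immediate consequence of the lemma.
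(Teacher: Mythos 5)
Your proof is correct and follows the same route as the paper, which simply declares the corollary an immediate consequence of Lemma~\ref{asymp trans length stable subset}. You correctly identify and fill in the one implicit verification, namely that every preimage in $G$ of a loxodromic $\hat{g}\in\hat{Q}$ is loxodromic, using the standing acylindricity assumption (hence no parabolics, via Lemma~\ref{no_parabolic_osin}) together with the fact that elliptic subgroups map to elliptic subgroups under the quotient.
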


%\begin{lemma}
%\label{rinj of hat G}
 %   Let $l$ be the infimum over the asymptotic translation length of loxodromic elements of $G$ that do not belong to $\text{Stab}(Y)$ for any $(H,Y) \in \mathcal{Q}$. Then we have \[r_{\text{inj}}(\hat G, \hat X) \geq \min \biggl \{ \frac{l \hat \delta}{\pi \sinh(26 \hat \delta)}, \hat \delta \biggr \}.\]
%\end{lemma}

\subsection{Small Cancellation and Partial Periodic Quotients}
\label{subsubsection partial periodic and small cancellation}

%We introduce now Propositions~\ref{res: SC - induction lemma} and~\ref{acyl passes to sc quotient}, Lemma~\ref{rinj in the sc quotient} and  Theorem~\ref{res : SC - partial periodic quotient}. Proposition~\ref{res: SC - induction lemma} (together with Lemma~\ref{rinj in the sc quotient}) and Theorem~\ref{res : SC - partial periodic quotient} are slight modifications of \cite[Proposition 6.1]{coulon_1} and \cite[Theorem 6.9]{coulon_1} respectively, while Proposition~\ref{acyl passes to sc quotient} is a slight variant of Proposition~\ref{res: SC - induction lemma}. They provide quotients of groups acting on hyperbolic spaces that will be key in proving Proposition~\ref{prop: classes stable under pp quotients}.

\subsubsection{Small cancellation quotients (SC-quotients)}

\begin{proposition}\emph{(Compare \cite[Proposition 6.1]{coulon_1})}
\label{res: SC - induction lemma}\label{rinj in the sc quotient} \label{acyl passes to sc quotient}
	There exist positive constants $\rho_0$,  $\delta_1$ and $L_S$ such that for every integer $\nu_0$  there is an integer $n_0$ such that the following holds:
 
	Suppose $G$ is a group acting by isometries on a $\delta_1$-hyperbolic length space $X$ such 
	that this action is WPD and non-elementary.
	Let $N$ be a normal subgroup of $G$ without involutions, $Q$  a conjugation invariant set of elements of $N$, and let $P$ be the subset of loxodromic elements $h$ of $Q$ which are primitive as elements of $N$ and such that $ [h] \leq L_S\delta_1 $.
Let $n_1 \geq n_0$ and $n\geq n_1$ be odd and suppose furthermore that:
 \begin{enumerate}
		\item \label{enu: SC - induction lemma - e}
		$e(N,X)$ divides $n$,
		\item \label{enu: SC - induction lemma - nu}
		$\nu(N,X) \leq \nu_0$,
		\item \label{enu: SC - induction lemma - A}
		$A(N,X) \leq   6\pi \nu_0 \sinh (2L_S\delta_1)$,
		\item \label{enu: SC - induction lemma - rinj}
		$ r_{\text{inj}} (Q,X) \geq 2\delta_1 \sqrt {\frac {L_S\sinh (\rho_0)}{n_1\sinh (26 \delta_1)}}$.
	\end{enumerate}

	 Let $K=\langle h^n \, : \, h \in P\rangle^G$, put $\hat G=G/K$ and let $\hat{N}, \hat{Q}$ denote the images of $N$ and $Q$ under the quotient map. Then $\hat{N}$ contains no involution and  there exists a $\delta_1$-hyperbolic length space $\hat X$ such that the tuple $(\hat G , \hat N , \hat Q , \hat X)$  satisfies the conditions (\ref{enu: SC - induction lemma - e}) -- (\ref{enu: SC - induction lemma - A})  of this proposition  with $\hat{N} $ and $ \hat{X}$ in place of $N $ and $ X$ respectively.  
  
Additionally, if $Q$ is stable with respect to $\mathcal{Q}=\{(\langle h^{n} \rangle, Y_{h}) \, : \, h \in P\}$, then the tuple $(\hat G, \hat N, \hat Q, \hat X)$ satisfies condition (\ref{enu: SC - induction lemma - rinj}) with $\hat Q$ and $\hat X$ in place of $Q$ and $X$ respectively.

	Moreover, the quotient map $G \rightarrow \hat G, g\mapsto \hat{g}$ has the following properties.
	\begin{itemize}
		\item  For  $g \in G$,  we have 
		\begin{displaymath}
			[{\hat g}]^{\infty}_{\hat X} \leq \frac 1{\sqrt {n_1}} \left(\frac {4\pi}{\delta_1}\sqrt{\frac {\sinh (\rho_0)\sinh (26 \delta_1)}{L_S}}\right)[g]^{\infty}_{ X}. 
		\end{displaymath}
		\item For every non-loxodromic elementary subgroup $E$ of $G$, the quotient map $G \rightarrow \hat G$ induces an isomorphism from $E$ onto its image $\hat E$ which is elementary and non-loxodromic.
		\item Let $\hat g$ be an elliptic (respectively parabolic) element of $\hat N$. 
		Either $\hat g^n = 1$ or $\hat g$ is the image of an elliptic (respectively parabolic) element of $N$.
		\item Let $u,u' \in N$ be such that $ [u] < L_S \delta_1$ and $u'$ is elliptic. 
		If $\hat{u}$ and $\hat{u}'$ are conjugate in $\hat G$ then so are $u$ and $u'$ in $G$.
	\end{itemize}

Furthermore, if the action of $G$ on $X$ is acylindrical, and if there is some positive integer $m$ such that for all $h \in P$ we have $\lvert \text{\text{Stab}}(Y_{h})/\langle h^{n} \rangle \rvert \leq m$, then the action of $\hat{G}$ on $\hat{X}$ is also acylindrical.

    %Furthermore, put \[ \lambda=\frac{4\pi}{\delta_{1}} \sqrt{\frac{\sinh(\rho_{0}) \sinh(26\delta_{1})}{n_1 L_{S}}} \] and let $\lambda X$ be a rescaled version of $X$. If the set $Q$ is stable with respect to $\mathcal{Q}=\{(\langle h^{n} \rangle, Y_{h}) \, : \, h \in P\}$ (where we are taking the subsets $Y_h$ in $\lambda X$), then the action of $\hat G$ in $\hat X$ satisfies the assumption (\ref{enu: SC - induction lemma - rinj}) with $\hat{Q}$ and $\hat{X}$ in place of $Q$ and $X$ respectively.

\end{proposition}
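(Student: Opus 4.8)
The plan is to reproduce the proof of \cite[Proposition 6.1]{coulon_1} almost verbatim, feeding it the small cancellation apparatus assembled above, and to insert exactly two modifications: working with the conjugation–invariant \emph{subset} $Q$ in place of the subgroup $N$ wherever an injectivity–radius statement is needed (this is where the stability of Definition~\ref{def stable family} enters), and carrying acylindricity through both the cone-off and the quotient. The constants $\delta_1\le\delta_0$, $\rho_0$ and $L_S$ are fixed once and for all as in Section~\ref{subsection geometric small cancellation}, and, given $\nu_0$, the threshold $n_0$ is taken large enough for the numerical inequalities below.

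First I would form the small cancellation data $\mathcal{Q}=\{(\langle h^n\rangle,Y_h):h\in P\}$, so that $K=\langle h^n:h\in P\rangle^G$ is the normal subgroup generated by the $\langle h^n\rangle$ and the conjugation action of $G$ on $\mathcal{Q}$ is the prescribed one. I then check the two small cancellation parameters. For the shortest relator, $T(\mathcal{Q})=\inf_{h\in P}[h^n]\ge n\inf_{h\in P}[h]^{\infty}\ge n_1\,r_{\mathrm{inj}}(Q,X)$, and hypothesis~(\ref{enu: SC - induction lemma - rinj}) together with $n_1\ge n_0$ forces this to exceed $8\pi\sinh(\rho)$. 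For the overlap, $\Delta(\mathcal{Q})\le\Delta_0$ holds because two distinct cylinders cannot fellow–travel for long without their axes forcing the generators into an elementary subgroup, a quantity controlled by hypotheses~(\ref{enu: SC - induction lemma - nu}) and~(\ref{enu: SC - induction lemma - A}). With these inequalities Lemma~\ref{small cancellation theorem} applies: the cone-off $\dot X$ over $X$ relative to $\{Y_h:h\in P\}$ is $\dot\delta$-hyperbolic, the quotient $\hat X_0=\dot X/K$ is $\hat\delta$-hyperbolic with $\hat\delta=64\cdot10^4\boldsymbol{\delta}$, and $\hat G=G/K$ acts on $\hat X_0$ by isometries, WPD and non-elementarily. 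I finally set $\hat X=\lambda\hat X_0$ with $\lambda=\delta_1/\hat\delta$, so that $\hat X$ is $\delta_1$-hyperbolic; by Remark~\ref{invariants in rescaled space} every invariant is multiplied by $\lambda$, which is precisely what lets the conclusion match the hypotheses and makes the statement usable as an induction step.

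Next I would verify conditions~(\ref{enu: SC - induction lemma - e})--(\ref{enu: SC - induction lemma - rinj}) for $(\hat G,\hat N,\hat Q,\hat X)$, which is bookkeeping on the invariants of \cite[Section 5]{coulon_1}. Since $e(\hat N,\hat X_0)$ divides $e(N,X)$ and $\nu(\hat N,\hat X_0)$ divides $\nu(N,X)$, and both $e$ and $\nu$ are scale–invariant (Remark~\ref{invariants in rescaled space}), we get $e(\hat N,\hat X)\mid n$ and $\nu(\hat N,\hat X)\le\nu_0$. For $A$, Coulon's estimate $A(\hat N,\hat X_0)\le A(N,X)+(\nu+4)\pi\sinh(2L_S\hat\delta)$ combined with the rescaling by $\lambda$ and hypothesis~(\ref{enu: SC - induction lemma - A}) returns the bound $A(\hat N,\hat X)\le 6\pi\nu_0\sinh(2L_S\delta_1)$, and it is here that the admissible values of $\delta_1$ and $L_S$ get pinned down. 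When $Q$ is moreover stable with respect to $\mathcal{Q}$, Corollary~\ref{inj radius of stable subset} bounds $r_{\mathrm{inj}}(\hat Q,\hat X_0)$ from below in terms of $l\ge r_{\mathrm{inj}}(Q,X)$ (the relevant infimum runs over a subset of the loxodromic elements of $Q$, hence is no smaller), and after rescaling this reproduces~(\ref{enu: SC - induction lemma - rinj}) for $\hat Q$; the factor $1/\sqrt{n_1}$ already present in~(\ref{enu: SC - induction lemma - rinj}) is exactly what makes the bound self-consistent. The contraction estimate $[\hat g]^{\infty}_{\hat X}\le \frac{1}{\sqrt{n_1}}\bigl(\tfrac{4\pi}{\delta_1}\sqrt{\sinh(\rho_0)\sinh(26\delta_1)/L_S}\bigr)[g]^{\infty}_{X}$ is Coulon's \cite[Proposition 6.1]{coulon_1}, whose proof does not see the restriction from $N$ to $Q$; I would quote it and rescale by $\lambda$.

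The qualitative statements are read off Lemma~\ref{small cancellation theorem}. The isomorphism on non-loxodromic elementary subgroups and the elementarity of their images come from parts~(\ref{g hat preserves elem}) and~(\ref{g hat isomorph of elliptic}); the elliptic/parabolic dichotomy, that either $\hat g^n=1$ or $\hat g$ lifts, comes from part~(\ref{elliptic subgroups of hat g}), since an elliptic $\hat g$ either lifts to an elliptic element of $N$ or lies in an apex stabilizer $\mathrm{Stab}(\hat v)\cong\mathrm{Stab}(Y_h)/\langle h^n\rangle$ (Remark~\ref{stab of v in the small cancellation theorem}), where the imposed relation forces $\hat g^n=1$; the conjugacy statement is the matching small cancellation fact of \cite[Proposition 6.1]{coulon_1}. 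That $\hat N$ has no involution then follows: an involution $\hat\iota\in\hat N$ is elliptic, and since $n$ is odd $\hat\iota^n=\hat\iota\ne 1$, so by the dichotomy $\hat\iota$ is the image of an elliptic $\iota'\in N$, whose order is preserved by part~(\ref{g hat isomorph of elliptic}); hence $\iota'$ would be an involution in $N$, a contradiction. For the acylindricity clause, Lemma~\ref{acyl passes to cone-off} gives that $G\curvearrowright\dot X$ is acylindrical, the hypothesis $\lvert\mathrm{Stab}(Y_h)/\langle h^n\rangle\rvert\le m$ makes the apex stabilizers of $\hat X_0$ uniformly finite (Remark~\ref{stab of v in the small cancellation theorem}), so Lemma~\ref{acyl passes to quotient} gives acylindricity of $\hat G\curvearrowright\hat X_0$, which the rescaling to $\hat X$ preserves. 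I expect the main obstacle to be the constant–chasing that closes the induction: one must check that the rescaled invariants of $\hat X$ satisfy the very same inequalities, with the same constants, as those assumed for $X$---in particular the precise form $6\pi\nu_0\sinh(2L_S\delta_1)$ in~(\ref{enu: SC - induction lemma - A}) and the $1/\sqrt{n_1}$ scaling built into~(\ref{enu: SC - induction lemma - rinj})---and this is what dictates the admissible choices of $\delta_1$, $\rho_0$, $L_S$ and $n_0$.
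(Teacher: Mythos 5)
Your high-level plan does match the paper's: both proofs run Coulon's \cite[Proposition 6.1]{coulon_1} with the conjugation-invariant subset $Q$ substituted for $N$ exactly where the injectivity radius is needed (via stability and Corollary~\ref{inj radius of stable subset}), and both carry acylindricity through Lemmas~\ref{acyl passes to cone-off} and~\ref{acyl passes to quotient}; your treatment of the qualitative clauses (no involutions in $\hat N$, lifting of elementary subgroups, apex stabilizers) is also in line with the paper. The genuine gap is where you place the rescaling. You cone off $X$ directly and rescale the quotient afterwards by the \emph{constant} factor $\delta_1/\hat\delta$, whereas the construction requires rescaling the \emph{input} space to $\lambda_{n_1}X$ with $\lambda_{n_1}=\frac{4\pi}{\delta_1}\sqrt{\sinh(\rho_0)\sinh(26\delta_1)/(n_1L_S)}\propto 1/\sqrt{n_1}$ before coning off. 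Three things break without this. First, your standing assumption $\delta_1\leq\delta_0$ is incompatible with the statement: for the proposition to iterate, $\delta_1$ must be (essentially) the hyperbolicity constant $\hat\delta=64\cdot 10^4\boldsymbol{\delta}$ of the quotient, which is far above the smallness threshold $\delta_0$ of Lemma~\ref{cone off hyperb}; it is the pre-rescaling that achieves $\lambda_{n_1}\delta_1\leq\delta_0$. Second, the small cancellation condition $\Delta(\mathcal{Q})\leq\Delta_0$ cannot be verified in your architecture: the overlap of two cylinders is controlled only up to $A(N,X)+O(\delta_1)$, and hypothesis~(\ref{enu: SC - induction lemma - A}) merely bounds $A(N,X)$ by $6\pi\nu_0\sinh(2L_S\delta_1)$, which exceeds the universal constant $\Delta_0$ once $\nu_0$ is large. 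Since $\delta_1$ and $L_S$ are fixed before $\nu_0$, no choice of $n_0$ rescues this for you --- the entire reason $n_0$ depends on $\nu_0$ in the paper is that one must force $\lambda_{n_1}\cdot(6\pi\nu_0\sinh(2L_S\delta_1)+118\delta_1)\leq\Delta_0$, and only the $n_1$-dependent pre-rescaling can do that. Third, the contraction factor $1/\sqrt{n_1}$ in $[\hat g]^{\infty}_{\hat X}\leq\lambda_{n_1}[g]^{\infty}_X$ comes precisely from the pre-rescaling (the cone-off and quotient maps are $1$-Lipschitz); your constant post-rescaling yields a bound independent of $n_1$, and ``quoting'' Coulon's estimate is circular, since it is proved for the pre-rescaled construction you have discarded. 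This factor is not decorative: the proof of Theorem~\ref{res : SC - partial periodic quotient} iterates it to drive $[g_j]$ below $r_{\text{inj}}(Q_j,X_j)$.

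A secondary flaw sits in your propagation of condition~(\ref{enu: SC - induction lemma - rinj}). Your parenthetical ``$l\geq r_{\text{inj}}(Q,X)$, hence no smaller'' is not enough: feeding $l= r_{\text{inj}}(Q,\lambda_{n_1}X)=8\pi\sinh(\rho_0)/n_1$ into Corollary~\ref{inj radius of stable subset} produces a lower bound decaying like $1/n_1$, strictly weaker than the right-hand side of~(\ref{enu: SC - induction lemma - rinj}), which decays like $1/\sqrt{n_1}$. The paper argues differently: a loxodromic element $g\in Q$ lying in no $\text{Stab}(Y)$ for $(H,Y)\in\mathcal{Q}$ has translation length at least $L_S\delta$ in the rescaled space, so Lemma~\ref{asympt and trans length relation} and the choice of $L_S$ give the constant-scale bound $[g]^{\infty}\geq\lambda_{n_1}(L_S\delta_1-32\delta_1)\geq L_S\delta_1\lambda_{n_1}/2$, and it is this bound --- combined with the inequality $\lambda_k\frac{L_S\delta_1^2}{2\pi\sinh(26\delta_1)}\leq\delta_1$ built into the definition of $n_0$ --- that reproduces exactly the right-hand side of~(\ref{enu: SC - induction lemma - rinj}) for $(\hat G,\hat N,\hat Q,\hat X)$. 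To repair your proof, restructure it as the paper does: define $\lambda_k$, let $n_0$ be the smallest integer making the four numerical inequalities hold for all $k\geq n_0$, run Lemma~\ref{small cancellation theorem} on $\lambda_{n_1}X$, and verify the injectivity radius via the $L_S\delta$ lower bound rather than via $r_{\text{inj}}$.
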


\begin{proof}%[Proof of Proposition~\ref{res: SC - induction lemma}]
First of all, note that Proposition 6.1 in \cite{coulon_1} involves three constants $A_0,r_0,\alpha$ that do not appear in Proposition~\ref{res: SC - induction lemma} above, while Proposition~\ref{res: SC - induction lemma} involves a constant $\rho_0$ that does not appear in \cite[Proposition 6.1]{coulon_1}. This difference is purely cosmetic, since the constants $A_0,r_0,\alpha$ can be expressed explicitly in terms of $L_S,\delta_1,\rho_0$ (and we refer the reader to the very beginning of the proof of Proposition 6.1 in \cite{coulon_1} for the explicit formulas), and the assumptions (\ref{enu: SC - induction lemma - e}) to (\ref{enu: SC - induction lemma - rinj}) above are simply a rewriting of the assumptions (1) to (4) in \cite[Proposition 6.1]{coulon_1} in terms of $\rho_0$ instead of $A_0,r_0,\alpha$, and with $Q$ in place of $N$ in the case of assumption (\ref{enu: SC - induction lemma - rinj}).
%In our case, the constant $L_S$ is the one defined in Subsection~\ref{subsubsection axis of isom}, $\rho_0$ is the constant coming from Lemma~\ref{small cancellation theorem}, and we set $\delta_1=54.10^{4}\boldsymbol{\delta}$.

With this in mind the proof of Proposition 6.1 in \cite{coulon_1} works for Proposition~\ref{res: SC - induction lemma} modulo some mostly minor changes. We include some details of the proof, focusing on the construction of the space $\hat{X}$, and we refer to  the proof of Proposition 6.1 in \cite{coulon_1} whenever parts of this proof work exactly as in that case.

Fix a positive integer $\nu_0$ . We will define a rescaling parameter $\lambda$ for the space $X$. For a positive integer $k$, we put \[\lambda_{k}=\frac{4\pi}{\delta_{1}} \sqrt{\frac{\sinh(\rho_{0}) \sinh(26\delta_{1})}{kL_{S}}} .\]
Now, we set the critical exponent $n_0$ as the smallest integer greater than 100 such that for every integer $k \geq n_0$ we have \[ \lambda_k \delta_1 \leq \delta_0, \] \[ \lambda_k (6\pi \nu_0 \sinh(2L_S \delta_1)+118 \delta_1) \leq \min \{ \Delta_0 \, , \, \pi \sinh (2L_S \delta_1)\},\] \[ \lambda_k \frac{L_S \delta_{1}^{2}}{2 \pi \sinh (26 \delta_1)} \leq \delta_1, \] and \[ \lambda_k \rho_0 \leq \rho_0, \] where the parameters $\rho_0$, $\delta_0$ and $\Delta_0$ are the ones appearing in Lemma~\ref{small cancellation theorem}. Let $n_1 \geq n_0$ and $n \geq n_1$ be an odd integer, and write $\lambda = \lambda_{n_1}$.

Let $G$ be a group with a normal subgroup $N$ acting on a hyperbolic space $X$, and let $Q$ be a family of conjugation invariant elements of $N$. Assume that $(G,N,Q,X)$ satisfies the assumptions of Proposition~\ref{res: SC - induction lemma} for $\nu_0$ and $n$. Remember that we denote by $\hat{G}$ the quotient of $G$ by the normal subgroup $K$ generated by $\{ h^n \, : \, h \in P \}$, where $P$ is as in the statement of the proposition. Since $P \subseteq N$, $K$ is a (normal) subgroup of $N$, and we write $\hat{N}= N / K$.

We consider for the rest of this proof the action of $G$ on the rescaled space $\lambda X$ (unless explicitly stated otherwise). This is a $\delta$-hyperbolic space with $\delta= \lambda \delta_1 \leq \delta_0$. We denote by $\mathcal{Q}$ the family $\mathcal{Q}=\{(\langle h^{n} \rangle, Y_{h}) \, : \, h \in P\}$. We claim that this family satisfies $\Delta(\mathcal{Q}) \leq \Delta_0$ and $T(\mathcal{Q}) \geq 8 \pi \sinh(\rho_0)$. The proof of the first inequality works exactly as in Lemma 6.2 in \cite{coulon_1}. For the second inequality, notice that by Remark~\ref{invariants in rescaled space}, assumption~(\ref{enu: SC - induction lemma - rinj}) and the choice of $n$ we have \[ r_{\text{inj}}(Q, \lambda X) \geq \frac{8 \pi \sinh(\rho_0)}{n_1} \geq  \frac{8 \pi \sinh(\rho_0)}{n}.\] Thus, since $P$ is a subset of $Q$, for all $h \in P$ we get $[h^{n}]^{\infty} =n [h]^{\infty} \geq 8 \pi \sinh(\rho_0)$, from which the desired inequality follows.

On account of this, we can apply Lemma~\ref{small cancellation theorem} to the action of $G$ on $\lambda X$ and the family $\mathcal{Q}$, so we denote by $\dot{X}$ the cone-off or radius $\rho_0$ over $X$ relative to the family $\{ Y \, : \, (H,Y) \in \mathcal{Q}\}$, and we set $\hat{X}$ as the quotient of $\dot{X}$ by $K$.

The proof that $\hat{G}$, $\hat{N}$ and $\hat{X}$ satisfy the required properties now follows exactly as in the proof of Proposition 6.1 in \cite{coulon_1}. 

In particular, the results from Lemma~\ref{small cancellation theorem} to Lemma~\ref{inj radius of stable subset} apply to the action of $\hat G$ on $\hat X$.

Suppose now that  $G$ acts acylindrically on $X$.  Then by Proposition~\ref{res: SC - induction lemma}, and the facts that acylindricity is preserved by rescalings and that, by Lemmas~\ref{acyl passes to cone-off} and~\ref{acyl passes to quotient}, the acylindricity of the action is also preserved by the cone-off construction (for $G$) and by the SC-quotient (for $\hat G$, if there exists a bound for $\lvert \text{\text{Stab}}(Y_{h})/\langle h^{n} \rangle \rvert$) respectively.

Finally, assume that $Q$ is stable with respect to $\mathcal{Q} $. If $g$ is a loxodromic isometry of $Q$ that does not belong to $\text{Stab}(Y)$ for any $(H,Y) \in \mathcal{Q}$, then by Lemma~\ref{asympt and trans length relation}, its asymptotic translation length satisfies \[ [g]^{\infty} \geq \lambda (L_S \delta_1 - 32 \delta_1).\] The choice of $L_S$ now implies $[g]^{\infty} \geq L_S \delta_1 /2$ and we obtained the required estimate from Corollary~\ref{inj radius of stable subset} and the choice of $n_1$.
\end{proof}

\textbf{Terminology.} For the remainder of the article, we call a group obtained as the quotient of a group $G$ as provided by Proposition~\ref{res: SC - induction lemma} a \emph{small cancellation quotient} (or simply an \emph{SC-quotient}) of $G$, and we will denote it by $\hat{G}$. Similarly, if $X$ is the length space on which $G$ acts, we will write $\hat{X}$ for the length space on which $\hat{G}$ acts.

\medskip

Notice that by Proposition~\ref{res: SC - induction lemma}, the quotient group $\hat G$ and the hyperbolic space $\hat X$ on which it acts exist whether the family $Q$ is stable with respect to $\mathcal{Q}$ or not. However, we want to apply Proposition \ref{res: SC - induction lemma} iteratively when proving Theorem \ref{res : SC - partial periodic quotient}, and for this purpose we will need to take a quotient of $\hat G$. Therefore, we need to bound the injectivity radius of $\hat Q$, and to this purpose is that we need to assume the stability of $Q$. In particular, we will have that if $\hat Q$ is stable with respect to $\hat{\mathcal{Q}}$ (where this family is constructed from $\hat Q$ exactly as $\mathcal{Q}$ was constructed from $Q$), then $(\hat G, \hat N, \hat Q, \hat X)$ satisfies all assumptions of Proposition \ref{res: SC - induction lemma}. This observation also motivates the following strengthening of Definition~\ref{def stable family} for stable families.

\begin{definition}
\label{def strongly stable family}
    Let $Q$ be a subset of $N$. We say that $Q$ is \emph{strongly stable} if the following property holds.
    
    Suppose $\{ \hat{G}_{i} \, : \, 0 \leq i \leq k \}$ is a finite sequence of quotients obtained from $G=\hat{G}_0$ by successive applications of Lemma~\ref{small cancellation theorem}, where the space on which $\hat{G}_0$ acts is (a possibly rescaled version of) $X$, and the space $\hat{X}_{i+1}$ on which $\hat{G}_{i+1}$ acts is (a possibly rescaled version of) the space $\hat{X}$ provided by Lemma~\ref{small cancellation theorem} when we consider the action of $\hat{G}_i$ on $\hat{X}_i$. Suppose furthermore that at step $i$ the family $\mathcal{Q}_i$ was built by taking the subsets $H_i$ of $\hat{G}_i$ to be the $n$-th power of a loxodromic element $\hat h$ in the image of $Q$ in $\hat{G}_i$ that is primitive as an element of the image $\hat{N}_i$ of $N$ in $\hat{G}_i$, and $Y$ is the cylinder of this $n$-th power.

    Then  the image $\hat{Q}_i$ of $Q$ in $\hat{G}_i$ is stable with respect to $\mathcal{Q}_i$.
\end{definition}

\begin{corollary}\label{cor: SC - induction lemma}
If the tuple $(G, N, Q, X)$ satisfies the assumptions of Proposition~\ref{res: SC - induction lemma} and $Q$ is strongly stable, then  so does   $(\hat G , \hat N , \hat Q, \hat X)$, and $\hat Q$ is strongly stable as well.
\end{corollary}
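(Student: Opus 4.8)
The plan is to separate the statement into its two assertions—that $(\hat G,\hat N,\hat Q,\hat X)$ again satisfies the hypotheses of Proposition~\ref{res: SC - induction lemma}, and that $\hat Q$ is strongly stable—and to observe that essentially all the analytic work has already been carried out, either inside Proposition~\ref{res: SC - induction lemma} or in the definition of strong stability, so that what remains is a bookkeeping argument about iterated quotients.

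First I would verify the hypotheses of Proposition~\ref{res: SC - induction lemma} for $(\hat G,\hat N,\hat Q,\hat X)$. The structural requirements are immediate: $\hat N$ is the image of the normal, involution-free subgroup $N$, hence is normal in $\hat G$ and (by Proposition~\ref{res: SC - induction lemma}) contains no involution; $\hat Q$ is the image of the conjugation-invariant set $Q\subseteq N$, hence a conjugation-invariant subset of $\hat N$; and the action of $\hat G$ on the $\delta_1$-hyperbolic length space $\hat X$ is WPD and non-elementary by Lemma~\ref{small cancellation theorem}(3). Conditions (\ref{enu: SC - induction lemma - e})--(\ref{enu: SC - induction lemma - A}) transfer verbatim from the conclusion of Proposition~\ref{res: SC - induction lemma}. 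The only condition needing extra input is the injectivity-radius bound (\ref{enu: SC - induction lemma - rinj}): by the ``additionally'' clause of that proposition it holds for $\hat Q$ once we know that $Q$ is stable with respect to $\mathcal{Q}=\{(\langle h^n\rangle,Y_h):h\in P\}$. I would deduce this stability by applying the strong stability of $Q$ to the single-step sequence $\hat G_0=G$, $\hat G_1=\hat G$, whose step-$0$ family is exactly $\mathcal{Q}$; Definition~\ref{def strongly stable family} then gives precisely that $Q$ (the image of $Q$ in $\hat G_0=G$) is stable with respect to $\mathcal{Q}$.

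For the second assertion the plan is a ``prepending'' argument. Given an arbitrary finite sequence of SC-quotients $\hat G=\hat G'_0,\dots,\hat G'_{k'}$ with families $\mathcal{Q}'_j$ built, as in Definition~\ref{def strongly stable family}, from the loxodromic elements of the image of $\hat Q$ in $\hat G'_j$ that are primitive with respect to the image of $\hat N$, I would splice the quotient $G\twoheadrightarrow\hat G$ in front, setting $\hat G_0=G$ and $\hat G_{i+1}=\hat G'_i$, to obtain a finite sequence of SC-quotients starting from $G$. The key observation is a compatibility check: since $\hat Q$ and $\hat N$ are the images of $Q$ and $N$ in $\hat G$ and the quotient maps compose, the image of $Q$ (respectively $N$) in each $\hat G'_j$ equals the image of $\hat Q$ (respectively $\hat N$); therefore the data used to build $\mathcal{Q}_{j+1}$ in the spliced sequence is literally the data used to build $\mathcal{Q}'_j$, so $\mathcal{Q}_{j+1}=\mathcal{Q}'_j$ (and $\mathcal{Q}_0=\mathcal{Q}$). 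The spliced sequence is thus one of the sequences quantified over in the strong stability of $Q$, and that hypothesis yields stability of the image of $Q$ in $\hat G_{j+1}=\hat G'_j$ with respect to $\mathcal{Q}_{j+1}=\mathcal{Q}'_j$, i.e.\ stability of the image of $\hat Q$ in $\hat G'_j$ with respect to $\mathcal{Q}'_j$. As $j$ and the sequence are arbitrary, $\hat Q$ is strongly stable. The single point demanding care—the closest thing to an obstacle—is exactly this compatibility check: one must confirm that constructing the cone-off family from the image of $\hat Q$ with primitivity measured in $\hat N$ at stage $\hat G'_j$ coincides with constructing it from the image of $Q$ with primitivity measured in $N$, which rests solely on transitivity of the quotient maps together with $\hat Q,\hat N$ being the images of $Q,N$. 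Everything else is a direct appeal to Proposition~\ref{res: SC - induction lemma} and the relevant definitions, with no new estimates.
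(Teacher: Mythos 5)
Your proof is correct and is exactly the intended argument: the paper states this corollary without proof, treating it as immediate from the conclusion of Proposition~\ref{res: SC - induction lemma} (conditions (\ref{enu: SC - induction lemma - e})--(\ref{enu: SC - induction lemma - A}) transfer directly, and (\ref{enu: SC - induction lemma - rinj}) via the stability of $Q$ with respect to $\mathcal{Q}$, which is the one-step instance of strong stability) together with the observation that any sequence of SC-quotients starting from $\hat G$ can be prepended with $G\twoheadrightarrow\hat G$ to produce a sequence covered by the strong stability of $Q$. Your splicing/compatibility check is a faithful elaboration of what the authors leave implicit, so there is nothing to add.
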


Thus, under the assumption that $Q$ is strongly stable, we can indeed iterate the application of Proposition~\ref{res: SC - induction lemma}.

\subsubsection{Partial periodic quotients (PP-quotients)}
\label{subsubsection pp quotients}

%We next formulate Theorem~\ref{res : SC - partial periodic quotient}, which is a slight modification of \cite[Theorem 6.9]{coulon_1}.

\begin{theorem}\emph{(Compare \cite[Theorem 6.9]{coulon_1})}
\label{res : SC - partial periodic quotient}
	Let $X$ be a $\delta$-hyperbolic length space.
	Let $G$ be a group acting by isometries on $X$.
	We suppose that this action is WPD and non-elementary.
	Let $N$ be a normal subgroup of $G$ without involutions, and $Q$ be a conjugation invariant strongly stable subset of $N$.
	In addition we assume that $e(N,X)$ is odd, $\nu(N,X)$ and $A(N,X)$ are finite and $ r_{\text{inj}}(Q,X)$ is positive.
	There exist a normal subgroup $K$ of $G$ contained in $N$ and a critical exponent $n_1$ that only depends on $\delta, \nu (N,X), A(N,X)$ and $r_{inj}(Q,X)$ such that for every odd integer $n \geq n_1$ which is a multiple of $e(N,X)$ the following holds:
	\begin{enumerate}
		\item if $E$ is an elementary subgroup of $G$ which is not loxodromic, then the projection $G \twoheadrightarrow \bar{G} = G/K$ induces an isomorphism from $E$ onto its image;
        \label{pp quotient isomorph of elliptic on projection}
		\item every non-trivial element of $K$ is loxodromic;
        \label{pp quotient K is loxo}
		\item for every element $\bar{g}$ of finite order in the image $\bar{N}$ of $N$ in $\bar{G}$, either $\bar{g}^n=1$ or $\bar{g}$ is the image of an elliptic element of $N$. Moreover, for every element $h$ in $Q$, either its image $\bar{h}$ in $\bar{N}$ satisfies $\bar{h}^n=1$ or it is identified with the image of a non-loxodromic element of $N$;
        \label{pp quotient image of N is partial periodic}
		\item there are infinitely many elements in $\bar{N}$ which do not belong to the image of an elementary non-loxodromic subgroup of $G$.
        \label{pp quotient infinitely many elem in bar N}
	\end{enumerate}
\end{theorem}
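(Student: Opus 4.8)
The plan is to realize $\bar G$ as the direct limit of an infinite tower of small cancellation quotients, iterating Proposition~\ref{res: SC - induction lemma}. First I would set $\nu_0 = \nu(N,X)$ and rescale $X$ by a sufficiently small $\lambda > 0$; by Remark~\ref{invariants in rescaled space} this leaves $e$ and $\nu$ unchanged while multiplying $\delta$, $A$ and $r_{\text{inj}}$ by $\lambda$, so for $\lambda$ small enough the rescaled space is $\delta_1$-hyperbolic and satisfies $A(N,X) \le 6\pi\nu_0\sinh(2L_S\delta_1)$, i.e.\ conditions~(\ref{enu: SC - induction lemma - nu}) and~(\ref{enu: SC - induction lemma - A}). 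Taking $n$ to be an odd multiple of the odd number $e(N,X)$ secures~(\ref{enu: SC - induction lemma - e}), and I would then fix the critical exponent $n_1$ large enough that condition~(\ref{enu: SC - induction lemma - rinj}) holds (possible since $\lambda\, r_{\text{inj}}(Q,X) > 0$ is fixed while its required lower bound tends to $0$ as $n_1 \to \infty$) and that the contraction factor $\kappa := \tfrac{1}{\sqrt{n_1}}\tfrac{4\pi}{\delta_1}\sqrt{\sinh(\rho_0)\sinh(26\delta_1)/L_S}$ is $< 1$. These constraints bound $n_1$ below only in terms of $\delta$, $\nu(N,X)$, $A(N,X)$ and $r_{\text{inj}}(Q,X)$, as required. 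Since $Q$ is strongly stable, Corollary~\ref{cor: SC - induction lemma} guarantees that each application of Proposition~\ref{res: SC - induction lemma} reproduces all of its own hypotheses for the quotient and that the image of $Q$ stays strongly stable; so, starting from $(G_0,N_0,Q_0,X_0) = (G,N,Q,X)$ and letting $(G_{i+1},N_{i+1},Q_{i+1},X_{i+1})$ be the SC-quotient of $(G_i,N_i,Q_i,X_i)$ for the same $n$, I obtain a tower $G_0 \twoheadrightarrow G_1 \twoheadrightarrow \cdots$. I then set $\bar G = \varinjlim_i G_i$ and $K = \ker(G \to \bar G)$; as each step kills a normal subgroup contained in $N_i$, an induction gives $K \subseteq N$, and $\bar N$ is the image of $N$.

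Conclusions~(\ref{pp quotient isomorph of elliptic on projection}) and~(\ref{pp quotient K is loxo}) then come out formally. For~(\ref{pp quotient isomorph of elliptic on projection}) I would use that each quotient map of the tower restricts to an isomorphism on any non-loxodromic elementary subgroup and carries it to another non-loxodromic elementary subgroup (Proposition~\ref{res: SC - induction lemma}); hence the successive images of an elementary non-loxodromic $E \le G$ stay non-loxodromic elementary and $E \xrightarrow{\sim} \bar E$. For~(\ref{pp quotient K is loxo}), given $g \in K \setminus \{1\}$, if $g$ were not loxodromic then $\langle g\rangle$ would be an elementary non-loxodromic subgroup of $G$, which by~(\ref{pp quotient isomorph of elliptic on projection}) injects into $\bar G$, contradicting $\bar g = 1$; thus $g$ is loxodromic.

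The heart is conclusion~(\ref{pp quotient image of N is partial periodic}), and here the quantitative input is decisive. Writing $g_i$ for the image of $g \in G$ in $G_i$, the uniform contraction of Proposition~\ref{res: SC - induction lemma} gives $[g_i]^{\infty}_{X_i} \le \kappa^i\,[g_0]^{\infty}_{X_0} \to 0$, whereas the preserved bound $r_{\text{inj}}(Q_i,X_i) \ge r_0 > 0$ (with $r_0$ the lower bound in~(\ref{enu: SC - induction lemma - rinj})) holds at every level. Consequently a loxodromic $h \in Q$ cannot keep loxodromic images indefinitely: at the least $i^*$ with $h_{i^*}$ non-loxodromic, $h_{i^*}$ is an elliptic or parabolic element of $N_{i^*}$, so the elliptic/parabolic preimage alternative of Proposition~\ref{res: SC - induction lemma} (for the step $G_{i^*-1}\twoheadrightarrow G_{i^*}$) gives either $h_{i^*}^{\,n}=1$, whence $\bar h^{\,n}=1$, or that $h_{i^*}$ is the image of a non-loxodromic element of $N_{i^*-1}$. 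Repeating this alternative descends the witness one level at a time, so after finitely many steps one reaches $N_0 = N$ and concludes that either $\bar h^{\,n}=1$ or $\bar h$ is the image of a non-loxodromic element of $N$ (a non-loxodromic $h$ being already in the second case). For the first assertion of~(\ref{pp quotient image of N is partial periodic}), a finite-order $\bar g \in \bar N$ of order $d$ satisfies $g^d \in K$, so $g_i^{\,d}=1$ in some $G_i$ and $g_i$ is elliptic there; the same descent through the elliptic branch yields either $\bar g^{\,n}=1$ or $\bar g$ equal to the image of an elliptic element of $N$.

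Conclusion~(\ref{pp quotient infinitely many elem in bar N}) asserts that the quotient does not degenerate, and I would derive it from the persistence of non-elementarity: each action $G_i \curvearrowright X_i$ is non-elementary (Proposition~\ref{res: SC - induction lemma}), $N_i$ stays a non-elementary normal subgroup, and in the limit this furnishes an infinite supply of elements of $\bar N$ that cannot lie in the image of any elementary non-loxodromic subgroup of $G$, those images being non-loxodromic. I expect the main obstacle to be precisely the quantitative bookkeeping of the first step: choosing a single exponent $n$ and a single $n_1$ — depending only on $\delta,\nu(N,X),A(N,X),r_{\text{inj}}(Q,X)$ — for which $\kappa < 1$ and all four numerical hypotheses of Proposition~\ref{res: SC - induction lemma} are preserved simultaneously along the entire tower. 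Once that balance is secured, the descent arguments for~(\ref{pp quotient image of N is partial periodic}) are routine tracing, and~(\ref{pp quotient infinitely many elem in bar N}) reduces to the preservation of non-elementarity.
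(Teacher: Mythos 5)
Your proposal follows essentially the same route as the paper: rescale $X$, choose the critical exponent $n_1$ so that the injectivity-radius hypothesis holds and the translation-length contraction factor is $<1$, iterate Proposition~\ref{res: SC - induction lemma} along a tower using the strong stability of $Q$ to keep all hypotheses valid at every stage, pass to the limit to define $K$ and $\bar G$, and establish conclusion~(\ref{pp quotient image of N is partial periodic}) by playing the geometric contraction $[g_i]^{\infty}\le\kappa^i[g]^{\infty}$ against the uniform lower bound on $r_{\text{inj}}(Q_i,X_i)$ and then descending the elliptic/parabolic witness level by level. The one soft spot is conclusion~(\ref{pp quotient infinitely many elem in bar N}), where your appeal to ``persistence of non-elementarity in the limit'' is a hope rather than an argument; the paper disposes of this point by noting that the claim already holds for the set of all loxodromic elements of $N$, exactly as in Coulon's Theorem 6.9, and you should invoke that argument explicitly rather than leave it as stated.
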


Notice that the third consequence of Theorem~\ref{res : SC - partial periodic quotient} implies that $\bar{N}$ contains no involution since $N$ contains no involution and $n$ is odd.

%The proof of Theorem~\ref{res : SC - partial periodic quotient} consists in iterating Proposition~\ref{res: SC - induction lemma}. Since we want to apply Theorem~\ref{res : SC - partial periodic quotient} to groups in the class $\classcprime$ to obtain quotients in $\classc$, we will need to dive deeper into the proof of this theorem in our specific setting in Section~\ref{section small cancellation quotient}.

\begin{proof}[Proof of Theorem~\ref{res : SC - partial periodic quotient}]
We will obtain Theorem~\ref{res : SC - partial periodic quotient} by iterated applications of Proposition~\ref{res: SC - induction lemma}. More specifically, by putting $G=G_{0}$, and  building a sequence $(G_{i})_{i \in \mathbb{N}}$ where we get $G_{i+1}$ from $G_{i}$ by adding new relations of the form $h^{n}$ with $h$ a loxodromic element of the image of $Q$ in $G_{i}$ that is primitive as an element of the image of $N$ in this quotient. The group $\bar{G}$ in Theorem~\ref{res : SC - partial periodic quotient} is defined as the limit of this sequence. 

We now give the details of the construction  and refer the reader to the proof of Theorem 6.9 in \cite{coulon_1} whenever parts of it work exactly as in that case.

Keeping the same notation for the constants $L_S$, $\rho_0$ and $\delta_1$ as in Proposition~\ref{res: SC - induction lemma}, we  let $X_0= \lambda' X$ be a rescaled version of $X$ where  the rescaling constant $\lambda'$ is the greatest real number such that $\delta'\leq \delta_1$ and $A(N,X_0) \leq   6\pi \nu_0 \sinh (2L_S\delta_1)$.   Thus, in $X_0$ the hyperbolicity constant is $\delta'= \lambda' \delta$, while the invariants satisfy $e(N,X_0)=e(N,X)$, $\nu(N,X_0)=\nu(N,X)$, $r_{\text{inj}}(Q,X_0)= \lambda' r_{\text{inj}}(Q,X)$ and $A(N,X_0)=\lambda' A(N,X)$. We write $\nu_0=\nu(N,X)$. 
The critical exponent $n_1$ is defined as the smallest positive integer such that \[  r_{\text{inj}} (Q,X_0) \geq 2\delta_1 \sqrt {\frac {L_S\sinh (\rho_0)}{n_1\sinh (26 \delta_1)}} \]and \[1 < \frac 1{\sqrt {n_1}} \left(\frac {4\pi}{\delta_1}\sqrt{\frac {\sinh (\rho_0)\sinh (26 \delta_1)}{L_S}}\right).\] 
We will write $c_1$ for the constant appearing in the first of these equations and $c_2$ for the constant appearing in the second one. Notice that indeed the value of $n_1$ only depends on $\delta, \nu (N,X), A(N,X)$ and $r_{inj}(Q,X)$. Fix  an odd integer $n \geq n_1$ that is a multiple of $e(N,X)$. Denote by $P_0$ the (conjugation invariant) set of elements $h$ in $Q_0$ that are loxodromic, primitive as elements of $N_0$ and that satisfy $ [h] \leq L_S\delta_1 $. Set $\mathcal{Q}_0= \{ (\langle h^{n} \rangle, Y_{h^{n}}) \, : \, h \in P_0 \}$. Since $Q$ is a strongly stable subset of $N$, the set $Q_0$ is again strongly stable with respect to $\mathcal{Q}_0$. Thus, by construction, and with $N_0=N$ and $Q_0=Q$, the tuple $(G_0,N_0,Q_0,X_0)$ satisfies the assumptions of Proposition~\ref{res: SC - induction lemma} for $\nu_0$, exponent $n$ and the family $\mathcal{Q}_0$.

Now assume that we have constructed a group $G_i$ acting on a hyperbolic space $X_i$ in such a way that the tuple $(G_i,N_i,Q_i,X_i)$ satisfies the assumptions of Proposition~\ref{res: SC - induction lemma} for $\nu_0$ and exponent $n$ (except that we do not require \textit{a priori} $Q_i$ to be stable with respect to some family), where $N_i$ and $Q_i$ are the images of $N_0$ and $Q_0$ respectively in $G_i$. Suppose that furthermore that for all $0< j \leq i$, the group $G_j$ has been obtained as a quotient of $G_{j-1}$ by a subgroup  $K'=\langle h'^n\rangle^{G_{j-1}}$ where $h$ is a loxodromic element of the image of $Q$ in $G_{j-1}$ that are primitive as elements of $N_{j-1}$. Denote by $P_i$ the (conjugation invariant) set of elements $h$ in $Q_i$ that are loxodromic, primitive as elements of $N_i$ and that satisfy $ [h] \leq L_S\delta_1 $. Put $\mathcal{Q}_i= \{ (\langle h^{n} \rangle, Y_{h^{n}}) \, : \, h \in P_i \}$. Since $Q$ is a strongly stable subset, $Q_i$ is again strongly stable with respect to $\mathcal{Q}_i$, so indeed the tuple $(G_i,N_i,Q_i,X_i)$ satisfies the assumptions of Proposition~\ref{rinj in the sc quotient} for $\nu_0$, exponent $n$ and $\mathcal{Q}_i$. Let $K_i=\langle h^n \, : \, h \in P_i\rangle^{G_i}$. By Proposition~\ref{res: SC - induction lemma}, the quotient $G_{i+1}=G_i / K_i$ acts on a hyperbolic space $X_{i+1}$, and if we write $N_{i+1}=N_i/K_i$ and $Q_{i+1}$ for the image of $Q$ in $G_{i+1}$, then the tuple $(G_{i+1},N_{i+1}, Q_{i+1}, X_{i+1})$  satisfies the assumptions of Proposition~\ref{res: SC - induction lemma} for $\nu_0$ and exponent $n$ (except that similarly we do not require \textit{a priori} $Q_{i+1}$ to be stable with respect to some family). Thus, we get a well-defined sequence $(G_i)_{i \in \mathbb{N}}$.

Let $\bar{G}$ be the limit of the sequence $(G_{i})_{i \in \mathbb{N}}$, so $\bar{G}$ is a quotient of $G$ by a normal subgroup $K$ contained in $N$. We claim that this group satisfies the required properties. The proofs of Properties (\ref{pp quotient isomorph of elliptic on projection}) and (\ref{pp quotient K is loxo}) work exactly as in the case of Theorem 6.9 in \cite{coulon_1}. For Property (\ref{pp quotient infinitely many elem in bar N}), it is enough to notice that this claim holds even for the set of all loxodromic elements of $N$, as is the case in Theorem 6.9 in \cite{coulon_1}.

It only remains to prove Property (\ref{pp quotient image of N is partial periodic}). A simple inductive argument using Proposition~\ref{res: SC - induction lemma} shows that if $g'$ is an elliptic (respectively parabolic) element of $N_i$, then either $g'^{n}=1$ or $g'$ is the image of an elliptic (respectively parabolic) element of $N$. Let now $\bar{g}$ be an element of finite order of $\bar{N}$ which is not the image of an elliptic element of $N$. Denote by $g$ a preimage of this element in $N$, and denote by $g_i$ the image of $g$ in $N_i$. Notice that $g$ must be loxodromic, since otherwise it would be a parabolic element (thus of infinite order) and the projection would induce an isomorphism of $\langle g \rangle$ onto its image $\langle \bar{g} \rangle$. If $g_i$ was of infinite order for all $i \in \mathbb{N}$, then $\bar{g}$ would be of infinite order, so there exists $j \in \mathbb{N}$ such that $g_j$ is of finite order. Now the first claim of Property (\ref{pp quotient image of N is partial periodic}) follows from the second and third consequences of Proposition \ref{res: SC - induction lemma}.

For the second part of this property, assume moreover that, under the notation of the previous paragraph, $\bar{g}$ is in $\bar{Q}$. By the way  the sequence $(G_{i})_{i \in \mathbb{N}}$ was constructed, we have  $[g_i] \leq (c_2)^{i}[g]$. Therefore, there exists $j \in \mathbb{N}$ such that $[g_j] < c_1=r_{\text{inj}}(Q_j,X_j)$. In particular, since $g_j \in Q_j$, it is not loxodromic. It also cannot be parabolic (since once again, this would give that $\bar g$ is of infinite order), so it must be elliptic, and once again the desired conclusion follows from the second and third consequences of Proposition \ref{res: SC - induction lemma}.
\end{proof}

\textbf{Terminology.} For the remainder of the article, we will call a group obtained as the quotient of a group $G$ as provided by Theorem~\ref{res : SC - partial periodic quotient} a \emph{partial periodic quotient} (or simply a \emph{PP-quotient}) of $G$, and we will denote it by $\bar{G}$.

\section[SC- and PP- Quotients]{Proof of Proposition~\ref{prop: classes stable under pp quotients}}
\label{section small cancellation quotient}

The main purpose of this section is to prove Proposition~\ref{prop: classes stable under pp quotients}, which we recall here.

\begin{proposition2.15}
\label{prop: classes stable under pp quotients restatement}
    Let $G$ be a group acting on a tree $X$ in such a way that the pair $(G, X)$  belongs to $\classcprime$. Then $G$ has a quotient $\bar G\in\classc$ such that the image of every pair of distinct involutions of $p$-affine type (respectively, of $p$-minimal type) is again 
    of $p$-affine type (respectively, of $p$-minimal type), any subgroup of $G$ that is elliptic for the action of $G$ on $X$ projects to an isomorphic image in $\bar{G}$ and $O(G)=O(\bar{G})$.
\end{proposition2.15}

In order to prove Proposition \ref{prop: classes stable under pp quotients}, we will apply Theorem~\ref{res : SC - partial periodic quotient} with $N=\langle Tr(G)\rangle$ and $Q=Tr(G)$. The reason why we don't simply take $Q=N$ as in \cite{coulon_1} is illustrated by the following example: if $i,j,k$ are three (pairwise distinct) involutions of $G$, then $(ijk)^2$ belongs to $N$ and it may happen that $(ijk)^2$ is primitive in $N$ (note that $ijk$ does not belong to $N$, otherwise $N$ would contain an involution), but we don't want to add the relation $((ijk)^2)^p=1$ in our quotient because this would create a new involution, namely $(ijk)^p$, over which we have no control (for instance, such an involution would have no reason to be conjugate to the other involutions), and thus this would break the conditions defining our classes of groups.

In order to apply Theorem~\ref{res : SC - partial periodic quotient} we need to show that $Q$ is strongly stable, and to do so we need to consider the intermediate small cancellation quotients obtained using Proposition~\ref{res: SC - induction lemma}. However, a pair $(\hat G , \hat X)$ obtained from a pair $(G,X)$ in $\classcprime$ does not belong to $\classcprime$ (because $\hat X$ is not a tree). For this reason we introduce a new auxiliary class $\classcprimezero$ containing $\classcprime$, and we will prove that this new class is stable under the small cancellation quotients obtained using Proposition~\ref{res: SC - induction lemma}.

\begin{definition}[Class $\classcprimezero$]
\label{definition class Cprimezero}
Let $G'$ be a group acting by isometries on a $\delta_1$-hyperbolic length space $X'$ (with $\delta_1$ the constant from Proposition~\ref{res: SC - induction lemma}). Write $N'= \langle Tr(G') \rangle$ and $Q'=Tr(G')$. We say that $(G', X')$ belongs to the class $\classcprimezero$ if it satisfies the conditions of the class $\classcprime$ where Condition~\ref{def class Cprime satisfies the ind lemma} is replaced by Condition~\ref{def class Cprimezero satisfies the ind lemma}:
  \begin{enumerate}[label=(\arabic*'')]
  \setcounter{enumi}{1}
      \item \label{def class Cprimezero satisfies the ind lemma}
         The tuple $(G',N',Q',X')$  satisfies the assumptions of Proposition~\ref{res: SC - induction lemma} for $\nu_0=5$ and exponent~$p$, except that $Q'$ is not required \textit{a priori} to be stable with respect to $\mathcal{Q}'$.
\end{enumerate}
\end{definition}

\begin{remark}
    Notice that if we want to obtain a PP-quotient of a group in $\classcprime$ by iteratively obtaining SC-quotients of groups in $\classcprimezero$, then we will need the family $Q'$ to be stable with respect to $\mathcal{Q}'$ (where this family is obtained from $Q'$ exactly as $\mathcal{Q}$ is obtained from $Q$ in Proposition \ref{res: SC - induction lemma}). However, we will see that this property follows directly from the other conditions defining $\classcprimezero$, so we do not need to include it explicitly in the definition above.
\end{remark}

%In the next subsection we will show how the inductive step given by Proposition~\ref{res: SC - induction lemma} applied to a group as in Proposition~\ref{prop: classes stable under pp quotients} preserves the relevant properties of class $\classcprime$. 

%Notice that, as  in the proof of Theorem~\ref{res : SC - partial periodic quotient}, when applying Proposition~\ref{res: SC - induction lemma} in the first step we will not be working with a pair $(G,X)$ in class $\classcprime$, but with a rescaled version of it, thereby changing its invariants. For this reason we introduce a new auxiliary class $\classcprimezero$, and we will show that it is stable under SC-quotients.

At the end of this section we will also obtain Lemma \ref{non-commuting translations}, that implies in particular that the sharply 2-transitive groups constructed in Theorem \ref{embedding theorem} do not split.

\subsection[SC-Quotients]{Stability of the Class $\classcprimezero$ under SC-Quotients}
\label{subsec sc quotient}

%In this subsection we show how the inductive step given by Proposition~\ref{res: SC - induction lemma} applied to a group as in Proposition~\ref{prop: classes stable under pp quotients} preserves the relevant properties of class $\classcprime$. 

In this subsection we will show that the new class $\classcprimezero$ defined above is stable under SC-quotients. We fix a group $G'$ acting on a hyperbolic space $X'$ in such a way that the pair $(G',X')$ belongs to $\classcprimezero$, and we write $N'= \langle Tr(G') \rangle$ and $Q'=Tr(G')$. The following result is the crucial induction step for the proof of Proposition~\ref{prop: classes stable under pp quotients}.

\begin{proposition}
\label{reformulation class cprime stable under sc quotients}
If  $(G',X')$ is in class $\classcprimezero$, then so is its
small cancellation quotient $(\hat G', \hat X')$ given by Proposition~\ref{res: SC - induction lemma}. Moreover, the image of every pair of distinct involutions of $p$-affine type (respectively, of $p$-minimal type) is again of $p$-affine type (respectively, of $p$-minimal type), and $O(G')=O(\hat G')$.
\end{proposition}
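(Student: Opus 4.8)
The plan is to apply Proposition~\ref{res: SC - induction lemma} to the tuple $(G',N',Q',X')$ with $N'=\langle Tr(G')\rangle$, $Q'=Tr(G')$, $\nu_0=5$ and exponent $p$, and then to read off each defining property of $\classcprimezero$ from its conclusions together with the small‑cancellation dictionary of Lemma~\ref{small cancellation theorem} and Lemma~\ref{loxodromic subgroups in the quotient}. First I would record that $Q'$ is stable with respect to $\mathcal{Q}'=\{(\langle h^{p}\rangle,Y_{h}):h\in P'\}$ (this is Lemma~\ref{tuple of cprime satisfies ind hyp}), so that the full strength of Proposition~\ref{res: SC - induction lemma}, including the injectivity‑radius bound, applies: the quotient tuple $(\hat G',\hat N',\hat Q',\hat X')$ again satisfies the assumptions of that proposition for $\nu_0=5$ and exponent $p$, with $\hat X'$ a $\delta_1$‑hyperbolic length space, which is exactly Condition~\ref{def class Cprimezero satisfies the ind lemma}. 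The action of $\hat G'$ on $\hat X'$ is non‑elementary and WPD by Lemma~\ref{small cancellation theorem}, and $\hat N'$ contains no involution by Proposition~\ref{res: SC - induction lemma}, giving Condition~\ref{def class C N without 2-torsion}.

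For acylindricity (Condition~\ref{def class Cprime no parabolic}) I would invoke the last clause of Proposition~\ref{res: SC - induction lemma}: it suffices to bound $\lvert\mathrm{Stab}(Y_{h})/\langle h^{p}\rangle\rvert$ uniformly for $h\in P'$. Here each $h\in P'$ is a primitive loxodromic translation; writing $h=\rho\sigma$ with $\rho,\sigma$ distinct involutions, I would show $\mathrm{Stab}(Y_{h})=M_{G'}(\langle h\rangle)=D_{\rho,\sigma}\cong D_{\infty}$. Indeed, by Condition~\ref{def class C cent of trans is cyclic} one has $\Cen(h)=\langle h\rangle$ (primitivity forces the cyclic centralizer to be generated by $h$ itself); the translation subgroup $T$ of the virtually cyclic group $M_{G'}(\langle h\rangle)$ is then $\langle h\rangle$ (any generator of $T$ commutes with $h$, hence lies in $\Cen(h)$), and since $M_{G'}(\langle h\rangle)$ maps to $\mathrm{Sym}\{h^{\pm\infty}\}=C_2$ with kernel $\Cen(h)=\langle h\rangle$, we get $M_{G'}(\langle h\rangle)=\langle h,\rho\rangle=D_{\rho,\sigma}$. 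Consequently $\lvert\mathrm{Stab}(Y_{h})/\langle h^{p}\rangle\rvert=2p$ for all $h$, giving the uniform bound and hence acylindricity of $\hat G'$. This identification $\mathrm{Stab}(Y_{h})=D_{\rho,\sigma}$ is the linchpin of the whole argument: the maximal finite normal subgroup of every cylinder stabilizer is trivial, so by Remark~\ref{stab of v in the small cancellation theorem} each apex stabilizer is isomorphic to $D_p$, and by Lemma~\ref{loxodromic subgroups in the quotient} the maximal finite normal subgroup of every loxodromic subgroup of $\hat G'$ has order at most $2$, which is Condition~\ref{def class Cprime no subg of odd ord norm by lox}.

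The analysis of involutions and translations is the heart of the proof, and I expect it to be the main obstacle. For Condition~\ref{def class Cprime inf trans are loxodromic}, let $\hat t=\hat r\hat s$ be a translation of infinite order; then $D_{\hat r,\hat s}\cong D_\infty$ has bounded orbits as soon as $\langle\hat t\rangle$ does, so if $\hat t$ were elliptic then $D_{\hat r,\hat s}$ would be an infinite elliptic subgroup of $\hat G'$, which by part~(\ref{elliptic subgroups of hat g}) of Lemma~\ref{small cancellation theorem} (the apex alternative being excluded, apex stabilizers being finite) is isomorphic to an elliptic subgroup $\langle\rho,\sigma\rangle\cong D_\infty$ of $G'$; but then $\rho\sigma$ is an infinite‑order translation of $G'$ that is simultaneously elliptic and, by Condition~\ref{def class Cprime inf trans are loxodromic} for $G'$, loxodromic, a contradiction. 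As the action is acylindrical there are no parabolics (Lemma~\ref{no_parabolic_osin}), so every infinite‑order translation is loxodromic. Next I would prove that every involution of $\hat G'$ lifts to an involution of $G'$: an involution is elliptic, so by part~(\ref{elliptic subgroups of hat g}) of Lemma~\ref{small cancellation theorem} it either lifts through the isomorphism on an elliptic subgroup of $G'$, or lies in an apex stabilizer $\mathrm{Stab}(\hat v)\cong D_p$. In the latter case, because $p$ is odd and every element of the reflection coset of $\mathrm{Stab}(Y_{h})/F=D_\infty$ has finite order, a short computation with the composite $\mathrm{Stab}(Y_{h})\twoheadrightarrow D_p$ shows each involution of $\mathrm{Stab}(\hat v)$ is the image of an involution of $D_{\rho,\sigma}\subseteq G'$; the oddness of $p$ is essential, as it prevents the order‑$p$ rotation $\hat h$ from producing new involutions.

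With the involutions understood, the remaining conditions follow by transporting the pair structure. A translation $\hat r\hat s$ has order $p$ or infinity, since its preimage $rs\in Q'$ does (Condition~\ref{def class C pairs of minimal or affine type}) and an order‑$p$ preimage maps to an order‑$p$ element. For a pair $(r,s)$ of $p$‑affine type the finite group $H\cong\agl$ is elliptic and embeds isomorphically by part~(\ref{g hat isomorph of elliptic}) of Lemma~\ref{small cancellation theorem}, so $(\hat r,\hat s)$ is again of $p$‑affine type; this simultaneously gives non‑emptiness and $\hat G'$‑transitivity on affine pairs (Condition~\ref{def class C G trans on affine trans}). For a pair $(r,s)$ of $p$‑minimal type, $\langle rs\rangle\cong C_p$ is a short elliptic subgroup and I would control its normalizer in $\hat G'$ using the conjugacy clause of Proposition~\ref{res: SC - induction lemma} (conjugacy of short elliptic elements lifts), concluding $N_{\hat G'}(\langle\widehat{rs}\rangle)=D_{\hat r,\hat s}$, so the image stays $p$‑minimal; the same applies to the new order‑$p$ pairs $(\hat\rho,\hat\sigma)$ arising from killed loxodromic translations $h=\rho\sigma$, whose dihedral group $\widehat{D_{\rho,\sigma}}\cong D_p$ sits inside an apex stabilizer and whose normalizer equals that stabilizer by Lemma~\ref{image out stab y fixes uniq vert}, hence they too are $p$‑minimal. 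Condition~\ref{def class C cent of trans is cyclic} is preserved by the same transport argument. Finally $O(G')=O(\hat G')$ follows because every finite‑order element is elliptic: elliptic subgroups of $G'$ embed by part~(\ref{g hat isomorph of elliptic}) of Lemma~\ref{small cancellation theorem}, giving $O(G')\subseteq O(\hat G')$, while a finite‑order element of $\hat G'$ either lifts to an elliptic subgroup of $G'$ or lies in an apex stabilizer $\cong D_p$, contributing only orders in $\{1,2,p\}\subseteq O(G')$, giving the reverse inclusion.
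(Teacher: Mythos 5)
Your proposal is correct and follows essentially the same route as the paper: apply Proposition~\ref{res: SC - induction lemma} to $(G',N',Q',X')$, identify every cylinder stabilizer $\text{Stab}(Y_h)$ as $D_\infty$ so that apex stabilizers are $D_p$, use the elliptic-lifting and conjugacy-lifting clauses of Lemma~\ref{small cancellation theorem} and Proposition~\ref{res: SC - induction lemma} to transport the $p$-affine/$p$-minimal dichotomy, and invoke Lemma~\ref{loxodromic subgroups in the quotient} for Condition~\ref{def class Cprime no subg of odd ord norm by lox}. The only notable (harmless) divergence is that you derive $\text{Stab}(Y_h)\cong D_\infty$ from the centralizer condition rather than from the paper's four-way classification of virtually cyclic loxodromic subgroups, and your treatment of the normalizer computation in the all-preimages-$p$-minimal case and of Condition~\ref{def class C cent of trans is cyclic} compresses the paper's case analysis, but the underlying tools are identical.
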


We need some preliminary lemmas. We first give a classification of the loxodromic subgroups of $G'$.

\begin{lemma}
\label{loxodromic subgroups in stprimezero}
    Let $E$ be a loxodromic subgroup of $G'$. Then, $E$ is isomorphic to one of the following four groups: $\mathbb{Z}$, $C_{2}\times \mathbb{Z}$, $D_{\infty}$ or $C_4\ast_{C_2}C_4$.
\end{lemma}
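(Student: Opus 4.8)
The plan is to combine the structure theory of infinite virtually cyclic groups with the defining conditions of $\classcprimezero$. First I would note that since $(G',X')\in\classcprimezero$, the action of $G'$ on $X'$ is non-elementary and acylindrical (Condition~\ref{def class Cprime no parabolic}), hence WPD, so Lemma~\ref{lox subg in wpd is virt cyc} applies: the loxodromic subgroup $E$ is contained in a maximal loxodromic subgroup and is itself virtually cyclic. Moreover $E$ is infinite, since it contains a loxodromic (hence infinite-order) isometry. Therefore $E$ surjects either onto $\mathbb{Z}$ (\emph{cyclic type}) or onto $D_{\infty}$ (\emph{dihedral type}), with kernel the maximal normal finite subgroup $F$ of $E$. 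As $E$ is loxodromic it contains a loxodromic element normalizing $F$, so Condition~\ref{def class Cprime no subg of odd ord norm by lox} forces $\lvert F\rvert\leq 2$; and since $\Aut(C_2)$ is trivial, $F$ is central in $E$ whenever $\lvert F\rvert=2$.

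Next I would dispatch the easy branches. In the cyclic type, $F=1$ gives $E\cong\mathbb{Z}$, while $F\cong C_2$ yields a central extension of $\mathbb{Z}$ by $C_2$, which splits (as $\mathbb{Z}$ is free) with trivial action on $C_2$, so $E\cong C_2\times\mathbb{Z}$. In the dihedral type, $F=1$ gives $E\cong D_{\infty}$. The only remaining case, which is the heart of the argument, is the dihedral type with $F=\langle z\rangle\cong C_2$ central, i.e. a central extension $1\to\langle z\rangle\to E\to D_{\infty}\to 1$; here I claim $E\cong C_4\ast_{C_2}C_4$.

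For this case I would choose generators $a,b$ of $D_{\infty}$ with $a^2=b^2=1$ and lifts $\tilde a,\tilde b\in E$. Since $z$ is central of order $2$, replacing a lift by its $z$-translate leaves its square unchanged, so $\tilde a^2,\tilde b^2\in\{1,z\}$ are well-defined. The decisive observation is that $E$ can contain no involution other than $z$: if some lift, say $\tilde a$, satisfied $\tilde a^2=1$, then $\tilde a$ and $z\tilde a$ would be two distinct involutions both centralized by the nontrivial central element $z$, contradicting Remark~\ref{no elem cent dist invol in c cprime}. Hence $\tilde a^2=\tilde b^2=z$, so $\tilde a,\tilde b$ have order $4$, $E=\langle\tilde a,\tilde b\rangle$, and the relations $\tilde a^4=\tilde b^4=1$, $\tilde a^2=\tilde b^2$ give a surjection $C_4\ast_{C_2}C_4\twoheadrightarrow E$; comparing the two central $C_2$-extensions of $D_{\infty}$ (both inducing the identity on the quotient and on the order-$2$ kernel) shows it is an isomorphism. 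I expect the main obstacle to be exactly this dihedral branch with $\lvert F\rvert=2$: one must rule out the competing central extensions $D_{\infty}\times C_2$ and $C_4\ast_{C_2}(C_2\times C_2)$ — which is precisely what the involution-centralizer obstruction of Remark~\ref{no elem cent dist invol in c cprime} achieves, since each of these contains an involution distinct from $z$ — and then correctly identify the surviving extension as the amalgam $C_4\ast_{C_2}C_4$, in which $z$ is the unique involution.
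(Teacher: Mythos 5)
Your proof is correct and follows essentially the same route as the paper's: virtual cyclicity from Lemma~\ref{lox subg in wpd is virt cyc}, the bound $\lvert F\rvert\leq 2$ from Condition~\ref{def class Cprime no subg of odd ord norm by lox}, and the absence of distinct commuting involutions in $G'$ to eliminate the remaining possibilities. The only cosmetic difference is that you classify the central $C_2$-extensions of $D_{\infty}$ by examining squares of lifts, whereas the paper inspects the vertex groups of the amalgam $A\ast_C B$ directly and discards $C_2\times C_2$ — both steps are driven by the same observation.
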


\begin{proof}
    By Lemma~\ref{lox subg in wpd is virt cyc}, we know that $E$ is virtually cyclic and thus there exists a (necessarily normal) finite subgroup $C$ of $E$ such that $E\simeq C\rtimes \mathbb{Z}$ or $E\simeq A\ast_C B$ with $[A:C]=[B:C]=2$. But $C$ has order at most 2 by Condition~\ref{def class Cprime no subg of odd ord norm by lox} of Definition~\ref{definition class Cprimezero}. Moreover, by Condition~\ref{def class C pairs of minimal or affine type} of Definition~\ref{definition anss2t}, no two involutions of $G'$ commute, which leads to the four possibilities listed above.
\end{proof}

Note that among these four groups, only $D_{\infty}$ contains two distinct involutions.

\begin{lemma}
\label{apex stab in SC quot}
    Let $\hat{v} \in \hat{v}(\mathcal{Q})$ be an apex of $\hat{X}'$. Then $\text{Stab}(\hat{v})$ is isomorphic to $D_{p}$.
\end{lemma}

\begin{proof}
Note that if $h=ij\in N'$ is a loxodromic translation that is primitive in $N'$ (where $i,j$ are two distinct involutions), then the setwise stabilizer $S$ of the pair $\{ h^{\pm \infty} \}\subset \partial G'$ contains two distinct involutions (namely $i$ and $j$), thus by Lemma~\ref{loxodromic subgroups in stprimezero} and the subsequent remark it is isomorphic to $D_{\infty}$. It follows that $S=\langle i,j\rangle$ since $h$ is primitive in $N'$ (and as a consequence, $h$ is also primitive in $G'$).
Now, let $\hat{v} \in \hat{v}(\mathcal{Q})$ be an apex of $\hat{X}'$. By the construction of the cone-off (see Section 4.3), and by our choice of the set $Q'$, there exists a primitive loxodromic translation $h=ij\in N'$ (where $i,j$ are two distinct involutions) such that $(\langle h^{p} \rangle, Y_{h})$ belongs to $\mathcal{Q}$ and $\text{Stab}(\hat v)$ is the quotient of the setwise stabilizer of the pair $\{ h^{\pm \infty} \}\in \partial G'$ by the normal subgroup generated by $h^p$. It follows from the previous paragraph that $\text{Stab}(\{ h^{\pm \infty} \})=\langle i,j \ \vert \ i^2=j^2=1\rangle$. Therefore, $\text{Stab}(\hat{v})=\langle i,j \ \vert \ i^2=j^2=(ij)^p=1\rangle\simeq D_{p}$.
\end{proof}

The next lemma follows easily from the previous lemma and from item (\ref{elliptic subgroups of hat g}) of Lemma~\ref{small cancellation theorem}.

\begin{lemma}
\label{elliptic maps to Dp}
    Let $\hat F$ be an elliptic subgroup of $\hat G'$. Then one of the following holds:
    \begin{enumerate}
        \item there exists an elliptic subgroup $F$ of $G'$ such that the quotient map $G'  \twoheadrightarrow \hat G'$ induces an isomorphism from $F$ onto $\hat F$,
        \item $\hat F$ is isomorphic to $C_p$ or $D_p$.
    \end{enumerate}
\end{lemma}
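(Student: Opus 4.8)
The plan is to reduce immediately to Lemma~\ref{small cancellation theorem}(\ref{elliptic subgroups of hat g}), which splits any elliptic subgroup $\hat F$ of $\hat G'$ into two cases, and then to analyze the second case using the previous lemma. First I would apply that dichotomy to $\hat F$. If we land in its first case, there is an elliptic subgroup $F$ of $G'$ mapping isomorphically onto $\hat F$, which is exactly conclusion~(1). So all the content is in the second case, where $\hat F$ is contained in $\text{Stab}(\hat v)$ for some apex $\hat v \in \hat v(\mathcal{Q})$; by Lemma~\ref{apex stab in SC quot} this stabilizer is isomorphic to $D_p$.

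Next I would simply enumerate the subgroups of $D_p$. Since $p$ is an odd prime, the only subgroups are the trivial group, the cyclic rotation subgroup $C_p$, the $p$ reflection subgroups isomorphic to $C_2$, and $D_p$ itself. If $\hat F \cong C_p$ or $\hat F \cong D_p$ we are immediately in conclusion~(2). If $\hat F$ is trivial it is the isomorphic image of the trivial (hence elliptic) subgroup of $G'$, so we are in conclusion~(1). Thus the only case requiring an argument is $\hat F \cong C_2$.

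The crux, then, is to show that a $C_2$ inside $\text{Stab}(\hat v)$ falls under conclusion~(1), i.e. that its generating involution lifts to an involution of $G'$. Here I would use the explicit description of $\text{Stab}(\hat v)$ obtained in the proof of Lemma~\ref{apex stab in SC quot}: it is the quotient of $\text{Stab}(\{h^{\pm\infty}\}) = \langle i,j \mid i^2=j^2=1\rangle \cong D_\infty$ (where $h = ij$ is the primitive loxodromic translation whose cylinder gives rise to $\hat v$) by the normal subgroup $\langle (ij)^p\rangle$. Under the quotient $D_\infty \twoheadrightarrow D_p$ every involution (reflection) of $D_p$ is the image of a reflection $i(ij)^k$ of $D_\infty$, which is an element of order two of $\langle i,j\rangle \subseteq G'$, hence an involution $w$ of $G'$. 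As a finite subgroup, $F = \langle w\rangle \cong C_2$ is elliptic, and the quotient map sends it isomorphically onto $\hat F$ (order two onto order two), placing us in conclusion~(1). I expect this lifting of the involution to be the only genuinely non-formal point; everything else is a direct appeal to the two cited lemmas together with the elementary subgroup structure of $D_p$.
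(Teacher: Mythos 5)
Your proposal is correct and follows essentially the same route as the paper: reduce via Lemma~\ref{small cancellation theorem}(\ref{elliptic subgroups of hat g}) to the case $\hat F\subseteq\text{Stab}(\hat v)\cong D_p$, observe that only the $C_2$ case needs work, and lift the involution through the quotient $D_\infty\twoheadrightarrow D_p$. The paper phrases the last step by noting that a preimage of the involution in the infinite dihedral stabilizer cannot have infinite order (else it would map to an element of order $1$ or $p$), whereas you exhibit the reflection preimage explicitly, but this is the same observation.
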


\begin{proof}
As a consequence of item (\ref{elliptic subgroups of hat g}) of Lemma~\ref{small cancellation theorem}, we only need to take care of the case when $\hat F$ is contained in $\text{Stab}(\hat v)$ for some $\hat v \in \hat v (\mathcal{Q})$. By Lemma~\ref{apex stab in SC quot}, $\text{Stab}(\hat v)$ is isomorphic to $D_p$, thus $\hat F$ is either trivial or isomorphic to $C_2$ or $C_p$ or $D_p$. More precisely, there is a primitive loxodromic translation $h\in N$ such that $\text{Stab}(\hat v)$ is the quotient of the setwise stabilizer $S$ of the pair $\{ h^{\pm \infty} \}\in \partial G'$ by the normal subgroup generated by $h^p$. By Lemma~\ref{loxodromic subgroups in stprimezero} $S$ is an infinite dihedral group. We need to prove that if $\hat F$ is trivial or isomorphic to $C_2$, then there exists an elliptic subgroup $F$ of $G'$ such that the projection $G'  \twoheadrightarrow \hat G'$ induces an isomorphism from $F$ onto $\hat F$. This is obvious if $\hat F$ is trivial, so let us assume that $\hat F$ is isomorphic to $C_2$, and write $\hat F=\langle i\rangle$ for some involution $i$ of $\hat G'$. Let $g$ be a preimage of $i$ in $S$. Note that $g$ cannot have infinite order, otherwise it would be mapped to an element of order $1$ or $p$ in $\hat F$. Thus $g$ is an involution and the projection $G'  \twoheadrightarrow \hat G'$ induces an isomorphism from $F=\langle g\rangle$ onto $\hat F=\langle i\rangle$.\end{proof}

\begin{lemma}
\label{norm stab v is in stab v}
Let $\hat g \in \text{Stab}(\hat v)$ for some $\hat v \in \hat v(\mathcal{Q})$ be of order $>2$. Then, $\hat v$ is the only apex of $\hat v(\mathcal{Q})$ fixed by $\hat g$.
\end{lemma}

\begin{proof}Recall that there is a primitive loxodromic translation $h$ such that $\text{Stab}(\hat v)$ is the quotient of $\text{Stab}(Y_{h})$ by its subgroup normally generated by $h^p$, and $\text{Stab}(Y_{h})$ is dihedral infinite by Lemma \ref{loxodromic subgroups in stprimezero} (with $\langle h\rangle$ as a subgroup of index 2). In particular, the only elliptic elements of $\text{Stab}(Y_{h})$ are the involutions, and thus $\hat g$ is not the image of an elliptic element of $\text{Stab}(Y_{h})$. Therefore, by Lemma~\ref{image out stab y fixes uniq vert}, $\hat v$ is the unique apex of $\hat v (\mathcal{Q})$ fixed by $\hat g$.
\end{proof}

We can now prove the following result (which is the first part of Proposition \ref{reformulation class cprime stable under sc quotients}).

\begin{proposition}
\label{prop: classcprime}
If $(G', X')\in\classcprimezero$, then so is $(\hat{G}', \hat{X}')$.
%The pair $(\hat{G}', \hat{X}')$ satisfies conditions~\ref{def class Cprimezero no parabolic}--\ref{def class Cprimezero no subg of odd ord norm by lox} of Definition~\ref{definition class Cprimezero} of $\classcprimezero$.
\end{proposition}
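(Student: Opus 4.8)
The plan is to check, one by one, the conditions defining $\classcprimezero$ for the pair $(\hat G',\hat X')$: that $\hat G'$ is almost sharply $2$-transitive of characteristic $p$ (the four conditions of Definition~\ref{definition anss2t}), together with Conditions~\ref{def class Cprime no parabolic},~\ref{def class Cprime inf trans are loxodromic} and~\ref{def class Cprime no subg of odd ord norm by lox} of $\classcprime$ and Condition~\ref{def class Cprimezero satisfies the ind lemma}. Several of these follow immediately from the small cancellation machinery. The action of $\hat G'$ on $\hat X'$ is non-elementary and WPD by Lemma~\ref{small cancellation theorem}, and it is acylindrical by the final clause of Proposition~\ref{res: SC - induction lemma}, since Lemma~\ref{apex stab in SC quot} shows that each apex stabilizer is isomorphic to $D_p$ and hence of order at most $2p$; this gives Condition~\ref{def class Cprime no parabolic}. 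Condition~\ref{def class Cprimezero satisfies the ind lemma} is exactly the content of Proposition~\ref{res: SC - induction lemma}, namely that the invariants $e,\nu,A$ of $(\hat N',\hat X')$ stay within the prescribed bounds and that the injectivity radius of $\hat Q'$ satisfies the required estimate, the latter because $Q'$ is stable (Lemma~\ref{tuple of cprime satisfies ind hyp}). Finally, Condition~\ref{def class C N without 2-torsion} (no involution in $\hat N'$) is asserted directly in Proposition~\ref{res: SC - induction lemma}.

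First I would describe the involutions and translations of $\hat G'$. By Lemma~\ref{elliptic maps to Dp} every elliptic subgroup of $\hat G'$ either lifts isomorphically from $G'$ or embeds into an apex stabilizer $\cong D_p$, and in the latter case a subgroup of order at most $2$ still lifts to $G'$; hence every involution of $\hat G'$ is the image of an involution of $G'$, and every translation of $\hat G'$ is the image $\widehat{rs}$ of a translation $rs$ of $G'$. To obtain the first clause of Condition~\ref{def class C pairs of minimal or affine type} I would analyse $\widehat{rs}$ according to whether $rs$ has order $p$ or infinite order in $G'$ (the only possibilities by Condition~\ref{def class C pairs of minimal or affine type} for $G'$). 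If $|rs|=p$, then $D_{r,s}\cong D_p$ is elliptic and maps isomorphically by Lemma~\ref{small cancellation theorem}(\ref{g hat isomorph of elliptic}), so $|\widehat{rs}|=p$. If $rs$ has infinite order, it is loxodromic by Condition~\ref{def class Cprime inf trans are loxodromic} for $G'$ and its image is elementary by Lemma~\ref{small cancellation theorem}(\ref{g hat preserves elem}); thus $\widehat{rs}$ is either loxodromic (still of infinite order, which also yields Condition~\ref{def class Cprime inf trans are loxodromic} for $\hat G'$, using that acylindrical actions have no parabolics by Lemma~\ref{no_parabolic_osin}) or elliptic, in which case Proposition~\ref{res: SC - induction lemma} forces $\widehat{rs}^{\,p}=1$ and so $|\widehat{rs}|=p$.

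The core of the proof, and the step I expect to be the main obstacle, is the second clause of Condition~\ref{def class C pairs of minimal or affine type}: that every order-$p$ pair $(\hat r,\hat s)$ is of $p$-minimal or $p$-affine type. A pair whose product already had order $p$ in $G'$ keeps its type by the preservation statement (the second part of Proposition~\ref{reformulation class cprime stable under sc quotients}, proved separately). The delicate case is when $rs$ was loxodromic of infinite order in $G'$ while $\widehat{rs}$ has order $p$. Here I would show that $\widehat{rs}$ generates the rotation subgroup of an apex stabilizer $\text{Stab}(\hat v)\cong D_p$ with $D_{\hat r,\hat s}=\text{Stab}(\hat v)$, and that any element normalizing $\langle\widehat{rs}\rangle$ must fix the unique apex fixed by the order-$p$ element $\widehat{rs}$ (Lemma~\ref{norm stab v is in stab v}) and therefore lie in $\text{Stab}(\hat v)=D_{\hat r,\hat s}$; this gives $N_{\hat G'}(\langle\widehat{rs}\rangle)=D_{\hat r,\hat s}$, so the pair is of $p$-minimal type. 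The genuine difficulty is to exclude pathological lifts and to certify that such a newly periodic translation really comes from an apex rather than from an uncontrolled elliptic element of $G'$; this is exactly where the descriptions of apex stabilizers (Lemma~\ref{apex stab in SC quot}) and of loxodromic subgroups of the quotient (Lemma~\ref{loxodromic subgroups in the quotient}) have to be combined carefully.

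It then remains to verify Conditions~\ref{def class C G trans on affine trans} and~\ref{def class C cent of trans is cyclic} of Definition~\ref{definition anss2t} and Condition~\ref{def class Cprime no subg of odd ord norm by lox}. For Condition~\ref{def class C G trans on affine trans}, since the previous paragraph shows that all newly created order-$p$ pairs are of $p$-minimal type and preservation identifies the $p$-affine pairs of $\hat G'$ with the images of those of $G'$, transitivity of the conjugation action descends along the surjection $G'\twoheadrightarrow\hat G'$, and non-emptiness is inherited from $G'$. For Condition~\ref{def class C cent of trans is cyclic} I would argue by cases: if $\widehat{rs}$ has order $p$ then $\Cen_{\hat G'}(\widehat{rs})\subseteq N_{\hat G'}(\langle\widehat{rs}\rangle)$ is contained in $D_{\hat r,\hat s}$ (minimal case), while in the affine case it is controlled by the surrounding copy of $\agl$, and in each the centralizer of the rotation is precisely the cyclic group it generates, a translation; if $\widehat{rs}$ has infinite order it is loxodromic and $\Cen_{\hat G'}(\widehat{rs})$ lies in the maximal loxodromic subgroup, whose structure is controlled by Lemmas~\ref{loxodromic subgroups in stprimezero} and~\ref{loxodromic subgroups in the quotient}, again forcing a cyclic centralizer generated by a translation. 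Finally, for Condition~\ref{def class Cprime no subg of odd ord norm by lox}, a finite subgroup $\hat F$ of order $>2$ contains an element of order $>2$; if $\hat F$ were normalized by a loxodromic $\hat g$, then whether $\hat F$ sits inside an apex stabilizer or lifts to $G'$, Lemma~\ref{norm stab v is in stab v} forces $\hat g$ to fix a unique apex and hence to be elliptic, a contradiction.
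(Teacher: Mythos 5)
Your overall strategy --- checking the defining conditions of $\classcprimezero$ one by one, using Lemma~\ref{apex stab in SC quot} for acylindricity, Lemma~\ref{elliptic maps to Dp} to control involutions and elliptic subgroups, and Lemma~\ref{norm stab v is in stab v} for pairs sitting in apex stabilizers --- is the paper's strategy, and several of your steps (acylindricity, absence of involutions in $\hat N'$, the apex case of Condition~\ref{def class C pairs of minimal or affine type}, transitivity on $p$-affine pairs) coincide with the paper's. But there is a genuine gap at what is actually the hardest point. You dispose of an order-$p$ pair $(\hat r,\hat s)$ whose dihedral group lifts to $G'$ by invoking "the preservation statement" that images of $p$-minimal pairs are $p$-minimal. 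That statement cannot be used here: the paper's Lemma~\ref{sc p affine and min maps to p affine and min} only shows that a $p$-minimal pair cannot map to a $p$-affine pair, and it concludes "minimal maps to minimal" precisely because the image is already known to be of one of the two types --- i.e.\ it presupposes Condition~\ref{def class C pairs of minimal or affine type} for $\hat G'$, which is what you are trying to prove. The paper instead argues Case 3 directly: if $D_{\hat r,\hat s}$ lifts with every lift $p$-minimal and some $\hat h\in N_{\hat G'}(\langle \hat r\hat s\rangle)\setminus D_{\hat r,\hat s}$ existed, one splits on whether $\langle \hat h,\hat r\hat s\rangle$ lifts, uses the conjugacy-lifting consequence of Proposition~\ref{res: SC - induction lemma} (elliptic elements of $N'$ with conjugate images are conjugate in $G'$), and uses Condition~\ref{def class C cent of trans is cyclic} for $\hat G'$, which therefore has to be established beforehand. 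None of this appears in your sketch.

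Three related, smaller gaps. First, your treatment of Condition~\ref{def class C cent of trans is cyclic} in the affine case asserts the centralizer is "controlled by the surrounding copy of $\agl$", but $\Cen(\hat r\hat s)\subseteq N_{\hat G'}(\langle\hat r\hat s\rangle)$ is not a priori contained in that copy; the paper needs the same four-way lifting analysis (loxodromic / does not lift / lifts but $\hat E$ does not / everything lifts) together with the conjugacy-lifting statement. Second, to show that an elliptic image of a loxodromic translation has order $p$ you appeal to the dichotomy "$\hat g^n=1$ or $\hat g$ is the image of an elliptic element of $N$" and ignore the second branch, whose order is not controlled; the paper avoids this by applying Lemma~\ref{elliptic maps to Dp} to the finite dihedral subgroup $D_{\hat r,\hat s}$ rather than to the element. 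Third, the stability of $Q'$ with respect to $\mathcal{Q}$, which you cite but do not prove, is itself one of the things that must be verified here (the paper derives it from the classification of loxodromic subgroups in Lemma~\ref{loxodromic subgroups in stprimezero} and Remark~\ref{loxodromic subg G' sc quotient}: the maximal loxodromic subgroup containing a loxodromic translation is infinite dihedral, so the distinguished preimage is again a translation).
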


%\begin{remark}Notice that we may still have infinite elliptic subgroups in $\hat{G}$, but by Lemma~\ref{elliptic maps to Dp}, they will all be isomorphic images of infinite elliptic subgroups of $G$.\end{remark}

%Then, we need the following corollary of Lemma~\ref{elliptic maps to Dp}.

%\begin{corollary}\label{even elements lift}If $\hat F$ is an elliptic subgroup of $\hat G$ all of whose elements have even order, then there exists an elliptic subgroup $F$ of $G$ such that the projection $G  \twoheadrightarrow \hat G$ induces an isomorphism from $F$ onto $\hat F$.\end{corollary}

%\begin{remark}This last corollary implies that no two distinct involutions of $\hat G$ commute.\end{remark}
\begin{proof}We verify that $(\hat{G}', \hat{X}')$ satisfies the conditions of Definition~\ref{definition class Cprimezero} of $\classcprimezero$.\medskip

\noindent{\bf \ref{def class Cprime no parabolic}}: the action of $\hat{G}'$ on $\hat{X}'$ is acylindrical.

\medskip

\label{action of ghat acyl}This is a consequence of Theorem~\ref{acyl passes to sc quotient} and Lemma~\ref{apex stab in SC quot}, since this lemma implies that for all $h \in P$, we have $\lvert \text{\text{Stab}}(Y_{h})/\langle h^{p} \rangle \rvert=2p$. 

\medskip

\label{sc quotient trans of inf order lox}\noindent{\bf \ref{def class Cprime inf trans are loxodromic}}: translations of infinite order of $\hat G '$ are loxodromic for their action on $\hat X '$.

\medskip

Assume towards a contradiction that there exist two distinct involutions $i\neq j$ in $\hat G '$ such that $ij$ is elliptic and has infinite order. Since $\langle ij \rangle$ is a subgroup of finite index of $D_{ i, j }$ this last subgroup is also elliptic, and by Lemma~\ref{elliptic maps to Dp} it must lift to an elliptic subgroup of $G'$. This would yield an elliptic translation of $G'$ of infinite order, contradicting Condition~\ref{def class Cprime inf trans are loxodromic} for the pair $(G',X')$ and thus the assumption that it belongs to $\classcprimezero$. 

\medskip
    
\noindent{\bf \ref{def class Cprime no subg of odd ord norm by lox}}: no loxodromic element of $\hat G '$ normalizes a non-trivial finite subgroup $\hat F$ of order $>2$.

\medskip

\label{no lox elem norm a subg of odd order}This follows directly from Lemma~\ref{loxodromic subgroups in the quotient}. Indeed, since the pair $(G',X')$ is in class $\classcprimezero$, every maximal normal finite subgroup of a loxodromic subgroup of $G'$ is of order at most two. Moreover, the maximal normal finite subgroup of every $\text{Stab}(Y)$ for $(H,Y) \in \mathcal{Q}$ is trivial, since it contains two involutions and thus by Lemma~\ref{loxodromic subgroups in stprimezero} it has to be isomorphic to $D_{\infty}$.

\begin{remark}
\label{loxodromic subg G' sc quotient}
    Since we have proved that the pair $(\hat G ' , \hat X ')$ satisfies Condition~\ref{def class Cprime no subg of odd ord norm by lox} of Definition~\ref{definition class Cprime} and Condition~\ref{def class C pairs of minimal or affine type} of Definition~\ref{definition anss2t}, the loxodromic subgroups of $\hat G '$ also satisfy the classification exhibited in Lemma~\ref{loxodromic subgroups in stprimezero}.
\end{remark}
\noindent{\bf \ref{def class Cprimezero satisfies the ind lemma}}:\label{tuple of cprime satisfies ind hyp}
%The tuple $(G', N', Q', X')$ satisfies the assumptions of Lemma~\ref{rinj in the sc quotient} for $\nu_0 =5$, exponent $p$ and the family $\mathcal{Q}$. In particular, t
the tuple $(\hat{G}', \hat{N}', \hat{Q}', \hat{X}')$ satisfies the assumptions of Proposition~\ref{res: SC - induction lemma} for $\nu_0 =5$ and exponent $p$ (except that $\hat{Q}'$ is not required \textit{a priori} to be stable with respect to a family $\hat{\mathcal{Q}}'$).

\medskip

It only remains to be proved that $Q'$ is stable with respect to $\mathcal{Q}$ to get the bound on the injectivity radius of $\hat{Q}'$.
Let $\hat{g}$ be a loxodromic element of the image $\hat{Q}'$ of $Q'$ in $\hat G '$. As a non-trivial image of a translation, it is itself a loxodromic translation. As such, for every $m \in \mathbb{N}$ we will have that $\text{Stab}(A_{\hat{g}^{m}})$ is the maximal loxodromic subgroup containing $\hat{g}$. By Remark~\ref{loxodromic subg G' sc quotient}, this has to be an infinite dihedral group. Thus, if we have a subset $A$ of $\dot{X}'$ such that the projection $G' \twoheadrightarrow \hat{G}'$ induces an isomorphism from $\text{Stab}(A)$ onto $\text{Stab}(A_{\hat{g}^{m}})$, then $\text{Stab}(A)$ is an infinite dihedral group and thus the preimage $g$ of $\hat{g}$ in $\text{Stab}(A)$ is a translation of infinite order. Thus, $g \in Q'$ and indeed the family $Q'$ is stable with respect to $\mathcal{Q}$.

\medskip

We now show that $(\hat{G}',\hat{X}')$ is almost sharply 2-transitive of characteristic $p$. 

\medskip

\noindent{\bf \ref{def class C cent of trans is cyclic}}: for $(\hat r, \hat s) \in I(\hat G')^{(2)}$, $\text{Cen}(\hat r \hat s)$ is cyclic and generated by a translation.

\medskip

Since $(\hat G', \hat X')$ satisfies Condition~\ref{def class Cprime inf trans are loxodromic} a translation $\hat r\hat s$ either has finite order or is loxodromic.

Let $\hat h \in \text{Cen}(\hat r \hat s) $. If $\hat r \hat s$ is of finite order, then $\hat h$ cannot be loxodromic since $(\hat{G}', \hat{X}')$ satisfies Condition~\ref{def class Cprime no subg of odd ord norm by lox}, namely no loxodromic element of $\hat G'$ normalizes a non-trivial finite subgroup of order $>2$. Also, if $\hat r \hat s$ is of finite order and $\hat h$ is elliptic, then so is $\hat E=\langle \hat r \hat s, \hat h \rangle$ since it contains $\langle \hat h \rangle$ as a finite index subgroup. We now distinguish four cases.

\textit{Case 1}: $D_{\hat r, \hat s}$ is loxodromic. Then $\hat h$ is in the maximal loxodromic subgroup containing $D_{\hat r, \hat s}$. By Remark~\ref{loxodromic subg G' sc quotient}, this subgroup is isomorphic to $D_{\infty}$, where the centralizer of any translation is cyclic and generated by a translation.

\textit{Case 2}: $D_{\hat r, \hat s}$ is elliptic and does not lift. Then there is some $\hat v \in \hat v(\mathcal{Q})$ with $D_{\hat r, \hat s}=\text{Stab}(\hat v)$, and $\hat v$ is the unique apex fixed by $\hat r \hat s$ by Lemma~\ref{norm stab v is in stab v}. Since $\hat h$ normalizes the subgroup $\langle \hat r \hat s \rangle$ we get $\hat h \in \text{Stab}(\hat v)$, and in this subgroup the centralizer of any translation is cyclic and generated by a translation.

\textit{Case 3}: $D_{\hat r, \hat s}$ is elliptic and lifts, but $\hat E$ does not lift. Note that $\hat E$ is elliptic (indeed, $\hat r\hat s$ has finite order, $\hat{h}$ is necessarily elliptic and $\langle \hat h \rangle$ has finite index in $\hat E$). Then there is some $\hat v \in \hat v(\mathcal{Q})$ with $\hat E \subset \text{Stab}(\hat v)$. But $\text{Stab}(\hat v)$ is isomorphic to $D_{p}$, where the centralizer of any translation is cyclic and generated by a translation. 

\textit{Case 4}: $D_{\hat r, \hat s}$ is elliptic and lifts, and $\hat E$ lifts. Let $r,s$ denote two involutions of $G'$ such that $D_{r,s}$ is a lift of $D_{\hat r, \hat s}$, and let $g$ be the preimage of $\hat r \hat s$ in a preimage $E$ of $\hat E$. Notice that, since $g$ maps to an element of $\hat N$ in the quotient, then we must have $g \in N$. Now, $g$ and $rs$ are elliptic elements of $N$ such that their images are conjugate in $\hat G'$, so by the fourth consequence of Proposition~\ref{res: SC - induction lemma}, they are conjugate in $G'$. In particular, $g$ is a translation centralized by the preimage $h$ of $\hat h$. Since $G'$ satisfies Condition~\ref{def class C cent of trans is cyclic}, we get that $h$ is in the cyclic subgroup $\text{Cen}(g)$, and the claim follows. 

\medskip

\noindent{\bf \ref{def class C pairs of minimal or affine type}}: every translation is either of order $p$ or of infinite order, and every pair $(\hat r, \hat s) \in I(\hat G ')^{(2)}$ such that $\hat r\hat s$ has order $p$ is either of $p$-affine type or of $p$-minimal type.

\medskip

First, let us prove that the the order of a translation of $\hat G '$ is either $p$ or infinite. Assume towards a contradiction that $i,j \in \hat G '$ are involutions such that $ij$ has finite order $\neq p$. Then by Lemma~\ref{elliptic maps to Dp} the group generated by $i$ and $j$ must lift to $G'$, contradicting Condition~\ref{def class C pairs of minimal or affine type} for $G'$.

Then, let $(\hat r, \hat s) \in I(\hat G')^{(2)}$ be such that $\hat r\hat s$ has order $p$, and let us prove that $(\hat r, \hat s)$ is either of $p$-affine type or of $p$-minimal type. Note that the subgroup $D_{\hat r, \hat s}$ is elliptic. We distinguish three cases.
    
\textit{Case 1}: the subgroup $ D_{\hat r, \hat s}$ does not lift. Then by Lemma~\ref{elliptic maps to Dp} there is some $\hat v \in \hat v (\mathcal{Q})$ such that $ D_{\hat r, \hat s}=\text{Stab}(\hat v)$. By Lemma~\ref{norm stab v is in stab v}, $\hat v$ is the only apex fixed by $\hat r\hat s$, and thus $N_{\hat G'}(\langle \hat r\hat s \rangle)$ is contained in $\text{Stab}(\hat v)$. But note that $D_{\hat r, \hat s}$ is contained in $N_{\hat G'}(\langle \hat r \hat s \rangle)$ since $\langle \hat r \hat s \rangle$ is a characteristic subgroup of $D_{\hat r, \hat s}$. Thus, $N_{\hat G'}(D_{\hat r\hat s})=D_{\hat r, \hat s}$, and therefore $(\hat r, \hat s)$ is of $p$-minimal type.

\textit{Case 2}: the subgroup $ D_{\hat r, \hat s}$ lifts and it has a preimage of $p$-affine type. Let $D_{r,s}$ be such a preimage for involutions $r,s \in G'$, then, by Remark~\ref{conj of pmin and paff is pmin paff} the normalizer $N_{G'}(D_{r,s})$ is isomorphic to $\agl$, and in particular it is elliptic in $G'$. Thus, the projection induces an isomorphism from $N_{G'}(D_{r,s})$ onto its image $H$, which contains $ D_{\hat r, \hat s}$, so $(\hat r, \hat s)$ is itself of $p$-affine type.

\textit{Case 3}: the subgroup $D_{\hat r, \hat s}$ lifts and all its preimages are of $p$-minimal type. Let us assume towards a contradiction that $D_{\hat r, \hat s}$ is not of $p$-minimal type, that is, that there is some element $\hat h \in N_{\hat G}(\langle \hat r \hat s \rangle) \backslash D_{\hat r, \hat s}$. Notice that $\hat h$ cannot be loxodromic, otherwise it would normalize the subgroup $\langle \hat r \hat s \rangle$ of order $>2$, contradicting Condition~\ref{def class Cprime no subg of odd ord norm by lox} for $(\hat G ', \hat X ')$. Therefore, $\hat h$ is elliptic, and since $\langle \hat h \rangle$ is a finite index subgroup of $\hat E=\langle \hat h, \hat r \hat s \rangle$, this subgroup is also elliptic.

If $ \hat E$ lifts, let $r,s$ denote two involutions of $G'$ such that $D_{r,s}$ is a lift of $D_{\hat r, \hat s}$, and let $g$ (respectively $h$) be the preimage of $\hat r \hat s$ (respectivley $\hat h$) in a preimage $E$ of $\hat E$. Since $g$ maps to an element of $\hat N '$ in the quotient, we must have $g \in N '$. Now, $g$ and $rs$ are elliptic elements of $N'$ such that their images are conjugate in $\hat G '$, so by the last consequence of Proposition~\ref{res: SC - induction lemma} they are conjugate in $G'$. In particular, $g$ is a translation. Let $a \in G'$ be such that $r'=a^{-1}ra$ and $s'=a^{-1}sa$ and $g=r's'$. Recall that by assumption all the preimages of $(\hat r,\hat s)$ are of $p$-minimal type, so $(r,s)$ and thus $(r',s')$ are of $p$-minimal type. It follows that $h$ is in $D_{r',s'}$. Moreover, since both $rs$ and $r's'$ are preimages of $\hat r \hat s$ in $G'$, then $\hat a$ is an element of $\hat G'$ centralizing the translation $\hat r \hat s$, and by Condition~\ref{def class C cent of trans is cyclic} proved above,  $\hat a$ belongs to $\langle \hat r \hat s \rangle$. In particular, the images of $r'$ and $s'$ in $\hat G'$ are contained in $D_{\hat r, \hat s}$, and therefore so is $\hat h$, contradicting our initial assumption that $\hat h$ does not belong to $D_{\hat r, \hat s}$.

If $\hat E$ does not lift, then by item (\ref{elliptic subgroups of hat g}) of Lemma~\ref{small cancellation theorem} there is some $\hat v \in \hat v(\mathcal{Q})$ such that $\hat E$ is contained in $\text{Stab}(\hat v)$, and by Lemma~\ref{norm stab v is in stab v} the apex $\hat v$ is the only apex fixed by $\langle \hat r \hat s \rangle$. Hence the normalizer of $\langle \hat r \hat s \rangle$ fixes $\hat v$, in particular $D_{\hat r, \hat s}$ fixes $\hat v$. By Lemma~\ref{elliptic maps to Dp} we have $\hat E\subset\text{Stab}(\hat v)=D_{\hat r, \hat s}$. Therefore $\hat h$ belongs to $D_{\hat r, \hat s}$, contradicting our assumption. 

\medskip

\noindent{\bf \ref{def class C N without 2-torsion}}: $\hat{N} '$ contains no involution.

\medskip

This is contained in Proposition~\ref{res: SC - induction lemma}. 

\medskip

\noindent{\bf \ref{def class C G trans on affine trans}}: the set of pairs $(r,s) \in I(\hat G')^{(2)}$ of $p$-affine type is non-empty and $\hat G '$ acts transitively on it.

\medskip

Since $G'$ contains a subgroup isomorphic to $\agl$, and since the projection from $G'$ onto $\hat G'$ induces an isomorphism from any elliptic subgroup onto its image, the group $\hat G'$ contains a subgroup isomorphic to $\agl$ as well, and thus the set of pairs of involutions of $p$-affine type of $\hat G'$ is non-empty.

Let now $(\hat r, \hat s)$ and $(\hat r', \hat s')$ be two pairs of $I(\hat G')^{(2)}$ of $p$-affine type. By Lemma~\ref{elliptic maps to Dp}, any subgroup of $\hat G'$ isomorphic to $\agl$ lifts, and thus there are pairs $(r,  s)$ and $(r', s')$ of $I( G')^{(2)}$ of $p$-affine type lifting $(\hat r, \hat s)$ and $(\hat r', \hat s')$ respectively. Since Condition~\ref{def class C G trans on affine trans} holds for $G'$, there exists $g \in G$ such that $g^{-1}rg=r'$ and $g^{-1}sg=s'$. Denoting by $\hat g$ the image of $g$ in $\hat G'$, we get $\hat g^{-1} \hat r \hat g= \hat r'$ and $\hat g^{-1} \hat s \hat g= \hat s'$.

Hence $\hat{G}'$ is almost sharply 2-transitive and we have established Proposition~\ref{prop: classcprime}.\end{proof}

The proof of Condition \ref{def class Cprimezero satisfies the ind lemma} in Proposition \ref{prop: classcprime} shows that, as it was stated before, the stability of the family $Q'$ with respect to $\mathcal{Q}'$ follows from the other conditions of Definition \ref{definition class Cprimezero}. In fact, it also shows the following result.

\begin{corollary}\label{cor strongly stable}
If $(G', X')$ belongs to $\classcprimezero$, then $Q'$ is strongly stable. 
\end{corollary}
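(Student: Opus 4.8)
The plan is to deduce strong stability directly from two facts already contained in the proof of Proposition~\ref{prop: classcprime}: that the class $\classcprimezero$ is closed under the formation of SC-quotients, and that for \emph{any} group in $\classcprimezero$ the one-step stability of the translation set holds. Recall that strong stability (Definition~\ref{def strongly stable family}) asks that, along every finite sequence $\hat{G}'_0 = G', \hat{G}'_1, \dots, \hat{G}'_k$ obtained by successive (rescaled) applications of Lemma~\ref{small cancellation theorem} in which each family $\mathcal{Q}'_i$ is built from the image of $Q'$ in $\hat{G}'_i$ in the prescribed way, the image $\hat{Q}'_i$ is stable with respect to $\mathcal{Q}'_i$. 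Since each such step, after absorbing the rescaling allowed by the definition, is exactly an SC-quotient in the sense of Proposition~\ref{res: SC - induction lemma} (the prescription on $\mathcal{Q}'_i$, namely taking $p$-th powers of primitive loxodromic elements of the image of $Q'$, matching the family there), it suffices to verify this one-step stability at each index $i$.

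First I would show, by induction on $i$, that each pair $(\hat{G}'_i, \hat{X}'_i)$ belongs to $\classcprimezero$. The base case $i = 0$ is the hypothesis, and the inductive step is precisely Proposition~\ref{prop: classcprime}: the group $\hat{G}'_{i+1}$ is an SC-quotient of $\hat{G}'_i \in \classcprimezero$, hence $(\hat{G}'_{i+1}, \hat{X}'_{i+1}) \in \classcprimezero$. The rescalings are harmless, since by Remark~\ref{invariants in rescaled space} all invariants relevant to membership in $\classcprimezero$ are preserved under rescaling.

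Then, with each $(\hat{G}'_i, \hat{X}'_i) \in \classcprimezero$ in hand, I would reuse verbatim the argument given in the verification of Condition~\ref{def class Cprimezero satisfies the ind lemma} in the proof of Proposition~\ref{prop: classcprime}. Concretely: a loxodromic element $\hat{g}$ of $\hat{Q}'_i$ is a nontrivial image of a translation, hence itself a loxodromic translation, so its two involution factors both lie in the maximal loxodromic subgroup $\text{Stab}(A_{\hat{g}^m})$; by Lemma~\ref{loxodromic subgroups in stprimezero} (which, via Remark~\ref{loxodromic subg G' sc quotient}, applies to every member of $\classcprimezero$) this subgroup must then be infinite dihedral, as it is the only one of the four listed types containing two distinct involutions. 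Consequently, whenever the projection induces an isomorphism from $\text{Stab}(A)$ onto $\text{Stab}(A_{\hat{g}^m})$, the group $\text{Stab}(A)$ is itself infinite dihedral, the preimage $g$ of $\hat{g}$ in it is an infinite-order translation, and so $g \in Q'$. This is exactly the stability of $\hat{Q}'_i$ with respect to $\mathcal{Q}'_i$; as $i$ was arbitrary, $Q'$ is strongly stable.

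The only point demanding attention is that this one-step stability genuinely transfers to every $\hat{G}'_i$ in the sequence and not merely to $G'$. This is secured by the closure of $\classcprimezero$ under SC-quotients (Proposition~\ref{prop: classcprime}) together with the observation that the loxodromic-subgroup classification of Lemma~\ref{loxodromic subgroups in stprimezero} rests only on conditions built into the definition of $\classcprimezero$. As both ingredients are already in place, the corollary is essentially a repackaging of the proof of Proposition~\ref{prop: classcprime}.
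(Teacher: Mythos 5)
Your argument is correct and is essentially the paper's own: the authors likewise deduce the corollary from the fact that the one-step stability of $Q'$ established in the verification of Condition~\ref{def class Cprimezero satisfies the ind lemma} uses only the other defining conditions of $\classcprimezero$, combined with the closure of $\classcprimezero$ under SC-quotients (Proposition~\ref{prop: classcprime}), so that the stability argument applies verbatim to every intermediate quotient $\hat{G}'_i$ in the sequence. Your explicit induction and the remark that rescaling does not affect the relevant properties merely spell out what the paper leaves implicit.
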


We finish the proof of Proposition~\ref{reformulation class cprime stable under sc quotients} by showing the remaining statements in the following two lemmas.
\begin{lemma}
\label{og is ohat g}
    $O(G')=O(\hat G')$.
\end{lemma}

\begin{proof}Let $g \in G'$ be an element of finite order. In particular, $g$ is elliptic. By the second consequence of Proposition~\ref{res: SC - induction lemma}, the projection $G' \twoheadrightarrow \hat G'$ induces an isomorphism from $\langle g \rangle$ onto $\langle \hat g \rangle$, so $\lvert g \rvert = \lvert \hat g \rvert$ and $O(G') \subseteq O(\hat G')$.

    Conversely, if $\hat g \in \hat G'$ is of finite order, then it is elliptic, hence by Lemma~\ref{elliptic maps to Dp} either there is $g \in G'$ such that the projection induces an isomorphism from $\langle g \rangle$ onto $\langle \hat g \rangle$, in which case $\lvert \hat g \rvert \in O(G')$, or $\langle \hat{g}\rangle$ is isomorphic to $C_p$. However, since $G'$ is almost sharply 2-transitive, it contains a translation of order $p$, so we obtain $O(\hat G') \subseteq O(G')$.
\end{proof}

\begin{lemma}
\label{sc p affine and min maps to p affine and min}
    The images of pairs of distinct involutions of $p$-affine type (respectively, of $p$-minimal type) are again of $p$-affine type (respectively, of $p$-minimal type).
\end{lemma}

\begin{proof}
    Notice now that for a pair $( r, s) \in I( G')^{(2)}$ of $p$-affine type, the projection $G' \twoheadrightarrow \hat G'$ induces an isomorphism from $N_{G'}(D_{r,s})$ onto its image (since this subgroup is finite by Remark~\ref{conj of pmin and paff is pmin paff}), and thus its image $(\hat r, \hat s)$ will be itself of $p$-affine type.

    Moreover, suppose that a pair $(r,s)$ of $p$-minimal type maps in the quotient to a pair $(\hat r, \hat s)$ of $p$-affine type. By Lemma~\ref{elliptic maps to Dp}, any subgroup of $\hat G'$ isomorphic to $\agl$ lifts, thus there is a pair $(r',s') \in I( G')^{(2)}$ of $p$-affine type lifting $(\hat r, \hat s)$. However, in this case, $rs$ and $r's'$ are elliptic elements of $N'$ such that their images are conjugate in $\hat G'$, so they must themselves be conjugate, say by an element $g \in G'$. But then, we would have $ \lvert D_{r,s} \cap g^{-1}D_{r',s'}g \rvert \geq p$, contradicting Lemma~\ref{qm pair}.\end{proof}

%\textcolor{red}{TO BE CONTINUED. I checked everything up to this point (Simon, October 31).}

This finishes the proof of Proposition~\ref{reformulation class cprime stable under sc quotients}.
%\end{comment}

\subsection[PP-Quotients]{Stability of the Classes under PP-Quotients}
\label{subsec pp quotient}

%\begin{remark}\label{elliptic subg norm by lox is finite}Let $(G,X)$ be a pair belonging to class $\classcprime$. We will make frequent use of the fact that, by Lemma~\ref{lox subg in wpd is virt cyc}, every elliptic subgroup of $G$ normalized by a loxodromic element must be finite.\end{remark}

%\subsection{The Construction of the Embedding Theorem}\label{subsection the construction}

%The goal of this subsection is to prove the Embedding Theorem~\ref{embedding theorem} up to some technical Lemmas.

%\begin{note}From now on, unless explicitly stated otherwise, if we have a group $G$ acting on a hyperbolic length space $X$ in such a way that $(G,X)$ belongs to $\classcprime$, when we talk of either a small cancellation or a partial periodic quotient of $G$ and $X$, we will assume that the normal subgroup $N$ into consideration is $\langle Tr(G) \rangle$.Also, since by Theorems ~\ref{res: SC - induction lemma} and~\ref{res : SC - partial periodic quotient} the small cancellation and partial periodic quotients induce an isomorphism from any elliptic subgroup onto its image, we may identify these subgroups (and similarly, any element of finite order with its image).\end{note}

We are ready to prove Proposition~\ref{prop: classes stable under pp quotients}. Recall that throughout this section, we fix $(G,X)\in\classcprime$. By Corollary~\ref{cor strongly stable} we can apply Proposition~\ref{res : SC - partial periodic quotient} and obtain a quotient $\bar{G}$ as the limit of iterated applications of Proposition~\ref{prop: classcprime}.
It remains to prove that $\bar{G}$ is almost sharply 2-transitive of characteristic $p$.
\begin{comment}
 
The strategy is to show that in this setting the hypotheses of Theorem~\ref{res : SC - partial periodic quotient} are satisfied. Notice that the only thing left to prove for this purpose is that the family $Q=Tr(G)$ is a strongly stable subset of $N= \langle Tr(G) \rangle$. Then, we will obtain the required properties of the PP-quotient $\bar{G}$ by iterated application of Proposition~\ref{reformulation class cprime stable under sc quotients}. 
More specifically, the PP-quotient $\bar G$ of $G$ is obtained by first building a sequence $(\hat G_{i})_{i \in \mathbb{N}}$, where $\hat G_{0}=G$ and $\hat G_{i+1}$ is an SC-quotient of $\hat G_{i}$, and then defining $\bar G$ as the projective limit of the sequence. By construction, this group is a quotient of $G$ by a normal subgroup contained in $N$. We keep the convention of writing $\hat G_{i} $ and $ \hat X_{i}$ for the intermediate SC-quotient groups and the space on which it acts, and $\mathcal{Q}_{i}$ for the corresponding family of defining relations.

\end{comment}

We now state an easy remark that will allow us to lift relations in the partial periodic quotient $\bar G$ to relations in some intermediate small cancellation quotient $(\hat G_{m}, \hat X_{m})$.

\begin{remark}
\label{from sc to pp}
    Let $\bar g^{(1)},...,\bar g^{(a)} \in \bar G$ be such that $\bar g^{(1)}...\bar g^{(a)}=1$. Then, there exist $m \in \mathbb{N}$ and preimages $\hat g^{(i)}_{m}$ of the elements $\bar g^{(i)}_{m}$ (for $1\leq i\leq a$) in the small cancellation quotient $\hat G_{m}$ of $G$ such that $\hat g^{(1)}_{m}...\hat g^{(a)}_{m}=1$.
\end{remark}

Moreover, for all $m \in \mathbb{N}$, the small cancellation quotient pair $(\hat G_{m}, \hat X_{m})$ is obtained by applying $m$ times Proposition~\ref{reformulation class cprime stable under sc quotients}, so this pair is in class $\classcprimezero$, and the images of pairs of distinct involutions of $p$-affine type (respectively, of $p$-minimal type) will be themselves of $p$-affine type (respectively, of $p$-minimal type). The following lemma shows that this is still true after taking the limit of the sequence. 

\begin{lemma}
\label{pp pmin and paff maps to pmin and paff}
    The images in the partial periodic quotient $\bar G$ of pairs of distinct involutions of $p$-affine type (respectively, of $p$-minimal type) will be themselves of $p$-affine type (respectively, of $p$-minimal type). Moreover, the images of pairs of involutions that generate a loxodromic subgroup will be of $p$-minimal type.
\end{lemma}

\begin{proof}
    Notice first that, by construction, all translations of $\bar G$ will be of order $p$ (by consequence~(\ref{pp quotient image of N is partial periodic}) of Theorem~\ref{res : SC - partial periodic quotient}).

    Let $(r,s) \in I(G)^{(2)}$ be a pair of $p$-affine type. Then, by the first consequence of Theorem~\ref{res : SC - partial periodic quotient}, the projection $G \twoheadrightarrow \bar G$ induces an isomorphism from $N_{G}(D_{r,s})$ onto its image (since by Remark~\ref{conj of pmin and paff is pmin paff} this subgroup is isomorphic to $\agl$), so the pair $(\bar r, \bar s)$ will be of $p$-affine type.

    Let $(r,s) \in I(G)^{(2)}$ be a pair of $p$-minimal type. Suppose that there is some element $\bar g \in N_{\bar G}(\langle \bar r \bar s \rangle) \backslash D_{\bar r, \bar s}$. In particular, $\bar g$ conjugates $\bar r \bar s$ to $(\bar r \bar s)^{l}$ for some $l \in \{ 1, \dots , p-1 \}$. Therefore, there is some $m \in \mathbb{N}$ such that a preimage $\hat g_{m}$ of $\bar g$ in $\hat G_{m}$ conjugates a preimage $\hat r_{m} \hat s_{m}$ of $\bar r \bar s$ to $(\hat r_{m} \hat s_{m})^{l}$. Clearly $\hat g_{m}$ cannot be in $D_{\hat r_{m}, \hat s_{m}}$ (since that would yield $\bar g \in D_{\bar r, \bar s}$). That is, $(\hat r_{m}, \hat s_{m})$ is not of $p$-minimal type, and this is a contradiction to Proposition~\ref{reformulation class cprime stable under sc quotients}, since this implies that the images in $\hat G_{m}$ of pairs of $p$-minimal type of $G$ are themselves of $p$-minimal type.

    Let $(r,s) \in I(G)^{(2)}$ be a pair of involutions generating a loxodromic subgroup. Then, there is some $m \in \mathbb{N}$ such that $ D_{\hat r_{m}, \hat s_{m}} \cong D_{\infty}$ and $ D_{\hat r_{m+1}, \hat s_{m+1}} \cong D_{p}$. Thus, there must be some $\hat v \in \hat v (\mathcal{Q}_{m})$ such that $D_{\hat r_{m+1}, \hat s_{m+1}} \subset \text{Stab}(\hat v)$. Now, $\text{Stab}(\hat v) \cong D_{p}$, so $D_{\hat r_{m+1}, \hat s_{m+1}} = \text{Stab}(\hat v)$. Lemma~\ref{norm stab v is in stab v} readily implies that $D_{\hat r_{m+1}, \hat s_{m+1}}$ is of $p$-minimal type, and then an argument completely analogous to the one in the previous paragraph yields that the pair $(\bar r, \bar s)$ is of $p$-minimal type itself.
\end{proof}

We also note the following easy fact.

\begin{lemma}
\label{og to obarg}
    $O(G)=O(\bar G)$.
\end{lemma}

\begin{proof}
    Let $g \in G$ be an element of finite order. In particular, $g$ is elliptic. By the first consequence of Theorem~\ref{res : SC - partial periodic quotient}, the projection $G \twoheadrightarrow \bar G$ induces an isomorphism from $\langle g \rangle$ onto $\langle \bar g \rangle$, so $\lvert g \rvert =\lvert \bar g \rvert$ and $O(G) \subseteq O(\bar G)$.

    Conversely, let $\bar g \in \bar G$ be an element of finite order. By Remark~\ref{from sc to pp}, there is some $m \in \mathbb{N}$ and a preimage $\hat g_{m}$ of $\bar g$ in $\hat G_{m}$ such that $\lvert \bar g \rvert = \lvert \hat g_{m} \rvert$. Now, by applying Lemma~\ref{og is ohat g} $m$ times, we get $\lvert \bar g \rvert \in O(G)$.
\end{proof}

We can now establish the following result.

\begin{proposition}\label{prop: G bar is ash2trans}
 The partial periodic quotient $\bar{G}$ is an almost sharply 2-transitive group of characteristic $p$.   
\end{proposition}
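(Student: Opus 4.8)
The plan is to verify the four conditions of Definition~\ref{definition anss2t} for $\bar G$ one at a time, reducing each to the corresponding statement already known for $G\in\classcprime$ or for a finite-stage small cancellation quotient $\hat G_m\in\classcprimezero$. The key preliminary, underlying all four verifications, is a \emph{lifting principle for involutions}. By Remark~\ref{from sc to pp}, any relation holding in $\bar G$ already holds, for suitable preimages, in some finite-stage quotient $\hat G_m$. So given an involution $\bar r$ of $\bar G$, applying this to $\bar r^2=1$ produces an involution $\hat r_m\in\hat G_m$ over $\bar r$; then iterating Lemma~\ref{elliptic maps to Dp} down the tower $\hat G_m\twoheadrightarrow\cdots\twoheadrightarrow\hat G_0=G$ lifts it all the way to $G$, since $\langle\hat r\rangle\cong C_2$ is neither $C_p$ nor $D_p$ and therefore always falls into the liftable alternative. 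Consequently every involution of $\bar G$ lifts to an involution of $G$, and hence every pair $(\bar r,\bar s)\in I(\bar G)^{(2)}$ lifts to a pair $(r,s)\in I(G)^{(2)}$ (the lifts being automatically distinct because their images $\bar r\neq\bar s$ differ).

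With this in hand, Condition~\ref{def class C pairs of minimal or affine type} is immediate: lifting $(\bar r,\bar s)$ to $(r,s)$ in $G$, the latter is of $p$-minimal type, of $p$-affine type, or loxodromic (the only possibilities in $G$, by Condition~\ref{def class Cprime inf trans are loxodromic}), and in all three cases Lemma~\ref{pp pmin and paff maps to pmin and paff} shows the image $(\bar r,\bar s)$ is of $p$-minimal or $p$-affine type; in particular every translation of $\bar G$ has order exactly $p$. This also yields Condition~\ref{def class C N without 2-torsion}: every translation of $\bar G$ lifts to a translation of $G$, and distinct involutions of $G$ remain distinct in $\bar G$ (their product has order $p\neq 1$ by the above), so $\langle Tr(\bar G)\rangle=\bar N$; and $\bar N$ contains no involution by the remark following Theorem~\ref{res : SC - partial periodic quotient} (consequence~(\ref{pp quotient image of N is partial periodic}) together with $N$ having no involution and $p$ odd).

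For Condition~\ref{def class C G trans on affine trans} I would argue as in the proof of the same condition in Proposition~\ref{prop: classcprime}, now lifting through the whole tower. The image of a $p$-affine pair of $G$ is $p$-affine by Lemma~\ref{pp pmin and paff maps to pmin and paff}, so the set is non-empty. Given two $p$-affine pairs of $\bar G$, each lies in a finite subgroup $\cong\agl$, which lifts (first to some $\hat G_m$ by relation-lifting, then down to $G$ by iterating Lemma~\ref{elliptic maps to Dp}, using $\agl\not\cong C_p,D_p$) to a subgroup $\cong\agl$ of $G$; inside it the two involutions lift to a $p$-affine pair of $G$. Transitivity then follows from Condition~\ref{def class C G trans on affine trans} for $G$ by projecting the conjugating element.

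The only condition needing a genuinely different argument is Condition~\ref{def class C cent of trans is cyclic}, and this is where I expect the main (though modest) difficulty: centralizers do not lift to $G$, since an element $h\in G$ projecting to $\bar h\in\text{Cen}(\bar r\bar s)$ need not centralize $rs$ in $G$. Instead I would fix preimages $r,s,h\in G$ of $\bar r,\bar s,\bar h$ and push the finitely many relations $\bar r^2=\bar s^2=(\bar r\bar s)^p=1$ and $[\bar h,\bar r\bar s]=1$ down to a single stage $\hat G_m$ (Remark~\ref{from sc to pp}), so that $\hat r_m,\hat s_m$ are distinct involutions, $\hat r_m\hat s_m$ has order $p$, and $\hat h_m$ centralizes it. Since $\hat G_m\in\classcprimezero$ is almost sharply 2-transitive, Condition~\ref{def class C cent of trans is cyclic} in $\hat G_m$ gives that $\text{Cen}(\hat r_m\hat s_m)$ is cyclic, generated by a translation; as $\hat r_m\hat s_m$ has finite order $p$, that generator must itself have order $p$, whence $\text{Cen}(\hat r_m\hat s_m)=\langle\hat r_m\hat s_m\rangle$. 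Thus $\hat h_m\in\langle\hat r_m\hat s_m\rangle$, and projecting back to $\bar G$ yields $\bar h\in\langle\bar r\bar s\rangle$. This proves $\text{Cen}(\bar r\bar s)=\langle\bar r\bar s\rangle\cong C_p$, completing the verification that $\bar G$ is almost sharply 2-transitive of characteristic $p$.
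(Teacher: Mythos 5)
Your proof is correct and follows essentially the same route as the paper: verify the four conditions of Definition~\ref{definition anss2t} by pushing relations down to a finite-stage quotient $\hat G_m\in\classcprimezero$ (Remark~\ref{from sc to pp}) and lifting finite subgroups back to $G$ via Lemma~\ref{elliptic maps to Dp}, with Lemma~\ref{pp pmin and paff maps to pmin and paff} doing the work for Conditions~\ref{def class C pairs of minimal or affine type} and~\ref{def class C G trans on affine trans}. Your explicit lifting of involutions of $\bar G$ to involutions of $G$, and your inclusion of the relation $(\bar r\bar s)^p=1$ when descending to stage $m$ in the centralizer argument (which pins down $\text{Cen}(\hat r_m\hat s_m)=\langle\hat r_m\hat s_m\rangle$ there), make explicit two points the paper's proof leaves implicit.
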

\begin{proof}
We verify the conditions from Definition~\ref{definition anss2t}.

\medskip

\noindent\label{pp translations of order p}{\bf \ref{def class C pairs of minimal or affine type}}: every translation is either of order $p$ or of infinite order, and every pair $(\bar r,\bar s) \in I(\bar G)^{(2)}$ such that $\bar r\bar s$ is of order $p$ is either of $p$-minimal type or of $p$-affine type.

\medskip

This is an immediate consequence of Lemma~\ref{pp pmin and paff maps to pmin and paff} since every pair of $I(G)^{(2)}$ is either of $p$-minimal type, of $p$-affine type, or generates a loxodromic subgroup.

\medskip

\noindent\label{pp trans on aff type}{\bf \ref{def class C G trans on affine trans}}: the set of pairs $(\bar r, \bar s) \in I(\bar G)^{(2)}$ of $p$-affine type is non-empty and $\bar G$ acts transitively on it.

\medskip

Since the image of a pair of involutions of $p$-affine type is of $p$-affine type, there will be one such pair in $\bar G$.
    
Fix now two pairs $(\bar r, \bar s), (\bar r', \bar s') \in I(\bar G)^{(2)}$ of $p$-affine type. There is some $m \in \mathbb{N}$ and preimages $\hat r_{m}, \hat s_{m}, \hat r'_{m}, \hat s'_{m}$ of $\bar r, \bar s, \bar r', \bar s'$ respectively such that $\hat r_{m} \hat s_{m}$ and $ \hat r'_{m} \hat s'_{m}$ have order $p$. Now, by Lemma~\ref{pp pmin and paff maps to pmin and paff}, the pairs $(\hat r_{m}, \hat s_{m}), (\hat r'_{m}, \hat s'_{m})$ must be of $p$-affine type (otherwise $(\bar r, \bar s) $ and $ (\bar r', \bar s')$ would be of $p$-minimal type), so there must be an element $\hat g_{m}$ conjugating one pair to the other (since $\hat{G}_{m}$ is in class $\classcprimezero$), and the image $\bar g$ of $\hat g_{m}$ in $\bar G$ conjugates $(\bar r, \bar s)$ to $ (\bar r', \bar s')$. 

\medskip

\noindent{\bf \ref{def class C cent of trans is cyclic}}: let $\bar r \bar s$ be a translation of the partial periodic quotient $\bar G$. Then, $\text{Cen}(\bar r \bar s)= \langle \bar r \bar s \rangle$.

\medskip

Assume towards a contradiction that, for a pair $(\bar r, \bar s) \in I(\bar G)^{(2)}$, we have an element $\bar g \in \text{Cen}(\bar r \bar s) \backslash \langle \bar r \bar s \rangle$. By Remark~\ref{from sc to pp}, there are $m,m' \in \mathbb{N}$ such that some preimages $\hat r_{m}$ and $\hat s_{m'}$ of $\bar r$ and $\bar s$ are involutions in $\hat G_{m}$ and $\hat G_{m'}$ respectively. By the second consequence of Proposition~\ref{res: SC - induction lemma}, this relations will be preserved for all $n \geq m$, $n' \geq m'$. Moreover, there exists $l \in \mathbb{N}$ such that a preimage $\hat r_{l} \hat s_{l}$ of $\bar r \bar s$ commutes with some preimage $\hat g_{l}$ of $\bar g$, and once again this relation will be preserved for all $n'' \geq l$. Take now $m''=\max (\{ m,m',l \})$. Then, in $\hat G_{m''}$ we have an element $\hat g_{m''}$ centralizing the translation $\hat r_{m''} \hat s_{m''}$. Moreover, $\hat g_{m''}$ cannot be in $\langle \hat r_{m''} \hat s_{m''} \rangle$, since otherwise this relation would still hold for their images $\bar g$ and $\bar r \bar s$ in $\bar G$. This contradicts the fact that the pair $(\hat G_{m''}, \hat X_{m''})$ belongs to class $\classcprimezero$. 

\medskip

\noindent{\bf \ref{def class C N without 2-torsion}}:  the subgroup $\langle Tr(\bar G) \rangle$ contains no involution.

\medskip

Suppose towards a contradiction that there are involutions $\bar r^{(1)}, \bar s^{(1)},..., \bar r^{(i)}, \bar s^{(i)}$ for $i \in \mathbb{N}$ such that $\bar r^{(1)} \bar s^{(1)}... \bar r^{(i)} \bar s^{(i)}$ is an involution. By Remark~\ref{from sc to pp}, there are $m_{j},m'_{j} \in \mathbb{N}$, $1 \leq j \leq i$ such that some preimages $\hat r^{(j)}_{m_{j}}$, $\hat s^{(j)}_{m'_{j}}$ in $\hat G_{m_{j}}$ and $\hat G_{m'_{j}}$ of $\bar r^{(j)}$ and $\bar s^{(j)}$ respectively are involutions, and this will remain the case for all $n_{j} \geq m_{j}$, $n'_{j} \geq m'_{j}$. Moreover, there is some $m'' \in \mathbb{N}$ such that some preimage $\hat r^{(1)}_{m''} \hat s^{(1)}_{m''}... \hat r^{(i)}_{m''} \hat s^{(i)}_{m''}$ in $\hat G_{m''}$ of $\bar r^{(1)} \bar s^{(1)}... \bar r^{(i)} \bar s^{(i)}$ is an involution, and this will remain the case for all $n''\geq m''$. Put $m'''=\max (\{ m_{1},m'_{1},...,m_{i},m'_{i},m'' \})$, then we get an element in $\langle Tr(\hat G_{m'''}) \rangle$ that is an involution, and this is a contradiction to the fact that the pair $(\hat G_{m'''}, \hat X_{m'''})$ is in class $\classcprimezero$.   
\end{proof}
Hence, the group $\bar{G}$ is almost sharply $2$-transitive of characteristic $p$. Moreover, by construction, $\bar{G}$ has no translation of infinite order, and thus it belongs to the class $\classc$. This finishes the proof of Proposition~\ref{prop: classes stable under pp quotients}.

We end this section with the following result, which implies (by Theorem \ref{if splits trans abelian}) that the groups constructed in Theorem \ref{embedding theorem} do not split.

\begin{lemma}
\label{non-commuting translations}
    $\bar{G}$ contains non-commuting translations.
\end{lemma}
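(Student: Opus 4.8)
The plan is to argue by contradiction: I will assume that every two translations of $\bar{G}$ commute and show that this forces $\bar N=\langle Tr(\bar G)\rangle$ to be finite, contradicting the infinitude of $\bar N$ guaranteed by the construction. The engine of the argument is the cyclicity of centralizers of translations (Condition~\ref{def class C cent of trans is cyclic}), which lets a single translation ``absorb'' all the others into one copy of $C_p$.

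First I would record that $\bar G\in\classc$: by Proposition~\ref{prop: G bar is ash2trans} the group $\bar G$ is almost sharply $2$-transitive of characteristic $p$, and by construction it has no translation of infinite order, so all its translations have order $p$. This makes Condition~\ref{def class C cent of trans is cyclic} available: for any fixed translation $\bar t$, the centralizer $\Cen(\bar t)$ is cyclic and generated by a translation $\bar u$. Since every translation of $\bar G$ has order $p$, the group $\langle \bar u\rangle$ has order $p$; as $\bar t\in\Cen(\bar t)=\langle \bar u\rangle$ is a nontrivial element of order $p$, I conclude $\Cen(\bar t)=\langle \bar t\rangle\cong C_p$. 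Now the contradiction hypothesis says that every translation $\bar t'$ commutes with $\bar t$, hence $\bar t'\in\Cen(\bar t)=\langle \bar t\rangle$. Therefore $Tr(\bar G)\subseteq\langle \bar t\rangle$, and consequently $\bar N=\langle Tr(\bar G)\rangle\subseteq\langle \bar t\rangle$ is finite of order at most $p$.

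This finiteness contradicts property~(\ref{pp quotient infinitely many elem in bar N}) of Theorem~\ref{res : SC - partial periodic quotient}, which asserts that $\bar N$ contains infinitely many elements. Hence the assumption fails and $\bar G$ must contain two non-commuting translations, as desired. The argument is short, so I do not expect a genuine obstacle: the only points requiring care are the bookkeeping verifications that $\bar G$ really satisfies Condition~\ref{def class C cent of trans is cyclic} (this is exactly Proposition~\ref{prop: G bar is ash2trans} together with $\bar G\in\classc$) and that $\bar N$ is infinite (this is property~(\ref{pp quotient infinitely many elem in bar N}) of Theorem~\ref{res : SC - partial periodic quotient}). The one conceptually substantive step is the reduction of ``all translations commute'' to ``$\bar N$ is cyclic of order $p$'', which rests precisely on the fact that the cyclic centralizer of a single translation already captures every translation.
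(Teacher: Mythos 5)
Your argument is correct and is essentially the paper's own proof, merely spelled out in more detail: the paper likewise deduces from consequence~(\ref{pp quotient infinitely many elem in bar N}) of Theorem~\ref{res : SC - partial periodic quotient} that $\bar N$ is infinite, and then observes that since $\bar N$ is generated by translations whose centralizers are cyclic of order $p$ (Condition~\ref{def class C cent of trans is cyclic}), two of these translations must fail to commute. Your explicit reduction of ``all translations commute'' to ``$Tr(\bar G)\subseteq\Cen(\bar t)=\langle\bar t\rangle$, hence $\bar N$ has order at most $p$'' is exactly the implicit content of that one-line observation.
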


\begin{proof}
Consequence (\ref{pp quotient infinitely many elem in bar N}) of Theorem \ref{res : SC - partial periodic quotient} implies in particular that $\bar{N}$ is infinite. Since this subgroup is generated by translations of $\bar{G}$, and in this group the centralizer of every translation is cyclic of order $p$ (by Condition \ref{def class C cent of trans is cyclic} of the definition of class $\classc$) then $\bar{N}$ must have a pair of translations that do not commute.\end{proof}

\section[Second Quotient]{Proof of Proposition~\ref{prop:ext in cprime}}
\label{section second quotient}
In this section we prove Proposition~\ref{prop:ext in cprime}, which we recall for convenience.

\begin{proposition2.14} Let $p$ be an odd prime number and let $G$ be a group in $\classc$. 
\begin{enumerate}
\item  Let $(r,s), (r',s') \in I(G)^{(2)}$ with $(r,s)$ of  $p$-affine type, and $(r',s')$ of $p$-minimal type and \[\gstar = \langle G,t \ \vert \ trt^{-1}=r', tst^{-1}=s'\rangle.\]
\item Let $H$ be a non-trivial group without $2$-torsion and  $\gstar= G * H$.
\end{enumerate}   
Let $X$ denote the corresponding Bass-Serre tree. Then $(\gstar,X)$ belongs to $\classcprime$.
\end{proposition2.14}

We first observe the following general result.
\begin{lemma}
\label{pathstabsmall}
Let $G$ be a group and let $(K,K')$ be a quasi-malnormal pair (see Definition~\ref{def quasi-malnormal}) of isomorphic subgroups of $G$. Let $\alpha : K\rightarrow K'$ be an isomorphism, and consider the group $\gstar=\langle G,t \ \vert \ tkt^{-1}=\alpha(k)\rangle$. Let $X$ be the corresponding Bass-Serre tree. Then stabilizers of paths with at least three edges in $X$ have order at most two.
\end{lemma}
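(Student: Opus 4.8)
The plan is to read off the path stabilizers directly from the standard description of the Bass--Serre tree $X$ of the HNN-extension $\gstar$, and then feed the combinatorics into the two halves of the quasi-malnormality hypothesis (Definition~\ref{def quasi-malnormal}). Recall that $X$ has vertex set $\gstar/G$ and edge set $\gstar/K$, that the edge $gK$ joins the vertices $gG$ and $gt^{-1}G$, and that the stabilizer of $gK$ is the conjugate $gKg^{-1}$ while the stabilizer of a vertex is a conjugate of $G$. Since the pointwise stabilizer of a path equals the intersection of the stabilizers of its edges, it suffices to bound $\mathrm{Stab}(e_1)\cap\mathrm{Stab}(e_2)\cap\mathrm{Stab}(e_3)$ for any three consecutive edges $e_1,e_2,e_3$ of a path, with interior vertices $w_1=e_1\cap e_2$ and $w_2=e_2\cap e_3$.

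First I would record the local dichotomy at a fixed vertex. After translating a vertex to the base vertex $G$, its incident edges are exactly the edges $aK$ with $a\in G$ (with stabilizer the $G$-conjugate $aKa^{-1}$ of $K$), which I call \emph{type $0$}, together with the edges $btK$ with $b\in G$ (with stabilizer $bK'b^{-1}$, a $G$-conjugate of $K'$, using $tKt^{-1}=K'$), which I call \emph{type $1$}. The crucial point is that the two endpoints $gG$ and $gt^{-1}G$ of any edge $gK$ are distinct and carry \emph{opposite} types: translating $gG$ to the base vertex turns $gK$ into the type-$0$ edge $K$, whereas translating $gt^{-1}G$ to the base vertex turns it into the type-$1$ edge $tK$. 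Hence the middle edge $e_2$ is of type $0$ at exactly one of its two endpoints $w_1,w_2$.

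Next I would translate so that this type-$0$ endpoint of $e_2$ is the base vertex $G$ and, absorbing the remaining $G$-translation, so that $e_2=K$ itself, with $\mathrm{Stab}(e_2)=K$. Call $e$ the other path-edge meeting $e_2$ at $G$ (so $e=e_1$ or $e=e_3$); it is incident to $G$ and distinct from $K$. If $e$ is of type $0$, say $e=aK$ with $a\in G\setminus K$, then $\mathrm{Stab}(e_2)\cap\mathrm{Stab}(e)=K\cap aKa^{-1}$, which has order at most $2$ because $K$ is quasi-malnormal (apply the defining inequality with $g=a^{-1}\notin K$). If instead $e$ is of type $1$, say $e=btK$ with $b\in G$, then $\mathrm{Stab}(e_2)\cap\mathrm{Stab}(e)=K\cap bK'b^{-1}$, which has order at most $2$ by the cross-condition $\lvert K\cap g^{-1}K'g\rvert\le 2$ of the quasi-malnormal pair (with $g=b^{-1}$). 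In either case $\mathrm{Stab}(e_1)\cap\mathrm{Stab}(e_2)\cap\mathrm{Stab}(e_3)$ is contained in $\mathrm{Stab}(e_2)\cap\mathrm{Stab}(e)$ and hence has order at most $2$; this proves the lemma for a three-edge path, and a longer path contains three consecutive edges, so its stabilizer is contained in theirs.

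The main obstacle, and the reason the statement requires \emph{three} edges rather than two, is that two edges sharing a vertex need not have small intersection of stabilizers on their own: if both are of type $1$, their stabilizers are $G$-conjugates $b_1K'b_1^{-1}$ and $b_2K'b_2^{-1}$ of $K'$, and bounding $K'\cap cK'c^{-1}$ would require $K'$ to be quasi-malnormal, which is not among the hypotheses. The third edge resolves this precisely because it forces the middle edge $e_2$ to have both endpoints interior to the path, so one may always work at the endpoint where $e_2$ presents as type $0$ (a conjugate of $K$); there the two available inequalities, namely $K$ quasi-malnormal and the cross-condition on $(K,K')$, cover both possibilities for the neighbouring edge. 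I would take some care to verify the well-definedness of the edge-to-vertex incidence and of the types under the translations used, but these are routine checks in Bass--Serre theory.
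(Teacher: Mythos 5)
Your proof is correct and takes essentially the same approach as the paper: the paper also normalizes the middle edge of the path to the base edge $[v,t^{-1}v]$ (whose stabilizer is $K$), observes that the adjacent path-edge at the $G$-fixed endpoint has stabilizer $gKg^{-1}$ or $gK'g^{-1}$ for some $g\in G$, and concludes from the two halves of quasi-malnormality. Your explicit coset bookkeeping and the remark on why two edges would not suffice are just more detailed renderings of the same argument.
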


\begin{proof}Let $v$ denote the vertex of $X$ fixed by $G$. Let $[v_1,v_2,v_3,v_4]$ be a path of length 3 in the tree $X$ (with $d(v_i,v_{i+1})=1$). We will prove that the (global) stabilizer of this path has order at most 2. Since $\gstar$ acts transitively on the (non-oriented) edges of $X$, we can assume that $[v_2,v_3]=[v,t^{-1}v]$ or $[v_2,v_3]=[t^{-1}v,v]$. Without loss of generality, suppose that $[v_2,v_3]=[v,t^{-1}v]$. The stabilizer of this edge is $G\cap t^{-1}Gt=K$. Since the vertices $v_1$ and $v_2=v$ are adjacent, there exists an element $g\in G$ such that $v_1=gt^{-1}v$ or $v_1=gtv$, thus the stabilizer of the path $[v_1,v_2,v_3]$ is $gKg^{-1}\cap K$ or $gK'g^{-1}\cap K$, hence it has order at most 2 since the pair $(K,K')$ is quasi-malnormal.\end{proof}

\begin{proof}[Proof of Proposition~\ref{prop:ext in cprime}]
We treat both cases uniformly, so let $G\in\classc$, and let $\gstar$ denote either the HNN extension or the free product defined in Proposition~\ref{prop:ext in cprime}, let $X$ denote the corresponding Bass-Serre tree and define $Q=Tr(\gstar)$ and $N=\langle Tr(G) \rangle$. We need to show that $G$ is almost sharply 2-transitive of characteristic $p$ and satisfies the conditions given in Definition~\ref{definition class Cprime} for $\classcprime$.

\medskip

To see that $G^*$ is almost sharply 2-transitive of characteristic $p$, we first verify
Condition~\ref{def class Cprime no subg of odd ord norm by lox} of Definition~\ref{definition class Cprime} of $\classcprime$ as this condition will be used in the proof of almost sharply 2-transitivity.

\medskip

\noindent\label{order_at_most_2}{\bf \ref{def class Cprime no subg of odd ord norm by lox}}: no loxodromic element of $\gstar$ normalizes a non-trivial finite subgroup of order $>2$.

\medskip

 Let $F$ be a finite subgroup of $\gstar$ of order $>2$. We will prove that $N_{\gstar}(F)$ is elliptic in the Bass-Serre tree $X$. First, note that the fixed-point set $Y\subset X$ of $F$ is preserved by $N_{\gstar}(F)$.
 
 If $\gstar=G\ast H$, then $Y$ is a point, and thus $N_{\gstar}(F)$ fixes this point as well. 
 
 If $\gstar$ is the HNN extension defined in Proposition~\ref{prop:ext in cprime} then, by Lemmas~\ref{qm pair} and~\ref{pathstabsmall}, the subtree $Y$ has diameter at most two. If $Y$ is a point, then $N_{\gstar}(F)$ fixes this point. If $Y$ has diameter one, then it consists of two vertices joined by an edge $e$, and $N_{\gstar}(F)$ fixes the edge $e$ as well. If $Y$ has diameter two, then $N_{\gstar}(F)$ fixes the midpoint of this subtree.

\medskip

We now show that $G^*$ is almost sharply 2-transitive as defined in Definition~\ref{definition anss2t}.

\medskip

\noindent\label{all trans of inf or p order in gstar}{\bf \ref{def class C pairs of minimal or affine type}}: every translation is either of order $p$ or of infinite order, and every pair $(r,s) \in I(\gstar)^{(2)}$ such that $rs$ has order $p$ is either of $p$-affine type or of $p$-minimal type.

\medskip

Let $g=rs$ be a translation, where $r,s$ denote two distinct involutions of $\gstar$. If $r$ and $s$ have a common fixed in the tree $X$, then their product $g$ is contained in a conjugate of $G$, and thus it has order $p$ and it is either of $p$-affine type or of $p$-minimal type since $G$ belongs to $\classc$ (and by Condition~\ref{def class Cprime no subg of odd ord norm by lox} above, the normalizer of the subgroup it generates is elliptic). If $r$ and $s$ do not have a common fixed point, then $g$ is loxodromic (and thus it has infinite order).

\medskip
\noindent
{\bf \ref{def class C N without 2-torsion}}: the subgroup $N^*=\langle Tr(\gstar) \rangle$ contains no involution.

\medskip

First, suppose that $\gstar=G\ast H$, where $H$ has no involution, as in Proposition~\ref{prop:ext in cprime}. Assume towards a contradiction that there is an involution in $N^*$, or equivalently, an odd number of involutions $r_{1},\ldots,r_{k}$ of $\gstar$ such that $r_{1}\cdots r_{k}=1$. Since $H$ has no involution, every $r_i$ is conjugate to an involution of $G$. Let $\pi^{\star} : \gstar \twoheadrightarrow G$ denote the map obtained by killing the free factor $H$. Then $\pi^{\star}(r_{1}\cdots r_{k})=1$, and $\pi^{\star}(r_{1}\cdots r_{k})$ is a product of an odd number of involutions in $G$, contradicting the assumption that $N$ has no involution (since $G$ belongs to $\classc$). 

Then, suppose that $\gstar$ is the HNN extension defined in Proposition~\ref{prop:ext in cprime}. The argument is the same as in the previous case, but a bit more involved. Assume towards a contradiction that there is an involution in $N^*$, or equivalently, an odd number of involutions $r_{1},\ldots,r_{k}$ of $\gstar$ such that $r_{1}\cdots r_{k}=1$. Consider the quotient of $\gstar$ by the normal subgroup generated by the stable letter $t$ of the HNN extension, that is, the group $G'= \gstar / \langle t \rangle ^{\gstar}$, and call $\pi^{\star} : \gstar \twoheadrightarrow G'$ the quotient map. Notice that $G'$ can also be obtained as the quotient of $G$ by the relations $k=\alpha(k)$, that is $G' \cong G / \langle k^{-1}\alpha(k) \, : \, k \in K \rangle ^{G}$. We will identify these isomorphic groups, and call $ \pi: G \twoheadrightarrow G'$ the quotient map. Note that the kernel of $\pi$ is contained in $N=\langle Tr(G) \rangle$. By assumption $G$ belongs to $\classc$, thus $N$ contains no involution. Hence no involution of $G$ is mapped to the identity by $\pi$. Furthermore, the restriction of $\pi^{\star}$ to $G$ coincides with $\pi$, and every involution of $\gstar$ is conjugate to an involution of $G$. Therefore, no involution of $\gstar$ is mapped to the identity by $\pi^{\star}$. Thus, the image $\pi ^{\star}(r_{1}...r_{k})$ is a (trivial) product of an odd number of involutions of $G'$. Moreover, every preimage of $\pi^{\star}(r_{1}...r_{k})$ in $G$ will be a product of an odd number of involutions. That is, there is an involution in $N=\langle Tr(G) \rangle$, and we arrive at a contradiction.

\medskip

\noindent{\bf \ref{def class C G trans on affine trans}}: the set of pairs $(r,s) \in I(\gstar)^{(2)}$ of $p$-affine type is non-empty and $\gstar$ acts transitively on it.

\medskip

First, note that this set is non-empty since $G$ is contained in $\gstar$. Then, if $(r,s) \in I(\gstar)^{(2)}$ is of $p$-affine type then $\langle r,s\rangle$ is finite, thus it is elliptic in the tree $X$. It follows that $\langle r,s\rangle$ is contained in a conjugate of $G$, and the conclusion follows from the fact that $G$ belongs to $\classc$.

\medskip

\noindent{\bf \ref{def class C cent of trans is cyclic}}: for $(r,s) \in I(\gstar)^{(2)}$, $\text{Cen}(rs)$ is cyclic and generated by a translation.

\medskip

If $rs$ is elliptic then its centralizer is elliptic in the tree $X$ by  Condition~\ref{def class Cprime no subg of odd ord norm by lox} above, and the conclusion follows from the fact that $G$ belongs to $\classc$.

If $rs$ is loxodromic then it is contained in a (unique) maximal loxodromic subgroup of $\gstar$, which is isomorphic to $D_{\infty}$ (indeed, exactly as in Remark \ref{loxodromic subg G' sc quotient}, since $\gstar$ satisfies Condition~\ref{def class C pairs of minimal or affine type} and Condition~\ref{def class Cprime no subg of odd ord norm by lox}, every loxodromic subgroup of $\gstar$ is isomorphic to $\mathbb{Z}$ or $C_{2}\times \mathbb{Z}$ or $D_{\infty}$ or $C_4\ast_{C_2}C_4$, and $D_{\infty}$ is the only group in this list that contains two distinct involutions), and thus the centralizer of $rs$ is cyclic infinite, generated by a translation.

\medskip

It is left to verify Conditions~\ref{def class Cprime no parabolic}--\ref{def class Cprime inf trans are loxodromic} of Definition~\ref{definition class Cprime} of $\classcprime$.

\medskip

\noindent{\bf \ref{def class Cprime no parabolic}}: the action of $\gstar$ on $X$ is acylindrical (and non-elementary).

\medskip

If $\gstar$ is the HNN extension defined in Proposition~\ref{prop:ext in cprime}, then by Lemma~\ref{qm pair} and Lemma~\ref{pathstabsmall} stabilizers of paths with at least three edges in $X$ have order at most two. If $\gstar=G\ast H$, stabilizers of edges are trivial. In both cases, one easily sees that the action of $\gstar$ on $X$ is acylindrical, and the fact that the action is non-elementary is clear.

\medskip

\noindent{\bf \ref{def class Cprime satisfies the ind lemma}}: $e(N,X)=1$, $r_{\text{inj}}(Q,X) \geq 1$, $\nu(N,X) \leq 5$ and $A(N,X)=0$.

\medskip

 Note that since the space $X$ is $0$-hyperbolic, it follows from Lemma~\ref{asympt and trans length relation} that $[g]^{\infty} = [g]$ for all $g \in \gstar$, and thus $r_{\text{inj}}(Q,X)= \inf(\{ [g] : g \in Q, \, g \ \text{loxodromic} \})$. Hence, since $X$ is a simplicial tree, we immediately get $r_{\text{inj}}(Q,X) \geq 1$ (in fact, it can be proved that $r_{\text{inj}}(Q,X)=2$).

Then, let us prove that $\nu(N,X)\leq 5$. The parameters of the definition of an acylindrical action (see Definition~\ref{acylindr definition}) corresponding to $\varepsilon = 97 \delta$ can be taken to be $L=3$ and $M=2$ in the case where $\gstar$ is the HNN extension defined in Proposition~\ref{prop:ext in cprime} (since stabilizers of paths with at least three edges in $X$ have order at most two), and $L=M=1$ in the case where $\gstar=G\ast H$. In both cases, the bound $\nu(N,X)\leq 5$ follows readily from Lemma~\ref{nu finite acyl}.

To see that  $e(N,X)=1$, by Remark~\ref{e is 1 if lox subg are cyc} it is sufficient to prove that every loxodromic subgroup $E$ of $N$ is cyclic. As in the proof of Condition \ref{def class C cent of trans is cyclic}, we have that $E$ is isomorphic to one of the following four groups: $\mathbb{Z}$, $C_{2}\times \mathbb{Z}$, $D_{\infty}$ or $C_4\ast_{C_2}C_4$. But $N^*$ has no involution (by Condition \ref{def class C N without 2-torsion}), and thus $E$ is necessarily isomorphic to $\mathbb{Z}$.

Last, let us prove that $A(N,X)=0$. First notice that, since our space $X$ is $0$-hyperbolic, the axis of an element $g$ is the set $A_{g}= \{ x \in X : d(x,g \cdot x)=[g] \}$. Also, since $\nu=\nu(N,X)$ is finite, we are looking at tuples $(g_{0}, \dots ,g_{\nu})$ of elements with $[g_{i}]=0$ for all $i \in \{ 0, \dots , \nu \}$ (that is, tuples of elliptic elements) such that they do not generate an elementary subgroup. In this case, $A_{g_{i}}$ is just the fixed-point set of $g_{i}$, and $A(g_{0}, \dots ,g_{\nu})=\text{diam}(A_{g_{0}} \cap \dots \cap A_{g_{\nu}})$ is the diameter of the intersection of their fixed-point sets. If all these elliptic elements have a fixed point in common, then every element of $\langle g_{0}, \dots , g_{\nu} \rangle$ will also fix this point, and therefore, they would generate an elementary subgroup. That is, we only need to consider tuples $(g_{0}, \dots , g_{\nu})$ of elliptic elements such that $A_{g_{0}} \cap \dots \cap A_{g_{\nu}}= \varnothing$. Thus, $A(g_{0}, \dots , g_{\nu})=0$, and therefore, $A(N,X)=0$.

\medskip

\noindent{\bf \ref{def class Cprime inf trans are loxodromic}}: translations of infinite order of $\gstar$ are loxodromic for their action on $X$.

\medskip

Let $g=rs$ be a translation, where $r,s$ denote two distinct involutions of $\gstar$. If $r$ and $s$ have a common fixed point $v\in X$, then their product $g$ fixes $v$ as well. Hence $g$ is contained in a conjugate of $G$ (namely the stabilizer of $v$ in $\gstar$), and thus it has order $p$ since $G$ belongs to $\classc$. If $r$ and $s$ do not have a common fixed point, then $g$ is loxodromic (and thus it has infinite order).

\medskip

This finishes the proof of Proposition~\ref{prop:ext in cprime} and thus of Theorem~\ref{maintheorem}.\end{proof}
\appendix

\section{Proof of Lemma~\ref{almost uniqueness of quasi-geod}}\label{appendix}

In this appendix we include the proof of Lemma~\ref{almost uniqueness of quasi-geod}. For the sake of completeness, we will also restate this result.

\begin{lemma}
\label{almost uniqueness of quasi-geod appendix}
    Let $X$ be a $\delta$-hyperbolic metric space, $x,y \in X$. Let $\gamma$ be a $(1,\ell)$-quasi-geodesic connecting $x$ to $y$ for $\ell \geq 0$. Let $p \in X$ be a point such that
    \begin{equation}
    \label{almost uniqueness of qg assumption}
        d(x,p)+d(p,y) \leq d(x,y) + 2\delta+3\ell.   
    \end{equation}
    Then, $d(p, \gamma) \leq 4\delta + 8\ell$.
\end{lemma}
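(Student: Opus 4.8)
The plan is to translate the hypothesis into a statement about Gromov products and then exploit the four-point $\delta$-inequality from Definition~\ref{delta hyp def} together with the intermediate value theorem. First I would rewrite the assumption: since $2\langle x,y\rangle_p = d(x,p)+d(p,y)-d(x,y)$, inequality~(\ref{almost uniqueness of qg assumption}) is exactly $\langle x,y\rangle_p \le \delta + \tfrac32\ell$. The goal is then to produce a point on $\gamma$ whose Gromov products with $x$ and with $y$ (based at $p$) are both small, and to convert this into a genuine distance estimate.

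I would parametrize $\gamma\colon[0,L]\to X$ by arc length, so that $\gamma(0)=x$, $\gamma(L)=y$, and the $(1,\ell)$-quasi-geodesic condition reads $|s-t|-\ell\le d(\gamma(s),\gamma(t))\le|s-t|+\ell$; in particular $L\le d(x,y)+\ell$. Define the continuous functions $\alpha(t)=\langle x,\gamma(t)\rangle_p$ and $\beta(t)=\langle\gamma(t),y\rangle_p$. At the endpoints $\alpha(0)=0$ and $\beta(L)=0$, while $\beta(0)=\alpha(L)=\langle x,y\rangle_p\ge 0$. Hence $(\alpha-\beta)(0)=-\langle x,y\rangle_p\le 0$ and $(\alpha-\beta)(L)=\langle x,y\rangle_p\ge 0$, so by the intermediate value theorem there is $t_0\in[0,L]$ with $\alpha(t_0)=\beta(t_0)=:c$.

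Next I would feed the four points $x,\gamma(t_0),y,p$ into the hyperbolicity inequality with basepoint $p$, obtaining $\langle x,y\rangle_p\ge\min(\alpha(t_0),\beta(t_0))-\delta=c-\delta$, whence $c\le\langle x,y\rangle_p+\delta\le 2\delta+\tfrac32\ell$. Finally, adding the two identities $d(\gamma(t_0),p)=2\alpha(t_0)-d(x,p)+d(x,\gamma(t_0))$ and $d(\gamma(t_0),p)=2\beta(t_0)-d(y,p)+d(y,\gamma(t_0))$ gives
\[
d(\gamma(t_0),p)=2c-\tfrac12\big(d(x,p)+d(y,p)\big)+\tfrac12\big(d(x,\gamma(t_0))+d(y,\gamma(t_0))\big).
\]
Here $d(x,p)+d(y,p)\ge d(x,y)$ by the triangle inequality, while the quasi-geodesic bound yields $d(x,\gamma(t_0))+d(y,\gamma(t_0))\le L+2\ell\le d(x,y)+3\ell$, so the last two terms together contribute at most $\tfrac32\ell$. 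Combined with $2c\le 4\delta+3\ell$ this gives $d(p,\gamma)\le d(\gamma(t_0),p)\le 4\delta+\tfrac92\ell\le 4\delta+8\ell$, as desired.

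The only place requiring care is the passage from "$c$ small" to "$d(p,\gamma(t_0))$ small": in a general length space one has no genuine geodesic to project onto, so one must recover the distance purely algebraically from the two Gromov products, as above, and then control the auxiliary term $\tfrac12(d(x,\gamma(t_0))+d(y,\gamma(t_0)))-\tfrac12(d(x,p)+d(y,p))$ using only the triangle inequality and the arc-length comparison $L\le d(x,y)+\ell$. I expect this bookkeeping with the $\ell$-terms — rather than any deep geometric input — to be the main (and only mildly delicate) point, and the slack between $\tfrac92\ell$ and the claimed $8\ell$ suggests the estimate is comfortably robust.
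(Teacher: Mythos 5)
Your argument is correct and reaches the stated bound (in fact the sharper $4\delta+\tfrac92\ell$), but it is organized differently from the paper's proof. The paper first reduces by symmetry to $d(x,p)\le d(y,p)$, disposes of the two degenerate cases $d(x,p)\ge d(x,y)$ and $d(y,p)\ge d(x,y)$ by the triangle inequality alone, and then in the main case chooses $z\in\gamma$ with $d(p,y)\le d(z,y)\le d(p,y)+\ell$ and runs a chain of estimates from the four-point condition. You instead select the comparison point by equalizing the two Gromov products $\alpha(t)=\langle x,\gamma(t)\rangle_p$ and $\beta(t)=\langle\gamma(t),y\rangle_p$ via the intermediate value theorem, and then recover $d(p,\gamma(t_0))$ purely algebraically from the two Gromov-product identities. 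This buys you a cleaner argument with no case distinction and no WLOG, at the cost of nothing: both proofs use only continuity of the quasi-geodesic, the four-point inequality, and the $(1,\ell)$ bookkeeping, so the two routes are comparable in length and elementarity.

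One slip to fix: $\alpha(0)=\langle x,\gamma(0)\rangle_p=\langle x,x\rangle_p=d(x,p)$ and $\beta(L)=\langle y,y\rangle_p=d(y,p)$, not $0$. The intermediate value argument survives, but with the signs reversed: since $\langle x,y\rangle_p\le\min\{d(x,p),d(y,p)\}$ one gets $(\alpha-\beta)(0)=d(x,p)-\langle x,y\rangle_p\ge 0$ and $(\alpha-\beta)(L)=\langle x,y\rangle_p-d(y,p)\le 0$, so a zero $t_0$ of $\alpha-\beta$ still exists, and nothing downstream uses the erroneous endpoint values. With that correction the proof is complete.
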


\begin{proof}
    Without loss of generality, we can assume that $d(x,p) \leq d(y,p)$. If we have $d(x,p) \geq d(x,y)$, then we get that $d(y,p) \leq 2\delta+3\ell$ and we are done. Similarly, if we assume $d(y,p) \geq d(x,y)$, then we get that $d(x,p) \leq 2\delta+3\ell$. 

    Now, assume that\[\max (\{d(x,p),d(y,p) \})< d(x,y).\] Take a point $z \in \gamma$ such that
    \begin{equation}
    \label{almost uniqueness of qg point in qg}
       d(p,y) \leq d(z,y)\leq d(p,y)+\ell. 
    \end{equation}
    Since $X$ is $\delta$-hyperbolic, we have that
    \begin{equation}
    \label{almost uniqueness of qg delta hyp cond}
        \langle x,y \rangle_{p} \geq \min \{ \langle x,z \rangle_{p} \, , \, \langle y,z \rangle_{p} \} - \delta.
    \end{equation}
    Combining Definition~\ref{gromov prod def} with the second inequality of (\ref{almost uniqueness of qg point in qg}) we get
    \begin{equation}
    \label{almost uniqueness of qg dif bet gromov prod prev}
        \langle x,z \rangle_{p}-\langle y,z \rangle_{p} = \frac{1}{2}(d(x,p)-d(x,z)-d(y,p)+d(y,z)) \leq \frac{1}{2}(d(x,p)-d(x,z))+ \frac{\ell}{2}.
    \end{equation}
    Now, inequality (\ref{almost uniqueness of qg assumption}) and the second inequality of (\ref{almost uniqueness of qg point in qg}) give
    \begin{equation}
    \label{almost uniqueness of qg dif bet gromov prod prev 2}
        d(x,p)-2\delta-3l \leq d(x,y)-d(y,p) \leq d(x,y)-d(y,z)+\ell,
    \end{equation}
    and this together with the triangle inequality imply
    \begin{equation}
    \label{almost uniqueness of qg dif bet gromov prod prev 3}
        d(x,p)-2\delta-3\ell \leq d(x,z)+\ell.
    \end{equation}
    Therefore, inequalities (\ref{almost uniqueness of qg dif bet gromov prod prev}) and (\ref{almost uniqueness of qg dif bet gromov prod prev 3}) yield
    \begin{equation}
    \label{almost uniqueness of qg dif bet gromov prod}
        \langle x,z \rangle_{p}-\langle y,z \rangle_{p} \leq \delta +\frac{5}{2}\ell.
    \end{equation}
    From inequalities (\ref{almost uniqueness of qg delta hyp cond}) and (\ref{almost uniqueness of qg dif bet gromov prod}) we get
    \begin{equation}
    \label{alm uniq qg prod with y in terms of z}
      \langle x,y \rangle_{p} \geq \langle x,z \rangle_{p}-\frac{5}{2}\ell-2\delta.
    \end{equation}
      Inequality (\ref{alm uniq qg prod with y in terms of z}) together with Definition~\ref{gromov prod def} yield
      \[ d(y,p)-d(x,y) \geq d(z,p)-d(x,z)-5\ell-4\delta, \] and thus
      \begin{equation}
      \label{alm uniq qg distances}
         d(y,p)-d(x,y)+d(x,z) \geq d(z,p)-5\ell-4\delta. 
      \end{equation}
      Also, since $z \in \gamma$ we have that \[d(x,z)+d(z,y) \leq d(x,y) + 3\ell,\] and in consequence,
      \begin{equation}
      \label{alm uniq qg z in qg}
          d(x,z)-d(x,y) \leq -d(z,y) + 3\ell.
      \end{equation}
      Therefore, combining inequalities (\ref{alm uniq qg distances}) and (\ref{alm uniq qg z in qg}) we obtain
      \begin{equation}
      \label{alm uniq of qg almost there}
          d(y,p)-d(z,y) \geq d(z,p)-8\ell-4\delta.
      \end{equation}
      The first inequality of (\ref{almost uniqueness of qg point in qg}) and inequalitiy (\ref{alm uniq of qg almost there}) imply that $d(z,p)\leq 8\ell+4\delta$.
\end{proof}

\printbibliography

\vspace{2cm} 

\textbf{Marco Amelio}

Universität Münster

Einsteinstraße 62

48149 Münster, Germany.

E-mail address: \href{mailto:mamelio@uni-muenster.de}{mamelio@uni-muenster.de}

\vspace{5mm}

\textbf{Simon André}

Sorbonne Université and Université Paris Cité

CNRS, IMJ-PRG

F-75005 Paris, France.

E-mail address: \href{mailto:simo.andre@imj-prg.fr
}{simon.andre@imj-prg.fr
}

\vspace{5mm}

\textbf{Katrin Tent}

Universität Münster

Einsteinstraße 62

48149 Münster, Germany.

E-mail address: \href{mailto:tent@wwu.de}{tent@wwu.de}

\end{document}